\documentclass[a4paper,reqno]{amsart}
\usepackage[T1]{fontenc}
\usepackage[utf8x]{inputenc}
\usepackage[english]{babel}
\usepackage{amsmath,amsfonts,amssymb,upgreek,comment,mathtools}
\usepackage[11pt,curve,matrix,arrow,frame,graph]{xy}
\usepackage{graphicx}
\usepackage{hyperref}
\usepackage{enumerate}
\pagestyle{plain}
\usepackage{xcolor}
\usepackage{esint}
\usepackage{etoolbox}
%\topmargin0pt
%\headheight10pt
%\headsep20pt
%\oddsidemargin0pt
%\textheight20cm

 % Règle la profondeur dans la table des matieres et règle la table des matières
\setcounter{tocdepth}{2}  
\let\oldtocsection=\tocsection
\let\oldtocsubsection=\tocsubsection
\renewcommand{\tocsection}[2]{\hspace{0em}\oldtocsection{#1}{#2}}
\renewcommand{\tocsubsection}[2]{\hspace{1.8em}\oldtocsubsection{#1}{#2}}

% Theorems
\usepackage{amsthm}

%\theoremstyle{definition}
%\newtheorem{D}{Definition}[section]

%\mathtoolsset{showonlyrefs} % Only shows equation labels for equations that are referred to.

\theoremstyle{plain}
\newtheorem*{theorem*}{Theorem}

\newtheorem{theorem}{Theorem}[section]
\theoremstyle{definition}
\newtheorem{D}[theorem]{Definition}
\newtheorem{lemma}[theorem]{Lemma}
\newtheorem{cor}[theorem]{Corollary}
\newtheorem{prop}[theorem]{Proposition}

\theoremstyle{definition}
\newtheorem{rem}[theorem]{Remark}
\newtheorem{rems}[theorem]{Remarks}
% Shortcuts
\newcommand{\scaling}{\ensuremath{\mathrm{sc}}}
\newcommand{\R}{\ensuremath{\mathbb R}}

\newcommand{\eps}{\ensuremath{\varepsilon}}

\newcommand{\Ric}{\ensuremath{\mbox{Ric}}}

%\newcommand{\katoi}{\ensuremath{\mbox{k}^i_T(\Ricm^i)}}

% Commands
%\newcommand{\norm}[1]{\ensuremath{\left\|#1\right\|}}

% Operators

%\DeclareMathOperator{\vol}{vol}
%\DeclareMathOperator{\diam}{diam}
%\DeclareMathOperator{\RCD}{RCD}

%\DeclareMathOperator{\Ric}{Ric}

% sets

\newcommand{\setN}{\mathbb{N}}

\newcommand{\setR}{\mathbb{R}}

\newcommand{\cD}{\mathcal{D}}

\newcommand{\cS}{\mathcal{S}}

% norms

% convergences

% some general mathematical commands

\DeclareMathOperator{\tr}{tr}

\newcommand{\di}{\mathop{}\!\mathrm{d}}

 % scalar product

% Lipschitz and continuous functions
\DeclareMathOperator{\Lip}{Lip}

\newcommand{\paral}{\mathbin{\!/\mkern-5mu/\!}}

% measures

% field, flow and solutions class

% reference distance and measure

% Spaces of functions and derivations

%\newcommand{\vol}{vol}

\newcommand{\Gr}{\mathcal{G}}
\newcommand{\Mix}{\mathrm{Mix}}
\newcommand{\Diag}{\mathrm{Diag}}

\newfont{\tmpf}{cmsy10 scaled 2500}

\newcommand{\uX}{\underline{X}}

\newcommand{\uF}{\underline{F}}

%\newcommand{\udist}{\underline{\dist}}

%commandes Gilles 

%\DeclareMathOperator{\tr}{tr}

%\DeclareMathOperator{\vol}{vol}
\def\R{\mathbb R}

\def\cC{\mathcal C}
\def\cD{\mathcal D}

\def\cG{\mathcal G}

\def\cM{\mathcal M}

\def\cO{\mathcal O}
\def\bO{\mathbf O}
\def\cS{\mathcal S}

\def\cR{\mathcal R}
\def\cQ{\mathcal Q}

\def\cU{\mathcal U}

\newcommand{\df}{\coloneqq}

\newcommand{\conv}{\mathrm{co} \, }

\newcommand{\uY}{\underline{Y}}
\newcommand{\uE}{\underline{E}}

\newcommand{\fk}{\mathfrak{F}}

\usepackage{enumitem}

\title[Critical metrics of eigenvalue functionals]{Critical metrics of eigenvalue functionals via Clarke subdifferential}

\author{Romain Petrides}
\address{Romain Petrides, Universit\'e Paris Cit\'e, Institut de Math\'ematiques de Jussieu - Paris Rive Gauche, b\^atiment Sophie Germain, 75205 PARIS Cedex 13, France}
\email{romain.petrides@imj-prg.fr}

\author{David Tewodrose}
\address{D. Tewodrose, Vrije Universiteit Brussel, Department of Mathematics and Data Science, Pleinlaan 2,
B-1050 Brussel, Belgium.}
\email{david.tewodrose@vub.be}
\date{}

\begin{document}
%\nocite{*} 

\maketitle

\begin{abstract}
We set up a new framework to study critical points of functionals defined as combinations of eigenvalues of operators with respect to a given set of parameters: Riemannian metrics, potentials, etc.  Our setting builds upon Clarke's differentiation theory to provide a novel understanding of critical metrics.  In particular, we unify and refine previous research carried out on Laplace and Steklov eigenvalues.  We also use our theory to tackle original examples such as the conformal GJMS operators, the conformal Laplacian, and the Laplacian with mixed boundary conditions. 
\end{abstract}

\tableofcontents
\newpage

\section{Introduction}

In Riemannian geometry, critical points of normalized eigenvalue functionals are strongly related to meaningful mappings of a manifold. This was first observed by Nadirashvili in his pioneering work \cite{Nadi} where he established that any $\overline{\lambda}_k$-critical Riemannian metric on a closed surface yields a minimal isometric immersion of the surface into some sphere.  Here $\overline{\lambda}_k$ denotes the volume normalized $k$-th non-zero eigenvalue of the Laplace-Beltrami operator $\Delta_g$ depending on a Riemannian metric $g$, where repetition is made according to the multiplicity.  Since then,  critical metrics involving various normalizations have been given a geometric interpretation in the following contexts:

\begin{itemize}
\item \textit{One Laplace eigenvalue.} El Soufi and Ilias \cite{EI1} extended Nadirashvili's theorem from surfaces to manifolds of dimension greater than or equal to three. In \cite{ElSouIlias2003,ColboisElSoufi}, they also proved that critical metrics in a conformal class correspond to harmonic mappings into spheres.
\item \textit{One Steklov eigenvalue.} Fraser and Schoen \cite{FraserSchoen} showed that on a compact surface with non-empty boundary,  a smooth Riemannian metric maximizing the normalized $k$-th Steklov eigenvalue yields a free boundary minimal immersion of the surface into a solid ball, and that maximality in a conformal class yields free boundary harmonic mappings into a ball. This was recently extended by Karpukhin and Métras \cite{KarpukhinMetras} to critical metrics in dimension $n\ge 3$ via a suitably weighted version of the Steklov eigenvalue problem. 
\item \textit{Second eigenvalue of the conformal Laplacian.} Ammann and Humbert \cite{AmmannHumbert} studied nodal Yamabe solutions associated with any generalized metric which minimizes, within a given conformal class, the normalized second eigenvalue of the conformal Laplacian on manifolds with non-negative Yamabe invariant. 
More recently, Gursky and Pérez--Ayala \cite{GPA} provided a related analytico-geometric interpretation --- involving both nodal Yamabe solutions and harmonic mappings --- for those \textit{maximizing} metrics under \textit{negative} Yamabe invariant.
\item \textit{One Paneitz eigenvalue in dimension four.} Pérez--Ayala \cite{Perez} studied the properties of critical metrics for one Paneitz eigenvalue in dimension $4$,  both within a given conformal class and on the whole set of Riemannian metrics.
\item \textit{One Dirac eigenvalue.} Ammann \cite{Ammann} showed that conformally minimal metrics for the first Dirac eigenvalue of a spin surface correspond to harmonic maps into $\mathbb{S}^2$ that can be interpreted as Gauss maps of CMC immersions into $\mathbb{R}^3$.  Karpukhin, Métras and Polterovich \cite{KMP} recently extended this result to Dirac eigenvalues of any higher index, showing that in that case conformally critical metrics are related to harmonic mappings into $\mathbb{CP}^n$.
\item \textit{Finite combinations of eigenvalues.} In \cite{PetridesEllipsoids}, the first-named author proved that critical metrics for combinations of Laplace eigenvalues are related to minimal immersions into Pseudo-Euclidean ellipsoids (Euclidean if the combination has positive partial derivatives). Analogously, he proved in \cite{PetridesSteklovellipsoids} that critical metrics for combinations of Steklov eigenvalues are related to free boundary minimal immersions into (Pseudo)-Euclidean ellipsoids.
\item \textit{Combination of Robin eigenvalues} In the very recent \cite{LimaMenezes,Medvedev}, the authors have investigated the mapping properties of metrics that are critical for some specific combination of Robin eigenvalues.
\end{itemize}

The solutions proposed for all these problems share some similarities, hence it is natural to look for a suitable common framework that could treat them all at once. This is the main goal of this paper. We also expect this new framework to apply in new situations and to yield natural tools to establish the existence of critical metrics in a general setting. Indeed, after Nadirashvili proved the existence of a maximal first Laplace eigenvalue on tori \cite{Nadi}, various variational methods have been developed to treat the case of one Laplace eigenvalue \cite{PetridesGFA} \cite{NadirashviliSire} \cite{Petrides} \cite{KNPP} \cite{KarpukhinStern}, one Steklov eigenvalue \cite{FraserSchoenInv} \cite{PetridesJDG}, the second eigenvalue of the conformal Laplacian \cite{AmmannHumbert} \cite{GPA}, the first eigenvalue of the Dirac operator \cite{Ammann} and finite combinations of eigenvalues \cite{PetridesEllipsoids} \cite{PetridesSteklovellipsoids}  \cite{PetridesVaria} \cite{Petridesnonplanarsphere} \cite{Petridesnonplanardisk}, but a common method is still to be found.\\

The starting point of our investigation is to question the notion of criticality. As well-known, branching phenomena caused by multiplicity prevent eigenvalue functionals from being differentiable. This is the reason why Nadirashvili proposed the following seminal definition:
a Riemannian metric $g$ is called $\bar{\lambda}_k$-critical if either
\begin{equation}\label{eq:criti_Nadi}
\bar{\lambda}_k(g_t) \le \bar{\lambda}_k(g_0) + o(t) \quad \text{or} \quad \bar{\lambda}_k(g_t) \ge \bar{\lambda}_k(g_0) + o(t) 
\end{equation}
is satisfied as $t \to 0$, for any smooth, area preserving, perturbation $\{g_t\}$ of $g=g_0$.  
This definition is \textit{ad hoc} to handle one eigenvalue, in particular when the critical metric $g$ comes as a minimizer/maximizer of a variational problem. However, as we will show, it does not encompass all the metrics that we would expect to be critical, especially in the case of combinations of eigenvalues
\begin{equation}\label{eq:combination}
\fk = F \circ (\bar{\lambda}_1,\ldots,\bar{\lambda}_N) \qquad \text{with } F \in \cC^1(\setR^N).
\end{equation}

We wish to interpret Nadirasvhili's definition in terms of sub/superdifferentials. Assume for a moment that $\bar{\lambda}_k$ were convex with respect to $g$. Then criticality of $g_0$ would translate into $0$ belonging to the subdifferential of $\bar{\lambda}_k$ at $g_0$. Now $\bar{\lambda}_k$ is obviously not convex in $g$, but one can still give a meaning to the sub/superdifferentials $\partial^{-}\bar{\lambda}_k$ and $\partial^{+}\bar{\lambda}_k$, see \ref{subsubsec:sub/super}. If $\mathcal{P}(g_0)$ denotes the set of perturbations of $g_0$ considered in \eqref{eq:criti_Nadi}, then:
\[
0 \in \partial^{+}\bar{\lambda}_k(g_0) \qquad \iff \qquad  \bar{\lambda}_k(g_t) \le \bar{\lambda}_k(g_0) + o(t) \quad \forall \{g_t\} \in \mathcal{P}(g_0),
\]
\[
0 \in \partial^{-}\bar{\lambda}_k(g_0) \qquad \iff \qquad  \bar{\lambda}_k(g_t) \ge \bar{\lambda}_k(g_0) + o(t) \quad \forall \{g_t\} \in \mathcal{P}(g_0).
\]

In some simple situations, however, the sub/superdifferentials of an eigenvalue may be empty; this is the case, for instance, for the second Laplace eigenvalue of the two-dimensional round sphere, see Proposition \ref{remempty}. Therefore, we rather work with the variants of sub/superdifferentials introduced by Clarke in the seventies \cite{Clarke1975}.  Up to our best knowledge, except in an interesting paper by Cox \cite{Cox} which found applications in shape optimization, this is the first time that the non-smooth analysis developed by Clarke  \cite{Clarke,RockafellerWets} is used to tackle eigenvalue differentiation problems.

Let $\Omega$ be an open subset of a Banach space $X$ and $\mathfrak{F} : \Omega \to \setR$ a function.  The generalized upper/lower directional derivatives of $\mathfrak{F}$ at $x \in \Omega$ in a direction $h \in X$ are defined as
\begin{equation} \label{eqgenderintro}
\mathfrak{F}_{+}^{\circ} (x \, ;h)\coloneqq  \limsup_{\substack{x' \to x\\t \downarrow 0}} \frac{\mathfrak{F}(x'+th)-\mathfrak{F}(x')}{t}
\end{equation}
\begin{equation} \label{eqgenderintro2}
\mathfrak{F}_{-}^{\circ} (x \, ;h)\coloneqq \liminf_{\substack{x' \to x\\t \downarrow 0}} \frac{\mathfrak{F}(x'+th)-\mathfrak{F}(x')}{t}
\end{equation}
and the Clarke sub/superdifferentials of $\mathfrak{F}$ at $x$ are
\begin{equation}
\partial_C^- \mathfrak{F}(x) \coloneqq \{ \zeta \in X^* : \mathfrak{F}_+^{\circ} (x \, ;h) \ge \langle \zeta , h \rangle \,\, \text{for any $h \in X$}\},
\end{equation}
\begin{equation}
\partial_C^+ \mathfrak{F}(x) \coloneqq \{ \zeta \in X^* : \mathfrak{F}_-^{\circ} (x \, ;h) \le \langle \zeta , h \rangle \,\, \text{for any $h \in X$}\}.
\end{equation}
We propose the following definition.
\begin{D}\label{def:critical}
We say that $x \in \Omega$ is critical for $\mathfrak{F}$ if and only if
\begin{equation}\label{eq:criti_nous}
0 \in \partial_C^- \mathfrak{F}(x) \cup \partial_C^+ \mathfrak{F}(x).
\end{equation}
\end{D}

The advantage of this definition is threefold. 

First,  when $\mathfrak{F}$ is a combination of eigenvalues like in \eqref{eq:combination}, we are able to show that $\partial_C^- \mathfrak{F}(x) \cup \partial_C^+ \mathfrak{F}(x)$ belongs to some concrete convex hull. Then the assumption \eqref{eq:criti_nous} easily provides a general Euler--Lagrange equation which can be interpreted in situations where $\Omega$ is a set of Riemannian metrics or conformal factors to deduce mapping properties of a critical point.  In this way, we obtain various geometrically or analytically meaningful mappings by eigenfunctions of manifolds into quadrics.  In addition, thanks to a so-called Mixing Lemma from linear algebra (Lemma \ref{lem:mixing}), we show that the coordinates of these mappings can be made \textit{orthogonal} with masses depending explicitly on the combination $F$: see Theorem \ref{th:main_cri}. All this relies on the fact that the Clarke sub/superdifferentials behave well under combinations. Indeed,  they satisfy a chain rule which writes as
\begin{equation} \label{eq:chainruleintro}
\partial_C^{\pm} \mathfrak{F}(x) \subset \sum_{i=1}^N \partial_i F\left(\bar{\lambda}_1(x),\cdots,\bar{\lambda}_N(x)\right) \partial_C^{\pm}\bar{\lambda}_i(x)
\end{equation}
when $\partial_i F\ge 0$ (see \eqref{eq:chain_rule_Clarke} for the general statement), so that we need only investigating the inclusion properties of each single $\partial_C^{\pm}\bar{\lambda}_i(x)$.

Secondly, the Clarke sub/superdifferentials are never empty, and they always contain the classical ones, see \ref{subsubsec:Clarkesub/super}.  In this regard, our study reveals that in the presence of a spectral gap $\bar{\lambda}_{k+1} - \bar{\lambda}_{k-1} >0$, even if convexity may still fail,  the classical sub/superdifferential of $\bar{\lambda}_k$ coincides with the Clarke one.  This yields a substantial improvement of our results: since the classical sub/superdifferentials satisfy the opposite chain rule inclusion (see \eqref{eq:chain_rule}),  the inclusion \eqref{eq:chainruleintro} becomes an equality and we obtain a reciprocal of the previous property: if there exists one of the aforementioned mappings into a quadric associated to a point $x$, then $x$ is critical for some well chosen combination $F$ (see Theorem \ref{th:converse} and all its applications).

Thirdly,  Definition \ref{def:critical} paves the way towards a definition of Palais--Smale sequence which would be the first step in building a general existence theory for critical metrics initiated by \cite{PetridesVaria}. Indeed,  for $\mathcal{C}^1$ functionals $E:X\to \R$, Palais--Smale sequences $(x_n)$ are classically defined as those for which $DE(x_n) \to 0$ in $X^*$ as $n\to+\infty$. The sub/superdifferential point of view gives us the opportunity to generalize this definition to non-differentiable spectral functionals $\mathfrak{F}$ by demanding that $\mathrm{dist}_{X^*}(0, \partial_C^{\pm}\mathfrak{F}(x_n)) \to 0 $ as $n\to +\infty$.

\medskip

Our results are based on a new explicit computation (Theorem \ref{th:main}) of generalized limits of suitable difference quotients for eigenvalues $\lambda_k$ defined in a broad setting.  These eigenvalues are obtained from an abstract Rayleigh quotient $\cR$ by usual min-max formula,  and we notably assume a compact embedding between the natural Hilbert spaces associated with $\cR$.  In this very general context, we prove that eigenvalues always have left and right classical directional derivatives, and we also compute their Clarke generalized derivatives thanks to a key result of Ioffe \cite{Ioffe2}.

We also give an explicit computation of the classical sub/superdifferential that clearly suggests that these sets are empty in numerous situations, see Proposition \ref{propclassicalsubdifferential}. This advocates in favor of Clarke's sub/superdifferentials to define criticality.  However, if one knows that the classical sub/superdifferential is non-empty (for instance if $x$ is an extremal point),  then we obtain much more information. For instance, we reinterpret the following property in terms of sub/superdifferentials: if $\bar{\lambda}_j(x) > \bar{\lambda}_i(x)$ for $j>i$, where $x$ is a local minimum of $\bar{\lambda}_j$, then one can take at most $j-i$ eigenfunctions in the mapping that appears in the Euler-Lagrange equation.  This property was used in the context of minimization of the first Dirac eigenvalues \cite{Ammann} and of the second eigenvalue of the conformal Laplacian \cite{AmmannHumbert}.

\medskip

The last sections of the paper are devoted to concrete applications:

 In Section \ref{sec:surfaces}, we reinterpret and complete the characterization of critical metrics of \cite{PetridesEllipsoids} \cite{PetridesSteklovellipsoids} for combinations of normalized Steklov and Laplace eigenvalues on surfaces. In particular, we prove a converse : any minimal immersion (resp. with free boundary) into some Pseudo-Euclidean quadric corresponds to a metric which is critical for appropriate choices of combination $F$ of rescaled Laplace (resp. Steklov) eigenvalues. 
 
 In Section \ref{sec:high} we give a result for combinations of Steklov and Laplace eigenvalues in higher dimension in the same spirit as \cite{KarpukhinMetras}. 
 
In Section \ref{sec:conf}, we assume that the numerator $G$ of the Rayleigh quotient $\cR$ does not depend on the set of variations $\beta$ while the denominator $Q$ only depends linearly on it, i.e.~$ Q(\beta,\phi) = \int_M \phi^2 \beta$.  With a natural choice of scaling $\Vert \beta \Vert_{L^p}$, we obtain that critical points correspond to $L$-harmonic mappings into quadrics, where $L$ is the operator naturally associated with $G$. For instance, $G(\phi) = \int_M \left\vert \nabla \phi \right\vert^2$ gives harmonic maps, $G(\phi) = \int_M \left\vert \Delta \phi \right\vert^2$ gives bi-harmonic maps, etc.  Thanks to this simple general result, we characterize the critical metrics in a conformal class of numerous conformally covariant operators,  like the conformal Laplacian,  the Paneitz operator, and more generally the GJMS operators of any order. 

In Section \ref{sec:conformal}, we exploit the explicit expression of the conformal Laplacian to investigate the properties of critical points in the whole set of Riemannian metrics for any combination of normalized eigenvalues.

Lastly, Section \ref{sec:mixed} provides an illustration of the flexibility of our approach, as we investigate Laplace functionals depending on different boundary conditions on a surface $\Sigma$ with a non-empty boundary $\partial\Sigma$ written as a disjoint union of finitely many connected curves: $\partial \Sigma = \bigsqcup_{k=1}^l A_i$. We define $\bar{\lambda}_k^{A_i}$ as the $k$-th normalized eigenvalue with Dirichlet and Neumann boundary condition on $A_i$ and $\partial \Sigma \setminus A_i$ respectively. We also denote by $\bar{\lambda}_k^{\mathcal{N}}:= \bar{\lambda}_k^{\emptyset}$ and $\bar{\lambda}_k^{\mathcal{D}}:= \lambda_k^{\partial\Sigma}$ the normalized Neumann and Dirichlet $k$-th non-zero eigenvalue. Then a critical metric for a combination of $\bar{\lambda}_1^{A_i}$ for $i=1,\cdots,l$ and of $\bar{\lambda}_1^{\mathcal{N}},\cdots,\bar{\lambda}_k^{\mathcal{N}}$ gives a proper free boundary minimal immersion into a $2^{-i}$-th piece of an ellipsoid.  Our Theorem \ref{th:main8} unifies Hersch's celebrated ``four properties'' \cite{hersch} seen as minimization problems into one single criticality property:
\begin{itemize}
\item If $g$ is a metric on a sphere $\mathbb{S}^2$, then
$$ \frac{1}{\bar{\lambda}_1(g)} + \frac{1}{\bar{\lambda}_2(g)} + \frac{1}{\bar{\lambda}_3(g)} \geq \frac{3}{8\pi} $$
with equality if and only if $(\mathbb{S}^2,g)$ is isometric to a round sphere $\mathbb{S}^2$.  This corresponds to the minimal immersions into ellipsoids associated to critical metrics of Laplace eigenvalues in the closed case (first observed in \cite{PetridesEllipsoids}).
\item If $g$ is a metric on a disk $\mathbb{D}$, then
$$ \frac{1}{\bar{\lambda}_1^{\mathcal{D}}(g)} + \frac{1}{\bar{\lambda}_1^{\mathcal{N}}(g)} + \frac{1}{\bar{\lambda}_2^{\mathcal{N}}(g)} \geq \frac{3}{4\pi} $$
with equality if and only if $(\mathbb{D},g)$ is isometric to a half-sphere $\mathbb{S}^2_+$. This corresponds to free boundary minimal immersions into half ellipsoids associated to critical metrics for combinations of $\bar{\lambda}_1^{\mathcal{D}}$ and $\bar{\lambda}_1^{\mathcal{N}},\cdots,\bar{\lambda}_k^{\mathcal{N}}$ (with $k=2$).
\item If $g$ is a metric on a disk $\mathbb{D}$ and $\partial\mathbb{D} = A_1 \sqcup A_2$, then
$$ \frac{1}{\bar{\lambda}_1^{A_1}(g)} + \frac{1}{\bar{\lambda}_1^{A_2}(g)} + \frac{1}{\bar{\lambda}_1^{\mathcal{N}}(g)} \geq \frac{3}{2\pi} $$
with equality if and only if $(\mathbb{D},g)$ is isometric to a quadrant of the unit sphere.  This corresponds to free boundary minimal immersions into quadrants of ellipsoids associated to critical metrics for combinations of $\bar{\lambda}_1^{A_1},\bar{\lambda}_1^{A_2}$ and $\bar{\lambda}_1^{\mathcal{N}},\cdots,\bar{\lambda}_k^{\mathcal{N}}$ (with $k=1$).
\item If $g$ is a metric on a disk $\mathbb{D}$ and $\partial\mathbb{D} = A_1 \sqcup A_2 \sqcup A_3$ then
$$ \frac{1}{\bar{\lambda}_1^{A_1}(g)} + \frac{1}{\bar{\lambda}_1^{A_2}(g)} + \frac{1}{\bar{\lambda}_1^{A_3}(g)} \geq \frac{3}{\pi} $$
with equality if and only if $(\mathbb{D},g)$ is isometric to an octant of the unit sphere. This corresponds to free boundary minimal immersions into octants of ellipsoids associated to critical metrics for combinations of $\bar{\lambda}_1^{A_1},\bar{\lambda}_1^{A_2}, \bar{\lambda}_1^{A_3}$ and $\bar{\lambda}_1^{\mathcal{N}},\cdots,\bar{\lambda}_k^{\mathcal{N}}$ (with $k=0$).
\end{itemize}

\subsubsection*{Acknowledgments} Both authors are grateful to Asma Hassannezhad and Bruno Premoselli for their interest in this work. They would also like to thank the anonymous referee for the suggestions for  improvement. The second author was supported by Laboratoire de Mathématiques Jean Leray via the project Centre Henri Lebesgue, by Fédération de recherche Mathématiques de Pays de Loire via the project Ambition Lebesgue Loire, and by the Research Foundation – Flanders (FWO) via the Odysseus programme Geometric and analytic properties of metric measure spaces with spectral curvature constraints, with applications to manifold learning (G0DBZ23N).

\section{Preliminaries}\label{sec:prelim}

The vector spaces considered in this paper are all over the field of real numbers.  Whenever we use a letter, say $X$, to refer to a vector space,  we underline this letter to denote the set obtained upon removing the zero vector:\[\uX\coloneqq X \backslash \{0\}.\] 
We set $\setN^* \df \setN \backslash \{0\}$ and $\overline{\setR} \df \setR \bigcup \{\pm \infty\}$.  We use the following convention on the notations $\pm$, $\mp$ and $\le_\pm$:
\begin{itemize}
\item If $\pm$ appears in a formula, it means that the formula holds with all $\pm$ replaced by $+$ and with all $\pm$ replaced by $-$. 
\item If $\pm$ and $\mp$ appears in a formula, then $\mp$ must be read as $-$ if $\pm$ is $+$ and as $+$ if $\pm$ is $-$. 
\item If $\pm$ and $\le_\pm$ appears in a formula,  then $\le_\pm$ must be read as $\le$ when $\pm$ is $+$ and as $\ge$ when $\pm$ is $-$.
\end{itemize}

If $M$ is a smooth manifold, then we write $\mathcal{R}(M)$ for the space of smooth Riemannian metrics on $M$. For any integer $m\ge 2$,  we let $\mathcal{R}^m(M)$ (resp.~$\mathcal{S}^m(M)$) be the space of Riemannian metrics (resp.~of symmetric $(2,0)$ tensors) of class $\cC^m$  on $M$.  

Assume that $M$ is compact and connected. Then any $g,g' \in \cR^2(M)$ are such that $c^{-1}g \le g' \le cg$ for some $c \ge 1$.  This has the following consequences for section and function spaces on $M$.  The usual $\cC^m$ norms on $\cS^m(M)$ (resp.~$\cC^m(M)$), which depend on the choice of a Riemannian metric,  are all equivalent one to another, and the space $\cS^m(M)$ (resp.~$\cC^m(M)$) endowed with any such a norm is a Banach space of which $\cR^m(M)$ (resp.~$\cC_{>0}^m(M)$) is an open subset.   Moreover,  neither the space $L^2(M)$ of square integrable (equivalent classes of) functions nor the Sobolev space $H^1(M)$ of square integrable weakly differentiable (equivalent classes of)  functions with square integrable differential norm depends on the choice of a Riemannian metric on $M$.  For any $g \in \mathcal{R}^2(M)$, the associated $L^2$ norm of $u \in L^2(M)$ is defined as
\[
\|u\|_{L^2(g)} \df \left(\int_M u^2 \, \di v_g\right)^{1/2}
\]
and the $H^1$ norm of $v \in H^1(M)$ is 
\[
\|v\|_{H^1(g)} \df \left(\|v\|_{L^2(g)} + \||dv|_g\|_{L^2(g)} \right)^{1/2}.
\]
For any $g \in \cR^m(M)$ and $h,h' \in \cS^m(M)$, we let $\langle h,h'\rangle_g$ denote the $\cC^m$ function expressed as $h_{ik} h'_{\ell m} g^{il} g^{km}$ in any local coordinate system.  Recall that the conformal class of $g \in \cR(M)$ is defined as
\[
[g] \df \{f g \, : \,  f \in \cC_{>0}^\infty(M)\}.
\]

\subsection{Linear algebra} Let us consider a vector space $X$ that we keep fixed until the end of this subsection.  For any $E \subset X$, we let $\conv  E$ be the convex hull of $E$. We recall that Minkowski linear combination behaves well with respect to taking convex hulls, that is to say, for any $E,E' \subset X$ and $\lambda \in \setR$,
\begin{equation}\label{eq:Minkowski}
\conv ( E + \lambda E' ) = \conv E + \lambda \conv  E'.
\end{equation}
For any positive integer $k$, we let $\Gr_{k}(X)$ be the $k$-th grassmannian of $X$ which is the space of all the $k$-dimensional linear subspaces of $X$.  We adopt the convention that $\Gr_0(X)=\{0\}$. 

Let $A$ be a map from $X$ to another vector space $Y$.  We say that $A$ is quadratic if the map $B : X \times X \ni (u,v) \mapsto (A(u+v)-A(u)-A(v))/2$ is bilinear and $A(\lambda u)=\lambda^2 A(u)$ for any $u \in X$ and $\lambda \in \setR$.  We say that a family $(u_k)$ of elements in $X$ is $A$-orthogonal if $B(u_k,u_{k'})=0$ for any $k\neq k'$. We say that $Q:X \to [0,+\infty]$ is a non-negative definite quadratic form if its domain $\cD(Q) = \{ Q < +\infty\}$ is a linear subspace of $X$ and if the restriction of $Q$ to $\cD(Q)$ is a quadratic map. We also say that a family $(u_k)$ of elements in $E$ is $Q$-orthonormal if it is $Q$-orthogonal and such that $Q(u_k)=1$ for any $k$.\\

Assume that $X$ is a Banach space with norm $\|\cdot\|_X$.  For any $x \in X$ and $\eps >0$ we let $B_X(x,\eps) \df \{y \in X  \, : \, \|x-y\|_X < \eps\}$ be the open ball centered at $x$ with radius $\eps$. For any subspace $E\subset X$, we set \[S(E) \coloneqq \{u \in E : \|u\|_X = 1\}.\]
We let $X^*$ be the topological dual of $X$. We denote by $\langle \cdot, \cdot \rangle : X^* \times X \to \setR$ the duality pairing and by $\|\cdot\|_{X^*}$ the dual norm defined by
\[
\|\zeta\|_{X^*} \df \sup_{u \in S(X)} \langle \zeta, u \rangle
\]
for any $\zeta \in X^*$. We recall that this norm defines the strong topology of $X^*$. We say that a quadratic form $Q$ on $X$ is densely defined if its domain $\cD(Q)$ is dense in $X$. For any subspace $E\subset X$,  we set
 \[S_Q(E)\coloneqq \{u \in E : Q(u) = 1\}.\]
If $\cD(Q)=X$ and $Q$ is positive definite, then the associated bilinear form $B$ is a scalar product on $X$. In particular,  $Q^{1/2}$ defines a norm on $X$. We say that a linear subspace $E\subset X$ is $Q^{1/2}$-dense in $X$ if for any $x \in X$ there exists a sequence $\{x_i\} \subset E$ such that $Q^{1/2}(x-x_i) \to 0$. We let $\|\cdot\|_{Q^{*}}$ be the norm on $X^*$ defined by
\[
\|\zeta\|_{Q^{*}} \df \sup_{u \in S_Q(X)} \langle \zeta , u \rangle
\]
for any $\zeta \in X^*$.\\  

Assume now that $X$ is a Hilbert space.  We denote by $\langle \cdot, \cdot \rangle_X$  the scalar product associated with $\|\cdot \|_X$, by $\cO(X)$ the set of orthogonal bases of $X$, and by $\bO(X)$ the set of orthonormal bases.  By $\bO(\setR^m)$ we mean the set of orthonormal bases with respect to the classical Euclidean scalar product. We also denote by $\cM_p(\setR)$ the space of $p\times p$ matrices with real entries.  If $A \in \cM_p(\setR)$ then $A^T$ denotes its transpose. If $d=(d_1,\ldots, d_m) \in \R^m$, we let $\Diag(d)$ be the diagonal matrix with $i$-th diagonal entry equal to $d_i$.  We use the notation $\mathfrak{S}_m$ to refer to the group of permutations of $\{1,\ldots,m\}$. If $\sigma \in \mathfrak{S}_m$ then we set $d_\sigma = (d_{\sigma(1)},\ldots,d_{\sigma(m)})$.  Lastly, we set
\begin{equation}\label{eq:mix}
\Mix(d) \df \conv\{ d_\sigma : \sigma \in \mathfrak{S}_m\} .
\end{equation}

The following elementary lemma is crucial for our work.

\begin{lemma}[Mixing lemma]\label{lem:mixing} Let $F$ be a Euclidean vector space of dimension $m$ and $A$ a quadratic map from $F$ to another vector space.  Then for any $d=(d_1,\ldots, d_m) \in \R^m$, 
\begin{align}\label{eq:mixing}
&  \phantom{=} \phantom{=} \mathrm{co} \left\{ \sum_{k=1}^{m} d_k A(u_k): (u_k) \in \bO(F)\right\} \\
& = \left\{ \sum_{\ell=1}^m \tilde{d}_\ell A(u_\ell) : (u_\ell) \in \bO(F) \text{ and } \tilde{d} \in  \Mix(d) \right\} \cdot
\end{align}
\end{lemma}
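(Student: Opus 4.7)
The plan is to reduce the claimed identity to an equivalent one between two subsets of $\mathrm{Sym}_m(\R)$ and then to apply classical majorization theory.

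Fixing an orthonormal basis $(e_1,\ldots,e_m)$ of $F$, denote by $B$ the symmetric polar form of $A$ (so $A(u)=B(u,u)$), and set $M_{jk}\coloneqq B(e_j,e_k)\in Y$. For any $(u_i)\in\bO(F)$, let $P\in O(m)$ be the matrix with $u_i=\sum_j P_{ji}e_j$; direct expansion then gives
\[
\sum_{i=1}^m d_iA(u_i)\;=\;\sum_{j,k=1}^m (PDP^T)_{jk}M_{jk}\;=\;\Phi(PDP^T),
\]
where $D=\Diag(d)$ and $\Phi:\mathrm{Sym}_m(\R)\to Y$ is the \emph{linear} map $S\mapsto\sum_{j,k}S_{jk}M_{jk}$. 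Similarly, $\sum_\ell\tilde d_\ell A(u_\ell)=\Phi(Q\tilde DQ^T)$ for $\tilde D=\Diag(\tilde d)$ and $Q\in O(m)$ the appropriate change-of-basis matrix.

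By linearity of $\Phi$, the left- and right-hand sides of \eqref{eq:mixing} are respectively $\Phi(\conv\cO_d)$ and $\Phi(\cE_d)$, where $\cO_d\coloneqq\{PDP^T:P\in O(m)\}$ and $\cE_d\coloneqq\{S\in\mathrm{Sym}_m(\R):\mathrm{spec}(S)\in\Mix(d)\}$ (the condition is unambiguous because $\Mix(d)$ is permutation-invariant by construction). It will therefore suffice to establish the matricial identity $\conv\cO_d=\cE_d$. For $\cE_d\subseteq\conv\cO_d$, I would write the spectrum of $S\in\cE_d$ as $\tilde d=\sum_i\alpha_id_{\sigma_i}$ and diagonalize $S=Q\Diag(\tilde d)Q^T$; since $\Diag(d_\sigma)=\Pi_\sigma^TD\Pi_\sigma$ for the permutation matrix $\Pi_\sigma$, this yields the explicit decomposition $S=\sum_i\alpha_i(Q\Pi_{\sigma_i}^T)D(Q\Pi_{\sigma_i}^T)^T\in\conv\cO_d$.

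The reverse inclusion $\conv\cO_d\subseteq\cE_d$ is the crux. Here I would invoke Ky Fan's variational formula
\[
\sum_{j=1}^k\alpha_j^\downarrow(A)=\max\{\tr(AR):R\in\mathrm{Sym}_m(\R),\ R^2=R,\ \mathrm{rank}\,R=k\},
\]
which exhibits $A\mapsto\sum_{j=1}^k\alpha_j^\downarrow(A)$ as a supremum of linear functionals, hence a convex function. Applied to $S=\sum_i\alpha_iP_iDP_i^T$, whose summands each have spectrum $d$, this gives $\sum_{j=1}^k\mu_j^\downarrow(S)\le\sum_{j=1}^k d_j^\downarrow$ for every $k$, with equality at $k=m$ by linearity of the trace. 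Thus $\mathrm{spec}(S)$ is majorized by $d$, which by Rado's theorem (equivalently Hardy--Littlewood--P\'olya) is exactly the condition $\mathrm{spec}(S)\in\Mix(d)$, i.e.\ $S\in\cE_d$.

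The main obstacle is this last step, resting on two classical but non-trivial inputs: Ky Fan's convex representation of partial eigenvalue sums, and the identification via Rado's theorem of $\Mix(d)$ with the majorization polytope. Once the matrix reformulation is in place, everything else is essentially formal bookkeeping.
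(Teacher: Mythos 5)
Your proof is correct, but it takes a genuinely different route from the paper's. You reduce the lemma, via the linear map $\Phi : S \mapsto \sum_{j,k} S_{jk} B(e_j,e_k)$, to the clean matrix identity $\conv\{P\Diag(d)P^T : P \in O(m)\} = \{S \in \mathrm{Sym}_m(\R) : \mathrm{spec}(S) \in \Mix(d)\}$, and you handle the nontrivial inclusion by two named classical results: Ky Fan's extremal characterization of partial eigenvalue sums (giving majorization of $\mathrm{spec}(S)$ by $d$ via convexity of $S\mapsto\sum_{j\le k}\mu_j^\downarrow(S)$), and Rado's theorem (identifying the majorization set with $\Mix(d)$). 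The paper never isolates a matrix statement: it applies the spectral theorem directly to the convex combination $\sum_\alpha t_\alpha O^\alpha\,\Diag(d)\,(O^\alpha)^T$, reads off the resulting eigenvalue vector as $\tilde d = Qd$ with $Q = \sum_\alpha t_\alpha \bigl((P^\alpha_{ij})^2\bigr)_{ij}$ doubly stochastic (because squaring the entries of an orthogonal matrix yields a doubly stochastic one), and then invokes only Birkhoff's theorem, finishing with an explicit expansion of $v$ in the new basis. The two arguments are morally equivalent --- Rado's theorem is essentially Birkhoff plus the Hardy--Littlewood--P\'olya correspondence between majorization and doubly stochastic action --- but the paper's version is slightly more self-contained (one classical ingredient and otherwise bare hands), whereas yours is more modular and makes the Schur--Horn-type content of the lemma explicit. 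Both routes produce the orthonormal family $(u_\ell)$ by diagonalization, so nothing needed downstream in the paper is lost.
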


\begin{proof} We need only proving the direct inclusion because the converse is trivial. Consider
\[
v = \sum_{\alpha} t_\alpha \sum_{k=1}^m d_k A(u_k^\alpha) \in \mathrm{co} \left\{ \sum_{k=1}^{m} d_k A(u_k): (u_k) \in \ \bO(F) \right\}
\]
and $(e_1,\ldots,e_m) \in \bO(F)$. For any $\alpha$ let $O^\alpha \in \bO(\setR^m)$ be such that
\begin{equation}\label{eq:1}
u_k^\alpha= \sum_{j=1}^m O^\alpha_{jk} e_j
\end{equation}
for any $k \in \{1,\ldots,m\}$.  The spectral theorem applied to the symmetric matrix 
\[\sum_{\alpha} t_\alpha O^\alpha\, \mathrm{Diag}(d)\, (O^\alpha)^T\]
implies that there exist $O \in \bO(\setR^m)$ and $\tilde{d} \in \mathbb{R}^m$ such that
\[
\Diag(\tilde{d})  = \sum_{\alpha} t_\alpha P^\alpha\, \mathrm{Diag}(d)\, (P^\alpha)^T
\]
where we have set $P^\alpha \df O^T O^\alpha$ for any $\alpha$.  Then
\begin{equation}\label{eq:3}
\tilde{d}_j = \sum_\alpha t_\alpha \sum_{k=1}^m d_k (P_{jk}^\alpha)^2
\end{equation}
for any $j \in \{1,\ldots,m\}$. For any $\alpha$, let $Q^\alpha$ be the matrix whose $(i,j)$-th entry is $(P_{i,j}^{\alpha})^2$. Then
\[
\tilde{d} = Q d
\]
with $Q \df \sum t_\alpha Q^{\alpha}$. As a convex combination of doubly stochastic matrices, $Q$ is doubly stochastic, hence $\tilde{d} \in \conv\{ d_\sigma : \sigma \in \mathfrak{S}_m\}$ by Birkhoff's theorem (see e.g.~\cite[Theorem 1]{Horn}). Moreover,  for any $\ell,\ell' \in \{1,\ldots,m\}$ such that $\ell \neq \ell'$,
\begin{equation}\label{eq:4}
0 = \sum_\alpha t_\alpha \sum_{k=1}^m d_k P_{\ell k}^\alpha P_{\ell' k}^\alpha.
\end{equation}
Let $(u_1,\ldots,u_m) \in \bO(F)$ be defined by
\[
u_\ell = \sum_{j=1}^m O_{j \ell} e_j
\]
for any $\ell \in \{1,\ldots,m\}$.  For any $\alpha$, since $OP^\alpha = O^\alpha$, we know that
\begin{equation}\label{eq:2}
O^\alpha_{jk} = \sum_{\ell = 1}^m O_{j \ell} P^\alpha_{\ell k}
\end{equation}
for any $j,k \in \{1,\ldots,m\}$. Then we have the following, where $B$ is the bilinear map associated to $A$:
\begin{align*}
v & = \sum_{\alpha} t_\alpha \sum_{k=1}^m d_k \, A(u_k^\alpha) & \\
& = \sum_{\alpha} t_\alpha \sum_{k=1}^m d_k \, A \left(  \sum_{j=1}^m O^\alpha_{jk} e_j \right) & \text{by \eqref{eq:1}}\\
& = \sum_{\alpha} t_\alpha \sum_{k=1}^m d_k \, A \left(  \sum_{j,\ell =1}^m O_{j \ell} P^\alpha_{\ell k} e_j \right) & \text{by \eqref{eq:2}} \\
& = \sum_{\alpha} t_\alpha \sum_{k=1}^m d_k \, B \left(  \sum_{j,\ell =1}^m O_{j \ell} P^\alpha_{\ell k} e_j,  \sum_{j',\ell' =1}^m O_{j' \ell'} P^\alpha_{\ell' k} e_{j'}\right) \\
& = \sum_{\alpha} t_\alpha \sum_{k=1}^m d_k \sum_{j,j', \ell,\ell'=1}^m P_{\ell k}^\alpha P_{\ell' k}^\alpha O_{j\ell} O_{j'\ell'}B (e_j, e_{j'})\\
& = \sum_{j,j', \ell,\ell'=1}^m \left( \sum_{\alpha} t_\alpha \sum_{k=1}^m d_kP_{\ell k}^\alpha P_{\ell' k}^\alpha \right)O_{j\ell} O_{j'\ell'}B (e_j, e_{j'})\\
& = \sum_{j,j', \ell = 1}^m \tilde{d}_\ell O_{j\ell}O_{j\ell'} B (e_j, e_{j'}) & \text{by \eqref{eq:3} and \eqref{eq:4}}\\
& = \sum_{\ell = 1}^m \tilde{d}_\ell B\left( \sum_{j=1}^m O_{j \ell} e_j, \sum_{j'=1}^m O_{j' \ell} e_j\right)\\
& = \sum_{\ell = 1}^m \tilde{d}_\ell A(u_\ell).
\end{align*}
\end{proof}

\subsection{Non-smooth analysis}

Let us provide some elements from non-smooth analysis.  Our presentation is inspired by \cite{Clarke}.  We consider a Banach space $X$, an open set $\Omega \subset X$ and a locally Lipschitz function \[\mathfrak{F} : \Omega \to \setR.\] We recall that the local Lipschitz constant of $\fk$ at $x \in \Omega$ is defined as
\[
\mathrm{Lip}_x(\fk) \df \limsup_{x' \to x} \frac{|\fk(x)-\fk(x')|}{\|x'-x\|_X} \, \cdot
\]

\subsubsection{Fréchet differentiability} We begin with recalling the following classical definition.

\begin{D}
We say that $\mathfrak{F}$ is Fréchet differentiable at $x \in \Omega$ if there exists $\mathfrak{F}_x(x) \in X^*$ such that
\[
\lim\limits_{h \to 0} \frac{\mathfrak{F}(x+h)-\mathfrak{F}(x) - \langle \mathfrak{F}_x(x), h\rangle }{\|h\|_{X}}  = 0.
\]
We say that $\mathfrak{F}$ is $\mathcal{C}^1$-Fréchet on an open set $V \subset \Omega$ if it is Fréchet differentiable at any $x \in V$ and if the restriction of the map $\mathfrak{F}'$ to $V$ is continuous with respect to the strong topology of $X^*$.
\end{D}

If $\mathfrak{F}$ is $\mathcal{C}^1$-Fréchet in a neighborhood $V$ of a point $x \in \Omega$, then the following version of the fundamental theorem of calculus holds: 
\begin{align}\label{eq:DL1}
\mathfrak{F}(x+th) & = \mathfrak{F}(x)  + t \int_0^1 \langle \mathfrak{F}_x(x + sth), h\rangle \di s
\end{align}
for any $x\in V$, $h \in X$ and $t>0$ such that $x+th \in V$, and
\begin{align}\label{eq:reste_integral}
\int_0^1 \langle \mathfrak{F}_x(y_t + sth), h\rangle \di s =   \langle \mathfrak{F}_x(x), h\rangle  + o(1)
\end{align}
as $t \downarrow 0$,  for any $\{y_t\}_{t>0} \subset V$ such that $y_t \to x$ as $t \downarrow 0$.

\begin{rem}
If $E$ is a vector space and $\mathcal{R} : \Omega \times E \to \setR$ is such that for any $u \in E$, the map $\mathcal{R}(\cdot,u) : \Omega \to \setR$ is Fréchet différentiable at any point in $\Omega$, then we use a slight abuse of notation by letting
\[
\mathcal{R}_x :  \Omega \times E \to X^*
\]
be such that $\mathcal{R}_x(x,u)$ denotes the Fréchet derivative of $\mathcal{R}(\cdot,u)$ at $x$.
\end{rem}

\subsubsection{Directional derivatives and classical sub/super differentials}\label{subsubsec:sub/super} The next definition is classical in convex analysis.  Note that here, however, we do not assume any convexity on $\mathfrak{F}$.

\begin{D}
We say that $\mathfrak{F}$ has a right directional derivative denoted $\mathfrak{F}_{r}'(x \, ;h)$ at $x \in \Omega$ in the direction $h \in X$ if the following limit exists
\[
\mathfrak{F}_r'(x;h) := \lim\limits_{t \downarrow 0} \frac{\mathfrak{F}(x+th)-\mathfrak{F}(x)}{t} \, \cdot
\]
Likewise, we say that $\mathfrak{F}$ has a left directional derivative  at $x$ in the direction $h$ if
\[
\mathfrak{F}_\ell'(x;h) := \lim\limits_{t \uparrow 0} \frac{\mathfrak{F}(x+th)-\mathfrak{F}(x)}{t}
\]
exists.  If both $\mathfrak{F}_\ell'(x;h)$ and $\mathfrak{F}_r'(x;h)$ exist and are equal, then we say that $\mathfrak{F}$ is directionally differentiable at $x$ in the direction $h$.
\end{D}

\begin{rems}
\hfill

\begin{enumerate}
\item A simple change of variable $s=-t$ shows that $\mathfrak{F}_\ell'(x;h)$ exists if and only if $\mathfrak{F}_r'(x;-h)$ does, in which case
\begin{equation}\label{eq:left_and_right}
\mathfrak{F}_\ell'(x;h) = - \mathfrak{F}_r'(x;-h).
\end{equation}

\item For any $x \in \Omega$ and $h \in X$, if $\mathfrak{F}$ admits a right/left derivative at $x$ in the direction $h$, then
\[
 \left| \mathfrak{F}_r'(x;h) \right| \le \Lip_x(\fk) \quad \text{and} \quad  \left| \mathfrak{F}_\ell'(x;h) \right| \le \Lip_x(\fk).
\]

\end{enumerate}
\end{rems}

From the notion of right-directional derivative, one can define the sub/superdifferential, as ensured by the following elementary lemma of which we provide a proof for the reader's convenience.

\begin{lemma}\label{lem:elem}
For any bounded $\phi : X \to \mathbb{R}$, the sets $$D^\pm_\phi \coloneqq \{ \zeta \in X^* \, : \, \langle \zeta,h \rangle \le_\pm \phi(h) \, \, \text{for any $h \in X$}\}$$ are convex and weak* compact. Moreover, if $D^\pm_\phi$ is non-empty, then $\pm\phi$ is its support function, that is to say, 
\[
\pm \phi(h) = \sup_{\zeta \in D^\pm_\phi} \langle \zeta, h \rangle
\]
for any $h \in X$; as a conseqeunce, $D^\pm_\phi$ is uniquely determined by $\phi$.
\end{lemma}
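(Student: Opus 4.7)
\emph{Proof plan.} I would proceed in three steps: (i) convexity, (ii) weak*-compactness, and (iii) the support-function identification, from which uniqueness follows automatically. Throughout, the right reading of ``bounded $\phi$'' seems to be $|\phi(h)| \le C\|h\|_X$ for some $C>0$ — the only sense consistent with the intended application to Clarke generalized derivatives $\mathfrak{F}^\circ_\pm(x\, ; \cdot)$. For convexity, I observe that for $\zeta_1,\zeta_2 \in D^+_\phi$ and $t \in [0,1]$, linearity of the pairing yields
\[
\langle t\zeta_1+(1-t)\zeta_2,h\rangle = t\langle\zeta_1,h\rangle+(1-t)\langle\zeta_2,h\rangle \le t\phi(h)+(1-t)\phi(h) = \phi(h)
\]
for every $h \in X$; the $-$ case is analogous.

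For weak*-compactness, I note that for each $h \in X$ the set $\{\zeta \in X^* : \langle\zeta,h\rangle \le \phi(h)\}$ is weak*-closed, being the preimage of a closed half-line under the weak*-continuous evaluation $\zeta \mapsto \langle\zeta,h\rangle$; intersecting over $h \in X$ gives weak*-closedness of $D^+_\phi$. The growth bound on $\phi$ provides, for any $\zeta \in D^+_\phi$,
\[
\|\zeta\|_{X^*} = \sup_{h \in S(X)} \langle\zeta,h\rangle \le \sup_{h \in S(X)} \phi(h) \le C,
\]
so that Banach--Alaoglu yields weak*-compactness; the $-$ case is identical.

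For the support-function identification, the inequality $\sup_{\zeta \in D^+_\phi} \langle\zeta,h\rangle \le \phi(h)$ is built into the definition of $D^+_\phi$; the converse is the heart of the matter and would rest on Hahn--Banach. Relying on $\phi$ being sublinear — positively $1$-homogeneous and subadditive, as holds automatically in the intended Clarke setting — given $h_0 \in X$ I would define $\ell : \mathbb{R} h_0 \to \mathbb{R}$ by $\ell(\lambda h_0) \coloneqq \lambda\phi(h_0)$, verify $\ell \le \phi$ on $\mathbb{R} h_0$ (using $\phi(-h_0) \ge -\phi(h_0)$, itself a consequence of $0 = \phi(0) \le \phi(h_0)+\phi(-h_0)$), and extend $\ell$ by Hahn--Banach to $\zeta_0 \in X^*$ with $\zeta_0 \le \phi$ pointwise and $\langle\zeta_0,h_0\rangle = \phi(h_0)$; then $\zeta_0 \in D^+_\phi$ realizes the supremum at $h_0$. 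The $-$ case follows by applying the same argument to $-\phi$. Uniqueness of $D^\pm_\phi$ from $\phi$ is then automatic, since any weak*-closed convex subset of $X^*$ is recovered from its support function via a standard weak* Hahn--Banach separation argument. The main obstacle is precisely this Hahn--Banach step and its reliance on the sublinearity of $\phi$ — a property not explicit in the statement but inherited from the Clarke setting; without it, the support function of $D^+_\phi$ would generally be only the sublinear lower envelope of $\phi$, not $\phi$ itself.
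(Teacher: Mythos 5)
Your proof is correct and, for the convexity and weak*-compactness parts, follows the paper's argument (closedness via evaluation functionals and boundedness plus Banach--Alaoglu). Two observations you make are genuinely useful. First, you read ``bounded'' as a linear growth bound $|\phi(h)| \le C\|h\|_X$, i.e.\ boundedness on the unit sphere, whereas the paper's proof literally estimates $\|\zeta\|_{X^*}$ by $\sup_X |\phi|$. Since every intended $\phi$ (a classical or Clarke directional derivative in $h$) is positively $1$-homogeneous, $\sup_X |\phi|$ is $+\infty$ unless $\phi\equiv 0$; your reading is the one under which the Banach--Alaoglu step actually closes, and it quietly corrects a small slip in the paper's argument. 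Second, for the support-function identity the paper simply cites \cite[Proposition 1.3 in Chapter 2]{Clarke}, whereas you prove it: extending $\ell(\lambda h_0)=\lambda\phi(h_0)$ from $\setR h_0$ subject to domination by $\phi$, and invoking Hahn--Banach, is exactly the content of that citation. You are right to flag that this step uses sublinearity (positive homogeneity and subadditivity), which is not among the lemma's hypotheses and is not a consequence of ``$D^\pm_\phi$ nonempty'': on $X=\setR$ with $\phi(h)=2|h|+1$ one has $D^+_\phi=[-2,2]\neq\emptyset$ yet its support function is $2|h|\neq\phi$. In the paper's actual uses of the support-function and uniqueness clauses (the equality cases in Propositions~\ref{prop:collectionClarke} and~\ref{prop:scal_Clarke}), $\phi$ is $\fk^\circ_+(x;\cdot)$ or $-\fk^\circ_-(x;\cdot)$, which are sublinear, so the gap is harmless there --- but as a statement about arbitrary bounded $\phi$ the lemma is over-broad, and your Hahn--Banach discussion pinpoints exactly where the missing hypothesis enters.
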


\begin{proof}
We prove the result only for $\pm=+$. The other case is obtained in a similar way.  Convexity is a simple consequence of linearity of the duality pairing. Indeed, consider $\zeta, \zeta' \in D_\phi$ and $t \in [0,1]$.  Then for any $h \in X$, 
\[
\langle t \,\zeta + (1-t) \, \zeta',h \rangle = t \langle \zeta,h \rangle + (1-t)\langle \zeta',h \rangle \le  t \phi(h) + (1-t)\phi(h) = \phi(h).
\]
Let us prove that $D_\phi$ is weak* closedness. If $\zeta_n \stackrel{*}{\rightharpoonup} \zeta$ for $(\zeta_n) \subset D_\phi$ and $\zeta \in X$, then $\phi(h) \ge \langle \zeta_n , h \rangle \to \langle \zeta , h \rangle$ for any $h \in X$, so that $\zeta \in D^+_\phi$. Lastly, if $\phi$ is bounded, then  $\langle \zeta, h \rangle \le \phi(h) \le \sup_X |\phi|$ and
$- \langle \zeta, h \rangle = \langle \zeta,  -h \rangle  \le \phi(-h) \le \sup_X |\phi|$ for any $\zeta \in D_\phi^+$, so that
\[
\|\zeta\|_{X^*} = \sup_{h \in SX} |\langle \zeta, h \rangle| \le \sup_X |\phi|.
\]
Thus $D^+_\phi$ is bounded in the strong topology of $X^*$, hence it is weakly* compact by the Banach--Alaoglu--Bourbaki theorem. The last statement follows from \cite[Proposition 1.3 in Chapter 2]{Clarke}. 
\end{proof}

Applying the previous lemma to a right-directional derivative function $\fk_r'(x,\cdot)$, we can define the following.

\begin{D}
If $\mathfrak{F}$ admits a right directional derivative at $x$ in any direction $h$, then the subdifferential of $\mathfrak{F}$ at $x$ is
\[
\partial^- \mathfrak{F} (x) \coloneqq \{ \zeta \in X^* \, : \, \langle \zeta,h \rangle \le \mathfrak{F}_r'(x,h) \, \, \text{for any $h \in X$}\}
\]
and the superdifferential of $\mathfrak{F}$ at $x$ is
\[
\partial^+ \mathfrak{F} (x) \coloneqq \{ \zeta \in X^* \, : \, \langle \zeta,h \rangle \ge \mathfrak{F}_r'(x,h) \, \, \text{for any $h \in X$}\}.
\]
\end{D}

\begin{rem}
Even if $\mathfrak{F}$ admits a right directional derivative at $x$ in any direction $h$, it may happen that $\partial^- \mathfrak{F} (x)$ and $\partial^+ \mathfrak{F} (x)$ are both empty: we provide an example in Proposition \ref{remempty}.
\end{rem}

The following proposition collects several properties of the right directional derivative and the corresponding sub/superdifferential. 

\begin{prop}\label{prop:collection}
Assume that $\fk$ admits a right-directional derivative at $x$ in any direction $h$.

\begin{enumerate}
\item If $\mathfrak{F}$ is $\mathcal{C}^1$-Fréchet on a neighborhood of $x$, then
\begin{equation}\label{eq:direct_Fréchet_rule}
\fk'_r(x,h) = \langle \mathfrak{F}_x(x), h \rangle
\end{equation}
for any $h \in X$, and
\begin{equation}\label{eq:Fréchet_rule}
\partial^\pm \mathfrak{F}(x) = \{\mathfrak{F}_x(x)\}.
\end{equation}

\item For any $\lambda\in \setR$,
\[
[\lambda \fk]'_r(x,h) = \lambda \fk'_r(x,h) 
\]
for any $h \in X$, and:
\begin{align*}\label{eq:homo_rule}
\lambda >0 & \quad \Rightarrow \quad \partial^{\pm} [\lambda \mathfrak{F}](x) = \lambda \partial^{\pm} \mathfrak{F}(x)\\
 \lambda < 0 & \quad \Rightarrow \quad \partial^{\pm} [\lambda \mathfrak{F}](x) = \lambda \partial^{\mp} \mathfrak{F}(x).
\end{align*}

\item If $\mathfrak{G} : \Omega \to \setR$ is another locally Lipschitz function which admits a right-directional derivative at $x$ in any direction $h$, then
\begin{equation}\label{eq:direct_sum_rule}
[\mathfrak{F} + \mathfrak{G}]_r'(x,h) = \mathfrak{F}_r'(x,h) +  \mathfrak{G}_r'(x,h),
\end{equation}
\begin{equation}\label{eq:direct_product_rule}
[\mathfrak{F} \mathfrak{G}]_r'(x,h) = \mathfrak{F}(x) \mathfrak{G}_r'(x,h) +  \mathfrak{G}(x) \mathfrak{F}_r'(x,h)
\end{equation}
for any $h \in X$, and
\begin{equation}\label{eq:sum_rule}
\partial^{\pm} (\mathfrak{F} + \mathfrak{G})(x) \supset  \partial^{\pm} \mathfrak{F} (x) + \partial^{\pm} \mathfrak{G}(x),
\end{equation}
\begin{equation}\label{eq:product_rule}
\partial^{\pm} (\mathfrak{F} \mathfrak{G})(x) \supset \mathfrak{F}(x) \partial^{\pm} \mathfrak{G} (x) + \mathfrak{G}(x) \partial^{\pm} \mathfrak{F}(x).
\end{equation}

\item If $\fk = F \circ (\lambda_1,\ldots, \lambda_N)$ where $F \in \cC^1(\setR^N)$ and $\lambda_1,\ldots, \lambda_N : \Omega \to \setR$ all admit a directional derivative at $x$ in a given direction $h \in X$, then so does $\fk$, and
\begin{equation}\label{eq:direct_chain_rule}
\fk_r'(x,h) = \sum_{k=1}^N d_k(x) [\lambda_k]_r'(x,h)
\end{equation}
where
\[
d_k(x) \df \partial_k F(\lambda_1(x),\ldots, \lambda_N(x)).
\]
In addition, if $\lambda_1,\ldots, \lambda_N : \Omega \to \setR$ all admit a directional derivative at $x$ in any direction $h \in X$, then
\begin{equation}\label{eq:chain_rule}
\partial^\pm \fk(x) \supset \sum_{\substack{k=1\\d_k(x)>0}}^N d_k(x)\partial^\pm \lambda_k(x) +  \sum_{\substack{k=1\\d_k(x)<0}}^N d_k(x)\partial^\mp \lambda_k(x)
\end{equation}
\end{enumerate}
\end{prop}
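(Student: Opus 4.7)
The plan is to deduce each sub/superdifferential statement from its directional-derivative counterpart, handling the four items in sequence.

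For (1), Fréchet differentiability gives $\fk(x+th) - \fk(x) = t\langle \fk_x(x),h\rangle + o(t)$, so dividing by $t>0$ and letting $t\downarrow 0$ yields $\fk_r'(x,h)=\langle \fk_x(x),h\rangle$. Then $\fk_x(x)$ lies in both $\partial^-\fk(x)$ and $\partial^+\fk(x)$ because the defining inequalities become equalities; conversely, any $\zeta\in\partial^-\fk(x)$ satisfies $\langle \zeta-\fk_x(x),h\rangle\leq 0$ for every $h\in X$, which forces $\zeta=\fk_x(x)$ upon testing also $-h$. Part (2) is immediate from the linearity of the limit defining $\fk_r'$; the sub/superdifferential statements then follow by noting that multiplication by $\lambda>0$ preserves the defining inequality (keeping the superscript) while multiplication by $\lambda<0$ reverses it (swapping the superscript).

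For (3), the sum rule \eqref{eq:direct_sum_rule} follows from linearity of limits, while the product rule \eqref{eq:direct_product_rule} comes from the algebraic identity
\[
(\fk\mathfrak{G})(x+th)-(\fk\mathfrak{G})(x) = \fk(x+th)\bigl[\mathfrak{G}(x+th)-\mathfrak{G}(x)\bigr] + \mathfrak{G}(x)\bigl[\fk(x+th)-\fk(x)\bigr]
\]
combined with the continuity of $\fk$ at $x$ (ensured by local Lipschitzness). The inclusions \eqref{eq:sum_rule} and \eqref{eq:product_rule} then follow by adding the defining inequalities for $\partial^\pm$ (treating $\fk(x)$ and $\mathfrak{G}(x)$ as scalar coefficients) and invoking the directional formulas just established.

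For (4), the essential input is the first-order Taylor expansion of $F\in\cC^1(\setR^N)$ at $y_0\df(\lambda_1(x),\ldots,\lambda_N(x))$, namely
\[
F(y)=F(y_0)+\sum_{k=1}^N d_k(x)(y_k-y_{0,k})+o(|y-y_0|) \quad \text{as } y\to y_0.
\]
Applied at $y=(\lambda_k(x+th))_k$ and using that each $\lambda_k$ is locally Lipschitz at $x$ (so $|y-y_0|=O(t)$), division by $t$ and passage to the limit produces \eqref{eq:direct_chain_rule}. For the inclusion \eqref{eq:chain_rule}, I choose $\zeta_k\in\partial^\pm\lambda_k(x)$ when $d_k(x)>0$ and $\zeta_k\in\partial^\mp\lambda_k(x)$ when $d_k(x)<0$: in the first case, multiplying the defining inequality by $d_k(x)$ preserves its direction, and in the second it reverses it, so in both cases $d_k(x)\langle \zeta_k,h\rangle$ lies on the correct side of $d_k(x)[\lambda_k]_r'(x,h)$. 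Summing over $k$ and invoking \eqref{eq:direct_chain_rule} yields $\sum_k d_k(x)\zeta_k\in\partial^\pm\fk(x)$. The main obstacle I anticipate is purely bookkeeping, namely carefully managing the sign conventions $\pm/\mp$ in the chain rule; the analytical content reduces entirely to elementary limit manipulations combined with the local Lipschitzness of $\fk$ and each $\lambda_k$.
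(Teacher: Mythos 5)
Your proof is correct and takes essentially the same approach as the paper's: establish the directional-derivative formulas by elementary limit manipulations, then deduce each sub/superdifferential inclusion by pairing a candidate element against an arbitrary direction $h$ and invoking the corresponding directional identity (the paper writes this out explicitly only for the sum rule and declares that the product and chain rules ``follow in the same lines''; your algebraic identity for the product and the Taylor expansion of $F$ for the chain rule are exactly the ``suitable formulas'' it alludes to). One small remark: in item (4) you do not actually need local Lipschitzness of the $\lambda_k$ to control the remainder, since the assumed existence of the directional derivative already gives $\lambda_k(x+th)-\lambda_k(x)=O(t)$ as $t\downarrow 0$.
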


\begin{proof}
We do not provide a proof for (1) and (2) since they easily follow from the definitions. Let us prove (3) and (4). Observe first that \eqref{eq:direct_sum_rule} is a direct consequence of the fact that for any $t>0$ and $h \in X$,
\[
\frac{[\fk + \mathfrak{G}](x+th)-[\fk + \mathfrak{G}](x)}{t}  = \frac{\fk(x+th)-\fk(x)}{t} + \frac{\mathfrak{G}(x+th)-\mathfrak{G}(x)}{t}\, \cdot
\]
We now prove \eqref{eq:sum_rule} in the case $\pm=-$. Consider $\zeta = \zeta_1 + \zeta_2$ where $\zeta_1 \in \partial^{-}\fk(x)$ and $\zeta_2 \in \partial^{-}\mathfrak{G}(x)$. Then for any $h \in X$,
\[
\langle \zeta, h \rangle = \langle \zeta_1, h \rangle + \langle \zeta_2, h \rangle \le \fk'_r(x,h) + \mathfrak{G}'_r(x,h) = [\fk + \mathfrak{G}]'_r(x,h).
\]
This implies that $\zeta \in \partial^- [\fk + \mathfrak{G}](x)$. The case $\pm=-$ is handled in a similar way.   In the same way as \eqref{eq:direct_sum_rule}, \eqref{eq:direct_product_rule} and \eqref{eq:direct_chain_rule} are obtained by writing a suitable formula for any $t>0$ and $h \in X$ and letting $t \to 0$. Then \eqref{eq:product_rule} and \eqref{eq:chain_rule} follow from these two properties in the same lines as \eqref{eq:sum_rule}  follows from \eqref{eq:direct_sum_rule}.
\end{proof}

\subsubsection{Generalized directional derivatives and Clarke sub/super differentials}\label{subsubsec:Clarkesub/super}

In \cite{Clarke1975}, Clarke introduced the following variants of the directional derivatives.

\begin{D} The generalized upper directional derivative of $\mathfrak{F}$ at $x  \in \Omega$ in the direction $h \in X$ is defined as
\begin{equation}\label{eq:gen_diff_+}
\mathfrak{F}_+^{\circ} (x\, ;h)\coloneqq \limsup_{\substack{y \to x\\t \downarrow 0}} \frac{\mathfrak{F}(y+th)-\mathfrak{F}(y)}{t}  \,  \cdot
\end{equation}  Likewise, the generalized lower  directional derivative of $\mathfrak{F}$ at $x $ in the direction $h$ is
\begin{equation}\label{eq:gen_diff_-}
\mathfrak{F}_-^{\circ} (x\, ;h)\coloneqq \liminf_{\substack{y \to x\\t \downarrow 0}} \frac{\mathfrak{F}(y+th)-\mathfrak{F}(y)}{t} \,  \cdot
\end{equation}
\end{D}

\begin{rems}
\hfill
\begin{enumerate}
\item For any $h \in X$,  \begin{equation}\label{eq:locLip}
\mathfrak{F}_{\pm}^{\circ} (x\, ;h) \le_{\pm} \pm \mathrm{Lip}_x(\mathfrak{F})\|h\|_X.
\end{equation}
\item A change of variable $z=y+th$ in \eqref{eq:gen_diff_+} and \eqref{eq:gen_diff_-} shows that
\begin{equation}\label{eq:change}
\mathfrak{F}_\pm^{\circ} (x\, ; h) = - \mathfrak{F}_\mp^{\circ} (x\, ;-h).
\end{equation}
\item A change of variable $s=-t$ and \eqref{eq:change} show that replacing $t \downarrow 0$ by $t \uparrow 0$ in \eqref{eq:gen_diff_+} and \eqref{eq:gen_diff_-} makes no change in the definitions of $\mathfrak{F}_+^{\circ} (x\, ;h)$ and $\mathfrak{F}_-^{\circ} (x\, ;h)$.
 \item If $\mathfrak{F}$ admits a right-directional derivative at $x$ in the direction $h$, then
\begin{equation}\label{eq:-'+}
\mathfrak{F}^\circ_{-}(x,h) \le  \mathfrak{F}_r'(x,h) \le  \mathfrak{F}^\circ_{+}(x,h).
\end{equation}
\end{enumerate}
\end{rems}

Applying Lemma \ref{lem:elem} to the generalized directional derivative functions $\mathfrak{F}_{+}^\circ(x,\cdot)$ and $\mathfrak{F}_{-}^\circ(x,\cdot)$, one can define the following sets.

\begin{D}
The Clarke subdifferential of $\mathfrak{F}$ at $x$ is 
\[
\partial_C^- \mathfrak{F} (x) \coloneqq \{ \zeta \in X^* \, : \, \langle \zeta,h \rangle \le \mathfrak{F}_{+}^\circ(x,h) \, \, \text{for any $h \in X$}\}
\]
and the Clarke  superdifferential of $\mathfrak{F}$ at $x$ is
\[
\partial_C^+ \mathfrak{F} (x) \coloneqq \{ \zeta \in X^* \, : \, \langle \zeta,h \rangle \ge \mathfrak{F}_{-}^\circ(x,h) \, \, \text{for any $h \in X$}\}.
\]
\end{D}

\begin{rem} It is crucial to notice that the sets $\partial_C^\pm \mathfrak{F} (x)$ are never empty.  This is because the functions $h \mapsto \mathfrak{F}_{\pm}^\circ(x,h)$ are positively homogeneous, super/sub-additive and bounded on the unit ball: see \cite[Proposition 1.3 (d) in Chapter 2]{Clarke}. Moreover, it follows from \eqref{eq:-'+} that 
\[
\partial^\pm \mathfrak{F} (x) \subset \partial_C^\pm \mathfrak{F} (x).
\]
\end{rem}

The next proposition collects several results about the generalized directional derivatives and the Clarke sub/superdifferentials.  It is worth pointing out that, compared to Proposition \ref{prop:collection},  the Clarke sub/superdifferentials satisfy the opposite inclusion in the sum, product and chain rules.

\begin{prop}\label{prop:collectionClarke}

\hfill
\begin{enumerate}
\item If $\mathfrak{F}$ is $\mathcal{C}^1$-Fréchet on a neighborhood of $x$, then
\begin{equation}\label{eq:direct_Fréchet_rule_Clarke}
\fk_\pm^\circ(x,h) = \fk'_r(x,h) = \langle \mathfrak{F}_x(x), h \rangle
\end{equation}
for any $h \in X$, and
$$\partial_C^\pm \mathfrak{F}(x)  = \partial^\pm \mathfrak{F}(x) = \{\mathfrak{F}_x(x)\}.$$

\item For any $\lambda\ge 0$,
\[
[\lambda \fk]_\pm^\circ(x,h) = \lambda \fk_\pm^\circ(x,h) \quad \text{and} \quad [-\lambda \fk]_\pm^\circ(x,h) = -\lambda \fk_\mp^\circ(x,h)
\]
for any $h \in X$, and
\begin{equation}\label{eq:homo_rule_Clarke}
\partial_C^{\pm} [\lambda \mathfrak{F}](x) = \lambda \partial_C^{\pm} \mathfrak{F}(x) \quad \text{and} \quad \partial_C^{\pm} [-\lambda \mathfrak{F}](x) = -\lambda \partial_C^{\mp} \mathfrak{F}(x).
\end{equation}

\item If $\mathfrak{G} : \Omega \to \setR$ is another locally Lipschitz function, then
\begin{equation}\label{eq:direct_sum_rule_Clarke}
[\mathfrak{F} + \mathfrak{G}]_\pm^\circ(x,h) \le_\pm \mathfrak{F}_\pm^\circ(x,h) +  \mathfrak{G}_\pm^\circ(x,h),
\end{equation}
\begin{equation}\label{eq:direct_product_rule_Clarke}
[\mathfrak{F} \mathfrak{G}]_\pm^\circ(x,h) \le_\pm \mathfrak{F}(x) \mathfrak{G}_\pm^\circ(x,h) +  \mathfrak{G}(x) \mathfrak{F}_\pm^\circ(x,h)
\end{equation}
for any $h \in X$, and
\begin{equation}\label{eq:sum_rule_Clarke}
\partial_C^{\pm} (\mathfrak{F} + \mathfrak{G})(x) \subset  \partial_C^{\pm} \mathfrak{F} (x) + \partial_C^{\pm} \mathfrak{G}(x),
\end{equation}
\begin{equation}\label{eq:product_rule_Clarke}
\partial_C^{\pm} (\mathfrak{F} \mathfrak{G})(x) \subset \mathfrak{F}(x) \partial_C^{\pm} \mathfrak{G} (x) + \mathfrak{G}(x) \partial_C^{\pm} \mathfrak{F}(x).
\end{equation}

\item If $\fk = F \circ (\lambda_1,\ldots, \lambda_N)$ where $F \in \cC^1(\setR^N)$ and $\lambda_1,\ldots, \lambda_N : \Omega \to \setR$, then for any $h \in X$,
\[
\fk^\circ_{\pm}(x,h)  \le \sum_{\substack{k=1\\d_k(x)>0}}^N d_k(x) \lambda^\circ_{\pm}(x,h) +  \sum_{\substack{k=1\\d_k(x)<0}}^N d_k(x) \lambda^\circ_{\mp}(x,h)
\]
where
\[
d_k(x) \df \partial_k F(\lambda_1(x),\ldots, \lambda_N(x)),
\]
and
\begin{equation}\label{eq:chain_rule_Clarke}
\partial_C^\pm \fk(x) \subset \sum_{\substack{k=1\\d_k(x)>0}}^N d_k(x)\partial_C^\pm \lambda_k(x) +  \sum_{\substack{k=1\\d_k(x)<0}}^N d_k(x)\partial_C^\mp \lambda_k(x).
\end{equation}
\end{enumerate}

\end{prop}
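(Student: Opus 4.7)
The proof follows the standard Clarke calculus template, mirroring Proposition~\ref{prop:collection} but with the inclusions reversed due to the $\limsup$/$\liminf$ nature of the generalized directional derivatives. For part~(1), I would use the fundamental theorem of calculus \eqref{eq:DL1} together with \eqref{eq:reste_integral}: for $y$ near $x$ and $t>0$ small,
$$\frac{\fk(y+th)-\fk(y)}{t} = \int_0^1 \langle \fk_x(y+sth), h\rangle \, ds = \langle \fk_x(x), h\rangle + o(1)$$
as $(y,t) \to (x, 0^+)$, by strong continuity of $\fk_x$. Both $\limsup$ and $\liminf$ thus equal $\langle \fk_x(x), h\rangle$, giving \eqref{eq:direct_Fréchet_rule_Clarke}. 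The sub/superdifferential equalities then follow since $\zeta \in \partial_C^-\fk(x)$ means $\langle \zeta, h\rangle \le \langle \fk_x(x), h\rangle$ for every $h$, which tested against $\pm h$ forces $\zeta = \fk_x(x)$; the classical sub/superdifferentials are pinned down by the sandwich \eqref{eq:-'+}.

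Part~(2) is immediate from the definitions: a positive scalar passes through $\limsup$ and $\liminf$, while a negative scalar swaps them via $\limsup(-a_n) = -\liminf a_n$. The sub/superdifferential identities then drop out of the generalized derivative formulas.

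For part~(3), the sum-rule inequality follows from subadditivity of $\limsup$ and superadditivity of $\liminf$ applied to the linear decomposition of the difference quotient. For the product rule I would use the Leibniz-type splitting
$$\frac{\fk(y+th)\mathfrak{G}(y+th)-\fk(y)\mathfrak{G}(y)}{t} = \fk(y+th)\,\frac{\mathfrak{G}(y+th)-\mathfrak{G}(y)}{t} + \mathfrak{G}(y)\,\frac{\fk(y+th)-\fk(y)}{t},$$
then absorb the errors $\fk(y+th)-\fk(x)$ and $\mathfrak{G}(y)-\mathfrak{G}(x)$ via the local Lipschitz boundedness \eqref{eq:locLip} of the other difference quotient. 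For part~(4), a first-order Taylor expansion of $F$ yields
$$\frac{\fk(y+th)-\fk(y)}{t} = \sum_{k=1}^N \bigl(d_k(x) + o(1)\bigr) \frac{\lambda_k(y+th)-\lambda_k(y)}{t}$$
as $(y,t) \to (x,0^+)$ by continuity of the $\lambda_i$ and of $\partial_i F$; splitting the sum according to the sign of $d_k(x)$ and again using $\limsup(-a_n) = -\liminf a_n$ gives the desired bound on $\fk^\circ_\pm(x,h)$.

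The \emph{main technical step} is upgrading the generalized derivative inequalities from parts~(3) and~(4) to Minkowski-sum inclusions of Clarke sub/superdifferentials. This hinges on the support function formalism of Lemma~\ref{lem:elem}: positive homogeneity and subadditivity of $h \mapsto \fk^\circ_+(x,h)$ (respectively superadditivity of $\fk^\circ_-(x,\cdot)$), combined with boundedness \eqref{eq:locLip}, identify $\fk^\circ_+(x,\cdot)$ as the support function of $\partial_C^-\fk(x)$, and dually for $\fk^\circ_-$. Since support functions distribute over Minkowski sums, and the Minkowski sum of two weak*-compact convex sets is itself weak*-compact (hence weak*-closed), the derivative inequalities \eqref{eq:direct_sum_rule_Clarke}, \eqref{eq:direct_product_rule_Clarke} and their chain-rule analogue translate into the inclusions \eqref{eq:sum_rule_Clarke}, \eqref{eq:product_rule_Clarke} and \eqref{eq:chain_rule_Clarke}; the negative-coefficient terms in the chain rule are handled by \eqref{eq:homo_rule_Clarke}.
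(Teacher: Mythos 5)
Your proposal is correct and follows essentially the same strategy as the paper: establish the inequalities for the generalized directional derivatives by manipulating $\limsup$/$\liminf$ (sum via subadditivity, product via a Leibniz splitting, chain rule via a first-order expansion of $F$), then pass to the subdifferential inclusions using Lemma~\ref{lem:elem} and the support-function characterization. The paper's own proof is terser — it works out only the sum rule \eqref{eq:direct_sum_rule_Clarke}/\eqref{eq:sum_rule_Clarke} in detail and asserts the rest follow similarly — but the key mechanism (derivative inequality $\Rightarrow$ support-function inequality $\Rightarrow$ inclusion of weak*-compact convex sets) is exactly what you describe.
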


\begin{proof}
Like in the proof of Proposition \ref{prop:collection}, we only provide details on how to prove the sum rules \eqref{eq:direct_sum_rule_Clarke} and \eqref{eq:sum_rule_Clarke}.  For $h \in X$, consider $y_n \to x$ and $t_n \downarrow 0$ which realize the limit in the definition of $[\fk+ \mathfrak{G}]_+^\circ(x,h)$.  Then
\begin{align*}
[\fk+ \mathfrak{G}]_+^\circ(x,h) & = \lim\limits_{n \to +\infty} \frac{[\fk+ \mathfrak{G}](y_n+ t_n h) - [\fk+ \mathfrak{G}](y_n)}{t_n} \\
& \le \limsup\limits_{n \to +\infty} \frac{\fk(y_n+ t_n h) - \fk(y_n)}{t_n} + \limsup\limits_{n \to +\infty} \frac{\mathfrak{G}(y_n+ t_n h) - \mathfrak{G}(y_n)}{t_n}\\
& \le \fk_+^\circ(x,h) + \mathfrak{G}_+^\circ(x,h).
\end{align*}
This proves \eqref{eq:direct_sum_rule_Clarke}.  The latter implies that the support functions of both sets involved in \eqref{eq:sum_rule_Clarke} are equal, hence the unique determination granted by Lemma \ref{lem:elem} implies that \eqref{eq:sum_rule_Clarke} holds true.
\end{proof}

\subsubsection{Upper and lower regularity} In general, the right directional derivatives and the generalized ones may  behave differently; this yields different properties between the classical sub/superdifferential and the Clarke ones.  However, the next definition provides a setting where both sets of notions agree.

\begin{D}
We say that $\fk$ is upper regular at $x \in \Omega$ if for any $h \in X$,
\begin{equation}\label{eq:upper_regular}
\fk_r'(x\, ;h) = \fk_+^{\circ} (x\, ;h).
\end{equation} 
Likewise, we say that $\fk$ is lower regular at $x$ if for any $h \in X$,
\begin{equation}\label{eq:lower_regular}
\fk_r'(x\, ;h) = \fk_-^{\circ} (x\, ;h).
\end{equation} 
\end{D}

Upper and lower regularity have the following obvious consequences.

\begin{prop}\label{prop:upper_reg}
Assume that $\fk$ is upper regular at $x \in \Omega$.

\begin{enumerate}
\item For any $h \in X$,
\[
\fk_\ell'(x,h) =  \fk_-^\circ(x,-h)
\]
and
\begin{equation}\label{eq:equal_regular}
\partial_C^- \fk(x) =  \partial^- \fk(x).
\end{equation}
\item If $\mathfrak{G}:\Omega \to \setR$ is another locally Lipschitz function upper regular at $x$, then
\begin{align}\label{eq:sum_rule_equal}
\partial_C^{-} (\mathfrak{F} + \mathfrak{G})(x) =  \partial_C^{-} \mathfrak{F} (x) + \partial_C^{-} \mathfrak{G}(x),
\end{align}
\begin{align}\label{eq:product_rule_equal}
 & \partial_C^{-} (\mathfrak{F} \mathfrak{G})(x)  = \mathfrak{F}(x) \partial_C^{-} \mathfrak{G} (x) + \mathfrak{G}(x) \partial_C^{-} \mathfrak{F}(x).
\end{align}
\item If $\fk = F \circ (\lambda_1,\ldots, \lambda_N)$ where $F \in \cC^1(\setR^N)$ and $\lambda_1,\ldots, \lambda_N : \Omega \to \setR$, then for any $h \in X$,
\begin{align}\label{eq:chain_rule_equal}
\partial_C^- \fk(x) = \sum_{\substack{k=1\\d_k(x)>0}}^N d_k(x)\partial_C^- \lambda_k(x) +  \sum_{\substack{k=1\\d_k(x)<0}}^N d_k(x)\partial_C^+ \lambda_k(x).
\end{align}
\end{enumerate}
The same results hold true with ``upper'' and ``$-$''  replaced by ``lower'' and ``$+$'' respectively.
\end{prop}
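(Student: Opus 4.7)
The plan is to first establish the identity \eqref{eq:equal_regular}, which will serve as the workhorse of the proposition, and then derive the sum, product, and chain rules by sandwiching the corresponding inclusions from Propositions \ref{prop:collection} and \ref{prop:collectionClarke} between their Clarke analogues. The opening identity relating $\fk_\ell'(x;h)$ to $\fk_-^\circ(x;-h)$ is obtained by chaining \eqref{eq:left_and_right}, upper regularity \eqref{eq:upper_regular} (applied at $-h$), and the sign-swap identity \eqref{eq:change}.

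For \eqref{eq:equal_regular}, I would note that $\partial^-\fk(x)\subset\partial_C^-\fk(x)$ was already recorded right after the definition of the Clarke sub/superdifferentials. For the reverse inclusion, picking $\zeta \in \partial_C^-\fk(x)$ one has $\langle \zeta,h\rangle \le \fk_+^\circ(x;h)= \fk_r'(x;h)$ for every $h\in X$ by upper regularity, and so $\zeta\in\partial^-\fk(x)$.

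The crux of the sum rule \eqref{eq:sum_rule_equal} is to propagate upper regularity from $\fk$ and $\mathfrak{G}$ to $\fk+\mathfrak{G}$. Combining \eqref{eq:direct_sum_rule}, the upper regularity of each summand, the subadditive inequality \eqref{eq:direct_sum_rule_Clarke}, and \eqref{eq:-'+} will yield the two-sided squeeze
\[
(\fk+\mathfrak{G})_+^\circ(x;h) \le \fk_+^\circ(x;h)+\mathfrak{G}_+^\circ(x;h) = (\fk+\mathfrak{G})_r'(x;h) \le (\fk+\mathfrak{G})_+^\circ(x;h),
\]
forcing equality throughout; applying \eqref{eq:equal_regular} to $\fk+\mathfrak{G}$ then collapses the Clarke subdifferential onto the classical one, after which the opposite inclusions \eqref{eq:sum_rule} and \eqref{eq:sum_rule_Clarke} pinch the two sides together. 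An entirely analogous squeeze using \eqref{eq:direct_product_rule}, \eqref{eq:direct_product_rule_Clarke}, \eqref{eq:product_rule}, and \eqref{eq:product_rule_Clarke} will prove \eqref{eq:product_rule_equal}; and the chain-rule version, starting from \eqref{eq:direct_chain_rule} and Proposition \ref{prop:collectionClarke}(4), will prove \eqref{eq:chain_rule_equal}.

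The step I expect to require the most care is the chain rule \eqref{eq:chain_rule_equal}, where the right-hand side mixes $\partial_C^-\lambda_k$ for $d_k(x)>0$ with $\partial_C^+\lambda_k$ for $d_k(x)<0$, while upper regularity of $\lambda_k$ directly equates only $\partial_C^-\lambda_k$ with $\partial^-\lambda_k$. The negative-coefficient terms will be handled by combining \eqref{eq:homo_rule_Clarke}, which swaps $\partial_C^\pm$ under negation, with the dual formulation of upper regularity obtained through \eqref{eq:change}, so that each summand $d_k(x)\lambda_k$ is upper regular at $x$ and the squeeze argument goes through verbatim. The final assertion, dualizing from ``upper/$-$'' to ``lower/$+$'', follows from the symmetry $\fk\leftrightarrow -\fk$ together with \eqref{eq:change} and \eqref{eq:homo_rule_Clarke}.
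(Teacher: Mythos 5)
Your plan is correct, and since the paper labels the proposition an ``obvious consequence'' and gives no proof, the route you sketch --- prove \eqref{eq:equal_regular} first, then squeeze each of the sum/product/composition rules between \eqref{eq:direct_sum_rule}--\eqref{eq:chain_rule} and \eqref{eq:direct_sum_rule_Clarke}--\eqref{eq:chain_rule_Clarke} to transfer upper regularity and collapse Clarke onto classical subdifferentials --- is exactly the intended one.

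Two caveats are worth flagging explicitly. First, the chain you describe for the opening identity actually produces $\fk_\ell'(x;h)=\fk_-^\circ(x;h)$, not the printed $\fk_-^\circ(x;-h)$: from \eqref{eq:left_and_right} and upper regularity at $-h$,
\begin{equation*}
\fk_\ell'(x;h)=-\fk_r'(x;-h)=-\fk_+^\circ(x;-h)=\fk_-^\circ(x;h)
\end{equation*}
where the last step is \eqref{eq:change}. The printed version is false in general: for $\fk(x)=\max(x,0)$ on $\setR$ at $x=0$ with $h=1$ one has $\fk$ upper regular, $\fk_\ell'(0;1)=0$, but $\fk_-^\circ(0;-1)=-1$. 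Since you cite the paper's version but your steps prove the corrected one, write out the end of the chain so the sign is visible.

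Second, \eqref{eq:chain_rule_equal} cannot follow from upper regularity of $\fk$ and $\mathfrak{G}$ alone --- it refers to $\lambda_k,d_k,N$ which presuppose $\fk=F\circ(\lambda_1,\dots,\lambda_N)$, and the argument you describe needs each summand $d_k\lambda_k$ to be upper regular at $x$, i.e.\ $\lambda_k$ upper regular whenever $d_k(x)>0$ and lower regular whenever $d_k(x)<0$ (using \eqref{eq:homo_rule_Clarke} and the ``lower/$+$'' version of \eqref{eq:equal_regular} to handle negative coefficients). This hypothesis is what makes your ``the squeeze argument goes through verbatim'' start; state it, since it is not recorded in the proposition and is precisely what Theorem \ref{th:main}(3) will supply in the applications.
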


\subsubsection{Critical points} With the tools of non-smooth analysis recalled above,  one possible definition of a critical point $x$ for an eigenvalue functional $\mathfrak{F}$ is
\begin{equation}\label{eq:deriv}\tag{C}
\sup_{h \in X} \mathfrak{F}_\ell'(x\, ; h) \mathfrak{F}_r'(x\, ; h) \le 0.
\end{equation}
It corresponds to the seminal definition \eqref{eq:criti_Nadi}. The next proposition provides some relation between this condition and classical sub/superdifferentials.

\begin{prop}\label{prop:C}
For any $x \in \Omega$:
\begin{align*}
0 \in \partial^{-}\mathfrak{F}(x) \qquad \iff & \qquad \inf_{h\in X}\mathfrak{F}_r'(x\, ; h)  \ge 0 \qquad  \Longrightarrow \qquad \eqref{eq:deriv},\\
0 \in \partial^{+}\mathfrak{F}(x) \qquad \iff & \qquad \sup_{h \in X}\mathfrak{F}_r'(x\, ; h)  \le 0 \qquad  \Longrightarrow \qquad \eqref{eq:deriv},\\
0 \in \partial^{-}\mathfrak{F}(x) \cup \partial^{+}\mathfrak{F}(x) \quad & \Longrightarrow \quad \eqref{eq:deriv} \quad  \Longrightarrow \quad 0 \in \overline{\conv} \left\{ \partial^{-}\mathfrak{F}(x) \cup \partial^{+}\mathfrak{F}(x) \right\}.
\end{align*}
\end{prop}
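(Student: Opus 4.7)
My plan is to split the statement into three pieces and handle them from easiest to hardest. I would first observe that the two equivalences on lines one and two are immediate from unpacking the definitions: $0\in\partial^-\mathfrak{F}(x)$ says that $0=\langle 0,h\rangle\le\mathfrak{F}_r'(x;h)$ for every $h\in X$, which is literally $\inf_{h\in X}\mathfrak{F}_r'(x;h)\ge 0$, and the superdifferential case is identical with the inequality reversed. The two implications to \eqref{eq:deriv} would then reduce to the elementary identity \eqref{eq:left_and_right}, $\mathfrak{F}_\ell'(x;h)=-\mathfrak{F}_r'(x;-h)$: if $\mathfrak{F}_r'(x;\cdot)\ge 0$ everywhere, then $\mathfrak{F}_\ell'(x;\cdot)\le 0$ everywhere, so the product $\mathfrak{F}_\ell'(x;h)\mathfrak{F}_r'(x;h)$ is non-positive pointwise and \eqref{eq:deriv} follows; the opposite-sign case is analogous. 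The first arrow on the third line then follows by concatenating these observations.

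For the substantive task, the second arrow on line three $\eqref{eq:deriv}\Rightarrow 0\in\overline{\conv}\{\partial^-\mathfrak{F}(x)\cup\partial^+\mathfrak{F}(x)\}$, my plan is to argue by contraposition using a Hahn--Banach separation in the weak* topology on $X^*$. By Lemma~\ref{lem:elem}, $\partial^\pm\mathfrak{F}(x)$ are convex and weak*-closed, hence so is their closed convex hull. Assuming $0$ does not belong to it, Hahn--Banach separation, whose weak*-continuous linear functionals are the evaluations $\zeta\mapsto\langle\zeta,h\rangle$ for $h\in X$, would yield $h\in X$ and $\eps>0$ such that, after possibly replacing $h$ by $-h$, one has $\langle\zeta,h\rangle>\eps$ for every $\zeta\in\partial^-\mathfrak{F}(x)\cup\partial^+\mathfrak{F}(x)$.

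The conclusion would then come from testing the defining inequalities in two opposite directions: for any $\zeta\in\partial^-\mathfrak{F}(x)$, $\mathfrak{F}_r'(x;h)\ge\langle\zeta,h\rangle>\eps$; for any $\zeta\in\partial^+\mathfrak{F}(x)$, the inequality $\langle\zeta,-h\rangle\ge\mathfrak{F}_r'(x;-h)$ combined with $\langle\zeta,-h\rangle=-\langle\zeta,h\rangle<-\eps$ yields $\mathfrak{F}_r'(x;-h)<-\eps$. Plugging into \eqref{eq:left_and_right} would give $\mathfrak{F}_\ell'(x;h)\mathfrak{F}_r'(x;h)=-\mathfrak{F}_r'(x;-h)\mathfrak{F}_r'(x;h)>\eps^2>0$, contradicting \eqref{eq:deriv}. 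The main obstacle is precisely this last step: the simultaneous extraction of strict inequalities on $\mathfrak{F}_r'(x;\pm h)$ really requires \emph{both} $\partial^-\mathfrak{F}(x)$ and $\partial^+\mathfrak{F}(x)$ to be non-empty, and the edge cases where one or both is empty (permitted by the remark following Proposition~\ref{prop:collection}) would have to be handled with additional care, likely by specialising to the context in which the proposition is applied where the sub/superdifferentials of interest are guaranteed to be non-empty.
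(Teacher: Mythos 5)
Your proposal follows the paper's proof step for step: the two equivalences are a direct unwinding of the definitions with $\zeta=0$; the implications to \eqref{eq:deriv} use the identity $\mathfrak{F}_\ell'(x;h)=-\mathfrak{F}_r'(x;-h)$ and a sign argument; and the final arrow is obtained by contraposition with a Hahn--Banach weak$^*$ separation producing an $h$ against which one tests the sub- and superdifferential inequalities in opposite directions. So the route is the same.

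The subtlety you flag at the end is a real one, and it is present in the paper's own proof as well. Both arguments deduce $\mathfrak{F}_r'(x;h)>0$ by picking some $\xi\in\partial^-\mathfrak{F}(x)$ and $\mathfrak{F}_r'(x;-h)<0$ by picking some $\xi\in\partial^+\mathfrak{F}(x)$; if either set is empty the corresponding conditional is vacuous and the strict positivity of the product $\mathfrak{F}_\ell'(x;h)\mathfrak{F}_r'(x;h)=-\mathfrak{F}_r'(x;-h)\,\mathfrak{F}_r'(x;h)$ does not follow. This is not an idle worry: take $X=\mathbb{R}^2$, $\mathfrak{F}(y)=|y_1|-|y_2|$ and $x=0$. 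Then $\mathfrak{F}_r'(0;h)=|h_1|-|h_2|$, and testing $h=(0,\pm1)$ (resp.\ $h=(\pm1,0)$) shows $\partial^-\mathfrak{F}(0)=\emptyset$ (resp.\ $\partial^+\mathfrak{F}(0)=\emptyset$), yet $\mathfrak{F}_\ell'(0;h)\mathfrak{F}_r'(0;h)=-(|h_1|-|h_2|)^2\le 0$ for every $h$, so \eqref{eq:deriv} holds while $0\in\overline{\conv}\left(\partial^-\mathfrak{F}(0)\cup\partial^+\mathfrak{F}(0)\right)=\emptyset$ fails. Your instinct that the proposition should be read under an implicit non-emptiness hypothesis (as is guaranteed in the paper's applications, for instance under a spectral gap or when working with Clarke differentials directly) is the correct resolution, and you did well to flag it rather than smooth over it.
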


\begin{proof}
The equivalences in the first two lines are obvious. The implication of the first line is proved in the following way: for any $h \in X$,
\[
\mathfrak{F}_\ell'(x\, ; h) \mathfrak{F}_r'(x\, ; h) = - \underbrace{\mathfrak{F}_r'(x\, ; -h)}_{\ge 0} \underbrace{\mathfrak{F}_r'(x\, ; h)}_{\ge 0} \le 0.
\]
The implication in the second line is proved in a similar way. The first implication in the third line is obvious because of the two first lines. Let us prove the contrapositive of the second implication in the third line. Assume that $0$ does not belong to $\overline{\conv} \left\{ \partial^{-}\mathfrak{F}(x) \cup \partial^{+}\mathfrak{F}(x) \right\}$. By the Hahn--Banach theorem, there exists $h \in X$ such that $\langle \xi, h \rangle>0$ for any $\xi \in \overline{\conv} \left\{ \partial^{-}\mathfrak{F}(x) \cup \partial^{+}\mathfrak{F}(x) \right\}$.  In particular, if $\xi \in  \partial^{-}\mathfrak{F}(x)$, then
\[
\mathfrak{F}_r'(x,h) \ge \langle \xi,  h \rangle >0,
\]
while if $\xi \in \partial^{+}\mathfrak{F}(x)$, then
\[
\mathfrak{F}_r'(x,-h) \ge \langle \xi,  -h \rangle = - \langle \xi,  h \rangle  < 0,
\]
hence we get
\[
\mathfrak{F}_\ell'(x,h)  \mathfrak{F}_r'(x,h)  =  \underbrace{-\mathfrak{F}_r'(x,-h)}_{>0} \underbrace{\mathfrak{F}_r'(x,h)}_{>0} > 0
\]
which means that \eqref{eq:deriv} does not hold.
\end{proof}

With the generalized directional derivatives at our disposal,  instead of \eqref{eq:deriv} we can consider the following condition which does not require the existence of right directional derivatives:
\begin{equation}\label{eq:deriv+}\tag{CC}
\sup_{h \in X} \mathfrak{F}_+^\circ(x\, ; h) \mathfrak{F}_-^\circ(x\, ; h) \le 0.
\end{equation}

Then we can prove a statement analogous to Proposition \ref{prop:C}.

\begin{prop}\label{prop:CC}
For any $x \in \Omega$:
\begin{align*}
0 \in \partial^{-}_C\mathfrak{F}(x) \qquad \iff & \qquad \inf_{h\in X} \mathfrak{F}_+^\circ(x\, ; h)  \ge 0 \qquad  \Longrightarrow \qquad \eqref{eq:deriv+},\\
0 \in \partial^{+}_C\mathfrak{F}(x) \qquad \iff & \qquad \sup_{h \in X}\mathfrak{F}_-^\circ(x\, ; h)  \le 0 \qquad  \Longrightarrow \qquad \eqref{eq:deriv+},\\
0 \in \partial_C^{-}\mathfrak{F}(x) \cup \partial_C^{+}\mathfrak{F}(x) \quad & \Longrightarrow \quad \eqref{eq:deriv+} \quad  \Longrightarrow \quad 0 \in \overline{\conv} \left\{ \partial_C^{-}\mathfrak{F}(x) \cup \partial_C^{+}\mathfrak{F}(x) \right\}.
\end{align*}
\end{prop}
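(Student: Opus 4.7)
The plan is to follow closely the pattern of Proposition \ref{prop:C}, translating each step from the ``classical'' right-directional setting to the ``generalized'' Clarke setting. The two equivalences on the first two lines are immediate from the very definitions of $\partial_C^- \mathfrak{F}(x)$ and $\partial_C^+ \mathfrak{F}(x)$: specializing $\zeta = 0$ in the defining inequalities, $0 \in \partial_C^-\mathfrak{F}(x)$ means $0 \le \mathfrak{F}_+^\circ(x;h)$ for every $h \in X$, and symmetrically for the superdifferential.

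For the two implications on the right of the first two lines, I would use the symmetry relation \eqref{eq:change}. Assuming $\inf_{h \in X}\mathfrak{F}_+^\circ(x;h) \ge 0$, applying this to both $h$ and $-h$ gives $\mathfrak{F}_+^\circ(x;h) \ge 0$ and $\mathfrak{F}_+^\circ(x;-h) \ge 0$; then \eqref{eq:change} yields $\mathfrak{F}_-^\circ(x;h) = - \mathfrak{F}_+^\circ(x;-h) \le 0$, so the product $\mathfrak{F}_+^\circ(x;h)\mathfrak{F}_-^\circ(x;h)$ is $\le 0$ for every $h$, which is exactly \eqref{eq:deriv+}. The case $\sup_h \mathfrak{F}_-^\circ(x;h) \le 0$ is treated analogously. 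The first arrow of the third line is then immediate.

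For the second arrow of the third line I would argue by contraposition, in the same spirit as Proposition \ref{prop:C}. Assume that $0 \notin \overline{\conv}\{\partial_C^{-}\mathfrak{F}(x) \cup \partial_C^{+}\mathfrak{F}(x)\}$. Since $\partial_C^{\pm}\mathfrak{F}(x)$ are weak-$*$ compact and convex by Lemma \ref{lem:elem}, the closed convex hull of their union is weak-$*$ closed in $X^*$. I would invoke the Hahn--Banach separation theorem in the weak-$*$ topology to produce some $h \in X$ such that $\langle \xi, h \rangle > 0$ for every $\xi$ in that hull. Because the function $h \mapsto \mathfrak{F}_+^\circ(x;h)$ (resp.~$h \mapsto -\mathfrak{F}_-^\circ(x;h)$) is exactly the support function of $\partial_C^{-}\mathfrak{F}(x)$ (resp.~of $-\partial_C^{+}\mathfrak{F}(x)$) by Lemma \ref{lem:elem} and the weak-$*$ compactness of these sets, the strict separation forces $\mathfrak{F}_+^\circ(x;h) > 0$ and $\mathfrak{F}_-^\circ(x;h) > 0$, so their product is strictly positive and \eqref{eq:deriv+} fails.

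The main obstacle I anticipate is ensuring that Hahn--Banach separation can be carried out using a direction $h \in X$ rather than a general element of $X^{**}$; this is precisely why it matters that we separate from a \emph{weak-$*$ closed} convex set, and why the properties of $\partial_C^\pm \mathfrak{F}(x)$ provided by Lemma \ref{lem:elem} (non-emptiness, convexity, weak-$*$ compactness) are indispensable here. Once this separation is secured, the remainder of the argument is a line-by-line transcription of the corresponding proof of Proposition \ref{prop:C}.
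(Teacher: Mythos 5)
Your proof is correct and follows essentially the same route as the paper's: definitions for the two equivalences, the symmetry \eqref{eq:change} for the implications to \eqref{eq:deriv+}, and a Hahn--Banach contrapositive for the last arrow. Your explicit appeal to the support-function characterization of Lemma \ref{lem:elem} together with weak-$*$ compactness is a welcome bit of extra precision (the paper's derivation of $\mathfrak{F}_+^\circ(x;-h)<0$ writes $\mathfrak{F}_+^\circ(x;-h) \ge \langle\xi,-h\rangle<0$, which alone does not force the strict sign; it too implicitly relies on the support-function equality), but it is not a different proof strategy.
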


\begin{proof}
The equivalences in the first two lines are obvious. The implication in the first line follows from noting that for any $h \in X$,
\[
\mathfrak{F}_+^\circ (x\, ; h) \mathfrak{F}_-^\circ(x\, ; h)  = - \underbrace{\mathfrak{F}_+^\circ(x\, ; h)}_{\ge 0} \underbrace{\mathfrak{F}_+^\circ(x\, ; -h)}_{\ge 0} \le 0.
\]
The implication in the second line follows from noting that for any $h \in X$,
\[
\mathfrak{F}_+^\circ (x\, ; h) \mathfrak{F}_-^\circ(x\, ; h)  = - \underbrace{\mathfrak{F}_-^\circ(x\, ; - h)}_{\le 0} \underbrace{\mathfrak{F}_-^\circ(x\, ; h)}_{\le 0} \le 0.
\]
The implication in the third line is obvious. Let us prove the contrapositive of the second implication in the third line. Assume that $0 \notin \overline{\conv} \left\{ \partial_C^{-}\mathfrak{F}(x) \cup \partial_C^{+}\mathfrak{F}(x) \right\}$. Then the Hahn-Banach theorem implies that there exists $h \in X$ such that $\langle \xi, h\rangle > 0$ for any $\xi \in \overline{\conv} \left\{ \partial_C^{-}\mathfrak{F}(x) \cup \partial_C^{+}\mathfrak{F}(x) \right\}$.  In particular, if $\xi \in  \partial_C^{-}\mathfrak{F}(x)$, then
\[
\mathfrak{F}^\circ_+(x \, ; h) \ge \langle \xi, h \rangle > 0,
\]
while if $\xi \in  \partial_C^{+}\mathfrak{F}(x)$, then
\[
\mathfrak{F}^\circ_+(x \, ; - h) \ge \langle \xi,  - h \rangle = - \langle \xi,  h \rangle < 0,
\]
so that
\[
\mathfrak{F}^\circ_+(x \, ; h) \mathfrak{F}^\circ_-(x \, ; h) = - \underbrace{\mathfrak{F}^\circ_+(x \, ; h)}_{>0} \underbrace{\mathfrak{F}^\circ_+(x \, ; - h)}_{<0} >0
\]
which means that \eqref{eq:deriv+} does not hold.
\end{proof}

The relationship between  \eqref{eq:deriv} and \eqref{eq:deriv+} is easy to describe under upper/lower regularity.

\begin{prop}
Assume that  $\mathfrak{F}$ is upper regular or lower regular at $x$. Then
\[\eqref{eq:deriv} \qquad \iff \qquad \eqref{eq:deriv+}.\] As a consequence:
\[
\begin{array}{ccccc}
0 \in \partial^{-}\mathfrak{F}(x) \cup \partial^{+}\mathfrak{F}(x) &  \Longrightarrow & \eqref{eq:deriv} &  \Longrightarrow &   0 \in \overline{\conv} \left\{ \partial^{-}\mathfrak{F}(x) \cup \partial^{+}\mathfrak{F}(x) \right\}\\
\hfill\\
 \Downarrow & &   \Updownarrow & & \Downarrow \\
 \hfill \\
 0 \in \partial_C^{-}\mathfrak{F}(x) \cup \partial_C^{+}\mathfrak{F}(x) & \Longrightarrow &  \eqref{eq:deriv+} &  \Longrightarrow & 0 \in \overline{\conv} \left\{ \partial_C^{-}\mathfrak{F}(x) \cup \partial_C^{+}\mathfrak{F}(x) \right\}.
\end{array}
\]
In particular:
\[
0 \in \partial_C^{-}\mathfrak{F}(x) \cup \partial_C^{+}\mathfrak{F}(x)  \qquad \Rightarrow \qquad \partial^{-}\mathfrak{F}(x) \cup \partial^{+}\mathfrak{F}(x) \neq \emptyset.
\]
\end{prop}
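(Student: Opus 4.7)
The plan is to first establish the equivalence \eqref{eq:deriv} $\iff$ \eqref{eq:deriv+} under the regularity hypothesis, and then to assemble the whole diagram from this equivalence, Propositions~\ref{prop:C} and~\ref{prop:CC}, and the pointwise inclusions $\partial^\pm \mathfrak{F}(x) \subset \partial_C^\pm \mathfrak{F}(x)$ already recorded in the excerpt. The only real work is the equivalence; everything else is bookkeeping.

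For the equivalence, I would combine the identity $\mathfrak{F}_\ell'(x;h) = -\mathfrak{F}_r'(x;-h)$ from \eqref{eq:left_and_right} with the identity $\mathfrak{F}_-^\circ(x;h) = -\mathfrak{F}_+^\circ(x;-h)$ from \eqref{eq:change}. Under upper regularity, $\mathfrak{F}_r'(x;h) = \mathfrak{F}_+^\circ(x;h)$ holds for every $h$, so applying the two identities yields
\[
\mathfrak{F}_\ell'(x;h) = -\mathfrak{F}_r'(x;-h) = -\mathfrak{F}_+^\circ(x;-h) = \mathfrak{F}_-^\circ(x;h).
\]
Multiplying these two equalities produces the pointwise identity
\[
\mathfrak{F}_\ell'(x;h)\,\mathfrak{F}_r'(x;h) = \mathfrak{F}_-^\circ(x;h)\,\mathfrak{F}_+^\circ(x;h),
\]
after which taking the supremum over $h \in X$ transforms \eqref{eq:deriv} into \eqref{eq:deriv+} and vice versa. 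The lower regular case is symmetric: $\mathfrak{F}_r' = \mathfrak{F}_-^\circ$ forces $\mathfrak{F}_\ell' = \mathfrak{F}_+^\circ$ by the same manipulations, giving the same product identity.

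To piece the diagram together, the two horizontal rows are exactly Propositions~\ref{prop:C} and~\ref{prop:CC}. The outer downward arrows follow from $\partial^\pm \mathfrak{F}(x) \subset \partial_C^\pm \mathfrak{F}(x)$: on the left this is immediate at the level of the sets themselves, and on the right it follows by monotonicity of the closed convex hull under inclusion. The central double arrow is the equivalence established above. Finally, the ``in particular'' statement is obtained by walking around the diagram: from $0 \in \partial_C^-\mathfrak{F}(x) \cup \partial_C^+\mathfrak{F}(x)$ we pass to \eqref{eq:deriv+} via Proposition~\ref{prop:CC}, cross to \eqref{eq:deriv} through the regularity-induced equivalence, and then invoke Proposition~\ref{prop:C} to conclude $0 \in \overline{\conv}\{\partial^-\mathfrak{F}(x) \cup \partial^+\mathfrak{F}(x)\}$; since the closed convex hull of the empty set is empty, the union must itself be non-empty.

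I do not expect any serious obstacle; the only subtlety worth watching is the sign bookkeeping induced by \eqref{eq:left_and_right} and \eqref{eq:change}, which causes $\mathfrak{F}_-^\circ$ and $\mathfrak{F}_+^\circ$ to swap roles between the upper- and lower-regular regimes.
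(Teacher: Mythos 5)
Your proposal is correct and takes essentially the same route as the paper: under upper (resp.\ lower) regularity you derive the pointwise identity $\mathfrak{F}_\ell'(x;h)\mathfrak{F}_r'(x;h)=\mathfrak{F}_-^\circ(x;h)\mathfrak{F}_+^\circ(x;h)$ from \eqref{eq:left_and_right} and \eqref{eq:change}, which is exactly the chain of equalities the paper manipulates (the paper just writes the same computation starting from the $\mathfrak{F}^\circ$ side). The rest of the diagram, including the ``in particular'' deduction via the empty-convex-hull observation, matches the paper's argument.
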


\begin{proof}
Assume that $\mathfrak{F}$ is upper regular at $x$. For any $h \in X$,
\begin{align*}
\mathfrak{F}_+^\circ(x\, ; h) \mathfrak{F}_-^\circ(x\, ; h) & = - \mathfrak{F}_+^\circ(x\, ; h) \mathfrak{F}_+^\circ(x\, ; -h)\\
& = - \mathfrak{F}'_r(x\, ; h) \mathfrak{F}_r'(x\, ; -h)\\
& =  \mathfrak{F}'_r(x\, ; h) \mathfrak{F}_\ell'(x\, ; h).
\end{align*}
Then \eqref{eq:deriv} and \eqref{eq:deriv+} are obviously equivalent. In the same way, if $\mathfrak{F}$ is lower regular at $x$, then for any $h \in X$,
\begin{align*}
\mathfrak{F}_+^\circ(x\, ; h) \mathfrak{F}_-^\circ(x\, ; h) & = - \mathfrak{F}_-^\circ(x\, ; -h) \mathfrak{F}_-^\circ(x\, ; h)\\
& = - \mathfrak{F}'_r(x\, ; -h) \mathfrak{F}_r'(x\, ; h)\\
& =  \mathfrak{F}'_\ell(x\, ; h) \mathfrak{F}_r'(x\, ; h),
\end{align*}
so that \eqref{eq:deriv} and \eqref{eq:deriv+} are obviously equivalent. The table of equivalences and implications follows from the previous propositions and the inclusions $\partial^{\pm}\mathfrak{F}(x) \subset \partial_C^{\pm}\mathfrak{F}(x)$.  This table notably implies that if $0 \in \partial_C^{-}\mathfrak{F}(x) \cup \partial_C^{+}\mathfrak{F}(x)$, then $0 \in \overline{\conv} \left\{ \partial^{-}\mathfrak{F}(x) \cup \partial^{+}\mathfrak{F}(x) \right\}$ so that $\partial^{-}\mathfrak{F}(x) \cup \partial^{+}\mathfrak{F}(x)$ cannot be empty.
\end{proof}

In the rest of the paper, we adopt the following definition.

\begin{D}\label{def:crit}
We say that $x$ is critical for $\mathfrak{F}$ if $0 \in \partial_C^- \mathfrak{F}(x) \cup  \partial_C^+ \mathfrak{F}(x).$
\end{D}

\subsubsection{Approximate sub/superdifferential} 
Later in this paper, we shall be interested in fine differentiability properties of the locally Lipschitz function $\fk$. In particular, we will need a notion to relate classical sub/superdifferentials with Clarke's ones. This is provided by the approximate sub/superdifferentials introduced by Ioffe in the more general context of a locally convex space \cite{Ioffe2}, see also \cite{Ioffe1, Ioffe3}. 

Let us begin with a preliminary definition.

\begin{D}
Let $\{A(y)\}_{y \in \Omega}$ be a family of sets belonging to $X^*$.  For any $x \in \Omega$, we define
$$ \limsup_{y \to x} A(y) \df \bigcap_{\delta>0} \overline{ \bigcup_{\Vert x-y \Vert < \delta} A(y)} $$
Note that $\zeta \in \limsup_{y \to x} A(y)$ if and only if there exist $y_n \to x$ and $\zeta_{n} \in A(y_{n})$ for any $n$ such that $\zeta_n \to \zeta$ in $X^*$ as $n\to +\infty$. 
\end{D}

We now introduce a useful notation. Let $V$ be a linear subspace of $X$. For any $y \in \Omega$, letting $r = \mathrm{dist}(y,X \backslash \Omega)$, we define
\[
\fk_{\vert y+V} \df \fk : (y+V) \cap B(y,r) \to \setR.
\]
If $\mathfrak{F}$ has left and right derivatives at $y$ in any direction $h\in X$, then the sub/superdifferentials of $\fk_{\vert y+V}$ at $y$ are well-defined subsets of $V^*$ and are denoted $\partial^{\pm}\fk_{\vert y+V}(y)$. To be consistent with Ioffe's definitions, we denote
$$\widetilde{\partial^{\pm}\fk_{\vert y+V}}(y) = \{ \zeta \in X^* : \zeta_{\vert V} \in \partial^{\pm}\fk_{\vert y+V}(y) \}.$$
%We write them as $\partial^{\pm}\fk_{\vert y+V}(y)$ to be consistent with Ioffe's notations.  

%Embedding $V^*$ into $X^*$ by extending any element $\xi \in V^*$ by $0$ on $X \backslash V$, we can write
%\[
%\partial^{\pm}\fk_{\vert y+V}(y) \subset X^*.
%\]

We are now in a position to define the approximate sub/superdifferentials in the sense of Ioffe.

\begin{D}
Assume that $\mathfrak{F}$ has left and right derivatives in a neighborhood of $x$ in any direction $h\in X$. The approximate sub/superdifferential of $\mathfrak{F}$ at $x$ is defined as
$$ \partial_A^{\pm}\mathfrak{F}(x) = \bigcap_{V \in \mathcal{G}(X)} \limsup_{y \to x}  \widetilde{\partial^{\pm}\fk_{\vert y+V}}(y).$$
%The approximate superdifferential of $\mathfrak{F}$ at $x$ is defined as
%$$ \partial_A^+\mathfrak{F}(x) = \bigcap_{V \in \bigcup_{k\in \mathbb{N}} \mathcal{G}_k(X) } \limsup_{y \to x}  \widetilde{\partial^{+}\fk_{\vert y+V}}(y).$$
where $\mathcal{G}(X) = \bigcup_{k\in \mathbb{N}} \mathcal{G}_k(X)$.
\end{D}

\begin{rem}
In his papers, Ioffe worked with slightly different definitions.  Firstly, he took as a support function for $\partial^{-}\fk$ the Dini derivative, which is always well-defined, while we use the left and right directional derivatives : this is because the latter ones exist in all of our applications. Secondly, Ioffe defined $\fk_{S}(x)=x$ if $x \in S$, $\fk_{S}(x)=+\infty$ otherwise, for any set $S \subset X$.  In this way, the sub/superdifferentials of $\fk_{S}$ are subsets of $X^*$, so there is no need to extend elements of $V^*$ to $X^*$ as we do. We found our variant more convenient in our proofs.
\end{rem}

%\begin{rem} \label{rem:convention} In the previous definition, we adopt the following notations and conventions:
%\begin{itemize}
%\item $\mathcal{G}_k(X)$ is the set of vector subspaces of $X$ of dimension $k$. 
%\item We denote $\mathfrak{F}_{\vert S}$ the restriction of $\mathfrak{F}$ to the set $S \subset X$. 
%\item We define $\limsup_{y \to x} A(y)$ of a familly of sets $A(y)$ for $y$ in a neighborhood of $x$ as
%$$ \limsup_{y \to x} A(y) = \bigcap_{\delta>0} \overline{ \bigcup_{\Vert x-y \Vert < \delta} A(y)} $$
%This means that $\zeta \in \limsup_{y \to x} A(y)$ if and only if there is $y_n \to x$, and a subsequence $\zeta_{n} \in A(y_{n})$ such that $\zeta_n \to \zeta$ in $X^\star$ as $n\to +\infty$. 
%\item By convention, we assume that a subset of $W \subset V^\star$ can be viewed as the subset $W' \subset X^\star$ of any continuous linear extension on $X$ of a linear form of $W$.
%\end{itemize}
%\end{rem}
The next Proposition is inspired by \cite[Proposition 3.3]{Ioffe2}. We provide a self-contained proof for the reader's convenience.
\begin{prop}\label{prop:IOFFE}  Assume that $\mathfrak{F}$ is a locally Lipschitz function with left and right derivatives in a neighborhood of $x \in \Omega$ in any direction $h\in X$. Then
$$ \partial_C^{\pm} \mathfrak{F}(x) = \overline{ \text{co}\left(  \partial_A^{\pm}\mathfrak{F}(x)\right)}.$$
\end{prop}
\begin{proof} We shall give a proof based on \cite[Proposition 2.4, Proposition 3.3]{Ioffe2} that we adapt to our context : $X$ is a Banach space and $\mathfrak{F}$ is a Lipschitz function with left and right derivatives in a neighborhood of $x \in \Omega$ in any direction $h\in X$. We only prove the proposition for subdifferentials.

\noindent\textbf{Step 1:} We have that $ \overline{ \text{co}\left(  \partial_A^-\mathfrak{F}(x)\right)} \subset \partial_C^- \mathfrak{F}(x) $.

\noindent\textbf{Proof of Step 1:} It suffices to prove that $\partial_A^-\mathfrak{F}(x) \subset \partial_C^- \mathfrak{F}(x)$. Let $\xi \in \partial_A^-\mathfrak{F}(x)$.
Let $h \in X$ and $V \in \mathcal{G}(X)$ containing $h$. By definition of $\partial_A^-  \mathfrak{F}(x)$, there exist $x_n \to x$ and $\xi_n \to \xi$ such that $\xi_n \in \widetilde{\partial^{-}\fk_{\vert x_n+V}}(x_n)$. In particular, we have that
$$ \langle \xi_n , h \rangle \leq \left(\fk_{\vert x_n+V} \right)'_r(x_n;h). $$
Then
\begin{align*}
\langle \xi,h \rangle & = \limsup_{n\to +\infty} \langle \xi_n,h \rangle  \\
& \le \limsup_{n\to +\infty} \left(\fk_{\vert x_n+V} \right)'_r(x_n;h)  \\
& = \limsup_{n\to  +\infty}  \fk'_r(x_n;h)  \\
& =  \limsup_{n\to +\infty}  \lim_{\substack{t \downarrow 0}} \frac{\mathfrak{F}(x_n+th)-\mathfrak{F}(x_n)}{t}\\
& \le  \limsup_{\substack{u \to x\\ t \downarrow 0}} \frac{\mathfrak{F}(u+th)-\mathfrak{F}(u)}{t} =  \mathfrak{F}^\circ_+(x,h).
\end{align*}
Then $\xi \in \partial_C^{-}\mathfrak{F}(x)$
and we proved Step 1.

\medskip

\noindent\textbf{Step 2:} We have that $ \partial_C^- \mathfrak{F}(x)  \subset \overline{ \text{co}\left(  \partial_A^-\mathfrak{F}(x)\right)} $.

\noindent\textbf{Proof of Step 2:} We let $h \in X$. It suffices to prove that
\begin{equation} \label{eq:goalioffe} \mathfrak{F}^\circ_+(x,h) \leq \sup\{ \langle \xi,h \rangle ; \xi \in \partial_A^-\fk(x) \}.
\end{equation}
Indeed, the right-hand side is the supporting function of $\overline{ \text{co}\left(  \partial_A^{\pm}\mathfrak{F}(x)\right)}$. Let $V \in \mathcal{G}(X)$. We denote 
\begin{equation*} Q_V = \left\{ \xi \in V^* : \begin{split}& \text{There is } (x_n) \in \Omega^\mathbb{N} \text{ such that } \\
 & x_n \to x \text{ as } n\to +\infty, \\ 
& \mathfrak{F}_{\vert x_n + V} \text{ is differentiable at } x_n, \\ & D \mathfrak{F}_{\vert x_n + V}(x_n) \to \xi \text{ as } n\to +\infty. \end{split} \right\} \end{equation*}
and
$$ \widetilde{Q_V} = \{ \xi \in X^* ; \xi_{\vert V} \in Q_V \}. $$
By definition of the $\limsup$ of sequences of sets, we have that
$$ \widetilde{Q_V} \subset \limsup_{y \to x}  \widetilde{\partial^{\pm}\fk_{\vert y+V}}(y) =: S_V $$
so that
$$ \bigcap_{V \in \mathcal{G}(X)} \widetilde{Q_V} \subset \bigcap_{V \in \mathcal{G}(X)} S_V = \partial_A^- \fk(x) \subset  \partial_C^- \fk(x). $$
In Step 3, we will prove that
\begin{equation} \label{eq:goalstep2ioffe} \forall V \in \mathcal{G}(X) \text{ s.t } h \in V, \{ \xi \in \widetilde{Q_V} \cap \partial_C^- \fk(x) : \langle \xi,h \rangle = \mathfrak{F}^\circ_+(x,h)  \} \neq \emptyset. \end{equation}
If \eqref{eq:goalstep2ioffe} holds true, since $\partial_A \mathfrak{F}(x) = \cap_{V \in \mathcal{G}(X)} S_V$ and $\partial_C\mathfrak{F}(x)$ is weak$*$ compact and $S_V$ is weak$*$ closed, we can take an increasing sequence $\{V_\ell\} \subset \mathcal{G}(X)$ such that $h \in V_\ell$ yielding elements $\xi_\ell \in \partial_C\mathfrak{F}(x) \cap \bigcap_{j \le \ell}S_{V_j}$ which shall subconverge to $\xi \in \partial_C\mathfrak{F}(x) \cap \partial_A\mathfrak{F}(x)$ satisfying
\[
\langle \xi , h \rangle  =  \mathfrak{F}_+^\circ(x,h).
\]
This yields \eqref{eq:goalioffe}.

\medskip

\noindent\textbf{Step 3 :} We prove \eqref{eq:goalstep2ioffe}.

\noindent\textbf{Proof of Step 3:} Let $h \in X$ and $V \in \mathcal{G}(X)$ containing $h$. First we prove that 
\begin{equation} \label{eq:goalstep2ioffe2} \{ \xi \in Q_V : \langle \xi,h \rangle = \mathfrak{F}^\circ_+(x,h)  \} \neq \emptyset. \end{equation}
The proof of \eqref{eq:goalstep2ioffe2} is inspired by Clarke \cite[Proposition (1.4)]{Clarke1975}. We set
$$ \alpha = \limsup\{ \langle D\fk_{\vert y+V}(y),h \rangle ; y \to x \text{ and } D\fk_{\vert y+V}(y) \text{ exists} \} $$
and we aim at proving that $\fk^\circ(x;h) \leq \alpha$ (the other inequality is straightforward). Let $\eps>0$. Let $\delta >0$ be such that for any $y \in B_X(x,\delta)$ such that $D\fk_{\vert y+V}(y)$ exists, we have
$$ \langle D\fk_{y+V}(y),h\rangle \leq \alpha +\eps. $$
We fix $y_0 \in B_X(x,\frac{\delta}{2})$. By Rademacher's theorem, $\fk_{\vert y_0 + V}$ is differentiable almost everywhere on $(y_0 + V) \cap B_X(x,\delta)$. We set for $y \in y_0 + V$
the line
$$ L_y = (y + \R h) \cap B_X(x,\delta). $$
By Fubini's theorem, for almost every $y \in y_0 + V$ (with respect to the Lebesgue measure on $y_0+V$), the set of points $z \in L_y$ such that $D\fk_{\vert y_0 +V}$ is not differentiable at $z$ has zero measure in $L_y$. Let $y \in B_X(x,\frac{\delta}{2})\cap (y_0+V)$ be such that this property holds true. Then, we can write
$$ \fk(y+th) - \fk(y) = t \int_0^1 \langle D\fk_{\vert y_0 + V}(y+sh),h \rangle ds \leq t (\alpha + \eps) $$
for any $0 \leq t \leq \frac{\delta}{2 \Vert h \Vert} $. Then we have that for almost every $y \in B_X(x,\frac{\delta}{2})\cap (y_0+V)$,
$$ \frac{\fk(y+th) - \fk(y)}{t} \leq (\alpha + \eps). $$
By continuity of $\fk$, this property holds everywhere in $B_X(x,\frac{\delta}{2})\cap (y_0+V)$. Since $y_0$ is arbitrary, this property holds everywhere in $B_X(x,\frac{\delta}{2})$. Passing to the $\limsup$ as $y \to x$ and $t \downarrow 0$ and then letting $\eps \to 0$, we obtain $\fk_+^\circ(x;h) \leq \alpha$ so that $\fk_+^\circ(x;h) = \alpha $. We can pick a sequence $y_i \to x$ such that $D\fk_{\vert y_i+V}(y_i)$ exists and 
$$ \langle D\fk_{\vert y_i+V}(y_i),h \rangle \to \alpha $$
as $i\to +\infty$. $V^*$ is a finite dimensional set and $\fk$ is Lipschitz : up to taking a subsequence, $D\fk_{\vert y_i+V}(y_i) \to \xi$ and $\xi$ belongs to the set in \eqref{eq:goalstep2ioffe2}: we obtain \eqref{eq:goalstep2ioffe2}.

Now we conclude the proof of \eqref{eq:goalstep2ioffe}. Since $\xi \in Q_V$, we have
$$ \forall v \in V, \langle \xi,v \rangle \leq  \mathfrak{F}^\circ_+(x,v). $$
%Let $N \subset X$ be a complementary of $V$. Then we can identify $N^\perp \df \{\zeta \in X^\star : \langle \zeta, w \rangle = 0  \, \, \, \forall w \in N \}$ with $V^\star$. In this way, we are in a position to apply 
We also have equality if $v = h$ from \eqref{eq:goalstep2ioffe2}.
The Hahn-Banach dominated extension theorem with the continuous sublinear function $ \mathfrak{F}^\circ_+(x,\cdot) : X \to \setR$ gives $\zeta \in X^*$ coinciding with $\xi$ on $V$ such that,
\[
\forall v \in X, \langle \zeta , v \rangle  \le \mathfrak{F}^\circ_+(x,v).
\]
In particular, $\zeta\in \partial_C^-f(x)$ and by construction, we have that $\zeta \in \widetilde{Q_V}$ and $\langle \zeta, h \rangle = \mathfrak{F}^\circ_+(x,h)$. This completes the proof of \eqref{eq:goalstep2ioffe} and the proof of the proposition.
\end{proof}

\subsubsection{Local minima and maxima}

We conclude this section with the following elementary lemma which implies that any local minimum or local maximum is critical in the sense of Definition \ref{def:crit}.

\begin{lemma}\label{lem:minmax} Consider $x \in \Omega$.
\begin{enumerate}
\item If $\fk$ admits a local minimum at $x$, then $0 \in \partial_C^- \fk(x)$. If $\fk$ admits a local maximum at $x$, then $0 \in \partial_C^+ \fk(x)$. 

\item Assume that $\fk$ admits right directional derivatives at $x$ in any direction $h \in X$. If $\fk$ admits a local minimum at $x$, then $0 \in \partial^- \fk(x)$. If $\fk$ admits a local maximum at $x$, then $0 \in \partial^+ \fk(x)$. 
\end{enumerate}
\end{lemma}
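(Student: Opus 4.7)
The plan is to reduce both statements to direct sign checks on one-sided or generalized directional derivatives, using the simplest possible test-path for the limsup/liminf that appear in Clarke's definitions.

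For statement (1), I would first rephrase the conclusions: by the definitions of the Clarke sub/superdifferentials, $0 \in \partial_C^- \fk(x)$ amounts to $\fk_+^\circ(x;h) \ge 0$ for every $h \in X$, and $0 \in \partial_C^+ \fk(x)$ amounts to $\fk_-^\circ(x;h) \le 0$ for every $h$. To verify the first of these when $x$ is a local minimum, I would test the limsup in \eqref{eq:gen_diff_+} against the trivial choice $y \equiv x$ (a legitimate competitor since any constant sequence converges to $x$), obtaining
\[
\fk_+^\circ(x;h) \ge \limsup_{t \downarrow 0} \frac{\fk(x+th) - \fk(x)}{t} \ge 0,
\]
where the last inequality holds because the local minimum property forces $\fk(x+th) - \fk(x) \ge 0$ for all sufficiently small $t>0$. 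The local maximum case is symmetric: the liminf in \eqref{eq:gen_diff_-} evaluated along $y \equiv x$ yields $\fk_-^\circ(x;h) \le 0$.

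For statement (2), the hypothesis that $\fk$ admits right directional derivatives at $x$ lets me bypass the generalized derivatives entirely. At a local minimum the quotient $t^{-1}(\fk(x+th) - \fk(x))$ is non-negative for all $t>0$ small enough, so its limit $\fk'_r(x;h)$ is non-negative for every $h$; this is exactly the defining inequality $\langle 0, h \rangle \le \fk'_r(x;h)$ for $0 \in \partial^-\fk(x)$. The local maximum case is handled identically with reversed signs, giving $0 \in \partial^+\fk(x)$.

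I do not anticipate any genuine obstacle: the whole lemma rests on the observation that one can restrict the path $y \to x$ in the generalized derivative to the constant path $y \equiv x$, at which point the extremum property supplies the required sign. The only point deserving a line of justification is precisely that the constant path is an admissible competitor in \eqref{eq:gen_diff_+} and \eqref{eq:gen_diff_-}, which is immediate from the definition.
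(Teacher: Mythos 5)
Your proof is correct and follows essentially the same route as the paper: for (1) you restrict the limsup/liminf defining $\fk_\pm^\circ(x;h)$ to the constant path $y\equiv x$ and invoke the local extremum sign condition, which is exactly what the paper does, and for (2) you make explicit the one-line argument that the paper labels "immediate."
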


\begin{proof}
Let us prove (1). If $\fk$ admits a local minimum at $x$, then for any $h \in X$, 
\[
\fk_+^\circ(x , h) \ge \limsup_{t \downarrow 0} \frac{\fk(x+th)-\fk(x)}{t} \ge 0,
\]
thus $0 \in  \partial_C^- \fk(x)$.  If $\fk$ admits a local maximum for $x$, then for any $h \in X$,
\[
\fk_-^\circ(x , h) \le \liminf_{t \downarrow 0} \frac{\fk(x+th)-\fk(x)}{t} \le 0,
\]
thus $0 \in  \partial_C^+ \fk(x)$.  The proof of (2) is immediate, therefore we omit it.
\end{proof}

\subsection{Mappings to pseudo-Euclidean quadrics}\label{subsec:mappings}

In this subsection, we provide some background information about mappings of smooth manifolds to pseudo-Euclidean quadrics. We follow the presentation of \cite{PetridesEllipsoids}.

\subsubsection{Pseudo-Euclidean quadrics} For given parameters $\lambda=(\lambda_1, \ldots, \lambda_N) \in \setR^N$, $\eps=(\eps_k) \in \{-1,0,1\}^{N}$ and $c \in \setR$,  we consider the (possibly degenerate) quadric of $\setR^N$ defined by
\begin{align}\label{eq:def_quadric}
\cQ  = \cQ(\lambda,\eps,c) & \df \left\{ \xi = (\xi_1,\ldots,\xi_n) \in \setR^N \, : \, \sum_{k=1}^N \eps_k \lambda_k \xi_k^2 = c \right\}.
\end{align}
Note that $\cQ(\lambda,\eps,c) = \cQ(\lambda,-\eps,-c)$. We endow $\cQ$ with the pseudo-Euclidean metric induced by \[h_\eps \df \sum_{k=1}^N \eps_k \, d \xi_k \otimes d \xi_k\] on the space $\setR^N(\eps)$ which we define as the image of $\setR^N$ under the projection over the subspace corresponding to those coordinates $k$ for which $\eps_k \neq 0$.  Observe that $\cQ$ is a level set of the quadratic form
\[
q: \xi \mapsto \sum_{k=1}^{N} \eps_k\lambda_k \xi_k^2
\]
and that  the differential of $q$ at $\xi \in \cQ$ in the direction $v \in \setR^N$ is
\[
\di_\xi q (v) = 2 \sum_{k=1}^N \eps_k \lambda_k \xi_k v_k = h_{\eps}(\Lambda\xi, v)
\]
where we have set $\Lambda = \mathrm{Diag}(\lambda_1,\ldots,\lambda_N)$. Then the tangent space of $\cQ$ at $\xi$ is
\begin{equation}\label{eq:tangent}
T_\xi \cQ = (\Lambda \xi)^{\perp_{h_\eps}} \subset \setR^N(\eps).
\end{equation}

\subsubsection{$L$-harmonic mappings}\label{subsubsec:L_harmonic_mappings} Let us consider a smooth, compact,  connected Riemannian manifold $(M,g)$ endowed with a $\|\cdot\|_{L^2(g)}$-densely defined elliptic self-adjoint operator $L$ of order $m \ge 2$ acting on $L^2(M)$. We assume that $\cC^\infty(M)$ belongs to the domain of $L$.  
For a mapping
\[
\Phi = (\phi_1,\ldots,\phi_N)  \in \cC^m(M,\setR^N)
\]
we set
\[
L\Phi \df (L\phi_1,\ldots,L\phi_N)
\]
and
\begin{align*}
E_{L,\eps}(\Phi) & \df  \frac{1}{2} \int_M \langle \Phi,  L\Phi \rangle_{h_\eps} \di v_g = \frac{1}{2} \sum_{k=1}^N \eps_k \int_M \phi_k L\phi_k \di v_g.
\end{align*}
A classical computation shows that for any $\Phi, \Psi \in \cC^m(M,\setR^N)$, 
\begin{equation}\label{eq:classical}
\left. \frac{\di}{\di t} \right|_{t=0} E_{L,\eps}(\Phi + t \Psi) = \int_M \langle L\Phi, \Psi \rangle_{h_\eps} \di v_g.
\end{equation}
Assume that $\partial M = \emptyset$.  From \eqref{eq:classical}, we obtain that the Euler--Lagrange equation  characterizing a critical point $\Phi$ of the restriction of $E_{L,\eps}$ to the set
\[
\mathcal{M}_m(\cQ) \df \left\{ \Phi   \in \cC^m(M,\setR^N) \, : \, \Phi(M) \subset \cQ \right\}
\]
 is 
\begin{equation}\label{eq:crit}
L \Phi \perp_{h_\eps} T_\Phi \cQ.
\end{equation}

\begin{D}
We say that $\Phi  \in \cM_m(\cQ)$ is an $L$-harmonic mapping from $(M,g)$ to $(\cQ,h_\eps)$ whenever \eqref{eq:crit} is satisfied.
\end{D}

\begin{rem}\label{rem:obvious}
Condition \eqref{eq:crit} is obviously equivalent to $L \Phi \perp_{h_{-\eps}} T_\Phi \cQ$. 
\end{rem}

\begin{rem}\label{rem:obvious2}
By \eqref{eq:tangent},  the Euler--Lagrange characterization \eqref{eq:crit} rewrites as
\[
L \Phi  \paral \Lambda \Phi
\]
that is to say, $L \Phi  = c \, \Lambda \Phi$ for some $c : M \to \setR$.   In terms of coordinates, this amounts to asking that for any $1 \le k \le N$, 
$$L \phi_k = c \lambda_k \phi_k.$$
As a trivial consequence,  if $\lambda_1,\ldots,\lambda_N$ are the $N$ lowest eigenvalues of $L$, then any mapping into $\cQ$ whose $k$-th coordinate is an eigenfunction associated with $\lambda_k$ is an $L$-harmonic mapping.
\end{rem}

\subsubsection{Minimal immersions}
We recall that a $\mathcal{C}^2$ immersion 
\[
\Phi = (\phi_1,\ldots,\phi_N) : (M,g) \to(\setR^N,h_\eps)
\]
defines a pull-back metric $\Phi^* h_\eps \df \sum_{k=1}^N \eps_k d\phi_k \otimes d \phi_k$ on $M$ and is called:
\begin{itemize}
\item conformal if and only if $f g = \Phi^* h_\eps$ for some $f \in \cC^\infty_{>0}(M)$,
\item homothetic if and only if $\gamma g = \Phi^* h_\eps$ for some $\gamma>0$,
\item isometric if and only if
$g = \Phi^* h_\eps$
\end{itemize}
Moreover, $\Phi$ is minimal if and only if it is both $\Delta_g$-harmonic and conformal.

 \subsubsection{$p$-harmonic mappings} Let $n\ge 2$ be the dimension of $M$ and $p \ge 2$ a given number. We consider the set of mappings
\begin{equation}\label{eq:mappings}
\mathcal{M}(\eps) \df  \left\{ \Phi \in \cC^2(M,\setR^N) \,  : \, \langle d \Phi,  d \Phi \rangle_{g,h_\eps}   > 0  \text{ on $M$} \right\}
\end{equation}
where
\begin{equation}\label{eq:prod}
\langle d \Phi,  d \Phi \rangle_{g,h_\eps}  \coloneqq \sum_{k=1}^N \eps_k |d \phi_k|_g^2.
\end{equation}
For any $\Phi \in \mathcal{M}(\eps)$, define
\[
E_{p,g,\eps}(\Phi) = \int_M |d \Phi|_{g,h_\eps}^p \di v_g
\]
where 
\[
|d \Phi|_{g,h_\eps} \df \left( \sum_{k=1}^N \eps_k |d \phi_k|_g^2 \right)^{1/2}.
\]
A classical computation shows that for any $\Phi\in \mathcal{M}(\eps)$ and $\Psi \in \cC^2(M,\setR^N)$ such that $\Phi + t \Psi \in \mathcal{M}(\eps)$ for any $t$ close enough to $0$,
\begin{equation}\label{eq:comp2}
\left. \frac{\di}{\di t} \right|_{t=0} E_{p,g,\eps}(\Phi + t \Psi) = \frac{p}{2} \int_M  \left\langle  \delta_g \left( |d \Phi|_{g,h_\eps}^{p-2} d \Phi \right) , \Psi\right\rangle_{h_\eps} \di v_g,
\end{equation}
where $\delta_g$ is the dual of $d$. 

Assume that $\partial M = \emptyset$.  From \eqref{eq:comp2}, we obtain that the Euler--Lagrange equation  characterizing a critical point $\Phi$ for the restriction of $E_{p,g,\eps}$ to the set
\[
\mathcal{M}(\eps,\cQ) \df  \left\{ \Phi \in \mathcal{M}(\eps) : \Phi(M) \subset \cQ \right\}
\]
 is 
\begin{equation}\label{eq:EL_n_harm}
 \delta_{g} \left( |d \Phi|_{g,h_\eps}^{p-2} d \Phi \right) \perp_{h_\eps} T_\Phi \cQ.
\end{equation}

\begin{D}
We say that $\Phi  \in \cM(\eps,\cQ)$ is a $p$-harmonic mapping from $(M,g)$ to $(\cQ,h_\eps)$ whenever \eqref{eq:EL_n_harm} is satisfied.
\end{D}

\begin{rems}

\hfill
\begin{enumerate}
\item The condition for being $2$-harmonic coincides with the one from the previous subsection for being $\Delta_g$-harmonic.
\item If $p>2$, then \eqref{eq:EL_n_harm} is not equivalent to $\delta_{g} \left( |d \Phi|_{g,h_{-\eps}}^{p-2} d \Phi \right) \perp_{h_{-\eps}} T_\Phi \cQ$.  Actually, $\mathcal{M}(\eps)$ and $\mathcal{M}(-\eps)$ do not even coincide.
\item By \eqref{eq:tangent}, the Euler--Lagrange equation \eqref{eq:EL_n_harm} rewrites as
\begin{equation}\label{eq:EL_n_harm_modif}
 \delta_{g} \left( |d \Phi|_{g,h_\eps}^{p-2} d \Phi \right)  \paral \Lambda \Phi.
\end{equation}
\item It is easily seen that $E_{n,g,\eps}$ is invariant under a conformal change of the metric $g$, i.e.~under replacing $g$ by $fg$ for some $f \in \cC_{>0}^\infty(M)$. 
\end{enumerate}
\end{rems}

 \subsubsection{Free boundary $p$-harmonic mappings}

Assume now that $\partial M \neq \emptyset$.  We consider the set of mappings
\[
\mathcal{M}(\eps,V,W) \df  \left\{ \Phi \in \mathcal{M}(\eps) : \Phi(M) \subset V \text{ and } \Phi(\partial M) \subset W \right\}
\] 
where $V$ is a smooth submanifold of $\setR^N$ and $W$ is a smooth submanifold of $V$.  From \eqref{eq:comp2}, we obtain that a critical point $\Phi$ of the restriction of $E_{p,g,\eps}$ to $\mathcal{M}(\eps,V,W)$ is characterized by the Euler--Lagrange equation
\begin{equation}\label{eq:EL_n_harm_free}
\begin{cases}
 \delta_{g} \left( |d \Phi|_{g,h_\eps}^{p-2} d \Phi \right) \perp_{h_\eps} T_\Phi V & \text{on $M$}\\
  |d \Phi|_{g,h_\eps}^{p-2}  \partial_g^{\nu}  \Phi  \perp_{h_\eps} T_\Phi W  & \text{on $\partial M$},
 \end{cases}
\end{equation}
where $\partial_g^{\nu}$ is the normal derivative with respect to $g$ at the boundary $\partial M$ and 
\[
\partial_g^{\nu}  \Phi  \df (\partial_g^{\nu}  \phi_1, \ldots,\partial_g^{\nu}  \phi_N).
\]

\begin{D}
We say that $\Phi  \in \cM(\eps,V,W)$ is a free boundary $p$-harmonic mapping from $(M,\partial M, g)$ to $(V,W,h_\eps)$ whenever \eqref{eq:EL_n_harm_free} is satisfied.
\end{D}

\begin{rem}
\hfill
\begin{enumerate}
\item When $p=2$ we obtain the classical notion of free boundary $\Delta_g$-harmonic maps.
\item In this paper, we are interested in the two following cases only.
\begin{itemize}
\item $(V,W)=(\setR^N, \cQ)$, in which case \eqref{eq:EL_n_harm_free} becomes
\begin{equation}\label{eq:EL_free_boundary_modif}
\begin{cases}
 \delta_{g} \left( |d \Phi|_{g,h_\eps}^{p-2} d \Phi \right) = 0 & \text{on $M$},\\
  \partial_g^{\nu} \Phi \paral \Lambda \Phi  & \text{on $\partial M$}.
 \end{cases}
\end{equation}
If $\cQ$ is the boundary of its convex hull $\conv \cQ$, then a simple argument based on the maximum principle shows that a free boundary $p$-harmonic mapping $\Phi$ of $(M,\partial M,g)$ into $(\setR^N, \cQ,h_\eps)$ maps $M$ into  $\conv \cQ$. Moreover, if $\Phi$ is additionally minimal, then it is a proper immersion of $M$ into $\conv \cQ$, i.e.~$\Phi(\mathring{M}) \cap \cQ = \emptyset$.

\item $(V,W) = (\cQ,\partial \mathcal{D}\cQ)$ where $\mathcal{D}\cQ$ is the positive piece of some dyadic subdivision of $\cQ$, i.e.~$\mathcal{D}\cQ \df \left\{x \in \cQ \, : \,   \text{$x_i > 0$ if $i \in I$}\right\}$ for some $I \subset \{1,\ldots,N\}$, and $\partial \mathcal{D}\cQ)\df \left\{x \in \cQ \, : \,   \text{$x_i = 0$ if $i \in I$}\right\}$ is its boundary.
\end{itemize}
\end{enumerate}
\end{rem}

\section{Theoretical part}\label{sec:theory}

\subsection{Abstract Rayleigh quotient}\label{subsec:abstract_Rayleigh} From now on,  we consider a Banach space $X$, an open set $\Omega \subset X$,  a vector space $Y$,  and a map
\[ \cR : \Omega \times \uY\to \setR \cup\{+\infty\}\]
such that the following hold at any $x \in \Omega$.

\subsubsection*{Standing assumptions}

\begin{enumerate}
\item[(A)] There exist a positive definite quadratic form $Q(x,\cdot)$ on $Y$ and another quadratic form $G(x,\cdot)$ with $Q^{1/2}(x,\cdot)$-dense domain $H(x)\subset Y$ such that for any $u \in \underline{Y}$,
\[
\cR(x,u)  = \frac{G(x,u)}{Q(x,u)}\, \cdot
\]
We let $B(x,\cdot,\cdot)$ and $\Gamma(x,\cdot,\cdot)$ be the bilinear forms associated to $Q(x,\cdot)$ and $G(x,\cdot)$, respectively. Note that $B(x,\cdot,\cdot)$ is a scalar product on $Y$.
\end{enumerate}

\begin{enumerate}
\item[(B)] There exists a neighborhood $V \subset \Omega$ of $x$ and a bounded function $\theta : V \to (0,+\infty)$ such that for any $x' \in V$, \[N(x',\cdot) \df [G(x'\, ,\cdot)+\theta(x') Q(x',\cdot)]^{1/2}\] defines a Hilbert norm on $H(x')$.  We let $P(x',\cdot,\cdot)$ be the associated scalar product. We set $\Theta(x') \df \inf \{\theta \ge  0 : G(x',u) + \theta Q(x',u) \ge 0 \,\, \text{for any $u \in \underline{Y}$}\}$.
\end{enumerate}

\begin{enumerate}
\item[(C)] There exist $C \ge 1$ and a neighborhood $V \subset \Omega$ of $x$ such that for any $x' \in V$,
\begin{align}\label{eq:equivN}
C^{-1} N(x'\, ,\cdot) & \le N(x\, ,\cdot) \le C N(x'\, ,\cdot). 
\end{align}
In particular, the spaces $H(x)$ and $H(x')$ are all isomorphic one to another.  Therefore, from now on we use the notation $H$ to denote the space $H(x)$. 
\end{enumerate}

\begin{enumerate}
\item[(D)] Let $\{x_n\} \subset \Omega$  be such that $\|x_n - x\|_X\to 0$. For any $(u_n) \subset H$ such that
\[
\sup_n N(x_n,u_n) < +\infty,
\]
there exists $u \in H$ such that, up to extracting a common subsequence from $\{x_n\}$ and $\{u_n\}$,
\begin{align}\label{eq:compactness}
\begin{cases}
Q(x_n,u_n - u) \to 0\\
\Gamma(x_n,u_{n},h) \to \Gamma(x,u,h) \qquad \forall h \in H.
\end{cases}
\end{align}

\item[(E)] The maps $Q(\cdot,u)$ and $G(\cdot,v)$ are Fréchet differentiable at $x$ for any $u \in \uY$ and $v \in H$.

\item[(F)] For any $(x_n) \subset \Omega$,  $(u_n)\subset Y$, $u\in Y$ and $(v_n) \subset H$, $v_n\in H$,
\begin{equation}\label{eq:contQ}
\|x_n-x\|_X + Q(x_n,u_n-u) \to 0  \quad \Rightarrow \quad \begin{cases}Q(x_n,u_n) \to Q(x,u),\\
 Q_x(x_n,u_n) \to Q_x(x,u),\end{cases}
\end{equation}
\begin{equation}\label{eq:contG}
\|x_n-x\|_X + N(x_n,v_n-v) \to 0  \quad \Rightarrow \quad \begin{cases}G(x_n,v_n) \to G(x,u),\\ G_x(x_n,v_n) \to G_x(x,v). \end{cases}
\end{equation}
\item[(G)] There exist $C_0>0$ and a neighborhood $V \subset \Omega$ of $x$ such that for any $x' \in V$,  $u_1,u_2 \in Y$ and $v_1,v_2 \in H$,
\begin{align}\label{Frechet_der}
\| B_x(x' \, , u_1\, , u_2)\|_{X^{*}} &  \le C_0 N(x',u_1) N(x',u_2),\\
\| \Gamma_x(x' \, , v_1\, , v_2)\|_{X^{*}} &  \le C_0 N(x',v_1) N(x',v_2).\nonumber
\end{align}
\end{enumerate}

\begin{rem}\label{eq:alternative_assumptions}
We have that [ (C) and (D) ] $\Leftarrow$ [ (C') and (D') ] where the latter are defined as follows.
\begin{enumerate}
\item[(C')] There exist $C \ge 1$ and a neighborhood $V \subset \Omega$ of $x$ such that for any $x' \in V$,
\begin{align*}
C^{-1} Q(x'\, ,\cdot) & \le Q(x\, ,\cdot) \le C Q(x'\, ,\cdot),\\
C^{-1} N(x'\, ,\cdot) & \le N(x\, ,\cdot) \le CN(x'\, ,\cdot).
\end{align*}

\item[(D')] The injection $(H,N(x,\cdot))  \hookrightarrow (Y,Q^{1/2}(x,\cdot))$ is compact.
\end{enumerate}

\end{rem}

\subsubsection*{Elementary consequences}

\begin{enumerate}
\item By (A), we directly obtain that:
\begin{itemize}
\item $\cR(x\, ,u) = +\infty$ if and only if $u \in Y \backslash H $,
\item $\cR$ is homogeneous with respect to the second variable, that is to say, $\cR(x \, ,t u) = \cR(x\, ,u)$ for any $t \in \setR \backslash \{0\}$ and $u \in H$.
\end{itemize} 
\item By (B) and standard functional analysis,  there exists a $B(x,\cdot,\cdot)$-self-adjoint operator $L(x)$ on $H$ defined by
\begin{align*}
\mathcal{D}(L(x)) & \coloneqq \{ u \in H : \exists ! \, \gamma \in Y, \,\,\forall \, v \in H, \,\, \Gamma(x,u,v) = B(x,\gamma,v)\},\\
L(x)u & \df \gamma \quad \text{for any $u \in \mathcal{D}(L(x))$.}
\end{align*}
\item By (D), considering the sequence $(x_n)$ constantly equal to $x$, we obtain that the injection
\begin{equation*}
(H(x),N(x,\cdot)) \hookrightarrow (Y,Q^{1/2}(x,\cdot))
\end{equation*}
is compact.  This implies that $L(x)$ has a compact resolvent, hence it admits isolated eigenvalues,  all with finite multiplicity,  forming a sequence \[-\Theta(x) \le  \lambda_1(x) \le  \lambda_2(x) \le \cdots \to +\infty.\]  Moreover, there exists an orthonormal basis $(u_k)_{k \in \setN^*}$ of $Y$ made of corresponding eigenvectors.  Lastly, the min-max principle yields that for any $k \in \setN$,
\begin{equation}\label{eq:def_lambda_k}
\lambda_k (x) =  \min_{E \in \mathcal{G}_{k}(H)} \max_{u \in S_{Q(x,\cdot)}(E)} \mathcal{R}(x,u).
\end{equation}

\item By (E),  we know that $\cR(\cdot,v)$ is Fréchet differentiable at any $v\in H$, and the quotient rule for the Fréchet derivative yields that when $v \neq 0$,
\begin{equation}\label{eq:quotientrule}
\mathcal{R}_x(x,v) = \frac{G_x(x,v)Q(x,v)-G(x,v)Q_x(x,v)}{Q(x,v)} \, \cdot
\end{equation}
\item By polarisation,  (F) implies that for any $(x_n) \subset \Omega$,  $(u_n)\subset Y$, $(v_n) \subset H$, $x \in \Omega$,  $u \in Y$,  $v \in H$ and $h \in X$,
\begin{equation}\label{eq:weakcontQ}
\|x_n-x\|_X + Q(x_n,u_n-u) \to 0  \quad \Rightarrow \quad \begin{cases}B(x_n,u_n,h) \to B(x,u,h),\\
 B_x(x_n,u_n,h) \to B_x(x,u,h),\end{cases}
\end{equation}
\begin{equation}\label{eq:weakcontG}
\|x_n-x\|_X + N(x_n,v_n-v) \to 0  \quad \Rightarrow \quad  \begin{cases}\Gamma(x_n,v_n,h) \to \Gamma(x,v,h),\\ \Gamma_x(x_n,v_n,h) \to \Gamma_x(x,v,h). \end{cases}
\end{equation}

\item Of course, (F), (A) and \eqref{eq:quotientrule} also imply that if $v \neq 0$, then
\begin{equation}\label{eq:contR}
\|x_n-x\|_X + N(x_n,v_n-v) \to 0  \quad \Rightarrow \quad \begin{cases}\cR(x_n,v_n) \to \cR(x,v),\\ \cR_x(x_n,v_n) \to \cR_x(x,v). \end{cases}
\end{equation}

\item By (C), the term $N(x_n,u_n-u)$ in the two previous points may be replaced by $N(x,u_n-u)$.

\end{enumerate}

\subsubsection*{Multiplicity notation} \hfill

For any $x \in \Omega$ and $k\in \setN^*$, we set
\begin{align*}
E_k(x) & \df \{ u \in H \, :\,  L(x)u = \lambda_k (x) u \}\\
& = \{ u \in \left( \oplus_{i=1}^{k-1} E_i(x)\right)^{\perp_{Q(x,\cdot)}} \, :\,  \mathcal{R}(x,u) = \lambda_k (x) \},\\
SE_k(x) & \df S_{Q(x,\cdot)}(E_k(x)).
\end{align*}
Note that if $u \in  SE_{k}(x)$ then
\begin{equation}\label{eq:Glambda}
G(x,u) = \lambda_k (x),
\end{equation}
\begin{equation}\label{eq:Rayleigh_der}
\mathcal{R}_x(x,u) =G_x(x,u)-\lambda_k(x)Q_x(x,u)\, \cdot
\end{equation}
We let:
\begin{itemize}
\item $\{\mu_i(x)\}_{i \in \setN^*}$ be the distinct elements of the collection $\{\lambda_k(x)\}_{k \in \setN^*}$ sorted in increasing order,
\item $\{F_i(x)\}_{i \in \setN^*}$ be the distinct elements of the collection $\{E_k(x)\}_{k \in \setN^*}$ sorted in accordance to the corresponding $\{\mu_i(x)\}$. 
\end{itemize}
 For any $i \in \setN^*$ we define:
\begin{itemize}
\item $\aleph_i(x)$ as the set of indices $k$ such that $\lambda_k(x) = \mu_i(x)$, 
\item $m_i(x)$ as the cardinality of $\aleph_{i}(x)$,
\item $j_i(x)$ as the lowest index in $\aleph_i(x)$,
\item $J_i(x)$ as the highest index in $\aleph_i(x)$.
\end{itemize}
For any $k \in \mathbb{N}^*$ we also define
\begin{itemize}
\item $i_k(x)$ as the unique integer such that $\mu_{i_k(x)}(x)=\lambda_k(x)$,
\item $p_k(x) \df k - j_{i_k(x)}(x) + 1$ as the position of $k$ in $\aleph_{i_k(x)}(x)$,
\item $\cU_k(x)$ as the set of $Q(x,\cdot)$-orthogonal families $(u_1,\ldots,u_k) \subset Y$ such that $u_j \in E_j(x)$ for any $1 \le j \le k$,
\item $\mathbf{U}_k(x)$ as the subset of $\cU_k(x)$ made of $Q(x,\cdot)$-orthonormal families.
\end{itemize}

\subsection{Continuity of the eigenvalues} In this section, we study the continuity properties of the functions $\lambda_k$. We begin with the following.

\begin{prop}\label{prop:uppersemi-continuity}
The function $\lambda_k$ is upper semi-continuous on $\Omega$ for any $k \in \mathbb{N}^*$.
\end{prop}

\begin{proof} 
Let $x_n \to x$ be a convergent sequence in $\Omega$. Let $\delta >0$. Let $E \in \mathcal{G}_k(x)$ be such that
\[
\max_{u \in \underline{E}} \cR(x \, , u) \leq \lambda_k(x) + \delta.
 \]
 Take a subsequence $(y_n)$ of $(x_n)$ such that $\lambda_k(y_n) \to  \limsup_{n\to +\infty} \lambda_k(x_n) $. For any $n$,
\[
\lambda_k(y_n) \leq \max_{u\in S_{Q(x,\cdot)}(E) }  \cR(y_n \, , u) =  \cR(y_n \, , u_n),
\]
where $u_n \in S_{Q(x,\cdot)}(E)$ is a maximizing point.  Then
\[
N(x,u_n) = (\cR(x,u_n) + \theta(x))^{1/2} \le (\lambda_k(x) + \delta + \theta(x))^{1/2}
\]
hence $(u_n)$ is bounded both in $Q(x, \cdot )^{\frac{1}{2}}$ and $N(x,\cdot)$. Since $E$ is finite dimensional, there is a subsequence still denoted $(u_n)$ such that $u_n \to u \in E$ strongly with respect to $Q(x,\cdot)^{\frac{1}{2}}$ and $N(x,\cdot)$. By \eqref{eq:contR}, we obtain that
\[ 
\limsup_{n\to +\infty} \lambda_k(x_n) \leq \lim\limits_{n \to +\infty}  \cR(y_n \, , u_n) =  \cR(x \, , u) \leq \lambda_k(x) + \delta. 
\]
Letting $\delta \to 0$ ends the proof of the proposition.
\end{proof}

\begin{prop} \label{prop:continuity}
The function $\lambda_k$ is locally Lipschitz on $\Omega$ for any $k \in \mathbb{N}^*$. 
\end{prop}

\begin{proof}
Let $x \in \Omega$ and $\eps > 0$ be such that $B_X(x,\eps)\subset V$ where $V$ is a neighborhood of $x$ in $\Omega$ where (C) and (D) hold. Consider $y_1,y_2 \in B_X(x,\eps)$ and $0<\delta<1$. 
By Proposition \ref{prop:uppersemi-continuity}, we can assume that $\eps $ is small enough to ensure that
$$ \lambda_k(y_i) \leq \lambda_k(x) +1$$
for any $i \in \{1,2\}$. Let $E \in \mathcal{G}_k(H)$ be such that
\[
\max_{u \in \underline{E}} \cR(y_1 \, , u) \leq \lambda_k(y_1) + \delta.
 \]
 We test $E$ in the variational characterization of $\lambda_k(y_2)$ to get
 \[
\lambda_k(y_2) \leq \max_{u \in \underline{E}} \cR(y_2 \, , u).
 \]
 We wish to compare $\cR(y_1 \, , u)$ with $\cR(y_2 \, , u)$ for a  given $u \in \underline{E}$.  Observe that
  \[
G(y_2 \, , u) = G(y_1 \, , u) + \int_{0}^1 \langle G_x(y_1 + s(y_2-y_1),u), y_2-y_1 \rangle \di s
 \]
 so that by (C) and (G), we get
   \[
\vert G(y_2 \, , u) - G(y_1 \, , u) \vert \leq C_0 N(y_1,u )^2 \Vert y_2 - y_1 \Vert_X.
 \]
 Since
    \[
 \frac{N(y_1,u )^2}{Q(y_1,u)} = \theta(x)+ \cR(y_1,u) \leq \theta(x) + \lambda_k(y_1)+\delta \leq \theta(x) + \lambda_k(x)+2
 \]
we obtain that there exists $K>0$ depending on $x$ only such that
    \[
\vert G(y_2 \, , u) - G(y_1 \, , u) \vert \leq K Q(y_1,u) \Vert y_2 - y_1 \Vert_X.
 \]
Likewise, up to changing the value of $K$, we have
   \[
\vert Q(y_2 \, , u) - Q(y_1 \, , u) \vert \leq K Q(y_1,u) \Vert y_2 - y_1 \Vert_X.
 \]
Thus
\begin{align*}
\cR(y_2 \, , u)  = \frac{G(y_2 \, , u)}{Q(y_2 \, , u)} & \le \frac{G(y_1 \, , u)+ KQ(y_1 \, , u)\|y_2 - y_1\|_X}{Q(y_2 \, , u)} \\
& \leq \frac{ \cR(y_1 \, , u) }{ 1 \pm K \Vert y_2-y_1 \Vert_X} + \frac{K \Vert y_2 -y_1 \Vert_X}{1-K \Vert y_2 -y_1 \Vert_X}
\end{align*} 
 where $\pm = -$ if $\cR(y_1 \, , u) \geq 0$ and $\pm = +$ if $\cR(y_1 \, , u) < 0$.  Assuming that $\eps$ is small enough to guarantee that
 \[
 \frac{1}{1\pm K \Vert y_2-y_1 \Vert_X} \le_{\mp} 1 \mp 2 K \Vert y_2-y_1 \Vert_X
 \]
 we eventually obtain that there exists $K'>0$ depending on $x$ only such that
  %and we obtain
    %\[ \cR(y_2 \, , u) \leq  \cR(y_1 \, , u) + K' \Vert y_2 -y_1 \Vert_X \]
%for a constant $K' = \left(2(\vert\lambda_k(x)\vert+2)+3\right)K$ only depending on $x$ so that
  \[ \lambda_k(y_2) \leq \max_{u \in \underline{E}} \cR(y_2 \, , u) \leq \lambda_k(y_1) + \delta + K' \Vert y_2 -y_1 \Vert_X.
 \]
 Letting $\delta \to 0$ and exchanging the roles of $y_1$ and $y_2$ completes the proof.
\end{proof}

\begin{rem}
Note that the upper semi-continuity and the Lipschitz regularity obtained in the two previous propositions does not require Assumption (D). 
\end{rem}

We now obtain the following immediate but important corollary where Assumption (D) is crucial.

\begin{cor}\label{prop:closed_graph}
Let $x_\ell \to x$ be a convergent sequence in $\Omega$.  For $k \in \setN^*$, let $\{u_\ell\} \subset Y$ be such that $u_\ell \in SE_{k}(x_\ell)$ for any $\ell$. Then there exists $u \in E_{k}(x)$ such that $\{u_\ell\}$ subconverges  $Q^{1/2}(x,\cdot)$-strongly and $N(x,\cdot)$-strongly to $u$.
\end{cor}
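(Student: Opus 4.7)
The plan is to apply the compactness hypothesis (D) to the sequence of normalized eigenvectors $(u_\ell)$, identify the subsequential limit as a normalized eigenvector at $x$, and then upgrade the $Q(x_\ell,\cdot)$-strong convergence furnished by (D) to strong convergence in $N(x,\cdot)$ and $Q^{1/2}(x,\cdot)$.

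First I would verify that $N(x_\ell,u_\ell)$ is bounded: from $u_\ell \in SE_k(x_\ell)$ and \eqref{eq:Glambda} we have $N^2(x_\ell,u_\ell) = \lambda_k(x_\ell) + \theta(x_\ell)$, which stays bounded since $\lambda_k$ is continuous by Proposition \ref{prop:continuity} and $\theta$ is bounded on a neighborhood of $x$. Assumption (D) then yields a subsequence (still denoted $(u_\ell)$) and some $u \in H$ with $Q(x_\ell,u_\ell-u)\to 0$ and $\Gamma(x_\ell,u_\ell,h)\to\Gamma(x,u,h)$ for every $h\in H$. Passing to the limit in the weak eigenvalue equation $\Gamma(x_\ell,u_\ell,h)=\lambda_k(x_\ell)B(x_\ell,u_\ell,h)$ via \eqref{eq:weakcontQ} and continuity of $\lambda_k$ gives $\Gamma(x,u,h)=\lambda_k(x)B(x,u,h)$ for every $h\in H$, so $u \in E_k(x)$; the normalization $Q(x,u)=1$ follows at once from \eqref{eq:contQ}, since $Q(x_\ell,u_\ell)=1$.

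Next I would upgrade to strong convergence in $N(x,\cdot)$ by expanding
\[
N^2(x_\ell,u_\ell-u) = G(x_\ell,u_\ell) - 2\Gamma(x_\ell,u_\ell,u) + G(x_\ell,u) + \theta(x_\ell)\,Q(x_\ell,u_\ell-u),
\]
and showing that the four terms converge respectively to $G(x,u)$, $-2G(x,u)$, $G(x,u)$ and $0$, which cancel. Each limit relies on a specific hypothesis: the first term combines $G(x_\ell,u_\ell)=\lambda_k(x_\ell)$ with continuity of $\lambda_k$; the cross term uses the weak-convergence clause of (D) evaluated at the fixed test vector $h=u\in H$; the third term uses \eqref{eq:contG} for the constant sequence equal to $u$; and the last term uses $Q(x_\ell,u_\ell-u)\to 0$ together with boundedness of $\theta$. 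Transferring to the base point is then immediate: by (C), $N(x,u_\ell-u)\le C\,N(x_\ell,u_\ell-u)\to 0$.

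Finally, the $Q^{1/2}(x,\cdot)$-strong convergence would follow by specializing (D) to the constant sequence $x_n\equiv x$, which makes the inclusion $(H,N(x,\cdot))\hookrightarrow (Y,Q^{1/2}(x,\cdot))$ sequentially compact and a fortiori continuous, so that $Q^{1/2}(x,u_\ell-u)\le C'\,N(x,u_\ell-u)\to 0$ for some constant $C'=C'(x)$. The main obstacle is the bookkeeping in the $N^2$-expansion: each of the four limits must invoke the correct continuity hypothesis, and it is crucial to handle the cross term $\Gamma(x_\ell,u_\ell,u)$ via the test-function clause of (D) rather than by general continuity of $\Gamma$, since we only have $Q(x_\ell,\cdot)$-strong convergence of $u_\ell$ to $u$ at that stage.
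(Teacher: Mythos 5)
Your proof is correct, and it is essentially the argument the authors have in mind: the paper gives no proof for this corollary, calling it an "immediate" consequence of Proposition \ref{prop:continuity}, whose Step 2 already contains the core of the argument (bound $N(x_\ell,u_\ell)$ using $G(x_\ell,u_\ell)=\lambda_k(x_\ell)$ and continuity of $\lambda_k$, invoke (D), pass to the limit in the weak eigenvalue equation via \eqref{eq:weakcontQ}, and identify $Q(x,u)=1$ via \eqref{eq:contQ}). What the corollary adds beyond Proposition \ref{prop:continuity} is precisely the strong $N(x,\cdot)$-convergence, and your polarization expansion of $N^2(x_\ell,u_\ell-u)$, with each of the four terms matched to the correct continuity hypothesis ((\ref{eq:Glambda}) plus continuity of $\lambda_k$; the test-vector clause of (D) at $h=u$; \eqref{eq:contG} for the frozen argument $u$; and boundedness of $\theta$ with the $Q(x_\ell,\cdot)$-strong clause of (D)), is exactly the right way to obtain it, followed by (C) to transfer to the base metric. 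Your final step correctly notes that $Q^{1/2}(x,\cdot)$-convergence is not automatic from (C) or (D) alone, since (C) only compares the $N$-norms; invoking boundedness of the compact inclusion $(H,N(x,\cdot))\hookrightarrow(Y,Q^{1/2}(x,\cdot))$ (Elementary Consequence (3)) closes this gap cleanly.
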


\subsection{Differentiability of the eigenvalues} 

The next theorem is the key technical result of this paper.

\begin{theorem}\label{th:main} Consider $k \in \setN^*$,  $x \in \Omega$ and $h \in X$. Set $i=i_k(x)$.

\begin{enumerate}
\item For any $\{t_n\} \subset (0,+\infty)$ and $\{z_n\} \subset X$ such that $t_n \downarrow 0$ and $z_n = o(t_n)$ as $n\to +\infty$,  there exists a $Q$-orthonormal, $G$-orthogonal,  and $\mathcal{B}$-orthogonal (see Remark \ref{rk:derivative=eigenvalue} below) family $\{u_j\}_{1 \le j \le m} \subset E_{k}(x)$ such that for any $j \in \{1,\cdots,m\}$, up to extracting subsequences from $\{t_n\}$ and $\{z_n\}$, one has
\begin{equation}\label{eq:step1}
 \lim_{n\to +\infty}  \frac{\lambda_{j_i(x)+j-1}(x +t_n h + z_n)-\lambda_{j_i(x)+j-1}(x)}{t_n}  = \langle \cR_x(x,u_j),h \rangle.
 \end{equation}

\item If $y: (-\eps,\eps)\to X$ is differentiable, satisfies $y(0) = x$ and $y'(0) = h$, then
\begin{equation} \label{eqminmaxdiff} \lim_{t \downarrow 0} \frac{\lambda_k(y(t)) - \lambda_k(x)}{t}  = \min_{F \in \Gr_{p_k(x)}(E_k(x))} \max_{u \in \uF} \,  \langle\cR_x(x\, ;u),h\rangle.
\end{equation}
In particular, the function $\lambda_k$ admits a right derivative at $x$ in the direction $h$, and
\begin{align}\label{min_max_diff}
[\lambda_k]_r'(x \, ; h) & =   \min_{F \in \Gr_{p_k(x)}(E_k(x))} \max_{u \in \uF} \,  \langle\cR_x(x\, ;u),h\rangle.
\end{align}
%\item The function $\lambda_{J_i(x)}$ is upper regular at $x$ and  the function $\lambda_{j_i(x)}$ is lower regular at $x$.
\end{enumerate}
\end{theorem}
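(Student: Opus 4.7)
The plan is to localise the analysis to the finite-dimensional eigenspace $E_k(x)$ and reduce everything to a single symmetric perturbation form thereon. Define
\[
\mathcal{B}_h(u, v) := \langle \Gamma_x(x, u, v) - \lambda_k(x) B_x(x, u, v),\, h\rangle \qquad \text{on } E_k(x);
\]
by \eqref{eq:Rayleigh_der} and polarisation, $\mathcal{B}_h(u, u) = \langle \cR_x(x, u), h\rangle$ whenever $Q(x, u) = 1$. Let $(u_j)_{1 \le j \le m}$, with $m = m_i(x)$, be a $Q(x, \cdot)$-orthonormal basis of $E_k(x)$ diagonalising $\mathcal{B}_h$ with eigenvalues $d_1 \le \ldots \le d_m$. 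Such a family is automatically $G$-orthogonal (because $\Gamma(x, \cdot, \cdot) = \lambda_k(x) B(x, \cdot, \cdot)$ on $E_k(x)$) and $\mathcal{B}$-orthogonal by construction.

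For (1), set $x_n := y_n + t_n h + z_n$. Combining \eqref{eq:DL1}--\eqref{eq:reste_integral} applied to the Fréchet-differentiable map $\cR(\cdot, v)$ with the continuity \eqref{eq:contR} of $\cR_x$ and the assumption $z_n = o(t_n)$, one obtains
\[
\cR(x_n, v) - \cR(y_n, v) = t_n \langle \cR_x(x, v), h\rangle + o(t_n)
\]
uniformly for $v$ in any fixed bounded subset of $H$. The strategy is then to squeeze the difference quotient
\[
D_j^n := \frac{\lambda_{j_i(x) + j - 1}(x_n) - \lambda_{j_i(x) + j - 1}(y_n)}{t_n}
\]
between upper and lower bounds deduced from the min-max \eqref{eq:def_lambda_k}. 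The upper bound uses, as test subspaces at $x_n$, those spanned by $Q(y_n, \cdot)$-orthonormal eigenfunctions at $y_n$; by Corollary \ref{prop:closed_graph} these families subconverge, along a common subsequence, to a $Q(x, \cdot)$-orthonormal family in $E_k(x)$. A symmetric construction with eigenfunctions at $x_n$ gives the matching lower bound. Both bounds converge to the $j$-th eigenvalue of $\mathcal{B}_h$, and by a final orthogonal rotation within $E_k(x)$ we identify the limiting family with the diagonalising basis $(u_j)$, yielding \eqref{eq:step1} with $\langle \cR_x(x, u_j), h\rangle = d_j$.

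Part (2) follows by specialising (1) to $y_n = x$, $z_n = y(t_n) - x - t_n h = o(t_n)$, and $j = p_k(x)$: the limit equals $d_{p_k(x)}$, which the classical finite-dimensional min-max principle for $\mathcal{B}_h$ on $(E_k(x), Q(x, \cdot))$ identifies with the right-hand side of \eqref{min_max_diff} (the constraint $Q(x, u) = 1$ being absorbed by the zero-homogeneity of $\cR_x(x, \cdot)$). For (3), when $k = J_i(x)$ one has $p_k(x) = m$ and only the full subspace $F = E_k(x)$ is available, so $[\lambda_{J_i(x)}]'_r(x; h) = d_m$, the largest eigenvalue of $\mathcal{B}_h$; applying (1) with $z_n \equiv 0$ and $j = m$ along any sequences realising the $\limsup$ in the definition of $[\lambda_{J_i(x)}]^\circ_+(x; h)$ gives the same value $d_m$, hence upper regularity. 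Lower regularity of $\lambda_{j_i(x)}$ is symmetric, with $j = 1$ and $d_1$ playing the corresponding roles.

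The main obstacle lies in the joint construction of the test families realising the sharp upper and lower bounds in (1): one must synchronise the orthonormalisations with respect to the nearby scalar products $Q(y_n, \cdot)$ and $Q(x_n, \cdot)$ (close to $Q(x, \cdot)$ by \eqref{eq:contQ} and comparable thanks to (C)), extract a common subsequence using the compactness (D), and rotate within $E_k(x)$ so that the limiting basis diagonalises $\mathcal{B}_h$ with the correct ordering $d_1 \le \cdots \le d_m$ matching the indexing $j = 1, \ldots, m$.
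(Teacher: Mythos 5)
The proposal takes a genuinely different route from the paper's, and the central step — part (1) — contains a conceptual gap that makes the whole argument collapse.

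Your strategy is to \emph{squeeze} the difference quotient
\[
D^n_j = \frac{\lambda_{j_i(x)+j-1}(y_n+t_nh+z_n)-\lambda_{j_i(x)+j-1}(y_n)}{t_n}
\]
between a min-max upper bound (test subspace spanned by eigenfunctions at $y_n$) and a matching lower bound (test subspace spanned by eigenfunctions at $x_n = y_n+t_nh+z_n$), and to conclude that both bounds converge to $d_j$, the $j$-th \emph{ordered} eigenvalue of $\mathcal{B}_h$. This is precisely the claim that the paper explicitly disavows in Remark \ref{rk:derivative=eigenvalue}: ``the eigenvalues $\{\nu_j\}$ are all realized at the limit \emph{but the order is not necessarily conserved}; in other words, $[\lambda_k]_\pm^\circ(x;h)$ belongs to the set $\{\nu_j\}$ but may not be equal to $\nu_{p_k(x)}$.'' The family $\{u_j\}$ of part (1) is \emph{not} a diagonalising basis of $\mathcal{B}_h$ fixed in advance with $\langle \cR_x(x,u_j),h\rangle = d_j$; it is an a posteriori family obtained as the limit of a $Q(w_n,\cdot)$-orthonormal family of eigenfunctions at $w_n = y_n+t_nh+z_n$, and the correspondence between $\{u_j\}$ and the ordered eigenvectors of $\mathcal{B}_h$ is in general a nontrivial permutation that depends on the sequences $\{y_n\}$, $\{t_n\}$. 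Order-preservation is specific to the case $y_n \equiv x$, which is exactly why the paper isolates it as Step 2.

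Quite apart from the ordering issue, the squeeze does not close as sketched. After extraction, the two test subspaces converge to subspaces of $\bigoplus_{\ell \le j_i(x)+j-1} E_\ell(x)$ whose intersections with $E_k(x)$ are $p$-dimensional subspaces $F$ and $F'$ that need not coincide, and the limiting bounds become $\max_{u\in\underline F}\langle\cR_x(x,u),h\rangle$ from above and $\min_{u\in\underline{F'}}\langle\cR_x(x,u),h\rangle$ from below — an interval of the form $[\nu_1,\nu_m]$ rather than a point. You acknowledge this difficulty in the last paragraph (``synchronise the orthonormalisations \dots\ and rotate within $E_k(x)$'') but offer no argument for how the synchronisation and the identification of the correct eigenvector would be carried out. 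By contrast the paper's proof of Step~1 avoids the min-max comparison altogether: it writes the eigenvalue equation $\Gamma(w_n,\cdot,u_j^n)=\tilde\lambda_j(w_n)B(w_n,\cdot,u_j^n)$, subtracts the corresponding identity at $y_n$ after projecting $u_j^n$ onto the relevant eigenspace at $y_n$, expands the $x$-dependence via the fundamental theorem of calculus \eqref{eq:DL1}, rescales by the bespoke quantity $\tau_j^n = |\tilde\lambda_j(w_n)-\tilde\lambda_j(y_n)|+t_n+\sqrt{Q(y_n,R_j^n)}$ so that all three normalised quantities $\bar t_j^n, D_j^n, \bar R_j^n$ subconverge, passes to the limit in the resulting weak identity \eqref{eq:chooseBeta} using Assumptions (D)--(G), and \emph{reads off} both the value of the quotient (testing with $v=u_j$) and the $\mathcal B$-orthogonality (testing with $v=u_\ell$). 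This machinery, rather than a comparison of extremal subspaces, is what produces a single common subsequence and a single $Q$- and $\mathcal{B}$-orthogonal family for all $j$ simultaneously.

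Your treatments of parts (2) and (3) are structurally aligned with the paper's and would be correct \emph{once (1) is available}: specialising to $y_n \equiv x$ restores the ordering, as the paper notes, and the regularity argument in (3) is the same two-sided comparison of $[\tilde\lambda_m]^\circ_+$ and $[\tilde\lambda_m]'_r$ (respectively $[\tilde\lambda_1]^\circ_-$ and $[\tilde\lambda_1]'_r$) that the paper carries out in Step~3. But without a working proof of (1), these remain conditional.
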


Before proving this theorem,  we point out a preliminary fact.

\begin{rem}\label{rk:derivative=eigenvalue}
For any $x \in \Omega$ and $h \in X$,  let us consider the bilinear form $\mathcal{B}(\cdot,\cdot)$ defined on $E=E_{k}(x)$ by setting 
\[
\mathcal{B}(u,v) \df   \langle \Gamma_x(x,u,v) - \lambda_k(x) B_x(x,u,v), h\rangle
\]
for any $u, v \in E$. By the Riesz representation theorem applied to the Hilbert space $(E, B(x,\cdot,\cdot))$, there exists a symmetric endomorphism $\mathcal{L}$  of $E$ such that $\mathcal{B}(u,v) = B(x,\mathcal{L}(u),v)$ for any $v \in E$.  Let
\[
\nu_1 \le \ldots \le \nu_m
\]
be the eigenvalues of $\mathcal{L}$. Since $B(x,\mathcal{L}(u),u)  = \langle \mathcal{R}_x(x,u),h \rangle$ for any $u \in E$,  the min-max principle applied to the endomorphism $\mathcal{L}$ on the Hilbert space $(E,B(x,\cdot,\cdot))$ yields that the right-hand side of \eqref{min_max_diff} coincides with $\nu_{p_k(x)}$, so that \eqref{eqminmaxdiff} and \eqref{min_max_diff} rewrite as
\begin{equation}\label{eq:derivative=eigenvalue}
\lim\limits_{t \downarrow 0}\frac{\lambda_k(y(t)) - \lambda_k(x)}{t} = [\lambda_k]_r'(x \, ;h)  = \nu_{p_k}(x).
\end{equation}
%More generally,  considering all the ratios in \eqref{eq:step1} for $j =1,\cdots,m$,  we obtain that the eigenvalues $\{\nu_j\}_{j=1,\cdots,m}$ are all realized at the limit but the order is not necessarily conserved; in other words, $[\lambda_k]_{\pm}^{\circ}(x;h)$ belongs to the set $\{\nu_j\}_{j=1,\cdots,m}$ but may not be equal to $\nu_{p_k(x)}$. 
%Notice
%%, however, 
%that the order of eigenvalues is conserved when $y_n = x$ for any $n$ (see Step 2); this is why we obtain \eqref{eqminmaxdiff}.
\end{rem}

\begin{rem}
Notice that if $t \mapsto y(t)$ is analytic, then \textit{(2)} follows from Rellich-Kato perturbation theory: see \cite[Theorem 2.6 in Chapter 8]{Kato}.  Here we do not assume analyticity.
\end{rem}

We are now in a position to prove Theorem \ref{th:main}.

\begin{proof}[Proof of Theorem \ref{th:main}]

Throughout the proof, we keep $x \in \Omega$ fixed and we write $m$, $Q$ and $G$  instead of $m_k(x)$, $Q(x,\cdot)$ and $G(x,\cdot)$, respectively.  For any $j \in \{1,\cdots,m\}$ and $y \in \Omega$,  we define \[\tilde{\lambda}_j(y) \df \lambda_{j_{i_k(x)}+j-1}(y).\] 

\textbf{Step 1.} We prove \eqref{eq:step1}.  For any $n$ we set 
\begin{align*}
w_n & \df x + t_n h +z_n,\\
w_n^s & \df x + s(t_n h +z_n) \qquad \forall s \in [0,1].
\end{align*}
With no loss of generality,  we assume that all the $w_n$, $w_n^s$ belong to the neighborhoods of $x$ given by Assumptions (B), (C) and (G).
For any $n$,  let $\{u^n_j\}_{1 \leq j \leq m}$ be a $Q(w_n,\cdot)$-orthonormal family of eigenfunctions associated with $\{ \tilde{\lambda}_j(w_n)\}_{1 \leq j \leq m}$. 
Consider $j \in \{1,\ldots,m\}$.  Then each $u^n_j$  satisfies
\begin{equation}\label{eq:good}
 \Gamma(w_n,\cdot,u_j^n) =  \tilde{\lambda}_j(w_n) B(w_n,\cdot,u_j^n). 
\end{equation}
Let $\pi$ be the $Q(x ,\cdot)$-orthogonal projection onto $E_k(x)$.  Set \[R_j^n \coloneqq u_j^n - \pi (u_j^n ) \in E_k(x)^{\perp_{Q}}.\] Then
\begin{align}\label{eq:à_diviser}
 & \phantom{=}  \Gamma(x,\cdot ,R_j^n)  - \tilde{\lambda}_j(x) B(x,\cdot ,R_j^n) \nonumber \\
=  & \phantom{=}  \Gamma(x,\cdot ,u_j^n)   - \tilde{\lambda}_j(x) B(x,\cdot,u_j^n)  \nonumber \\
= & \phantom{=} \Gamma(x,\cdot ,u_j^n) - \Gamma(w_n,\cdot ,u_j^n) +  \tilde{\lambda}_j(w_n) (B(w_n,\cdot,u_j^n) - B(x,\cdot,u_j^n)) \nonumber \nonumber  \\
& \qquad \qquad \qquad \qquad +  (\tilde{\lambda}_j(w_n)-\tilde{\lambda}_j(x)) B(x,\cdot,u_j^n)  \nonumber \\
= &  - t_n \int_{0}^1 \left\langle \Gamma_x(w_n^s ,\cdot,u_j^n) ,h + \frac{z_n}{t_n} \right\rangle \di s + \tilde{\lambda}_j(w_n)  t_n  \int_{0}^1 \left\langle B_x(w_n^s,\cdot,u_j^n),h + \frac{z_n}{t_n} \right\rangle \di s  \nonumber \\
&  \qquad \qquad +  (\tilde{\lambda}_j(w_n)-\tilde{\lambda}_j(x)) B(x,\cdot,u_j^n) 
 \end{align}
where we have used \eqref{eq:good} to obtain the second equality and \eqref{eq:DL1} to get the last one. Set
\[
\tau_j^n \df  t_n + \sqrt{Q(x,R_j^n)}
\]
and
\[
D_j^n  \df \frac{ \tilde{\lambda}_j(w_n) - \tilde{\lambda}_j(x)}{\tau_{j}^n} \,, \qquad \qquad  \overline{t}_j^n \df \frac{t_n}{\tau_j^n} \, ,  \qquad \qquad \overline{R}_j^n \df \frac{R_j^n}{\tau_j^n} \, \cdot
\]
Divide \eqref{eq:à_diviser} by $\tau_j^n$ to get that for any $v \in H$,
\begin{align}\label{eq:topass}
 &   \phantom{=}  \Gamma(x,v ,\overline{R}_j^n)  - \tilde{\lambda}_j(x) B(x,v ,\overline{R}_j^n) =   - \overline{t}_j^n  \int_{0}^1 \left\langle \Gamma_x(w_n^s,v,u_j^n) ,   h +  \frac{z_n}{t_n} \right\rangle \di s  \\
 &  \qquad \qquad \qquad \qquad + \overline{t}_j^n \tilde{\lambda}_j(w_n) \int_{0}^1 \left\langle B_x(w_n^s,v,u_j^n), h + \frac{z_n}{t_n} \right\rangle \di s + B(x,v ,u_j^n) D_j^n. \nonumber
\end{align}
Let us justify that, up to extracting a subsequence, we can take the  limit as $n\to+\infty$ in the previous equality.
\begin{enumerate}
\item[(i)] Since $\{D_j^n\}_n $ is bounded by Proposition \ref{prop:continuity} and $\{\overline{t}_j^n\}_n \subset [0,1]$,  there exist $C>0$, $D_j \in [-C,C]$ and $\overline{t}_j \in [0,1]$ such that, up to extracting subsequences, 
$$ \overline{t}_j = \lim_{n\to +\infty} \overline{t}_j^n
\quad \text{and} \quad  D_j = \lim_{n\to +\infty} D_j^n.$$

\item[(ii)] For any $n$ and $j$,  the element $u_j^n$ is such that $Q(w_n,u_j^n)=1$, and \eqref{eq:good} implies that $G(w_n,u_j^n)=\tilde{\lambda}_j(w_n)$. Then the continuity of $\tilde{\lambda}_j$ and the local boundedness of $\theta$ imply that there exists $C=C(j,x)>0$ such that
\begin{equation}\label{eq:oo}
\sup_n N(w_n,u_j^n) \le C.
\end{equation}
By Assumption (D),  there exists $u_j \in H$ such that, up to extracting subsequences,
\[
\begin{cases}
Q(w_n,u_j^n-u_j) \to 0,\\
\Gamma(w_n,u_j^n,h) \to \Gamma(x,u_j,h).
\end{cases}
\]
In particular, thanks to \eqref{eq:weakcontQ}, we obtain that
\begin{equation}
B(w_n,u_j^n,v) \to B(x,u_j,v).
\end{equation}
Moreover,  by polarisation,  we easily get that for any $\ell,j \in \{1,\ldots,m\}$,
\[
\begin{cases}
\delta_{\ell,j} = B(w_n,u_\ell^n,u_j^n) \to B(x,u_\ell,u_j),\\
c_{\ell,j}^n\delta_{\ell,j} = \Gamma(w_n,u_\ell^n,u_j^n) \to \Gamma(x,u_\ell,u_j),
\end{cases}
\]
where $c_{\ell,j}^n$ is some positive  constant.  Then $(u_j)$ is $Q$-orthonormal and $G$-orthogonal. 

\item[(iii)] On the Hilbert space $(H,N(x,\cdot))$, consider the linear form $L_j^n$ which is dual to $\overline{R}_j^n$, that is to say,  $L_j^n$ is such that for any $v \in H$,
\[
L_j^n(v) = P(x,v,\overline{R}_j^n) 
\]
where we recall that $P(x,\cdot,\cdot)$ is the scalar product associated with $N(x,\cdot)$.
Use \eqref{eq:topass} to obtain the second inequality below :
\[
\left\vert L_j^n(v) \right\vert \leq |\Gamma(x,v,\overline{R}_j^n)| + \theta(x)  |B(x,v,\overline{R}_j^n)|
 \le \mathrm{I} + \mathrm{II} + \mathrm{III} + \mathrm{IV}
\]
where
\begin{align*}
\mathrm{I} & \df \left( \tilde{\lambda}_j(y_n) +  \theta(y_n) \right) |B(y_n,v,\overline{R}_j^n)|  \\
\mathrm{II} & \df  |\overline{t}_j^n|  \int_{0}^1 \left| \left\langle  \Gamma_x(w_n^s,v,u_j^n) ,   h +  \frac{z_n}{t_n} \right\rangle  \right| \di s \\
\mathrm{III} & \df | \tilde{\lambda}_j(w_n)| |\overline{t}_j^n|  \int_{0}^1 \left| \left\langle  B_x(w_n^s,v,u_j^n) ,   h +  \frac{z_n}{t_n} \right\rangle  \right| \di s \\
\mathrm{IV} & \df |B(x,v,u_j^n)| |D_j^n|.
\end{align*}
Note that $\overline{R}_j^n$ is defined in such a way that $Q(x, \overline{R}_j^n)\le 1$. The continuity of $\tilde{\lambda}_j$, the local boundedness of $\theta$ and the Cauchy-Schwarz inequality easily imply that there exists $C=C(j,x)>0$ such that
\begin{align*}
\mathrm{I} & \le CQ^{1/2}(x,v) Q^{1/2}(x,\overline{R}_j^n) \le C N(x,v).
\end{align*}
The Cauchy-Schwarz inequality also implies that
\begin{align*}
\mathrm{IV} & \le N(x,v) N(x,u_j^n)  \le C N(x,v)
\end{align*}
where $C=C(j,x)>0$ is given by \eqref{eq:oo}. Moreover,
\begin{align*}
\mathrm{II} & \le \int_{0}^1 \left\|   \Gamma_x(w_n^s,v,u_j^n)  \right\|_{X^*}   \di s \left\| h +  \frac{z_n}{t_n} \right\|_X\\
& \le C_0 \int_0^1 N(w_n^s,v) N(w_n^s,u_j^n) \di s \left( \| h \|_X +o(1)\right)\\
& \le C_0 C  \left( \| h \|_X +o(1)\right) N(x,v)
\end{align*}
where we have used Assumption (E) to get the second inequality, and both Assumption (D) and \eqref{eq:oo} to get the last one. Lastly,  we may use the continuity of $\tilde{\lambda}_j$, Assumption (E), Assumption (D) and \eqref{eq:oo} to obtain that
\[
\mathrm{III} \le C(j,x)C_0\left( \| h \|_X +o(1)\right) N(x,v).
\]
In the end, we have
\[
|L_j^n(v)| \le \overline{C} N(x,v)
\]
for some $\overline{C}=\overline{C}(j,x,\|h\|_X)>0$ independent of $n$.  Therefore, by duality, we obtain that
\[
\sup_n N(x,\overline{R}_j^n) \le \overline{C}.
\]
Then Assumption (F) implies that, up to extracting subsequences,
\begin{equation}\label{eq:ortho}
\begin{cases}
B(x,v,\overline{R}_j^n) \to B(x,v,\overline{R}_j),\\
\Gamma(x,v,\overline{R}_j^n) \to \Gamma(x,v,\overline{R}_j).
\end{cases}
\end{equation}

\item[(iv)] From \eqref{eq:reste_integral}, we know that
\begin{align*}
 \int_{0}^1 \left\langle \Gamma_x(w_n^s,v,u_j^n) ,h + \frac{z_n}{t_n} \right\rangle \di s & \to   \langle \Gamma_x(x,v,u_j) ,h \rangle, \\
 \int_{0}^1 \left\langle B_x(w_n^s,v,u_j^n),h + \frac{z_n}{t_n} \right\rangle \di s & \to \langle B_x(x,v,u_j),h \rangle. 
\end{align*}
\end{enumerate} 
Thus we can let $n$ tend to $+\infty$ in \eqref{eq:topass}. We get
\begin{align}\label{eq:chooseBeta}
\Gamma(x,v,\overline{R}_j)  - \lambda_k(x) B(x,v,\overline{R}_j) =&  - \overline{t}_j \langle \Gamma_x(x,v,u_j) ,h \rangle + B(x,v,u_j) D_j \nonumber \\
& + \lambda_k(x) \overline{t}_j \langle B_x(x,v,u_j),h \rangle.
\end{align}
Choosing $v = u_j$, we obtain
\begin{equation}\label{eq:almost_done}
0 = - \overline{t}_j \langle G_x(x,u_j) ,h \rangle + Q(x,u_j) D_j  + \lambda_k(x) \overline{t}_j \langle Q_x(x,u_j),h \rangle. 
\end{equation}
If $\overline{t}_j = 0$, then $D_j = 0$ and we obtain $\Gamma(x,v,\overline{R}_j)  = \lambda_k(x) B(x,v,\overline{R}_j)$.  Since $\overline{R}_j \in E_k(x)^{\perp_{Q(x,.)}}$, this yields $\overline{R}_j = 0$, which contradicts 
$$1 = \overline{t}_j + \left\vert D_j \right\vert + \sqrt{Q(x,\overline{R}_j)}.$$
Thus $\overline{t}_j\neq 0$,  hence we can divide \eqref{eq:almost_done} by $ \overline{t}_j /Q(x,u_j)$ to get
$$ \frac{D_j}{\overline{t}_j} = \frac{\lambda_k(x) \langle Q_x(x,u_j),h \rangle -  \langle G_x(x,u_j) ,h \rangle}{Q(x,u_j)}$$
which easily rewrites as
\begin{equation*}
 \lim_{n\to +\infty} \frac{\tilde{\lambda}_j(w_n)-\lambda_{k}(x)}{t_n} = \langle\cR_x(x,u_j),h \rangle.
 \end{equation*}
 This is exactly \eqref{eq:step1}.  Moreover, choosing $v=u_l$ for $l\neq j$ in \eqref{eq:chooseBeta} and applying \eqref{eq:ortho},  we obtain that
\begin{align*}
0 &  = - \overline{t}_j \bigg( \langle \Gamma_x(x,u_j,u_l) ,h \rangle -\lambda_k(x) \langle B_x(x,u_j,u_l),h \rangle)  \bigg)
\end{align*}
which yields that $\{u_j\}_{1\leq j \leq m}$ is $\mathcal{B}$-orthogonal.  

\hfill

\textbf{Step 2.} We prove \eqref{eqminmaxdiff} and \eqref{min_max_diff}. Of course \eqref{min_max_diff} is a particular case of \eqref{eqminmaxdiff} for $y(t) = x+th$.
Now, if $y: (-\eps,\eps)\to X$ is differentiable, satisfies $y(0) = x$ and $y'(0) = h$, we have that $y(t) = x + t h + z(t)$, where $z(t)=o(t)$ as $t\to 0$. Let $t_n \downarrow 0$. We set $z_n = z(t_n)$.
By the previous step,  we know that there exists an extraction $\varphi(n) \to +\infty$ and a $Q$-orthonormal, $G$-orthogonal,  and $\mathcal{B}$-orthogonal family $\{u_j\}_{1 \le j \le m} \subset E_k(x)$ such that
\[
 \lim_{n\to +\infty}  \frac{\tilde{\lambda}_j(x +t_{\varphi(n)} h + z_{\varphi(n)})-\lambda_k(x)}{t_n}  = \langle \cR_x(x,u_j),h \rangle
 \]
 for any $j \in \{1,\cdots,m\}$. Note that for any $n$, \[\tilde{\lambda}_1(x +t_{\varphi(n)} h+z_{\varphi(n)}) \le \ldots \le \tilde{\lambda}_m(x +t_{\varphi(n)} h+z_{\varphi(n)}).\] Subtract $\lambda_k(x)$, divide by $t_{\varphi(n)}$ and pass to the limit as $n \to +\infty$ to get that
 \[\langle \cR_x(x,u_1),h \rangle \le \ldots \le \langle \cR_x(x,u_m),h \rangle.\]
Since $\{u_j\}$ is $Q$-orthonormal and $\mathcal{B}$-orthogonal, following the notations of Remark \ref{rk:derivative=eigenvalue},  each $u_i$ has to be an eigenfunction of $\mathcal{B}$ with eigenvalue $\nu_i(x,h)$. Then, there is a unique subsequential limit and we get Step 2 by the min-max characterization of eigenvalues of endomorphisms.

%\hfill
%
%\textbf{Step 3.} 
%Let us prove the upper regularity at $x$ of $\lambda_{J_i(x)}$. This amounts to showing that
%\[
%[\tilde{\lambda}_m]_+^{\circ}(x;h) = [\tilde{\lambda}_m]_r'(x, h).
%\]
%By the definition of a limit superior,  we know that $[\tilde{\lambda}_m]^{\circ}(x;h) \ge [\tilde{\lambda}_m]_r'(x, h).$ Consider $\{y_n\}$ and $\{t_n\}$ such that $y_n \to x$, $t_n \downarrow 0$, and
%\[
%[\tilde{\lambda}_m]_+^{\circ}(x;h) =  \lim_{n\to +\infty}  \frac{\tilde{\lambda}_m(y_n +t_n h)-\tilde{\lambda}_m(y_n)}{t_n} \, \cdot
%\]
%By Step 1, there exists $u_m \in E_k(x)$ such that $[\tilde{\lambda}_m]_+^{\circ}(x;h) = \langle \cR_x(x,u_m),h \rangle$, and by Step 2,  we know that $ [\tilde{\lambda}_m]_r'(x, h) = \max_{u \in \uE_k(x)} \langle\cR_x(x\, ;u),h\rangle.$ Thus  $[\tilde{\lambda}_m]_+^{\circ}(x;h) \le  [\tilde{\lambda}_m]_r'(x, h)$.
%
% The lower regularity at $x$ of $\lambda_{j_i(x)}$ is proved along the same lines. By definition of a limit inferior, $[\tilde{\lambda}_1]_-^{\circ}(x;h) \le [\tilde{\lambda}_1]_r'(x, h).$ By Step 2,  we know that
%$[\tilde{\lambda}_1]_r'(x, h) =  \min_{u \in \uE_k(x)} \langle\cR_x(x\, ;u),h\rangle$ while Step 1 ensures that $[\tilde{\lambda}_1]_-^{\circ}(x;h)$ equals $\langle\cR_x(x\, ;u_1),h\rangle$ for some $u_1 \in \uE_k(x)$. Thus $[\tilde{\lambda}_1]_-^{\circ}(x;h) \ge [\tilde{\lambda}_1]_r'(x, h).$
\end{proof}

%\begin{rem}
%Theorem \ref{th:main} also implies that $\lambda_k$ is a locally Lipschitz map. Indeed, let us prove that for any $x\in \Omega$ there exists a neighborhood $V \subset \Omega$ of $x$ and a constant $C_*>0$ such that
%\begin{equation}\label{bound}
%\sup_{\substack{y \in V\\ \|h\|_{X}=1}} |[\lambda_k]_\pm^\circ(y;h)| \le C_*.
%\end{equation}
%This is enough to ensure that $\lambda_k$ is a locally Lipschitz at $x$.
%
%\begin{proof}
%Let $C_0$ be given by Assumption (C).  Then there exists $C_1>0$ and a neighborhood $V \subset \Omega$ of $x$ such that $C_0(y)\le C_1$ for any $y \in V$. For any $h \in S(X)$,  the formula \eqref{min_max_diff} yields that for any $y \in V$,
%\[
%|[\lambda_k]_\pm^\circ(y,h)| \le \max_{\substack{u \in E_{i(k,y)}(y)\\ Q(y,u)=1}} |\langle \mathcal{R}_x(y,u);h\rangle|,
%\]
%and for any $u \in E_{i(k,y)}(y)$ such that $Q(y,u)=1$,
%\begin{align*}
%|\langle \mathcal{R}_x(y,u);h\rangle| &  \le  \| \mathcal{R}_x(y,u) \|_{X^*}\\
%& \le \| G_x(y,u) \|_{X^*}  + |\lambda_k(y)| \| Q_x(y,u) \|_{X^*} \\
%& \le C_1 (N(y,u)^2 + |\lambda_k(y)| )  &  \text{by \eqref{Frechet_der}}\\
%& = C_1(\lambda_k(y) + \theta(y) + |\lambda_k(y)|) & \text{ by \eqref{eq:Glambda}} \\
%& \le C_1(\lambda_k(y) + \Theta(y) + |\lambda_k(y)|).
%\end{align*}
%Then \eqref{bound} follows from the continuity of $\lambda_k$ and the local boundedness of $\Theta$.
%\end{proof}
%\end{rem}

We now build upon the previous result to give properties of the sub/superdifferentials of an eigenvalue.  We begin with a collection of elementary consequence of Theorem \ref{th:main}.

\begin{prop} Consider $x \in \Omega$ and $i\in \mathbb{N}^*$ such that  $m_i(x) \ge 2$.  Then for any $k \in \{0,\ldots,m_{i}(x)-2\}$,
\begin{equation}\label{eq:inclusionsub}
\partial^-[ \lambda_{j_i(x) + k} ](x) \subset \partial^-[ \lambda_{j_i(x) + k + 1} ](x),
\end{equation}
\begin{equation}\label{eq:inclusionsup}
\partial^+[ \lambda_{j_i(x) + k + 1 } ](x) \subset \partial^+[ \lambda_{j_i(x) + k} ](x),
\end{equation}
\begin{equation} \label{eqderivativemult}  [\lambda_{j_i(x)+k}]_r'(x;h)  =  [\lambda_{J_i(x) - k}]_\ell'(x; h) \qquad \forall \, h \in X,\end{equation}
\begin{equation} \label{eqsubsuperdifferentialmult}   \partial^{-} [\lambda_{j_i(x)+k}](x) = \partial^+ [\lambda_{J_i(x)-k}](x) .
\end{equation} 
\end{prop}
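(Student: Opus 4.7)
The plan is to reduce everything to the min-max formula \eqref{min_max_diff} from Theorem \ref{th:main} combined with the eigenvalue interpretation from Remark \ref{rk:derivative=eigenvalue}. First, I would observe that for every index $k$ with $j_i(x) \le k \le J_i(x)$, the eigenspace $E_k(x)$ is the same space $E \df F_i(x)$ of dimension $m \df m_i(x)$. Thus, fixing $h\in X$, by Remark \ref{rk:derivative=eigenvalue} the symmetric endomorphism $\mathcal{L} = \mathcal{L}(h)$ on $(E,B(x,\cdot,\cdot))$ whose quadratic form is $u\mapsto \langle \cR_x(x,u),h\rangle$ has eigenvalues
\[
\nu_1(h)\le \nu_2(h)\le \cdots \le \nu_m(h),
\]
and \eqref{min_max_diff} gives $[\lambda_{j_i(x)+\ell}]'_r(x\,;h) = \nu_{\ell+1}(h)$ for all $\ell \in \{0,\ldots,m-1\}$ (since $p_k(x) = k-j_i(x)+1$).

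Next, I would establish the two inclusions \eqref{eq:inclusionsub} and \eqref{eq:inclusionsup}. Since $\nu_{\ell+1}(h)\le \nu_{\ell+2}(h)$ for every $h\in X$, we get the pointwise inequality $[\lambda_{j_i(x)+k}]'_r(x\,;h) \le [\lambda_{j_i(x)+k+1}]'_r(x\,;h)$. Hence any $\zeta \in \partial^-[\lambda_{j_i(x)+k}](x)$ satisfies $\langle \zeta,h\rangle \le [\lambda_{j_i(x)+k+1}]'_r(x\,;h)$ for every $h$, which yields $\zeta\in \partial^-[\lambda_{j_i(x)+k+1}](x)$, proving \eqref{eq:inclusionsub}. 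Inclusion \eqref{eq:inclusionsup} follows analogously from the reverse inequality viewed from the superdifferential side.

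To prove \eqref{eqderivativemult}, I would combine \eqref{eq:left_and_right} with the standard max-min characterization of eigenvalues of a symmetric endomorphism. Using $J_i(x)-k = j_i(x) + (m-1-k)$ (so the position is $m-k$), we have
\[
[\lambda_{J_i(x)-k}]'_\ell(x\,;h) = -[\lambda_{J_i(x)-k}]'_r(x\,;-h) = -\min_{F\in \Gr_{m-k}(E)} \max_{u\in \uF} \langle \cR_x(x,u),-h\rangle.
\]
Interchanging the sign and swapping $\min$ and $\max$ turns this into the max-min formula
\[
\max_{F\in \Gr_{m-k}(E)} \min_{u\in \uF} \langle \cR_x(x,u),h\rangle = \nu_{k+1}(h) = [\lambda_{j_i(x)+k}]'_r(x\,;h),
\]
which is exactly \eqref{eqderivativemult}.

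Finally, I would deduce \eqref{eqsubsuperdifferentialmult} directly from \eqref{eqderivativemult}: a covector $\zeta\in X^*$ lies in $\partial^-[\lambda_{j_i(x)+k}](x)$ iff $\langle \zeta,h\rangle \le [\lambda_{j_i(x)+k}]'_r(x\,;h) = -[\lambda_{J_i(x)-k}]'_r(x\,;-h)$ for all $h$, iff $\langle \zeta,h'\rangle \ge [\lambda_{J_i(x)-k}]'_r(x\,;h')$ for all $h'\in X$ (via the change of variable $h'=-h$), iff $\zeta\in \partial^+[\lambda_{J_i(x)-k}](x)$. There is no serious obstacle; the only subtlety is keeping track of the shift between the position $p_k(x)$ and the enumeration inside $\aleph_i(x)$, and correctly translating between the min-max and max-min characterizations of the intermediate eigenvalues of $\mathcal{L}$.
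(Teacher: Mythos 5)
Your proof is correct and follows essentially the same route as the paper: the inclusions \eqref{eq:inclusionsub}--\eqref{eq:inclusionsup} from the monotonicity of $\nu_{\ell+1}(h)$ given by \eqref{min_max_diff}, then \eqref{eqderivativemult} by relating $\nu_{k+1}(h)$ to $-\nu_{m-k}(-h)$, and then \eqref{eqsubsuperdifferentialmult} from \eqref{eqderivativemult} and \eqref{eq:left_and_right}. The only cosmetic difference is that for \eqref{eqderivativemult} the paper argues at the operator level via $\mathcal{L}(x,h)=-\mathcal{L}(x,-h)$ and the reversal of eigenvalue ordering under sign change, whereas you pass directly to the max--min (Courant--Fischer) formulation; these are the same fact.
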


\begin{proof}
Equality  \eqref{min_max_diff} implies that $[\lambda_{j_i(x) + k}]_r'(x;h) \leq [\lambda_{j_i(x) + k+1}]_r'(x;h) $ for any $h \in X$, hence we get \eqref{eq:inclusionsub} and \eqref{eq:inclusionsup}. Let us prove \eqref{eqderivativemult}. From Equality  \eqref{min_max_diff} and Remark \ref{rk:derivative=eigenvalue},  we know that
\[
[\lambda_{j_i(x)+ k}]_r'(x;h) = \nu_{k+1}(x,h)
\]
where $\nu_{k+1}(x,h)$ is the $(k+1)$-th eigenvalue associated to the endomorphism $\mathcal{L}(x,h)$ defined by
\[
B(x,\mathcal{L}(x,h)(u),v) = \langle \Gamma_x(x,u,v) - \lambda_k(x) B_x(x,u,v), h\rangle
\]
for any $u,v \in F_i(x)$. From this defining equality, we get that $\mathcal{L}(x,h) = - \mathcal{L}(x,-h)$. Since the eigenvalues of $- \mathcal{L}(x,-h)$ sorted in non-decreasing order are $-\nu_{m_i(x)}(x,-h)\le \ldots \le -\nu_{1}(x,-h)$, we obtain that
\begin{align*}
\nu_{k+1}(x,h) & = - \nu_{m_i(x)-k}(x,-h)\\
& = -[\lambda_{j_i(x)+ m_i(x) - k-1}]_r'(x;-h)\\
& = [\lambda_{j_i(x)+ m_i(x) - k-1}]_\ell'(x;h) \,\, \qquad \text{by \eqref{eq:left_and_right}}\\
&= [\lambda_{J_i(x) - k}]_\ell'(x;h).
\end{align*}
Lastly, \eqref{eqsubsuperdifferentialmult} follows from \eqref{eqderivativemult}, \eqref{eq:left_and_right} and the definition of subdifferentials and superdifferentials.
\end{proof}

The previous implies a simple result for ``middle'' eigenvalues.

\begin{cor}
Consider $x \in \Omega$ and $i\in \mathbb{N}^*$ such that $J_i(x)-j_i(x)$ is even.  Set $k\df(J_i(x)+j_i(x))/2$.  Then the function $\lambda_k$ is directionally differentiable at $x$ in any direction $h \in X$.
\end{cor}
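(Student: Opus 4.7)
My plan is to show that the hypothesis forces the ``middle'' index to coincide with its mirror image under the symmetry relation \eqref{eqderivativemult}, so that left and right derivatives automatically agree.

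More precisely, set $m \df m_i(x) = J_i(x) - j_i(x) + 1$. The assumption that $J_i(x) - j_i(x)$ is even means that $m$ is odd, so $(m-1)/2$ is an integer, and I would introduce the offset $\ell \df k - j_i(x) = (J_i(x) - j_i(x))/2 = (m-1)/2$. Note that simultaneously $J_i(x) - k = (m-1)/2 = \ell$, i.e.\ the chosen $k$ is equidistant from $j_i(x)$ and $J_i(x)$. Since $0 \le \ell \le m_i(x) - 2$ whenever $m_i(x) \ge 3$ (the case $m_i(x)=1$ being trivial because $\lambda_k$ is Fréchet-differentiable in the simple-eigenvalue situation via Theorem \ref{th:main}, while $m_i(x)=2$ would give $J_i(x)-j_i(x)=1$ odd and is therefore ruled out), I am in position to apply \eqref{eqderivativemult}.

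The key step is then to invoke \eqref{eqderivativemult} with offset $\ell$: for any $h \in X$,
\begin{equation*}
[\lambda_{j_i(x) + \ell}]_r'(x;h) = [\lambda_{J_i(x) - \ell}]_\ell'(x;h).
\end{equation*}
By our choice of $\ell$ we have $j_i(x) + \ell = k = J_i(x) - \ell$, so this identity reads
\begin{equation*}
[\lambda_k]_r'(x;h) = [\lambda_k]_\ell'(x;h).
\end{equation*}
The right-hand derivative exists in every direction by item (2) of Theorem \ref{th:main}, so both one-sided derivatives exist and coincide, which by definition means $\lambda_k$ is directionally differentiable at $x$ in direction $h$. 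Since $h$ was arbitrary, the corollary follows.

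There is no real obstacle once the indexing is set up: the only subtlety is the bookkeeping of offsets (the $k$ appearing in \eqref{eqderivativemult} is a relative offset inside the multiplicity block, not an absolute index), and verifying that the parity hypothesis is exactly what is needed for the midpoint to be a fixed point of the involution $\ell \mapsto m_i(x) - 1 - \ell$ on the block.
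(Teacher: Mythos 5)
Your proof is correct and follows essentially the same route as the paper: set $\ell = (J_i(x)-j_i(x))/2$, observe that $j_i(x)+\ell = J_i(x)-\ell = k$, and invoke \eqref{eqderivativemult} to conclude that the one-sided derivatives coincide. The only difference is that you are a bit more careful about the range check on $\ell$ and the degenerate cases $m_i(x)\in\{1,2\}$ (the former trivial via simple-eigenvalue differentiability, the latter excluded by parity), which the paper leaves implicit.
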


\begin{proof}
Set $\ell \df (J_i(x)-j_i(x))/2$ and note that $j_i(x)+\ell = J_i(x)- \ell = k$. Then \eqref{eqderivativemult} writes as $[\lambda_{k}]_r'(x;h)  =  [\lambda_k]_\ell'(x; h)$ for any $h \in X$.
\end{proof}

\begin{rem}
In the setting of the previous Corollary,  Proposition \ref{prop:C} immediately implies that, if there exists $h \in X$ such that $[\lambda_{k}]_r'(x;h)>0$, then $0 \notin \partial^- \lambda_{k} (x) \cup \partial^+ \lambda_{k} (x)$.
\end{rem}

In fact, we have the following more general characterization of the classical sub/superdifferential:
\begin{prop}\label{propclassicalsubdifferential}
For any $x \in \Omega$ and $k \in \mathbb{N}^*$, 
\begin{align}\label{eq:diff_single_rescaled_classical-}
\partial^- \lambda_k(x)  = A_k^-(x) \df  \bigcap_{F \in \mathcal{G}_{p_k(x)}(E_k(x))} \conv \bigg\{\,  \mathcal{R}_x(x,u) : u \in S(F) \bigg\}.
\end{align}
and
\begin{align}\label{eq:diff_single_rescaled_classical+}
\partial^+ \lambda_k(x)  = A_k^+(x) \df \bigcap_{F \in \mathcal{G}_{m_k(x) - p_k(x)+1}(E_k(x))} \conv \bigg\{\,  \mathcal{R}_x(x,u) : u \in S(F) \bigg\}.
\end{align}
\end{prop}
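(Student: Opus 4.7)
The plan is to establish \eqref{eq:diff_single_rescaled_classical-} first by a Hahn--Banach separation argument, and then obtain \eqref{eq:diff_single_rescaled_classical+} from the entirely parallel max-min formula for the right derivative. Write $p \df p_k(x)$, $m \df m_{i_k(x)}(x)$ and $E \df E_k(x)$. By Theorem \ref{th:main}(2),
\[
[\lambda_k]_r'(x\,;h) = \min_{F \in \Gr_p(E)} \max_{u \in SF} \langle \mathcal{R}_x(x,u), h\rangle,
\]
and for each $F \in \Gr_p(E)$ I set $C_F \df \conv\{\mathcal{R}_x(x,u) : u \in SF\}$, so that $A_k^-(x) = \bigcap_F C_F$. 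Note that the choice of the compact cross-section $SF$ of $F \setminus \{0\}$ is immaterial since $\mathcal{R}_x(x,\cdot)$ is $0$-homogeneous on $F \setminus \{0\}$ by \eqref{eq:Rayleigh_der}.

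For the easy inclusion $A_k^-(x) \subset \partial^- \lambda_k(x)$, I would fix $\zeta \in A_k^-(x)$, $h \in X$ and $F \in \Gr_p(E)$, write $\zeta = \sum_i \alpha_i \mathcal{R}_x(x,u_i)$ with $\alpha_i \ge 0$, $\sum_i \alpha_i = 1$, $u_i \in SF$, pair with $h$, and dominate each term by $\max_{u \in SF}\langle \mathcal{R}_x(x,u),h\rangle$. Taking infimum over $F$ gives $\langle \zeta,h\rangle \le [\lambda_k]_r'(x\,;h)$, hence $\zeta \in \partial^- \lambda_k(x)$.

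The reverse inclusion is the heart of the argument and proceeds by contradiction. Suppose $\zeta \in \partial^- \lambda_k(x)$ but $\zeta \notin C_F$ for some $F$. The crucial observation is that $C_F$ is weak*-closed: expanding $u$ in a basis $(e_1,\ldots,e_p)$ of $F$ and using bilinearity of $\Gamma_x(x,\cdot,\cdot)$ and $B_x(x,\cdot,\cdot)$, the formula \eqref{eq:Rayleigh_der} shows that $\mathcal{R}_x(x, F \setminus \{0\})$ is contained in the finite-dimensional subspace $V \subset X^*$ spanned by the vectors $\Gamma_x(x,e_i,e_j) - \lambda_k(x) B_x(x,e_i,e_j)$, $1 \le i,j \le p$. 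Since $SF$ is compact and $\mathcal{R}_x(x,\cdot)$ is continuous, $\mathcal{R}_x(x,SF)$ is compact in $V$, and so is its convex hull $C_F$; hence $C_F$ is weak*-compact and a fortiori weak*-closed. The Hahn--Banach theorem in the weak* topology then produces $h \in X$ with $\langle \zeta,h\rangle > \sup_{\eta \in C_F}\langle \eta, h\rangle = \max_{u \in SF}\langle \mathcal{R}_x(x,u),h\rangle \ge [\lambda_k]_r'(x\,;h)$, contradicting $\zeta \in \partial^- \lambda_k(x)$. The reliance on weak*-closedness (in order to separate by a genuine $h \in X$ and not merely by an element of $X^{**}$) is what makes the quadratic nature of $\mathcal{R}_x(x,\cdot)$ on $E$ essential: this is the main technical obstacle, and it is settled precisely by the finite-dimensionality argument above.

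To obtain \eqref{eq:diff_single_rescaled_classical+}, I would combine \eqref{eqderivativemult}, \eqref{eq:left_and_right} and Theorem \ref{th:main}(2) to derive the dual formula
\[
[\lambda_k]_r'(x\,;h) = \max_{F \in \Gr_{m-p+1}(E)} \min_{u \in SF} \langle \mathcal{R}_x(x,u), h\rangle,
\]
and then run the same two-inclusion argument verbatim: the easy direction uses that a convex combination of $\mathcal{R}_x(x,u_i)$ is bounded below by the minimum, and the hard direction applies Hahn--Banach to separate $\zeta \notin \conv\{\mathcal{R}_x(x,u) : u \in SF\}$ from this weak*-closed convex set, producing $h$ with $\langle \zeta,h\rangle < \min_{u \in SF}\langle \mathcal{R}_x(x,u),h\rangle \le [\lambda_k]_r'(x\,;h)$, contradicting $\zeta \in \partial^+ \lambda_k(x)$.
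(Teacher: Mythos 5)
Your proof is correct and follows the same two-inclusion, Hahn--Banach structure as the paper's. The only variations are that you explicitly justify weak*-closedness of $\conv\{\mathcal{R}_x(x,u):u\in SF\}$ via the finite-dimensionality of its linear span in $X^*$ (a point the paper leaves implicit but which is indeed needed for the separation to produce an $h\in X$), and you handle $\partial^+\lambda_k$ by re-running the argument with the max-min formula rather than invoking the duality $\partial^-\lambda_{j_i(x)+\ell}(x)=\partial^+\lambda_{J_i(x)-\ell}(x)$ from \eqref{eqsubsuperdifferentialmult} as the paper does.
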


\begin{proof}
Let us prove \eqref{eq:diff_single_rescaled_classical-}, the other one being a consequence of \eqref{eqsubsuperdifferentialmult}. Let $ \zeta \in A_k^-(x) $ and
$h \in X$. By \eqref{min_max_diff}, we have the existence of $F_h \in  \mathcal{G}_{p_k(x)}(E_k(x))$ such that
\[
 [\lambda_k]_r' (x\, ,h) = \min_{F \in  \mathcal{G}_{p_k(x)}(E_k(x))} \max_{u \in \underline{F} } \langle \cR_x(x\, ,u),h\rangle = \max_{u \in \underline{F}_h } \langle \cR_x(x\, ,u),h\rangle  \ge \langle \cR_x(x\, ,v),h\rangle
\]
for any $v\in \underline{F}_h $. Knowing that $\zeta = \sum_{i} t_i \mathcal{R}_x(x,u_i)$ with $\sum_{i} t_i = 1$ and $0\leq t_i \leq 1$ and $u_i \in S(F_h)$,
by a convex combination of the previous inequality specified to some $u_i \in \underline{F}_h $, we obtain $[\lambda_k]_r' (x\, ,h)\geq \langle \zeta,h \rangle$. Since it is true for any $h$, $ \zeta   \in \partial^- \lambda_k(x)$.  Thus 
\[
A_k^-(x) \subset \partial^- \lambda_k(x).
\]
Let us prove the converse inclusion. We have by \eqref{min_max_diff} that
\begin{align*}
\partial^- \lambda_k(x) & = \{\zeta \in X^* : \forall h \in X , \left\langle \zeta,h \right\rangle \leq \min_{F \in \mathcal{G}_{p_k(x)}(E_k(x))} \max_{u\in \underline{F}} \left\langle \mathcal{R}_x(x,u),h \right\rangle  \}  \\
& = \{\zeta \in X^* : \forall h \in X, \forall F \in \mathcal{G}_{p_k(x)}(E_k(x)) , \left\langle \zeta,h \right\rangle \leq \max_{u\in \underline{F}} \left\langle \mathcal{R}_x(x,u),h \right\rangle  \} \\
& = \bigcap_{ F \in \mathcal{G}_{p_k(x)}(E_k(x)) } \{\zeta \in X^* : \forall h \in X, \left\langle \zeta,h \right\rangle \leq \max_{u\in \underline{F}} \left\langle \mathcal{R}_x(x,u),h \right\rangle  \} \\
& \subset \bigcap_{ F \in \mathcal{G}_{p_k(x)}(E_k(x)) } \conv \bigg\{\,  \mathcal{R}_x(x,u) : u \in S(F) \bigg\} = A_k^-(x)
\end{align*}
where the latter inclusion is due to the following consequence of a Hahn-Banach separation theorem: for any $F\in \mathcal{G}_{p_k(x)}(E_k(x))$,
\[ 
K_F:= \bigg\{\zeta \in X^* : \forall h \in X, \left\langle \zeta,h \right\rangle \leq \max_{u\in \underline{F}} \left\langle \mathcal{R}_x(x,u),h \right\rangle  \bigg\} \subset \conv \bigg\{\,  \mathcal{R}_x(x,u) : u \in S(F) \bigg\}
\]
Indeed, if $\zeta \notin  \conv \bigg\{\,  \mathcal{R}_x(x,u) : u \in S(F) \bigg\}$,  then there exists $h \in X$ such that 
\[  \forall \xi \in  \conv \bigg\{\,  \mathcal{R}_x(x,u) : u \in S(F) \bigg\}, \langle \zeta,h \rangle > \langle \xi,h \rangle
\]
Then for any $u\in S(F)$, $\left\langle \zeta,h \right\rangle > \left\langle \mathcal{R}_x(x,u),h \right\rangle$, so that $\zeta \notin K_F$.

\end{proof}

The previous formula suggests that $\partial^- \lambda_k(x)$ is often empty.  The next proposition provides an example.

\begin{prop} \label{remempty} Let $\{\lambda_k(\cdot)\}$ be the Laplace eigenvalues defined on the conformal class of the round metric $g$ of the sphere $\mathbb{S}^2$.  Then
$$ \partial^- \lambda_1(g) = \partial^- \lambda_2(g) = \partial^+ \lambda_2(g) = \partial^+ \lambda_3(g) = \emptyset.$$
\end{prop}

\begin{proof}
We prove that $ \partial^{\pm} \lambda_i(g)$ is invariant by rotation. By the previous formula, and a straightforward computation
$$ \partial^{\pm} \lambda_i(g) \subset \left\{ \zeta_\psi ; \psi = - 2\sum_{i=1}^3 t_i  \frac{x_i^2}{\int_{\mathbb{S}^2} x_i^2} \text{s.t.} t_i \geq 0, \sum_i t_i = 1 \right\} \text{ where } \zeta_{\psi}(\phi) = \int_{\mathbb{S}^2} \psi \phi $$
Let $R \in SO(\R^3)$, let $\zeta_{\psi} \in  \partial^{\pm} \lambda_i(g)$. Let's prove that $\zeta_{\psi \circ R} \in  \partial^{\pm} \lambda_i(g)$. Let $h\in X$. Then
\begin{equation*}
\begin{split} \left\langle \zeta_{\psi\circ R}, h \right\rangle = \int_{\mathbb{S}^2} \psi\circ R . h dA_g = \int_{\mathbb{S}^2} \psi . h\circ R^{-1} dA_g \\
= \left\langle \zeta_{\psi}, h\circ R^{-1} \right\rangle \leq \bar{\lambda}_i'(g, h\circ R^{-1}g) =  \bar{\lambda}_i'(g, h g)
\end{split} \end{equation*}
because the eigenvalues are invariant by isometry. We assume by contradiction that $\partial^{\pm} \lambda_i(g) \neq \emptyset$, let $\zeta_\psi \in \partial^{\pm} \lambda_i(g)$ where
$$ \psi = -2 \sum_{i=1}^3 t_i  \frac{x_i^2}{\int_{\mathbb{S}^2} x_i^2} $$
where $t_1+t_2+t_3 = 1$ and $t_i \geq 0$. We define $R_1,R_2,R_3 \in SO(3)$ such that
$$ R_1 = Id_{\R^3} ; $$ 
$$ R_2(e_1) = e_2 ; R_2(e_2) = e_3 ; R_2(e_3) = e_1 $$
$$ R_3(e_1) = e_3 ; R_3(e_2) = e_1 ; R_3(e_3) = e_2 $$
Then, by invariance by rotation, $\zeta_\psi\circ R_i \in \partial^{\pm} \lambda_i(g)$ for $i=1,2,3$ and since $\partial^{\pm} \lambda_i(g)$ is convex, 
$$ \frac{1}{3} \sum_{i=1}^3 \left( \zeta_\psi\circ R_i \right) \in \partial^{\pm} \lambda_i(g)$$
By a straitforward computation, knowing that $x_1^2 + x_2^2 + x_3^2 = 1$, we obtain that $- \frac{1}{2\pi} \in \partial^{\pm} \bar{\lambda}_i(g)$. However, we cannot find any constant function in a set 
$$\conv\{ \left\langle x,p \right\rangle^2 ; p \in E\cap \mathbb{S}^2 \}$$
where $E$ is a subspace of $\mathbb{R}^3$ of dimension lower than 2.
\end{proof}

\subsection{Critical points of a single eigenvalue}
%We now consider rescaled eigenvalues. Let $\scaling : \Omega \to (0,+\infty)$ be a $\mathcal{C}^1$-Fréchet differentiable scaling function.  For any $x \in \Omega$ and $k \in \mathbb{N}^*$,  we set
%\[
%\overline{\lambda}_k(x) \coloneqq \scaling(x)\,  \lambda_k(x).
%\]
We begin with the following result which is an immediate consequence of (2) in Lemma \ref{lem:minmax} and Proposition \ref{propclassicalsubdifferential}.
\begin{cor} 
If $\lambda_k$ admits a local minimum at $x$, then 
\begin{align}\label{eq:crit_single_rescaled_classical-}
0\in  \bigcap_{F \in \mathcal{G}_{p_k(x)}(E_k(x))} \conv \bigg\{\,    \cR_x(x\, ,u) : u \in S(F) \bigg\}.
\end{align}
If $\lambda_k$ admits a local maximum at $x$ then
\begin{align}\label{eq:crit_single_rescaled_classical+}
0\in \bigcap_{F \in \mathcal{G}_{m_k(x)-p_k(x)+1}(E_k(x))} \conv \bigg\{\,  \cR_x(x\, ,u)   : u \in S(F) \bigg\}.
\end{align}
\end{cor}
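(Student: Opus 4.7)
The statement is essentially a bookkeeping exercise combining the three results named in the corollary, so my plan is to simply trace the chain. First I would check that Theorem \ref{th:main}(2) guarantees that $\lambda_k$ (and thus $\bar{\lambda}_k=\scaling\cdot\lambda_k$, by the product rule for right directional derivatives in Proposition \ref{prop:collection}(3)) admits a right directional derivative at $x$ in every direction $h \in X$. This is the prerequisite that makes all the sub/superdifferentials appearing below well-defined.

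Next, since $\bar{\lambda}_k$ admits right directional derivatives at $x$ in every direction and has a local minimum (resp.\ local maximum) at $x$, I would apply Lemma \ref{lem:minmax}(2) to conclude
$$0 \in \partial^{-}\bar{\lambda}_k(x) \qquad \text{(resp.\ } 0 \in \partial^{+}\bar{\lambda}_k(x)\text{).}$$
Then I would invoke Proposition \ref{prop:scal}(2) with $s = \scaling$ and $\mathfrak{F} = \lambda_k$ to rewrite
$$\partial^{\pm}\bar{\lambda}_k(x) \,=\, \scaling(x)\,\partial^{\pm}\lambda_k(x) \,+\, \lambda_k(x)\,\scaling_x(x),$$
so that the above memberships translate into
$$0 \,\in\, \scaling(x)\,\partial^{\pm}\lambda_k(x) \,+\, \lambda_k(x)\,\scaling_x(x).$$

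Finally, I would substitute the explicit descriptions of $\partial^{-}\lambda_k(x)$ and $\partial^{+}\lambda_k(x)$ furnished by Proposition \ref{propclassicalsubdifferential} and observe that the affine map $\zeta \mapsto \scaling(x)\,\zeta + \lambda_k(x)\,\scaling_x(x)$ (with $\scaling(x)>0$) commutes with convex hulls and with arbitrary intersections. Pushing the constants inside the $\conv$ and inside the $\bigcap$ yields exactly the desired formulas \eqref{eq:crit_single_rescaled_classical-} and \eqref{eq:crit_single_rescaled_classical+}.

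No genuine obstacle appears in this argument. The only point that requires a touch of attention is verifying that taking a $\bigcap_{F}$ of translated and rescaled convex hulls agrees with translating and rescaling the intersection, i.e.\ that for any family $\{C_F\}$ of subsets of $X^*$ and any $c>0$, $\eta \in X^*$ one has $c \bigcap_F C_F + \eta = \bigcap_F (cC_F + \eta)$, which is immediate since the affine map is a bijection of $X^*$.
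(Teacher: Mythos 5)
Your proposal is correct and follows exactly the route the paper indicates: the paper presents this corollary as an immediate consequence of Lemma \ref{lem:minmax}(2), Proposition \ref{prop:scal}(2), and Proposition \ref{propclassicalsubdifferential}, which is precisely the chain you trace (including the preliminary check via Theorem \ref{th:main}(2) that right directional derivatives exist, and the final observation that the positive affine map $\zeta \mapsto \scaling(x)\zeta + \lambda_k(x)\scaling_x(x)$ commutes with convex hulls and intersections).
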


\begin{rem}
If $k=j_{i(k)}$ and if $\lambda_k$ admits a local minimum at $x$, then
$$ \cR_x(x\, ,u)  = 0 \qquad \forall u \in E_k(x).$$
There are two known examples where this phenomenon occurs.
\begin{itemize}
\item Minimizers of the first Dirac eigenvalue in a conformal class on surfaces \cite{Ammann}. In this case, it implies that a first eigenspinor is a harmonic mapping into a sphere. The construction of such mappings was a motivation to build CMC surfaces.
\item Minimizers of the second eigenvalue of the conformal Laplacian in a conformal class \cite{AmmannHumbert}. In this paper, we can even deduce that the multiplicity of the second eigenvalue of a local minimizer has to be one and that an eigenfunction is a nodal solution of the Yamabe problem. Notice that if the conformal Laplacian have negative eigenvalues, a local maximizer of the biggest negative eigenvalue also provides nodal Yamabe solutions.
\end{itemize}
\end{rem}

However, we know that minimality and maximality does not capture all the possible critical points. The classical sub/superdifferential is not adapted in the general case: we use the Clarke sub/superdifferential. We have the following:

\begin{prop}\label{prop:diff_single}
For any $x \in \Omega$ and $k \in \mathbb{N}^*$,  set
\[
C_k(x) \df \conv \bigg\{\,  \cR_x(x\, ,u) : u \in SE_{k}(x) \bigg\}.
\]
Then
\begin{align}\label{eq:diff_single_rescaled}
\partial_C^\pm \lambda_k(x)  \subset C_k(x).
\end{align}
Moreover:
\begin{align}\label{eq:diff_single_rescaled_equal}
\pm(\lambda_k(x) - \lambda_{k\pm1}(x))<0 \quad \Rightarrow \quad  \partial^\mp \lambda_k(x)  =   \partial_C^\mp \lambda_k(x) = C_k(x).
\end{align}
\end{prop}

\begin{proof}[Proof of Proposition \ref{prop:diff_single}]
We prove the proposition for $\partial_C^-$ --- the result for $\partial_C^+$ is proved in a similar way. Let's prove \eqref{eq:diff_single_rescaled}.

\textbf{Step 1.} We show the following compactness property of the set-valued map $C_k(\cdot)$ : for any $x \in \Omega$,  $\{y_n\}\subset \Omega$ satisfying $y_n \to x$,  and $\{\xi_n\} \subset X^*$ such that $\xi_n \in C_k(y_n)$ for any $n$, there exists $\xi \in C_k(x)$ such that $\xi_n \to \xi$ in $X^{*}$, up to a subsequence.

For any $n$ there exist $\{t_{\alpha,n}\}_{\alpha \in A(n)} \subset [0,1]$ and $(u_{\alpha,n}) \in SE_k(y_n)$ such that $A(n) \in \setN$ is non-empty, $\sum_\alpha t_{\alpha,n}=1$ and
\[
\xi_n = \sum_{\alpha} t_{\alpha,n} \cR_x(y_n,u_{\alpha,n}).
\]
Since $\{\mathrm{dim}(E_k(y_n))\}_n$ is bounded,  there exists a positive integer $M$ such that $A(n) \subset \{0,\ldots,M\}$ for any $n$. Set $t_{\alpha,n}=0$ for any $\alpha \in \{0,\ldots,M\} \backslash A(n)$. For any $\alpha \in \{0,\ldots,M\}$ the sequence $\{t_{\alpha,n}\}_n$ subconverges to some value $t_\alpha$, and $\sum_{\alpha} t_\alpha = 1$. By Corollary \ref{prop:closed_graph}, we know that for any $\alpha \in \{0,\ldots,M\}$ the sequence $\{u_{\alpha,n}\}_n$ subconverges to some $u_\alpha \in SE_k(x)$.  Then the continuity of $(x,u) \mapsto \cR_x(x\, ,u)$ implies that $\xi_n$ subconverges to 
\[
\xi = \sum_{\alpha} t_{\alpha} \cR_x(x,u_{\alpha})
\]
which belongs to $C_k(x)$.

\textbf{Step 2.} We show that for any $V \in \mathcal{G}_k(X)$,
\[
\limsup_{y\to x} \widetilde{\partial^- \left(\lambda_k\right)_{\vert y+V}}(y) \subset \{ \zeta \in X^* \, : \,  \exists \, \xi \in C_k(x) \text{ s.t. }  \zeta_{\vert V} = \xi_{\vert V} \}. 
\]

%We already know that $\partial^- \lambda_k(x)\subset C_k(x)$.  
By definition,
$$ \partial^- \left(\lambda_k\right)_{\vert y+V}(y) =  \left\{ \zeta \in V^* \, : \,  \forall \, h \in V,  \, \, \langle\zeta,h \rangle \leq \left(\left(\bar{\lambda}_k\right)_{\vert y+V}\right)_r'(y,h)  \right\}.  $$
Acting as in the proof of Proposition \ref{propclassicalsubdifferential}, we get that
$$ \partial^- \left(\lambda_k\right)_{\vert y+V}(y) =  \bigcap_{F \in \mathcal{G}_{p_k(x)}(E_k(x))} \conv \{ \zeta \in V^* \, : \,  \exists u \in S(F), \zeta = \cR_x(y\, ,u)_{\vert V}   \}  $$
We easily deduce that
$$  \widetilde{\partial^- \left(\lambda_k\right)_{\vert y+V}}(y) \subset \{ \zeta \in X^* \, : \,  \exists \xi \in C_k(y),  \zeta_{\vert V} = \xi_{\vert V} \}.$$
Take $\limsup_{y\to x}$ and use Step 1 to obtain that
\begin{align*}
\limsup_{y\to x} \widetilde{\partial^- \left(\lambda_k\right)_{\vert y+V}}(y) &  \subset \limsup_{y\to x} \{ \zeta \in X^* \, : \,  \exists \xi \in C_k(y),  \zeta_{\vert V} = \xi_{\vert V} \}\\
& \subset \{ \zeta \in X^* \, : \,  \exists \xi \in C_k(x),  \zeta_{\vert V} = \xi_{\vert V} \}.
\end{align*} 

\textbf{Step 3.} We conclude. Step 2 ensures that the inclusion below holds true :
\begin{align*}
\partial_A^- \lambda_k(x) & = \bigcap_{V \in \bigcup_{\ell \in \setN}\mathcal{G}(X) } \limsup_{y \to x} \widetilde{\partial^- \left(\lambda_k\right)_{\vert y+V}}(y)\\
& \subset  \bigcap_{V \in \bigcup_{\ell \in \setN}\mathcal{G}(X)}  \{ \zeta \in X^* \, : \,  \exists \, \xi \in C_k(x) \text{ s.t. }  \zeta_{\vert V} = \xi_{\vert V} \} \\
& = C_k(x).
\end{align*}
Then Proposition \ref{prop:IOFFE} and  the fact that $C_k(x)$ is closed and convex yields \eqref{eq:diff_single_rescaled}.

%Consider $\zeta \notin \conv \{\,   \cR_x(x\, ,u) :  u \in SE_{k}(x) \}$. By the Hahn-Banach theorem, there exists $w \in X$ such that $\langle \xi , w \rangle <  \langle \zeta , w \rangle$ for any $\xi \in  \conv \{\,   \cR_x(x\, ,u) :  u \in SE_{k}(x) \}$. By Theorem \ref{th:main},  there exists $v \in E_{k}(x)$ such that
%\begin{equation}\label{eq:minmax}
%[\lambda_k]_+^{\circ} (x\, ,w) = \langle \cR_x(x\, ,v),w\rangle.
%\end{equation}
%Then $[\lambda_k]_+^{\circ} (x\, ,w)  < \langle \zeta, w \rangle$. As a consequence, $\zeta \notin \partial_C^- \lambda_k(x)$.  Assume now that $\scaling$ is arbitrary.  Then
%\begin{align*}
%\partial_C^- \overline{\lambda}_k(x) & \subset \scaling(x) \partial_C^- \lambda_k(x) + \lambda_k(x)  \partial_C^-  \scaling(x) & \text{by \eqref{eq:product_rule}}\\
%& \subset \scaling(x)  \conv \{ \cR_x(x\, ,u) : u \in SE_k(x)\}  + \lambda_k(x)  \scaling_x(x) & \text{by \eqref{eq:diff_single_-}}\\
%& =  \conv \{ \scaling(x)  \cR_x(x\, ,u) + \lambda_k(x)  \scaling_x(x)  : u \in SE_k(x)\} & \text{by \eqref{eq:Minkowski}},
%\end{align*}
%and we conclude by \eqref{eq:Rayleigh_der}. 

As for \eqref{eq:diff_single_rescaled_equal}, this is a direct consequence of Proposition \ref{propclassicalsubdifferential}. Indeed, if $\lambda_k(x) < \lambda_{k+1}(x)$, then $\partial^{-}\lambda_k(x) = A_k^-(x) = C_k(x)$, so the inclusions 
$\partial^{-}\lambda_k(x) \subset \partial_C^{-}\lambda_k(x) \subset  C_k(x)$ are all equalities.  In the same way, if $\lambda_k(x) > \lambda_{k-1}(x)$, then $\partial^{+}\lambda_k(x) = A_k^+(x)=C_k(x)$ hence $\partial^{+}\lambda_k(x)= \partial_C^{+}\lambda_k(x) = C_k(x)$.
\end{proof}

\begin{rem}
On the round sphere, by Proposition \ref{remempty}, we have that $0 \notin \partial^{\pm} \lambda_2(g)$. However, by a similar proof as the one for Proposition \ref{remempty}, since $\partial_C^{\pm} \lambda_i(g)$ is not empty, we deduce by invariance by rotation and Proposition \ref{prop:diff_single} that $0 \in \partial_C^{\pm} \lambda_i(g)$.
\end{rem}

\begin{rem}
If $k=j_{i(k)}$ and if $\lambda_k$ admits a local minimum at $x$, then $\lambda_k$ is differentiable at $x$ and $ D \bar{\lambda}_k(x) = 0 $.
\end{rem}

\begin{prop} For $x\in \Omega$ and $k\in \mathbb{N}^*$, we consider the following statements.
\begin{itemize}
\item[(1$\pm$)] $0 \in  \partial^{\pm} \lambda_k(x)$.
\item[(1)] $0 \in  \partial^{+}\lambda_k(x)  \cup  \partial^{-} \lambda_k(x)$.
\item[(2)]  $\sup_{h \in X} [\lambda_k]_r'(x\, ;h) [\lambda_k]_\ell'(x\, ;h) \leq 0$.
\item[(3)] $0 \in  \conv\{ \partial^{+} \lambda_k(x)  \cup  \partial^{-} \lambda_k(x)\}$.
\item[(3C)] $0 \in  \conv\{ \partial_C^{+} \lambda_k(x)  \cup  \partial_C^{-} \lambda_k(x)\}$.
\item[(4)] For any $h \in X$, the quadratic form $u \in E_k(x) \mapsto \left\langle \cR_x(x\, ,u), h \right\rangle$ is indefinite.
\end{itemize}
Then: $$(1\pm) \Rightarrow (1) \Rightarrow (2)  \Rightarrow  (3) \Rightarrow (3 \mathrm{C}) \Rightarrow (4).$$  Moreover: $$\pm(\lambda_k(x) - \lambda_{k\pm1}(x))<0 \Rightarrow \big[(4) \Rightarrow (1\pm)\big].$$
\end{prop}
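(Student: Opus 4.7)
The chain $(1\pm) \Rightarrow (1) \Rightarrow (2) \Rightarrow (3) \Rightarrow (3\mathrm{C})$ should go quickly using the general non-smooth machinery already set up. Trivially $(1\pm) \Rightarrow (1)$. For $(1) \Rightarrow (2)$ and $(2) \Rightarrow (3)$, I would directly invoke Proposition \ref{prop:C} applied to $\bar{\lambda}_k$, whose right directional derivatives exist in every direction by Theorem \ref{th:main}(2). That proposition delivers $0 \in \overline{\conv}\{\partial^+ \bar{\lambda}_k(x) \cup \partial^- \bar{\lambda}_k(x)\}$ from (2). To upgrade $\overline{\conv}$ to $\conv$ in statement (3), I would use that the sets $\partial^{\pm}\bar{\lambda}_k(x)$ are weak*-compact and convex by Lemma \ref{lem:elem}, so the continuous image of $\partial^{-}\bar{\lambda}_k(x) \times \partial^{+}\bar{\lambda}_k(x) \times [0,1]$ under $(a,b,t) \mapsto ta + (1-t)b$---which coincides with $\conv\{\partial^+\bar\lambda_k(x) \cup \partial^-\bar\lambda_k(x)\}$ since both factors are already convex---is itself weak*-compact, hence weak*-closed. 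Finally $(3) \Rightarrow (3\mathrm{C})$ from the inclusion $\partial^\pm \subset \partial_C^\pm$.

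For $(3\mathrm{C}) \Rightarrow (4)$, I would argue by contradiction. Suppose (4) fails, so some $h \in X$ makes the quadratic form $q_h(u) \df \langle \scaling(x)\cR_x(x,u) + \lambda_k(x)\scaling_x(x), h\rangle$ definite on $E_k(x)\setminus\{0\}$; up to replacing $h$ by $-h$, I may assume $q_h > 0$ there. Compactness of $SE_k(x)$ (the $Q(x,\cdot)$-unit sphere of the finite-dimensional space $E_k(x)$) and continuity of $q_h$ give $\inf_{SE_k(x)} q_h > 0$. Taking convex combinations yields $\langle \zeta, h\rangle > 0$ for every $\zeta \in C_k(x)$; but Proposition \ref{prop:diff_single} guarantees $\partial_C^{\pm}\bar{\lambda}_k(x) \subset C_k(x)$, so the same strict inequality persists on $\conv\{\partial_C^+\bar{\lambda}_k(x) \cup \partial_C^-\bar{\lambda}_k(x)\}$, contradicting the presence of $0$ in this set.

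For the last implication, assume the gap condition $\pm(\lambda_k(x) - \lambda_{k\pm 1}(x)) < 0$. By Proposition \ref{prop:diff_single}, this forces one of the classical sub/superdifferentials to coincide with $C_k(x)$, which therefore inherits the weak*-closedness granted by Lemma \ref{lem:elem}. If $0$ did not belong to $C_k(x)$, the Hahn--Banach separation theorem in the weak* topology of $X^*$ would furnish $h \in X$ with $\langle \zeta, h\rangle > 0$ for every $\zeta \in C_k(x)$; this would make $q_h$ positive definite on $E_k(x)\setminus\{0\}$, contradicting (4). Hence $0 \in C_k(x)$, and by the same Proposition \ref{prop:diff_single} $0$ lies in the corresponding classical sub/superdifferential, yielding $(1\pm)$. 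I expect $(3\mathrm{C}) \Rightarrow (4)$ to be the conceptually most important step, as it is there that the convex-analytic condition on Clarke sub/superdifferentials must be translated into a concrete sign property of the eigenspace-level quadratic form $q_h$ via the containment $\partial_C^\pm \subset C_k(x)$; the gap implication is just the converse of this translation, powered by Hahn--Banach.
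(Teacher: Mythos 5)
Your proposal is correct and follows essentially the same route as the paper: $(1\pm)\Rightarrow(1)$ and $(3)\Rightarrow(3\mathrm{C})$ are read off the definitions, the middle chain follows from the general sub/superdifferential dichotomy of Section~\ref{sec:prelim}, and both $(3\mathrm{C})\Rightarrow(4)$ and the gap implication reduce to the equivalence $(4)\iff 0\in C_k(x)$ together with Proposition~\ref{prop:diff_single}. In fact you supply two details the paper glosses over. First, the published proof cites Proposition~\ref{prop:CC} for $(1)\Rightarrow(2)\Rightarrow(3)$, but $(2)$ is stated with the classical one-sided derivatives $[\bar\lambda_k]'_r$ and $[\bar\lambda_k]'_\ell$ rather than the Clarke generalized ones, so Proposition~\ref{prop:C} is the correct tool, as you use; and since it only produces $0\in\overline{\conv}\{\partial^+\bar\lambda_k(x)\cup\partial^-\bar\lambda_k(x)\}$, your weak*-compactness argument to drop the closure (valid also when one of the two sets is empty, where the hull reduces to the other weak*-compact set) is genuinely needed. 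Second, your Hahn--Banach argument realising $(4)\iff 0\in C_k(x)$ makes explicit what the paper calls ``obviously equivalent.'' One small point to flag: with the paper's $\pm/\mp$ convention and the equality case $\pm(\lambda_k-\lambda_{k\pm1})<0\Rightarrow\partial^{\mp}\bar\lambda_k(x)=C_k(x)$ of Proposition~\ref{prop:diff_single}, the conclusion of the last implication should read $(1\mp)$ rather than $(1\pm)$; the paper's proof paragraph has it right while the displayed statement (which you followed) carries a sign typo.
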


\begin{proof} The implications $(1\pm) \Rightarrow (1)$ and $(3) \Rightarrow (3\mathrm{C})$ are obvious, while $(1) \Rightarrow (2)  \Rightarrow  (3)$ come from a direct application of Proposition \ref{prop:CC}.  The implication $(3)  \Rightarrow  (4)$ is a consequence of Proposition \ref{prop:diff_single}, since (4) is obviously equivalent to $0 \in C_k(x)$.  The equality case in Proposition \ref{prop:diff_single} implies that: $\pm(\lambda_k(x) - \lambda_{k\pm1}(x))<0$ $\Longrightarrow$  $\partial^\mp \lambda_k(x) = \partial_C^\mp \lambda_k(x) = C_k(x)$ and the latter equalities trivially imply that $(4) \Rightarrow (1\mp)$.
\end{proof}

\subsection{Finite combinations of eigenvalues} Let us now consider a positive integer $N$ and a function $F \in \mathcal{C}^1(\setR^N,\setR)$ with partial derivatives $\partial_1 F,\ldots, \partial_N F$.  For any $x \in \Omega$,  define
 \[\mathfrak{F}(x) \df F\left(\lambda_1(x),\cdots, \lambda_N(x)\right)\] 
 and for any $k \in \setN^*$, set
 \[
 d_k(x) \df 
  \begin{cases}
 \partial_k F\left(\lambda_1(x),\cdots,\lambda_N(x)\right) &  \text{if $1 \le k \le N$}\\
 0 & \text{otherwise}.
 \end{cases}
 \]
Recall that $\mathbf{U}_{N}(x)$ is defined at the end of Section \ref{subsec:abstract_Rayleigh}.
 Then the following holds.

\begin{prop}\label{prop:diff}
For any $x \in \Omega$, set 
\[
C'_N(x) \df \conv \left\{ \sum_{k=1}^N d_k(x)  \cR_x(x\, ,u_k)   \,  :  (u_k) \in \mathbf{U}_N(x) \right\}.
\]
Then 
\begin{align}\label{eq:diff}
\partial_C^{\pm} \mathfrak{F}(x) & \subset C'_N(x).
\end{align}
We also have that
\begin{align}\label{eq:diff_rescaled_equal_1}
\begin{cases}
\lambda_N(x) < \lambda_{N+1}(x)\\
\text{$k \mapsto d_k(x)$ is constant on each $\aleph_i(x)$}
\end{cases}
\Rightarrow  \quad \begin{cases}
\partial_C^- \mathfrak{F}(x) \cup \partial_C^+ \mathfrak{F}(x) = C'_N(x) \\
\sharp C'_N(x) = 1,
\end{cases}
\end{align}
in particular, $\fk$ is differentiable at $x$. More generally:
\begin{align}\label{eq:diff_rescaled_equal_2}
\forall   k \left[ d_k(x) \neq 0  \, \Leftrightarrow \, \begin{cases} \pm d_k(x)(\lambda_k(x) - \lambda_{k\pm1}(x))<0 \\ \text{ or $k \mapsto d_k(x)$ is constant on $\aleph_{i_k(x)}(x)$} \end{cases} \right] \Rightarrow  \partial_C^{\pm} \mathfrak{F}(x) = C'_N(x).
\end{align}
\end{prop}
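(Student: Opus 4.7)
The plan is to establish the inclusion \eqref{eq:diff} by a direct computation of the generalized directional derivatives $\mathfrak{F}_\pm^\circ(x,h)$ using Theorem \ref{th:main}, rather than through the Clarke chain rule \eqref{eq:chain_rule_Clarke} combined with Proposition \ref{prop:diff_single}. The latter approach would only yield a set in which the $u_k$'s range freely over $SE_k(x)$, which can be strictly larger than $C'_N(x)$: already in a single eigenspace of dimension two with constant $d_k$, orthonormal pairs return the basis-independent trace of $\cR_x$ on the eigenspace, while non-orthogonal pairs produce a much wider set. Preserving $Q(x,\cdot)$-orthonormality across indices sharing a multiplicity class is exactly what Theorem \ref{th:main}(1) provides.

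Concretely, for $\partial_C^-\mathfrak{F}(x)$ I would fix $h \in X$ and pick $y_n \to x$, $t_n \downarrow 0$ realizing $\mathfrak{F}_+^\circ(x,h)$. Local Lipschitz continuity of each $\bar\lambda_k$, the $\mathcal{C}^1$-smoothness of $F$, and the continuity of $d_k$ give
\[
\frac{\mathfrak{F}(y_n+t_n h)-\mathfrak{F}(y_n)}{t_n} = \sum_{k=1}^N d_k(x)\,\frac{\bar\lambda_k(y_n+t_n h)-\bar\lambda_k(y_n)}{t_n} + o(1).
\]
For each multiplicity class $\aleph_i(x)$ meeting $\{1,\ldots,N\}$, Theorem \ref{th:main}(1) applied with $z_n=0$, combined with Proposition \ref{prop:scal} to accommodate the scaling factor, allows me to extract a common subsequence along which each of the ratios above converges to $\lambda_k(x)\langle\scaling_x(x),h\rangle + \scaling(x)\langle\cR_x(x,u_k),h\rangle$ for a $Q(x,\cdot)$-orthonormal family $(u_k)_{k\in\aleph_i(x)} \subset F_i(x)$. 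Since eigenspaces of $L(x)$ attached to distinct eigenvalues are automatically $Q$-orthogonal, concatenating across classes yields a full element $(u_k)_{k=1}^N \in \mathbf{U}_N(x)$. Thus $\mathfrak{F}_+^\circ(x,h) = \langle\Xi,h\rangle$ for some $\Xi \in C'_N(x)$, so $\mathfrak{F}_+^\circ(x,h) \le \sup_{\xi \in C'_N(x)}\langle\xi,h\rangle$, and the Hahn--Banach separation used inside Lemma \ref{lem:elem} delivers $\partial_C^-\mathfrak{F}(x) \subset C'_N(x)$; the other case follows symmetrically via \eqref{eq:change}.

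For the equality \eqref{eq:diff_rescaled_equal_1}, the plan is to prove that $\mathfrak{F}$ is actually Fréchet differentiable at $x$. The spectral gap $\lambda_N(x)<\lambda_{N+1}(x)$, together with the internal gaps between distinct values $\mu_i(x)$, ensures by standard holomorphic perturbation of spectral projectors that each block sum $T_i(y) \coloneqq \sum_{k \in \aleph_i(x)} \bar\lambda_k(y)$ depends Fréchet-smoothly on $y$ near $x$. Since $d_k$ is constant equal to some $d_i^*$ on each class, $\mathfrak{F}$ can be locally rewritten as a $\mathcal{C}^1$ function of the $T_i$'s and is therefore Fréchet differentiable. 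Simultaneously, for any $(u_k) \in \mathbf{U}_N(x)$ the sum $\sum_{k\in\aleph_i(x)} d^*_i\, \cR_x(x,u_k)$ equals $d^*_i$ times the trace of the quadratic map $\cR_x(x,\cdot)|_{F_i(x)}$, which is basis-independent; so $C'_N(x)$ collapses to a singleton, which by \eqref{eq:direct_Fréchet_rule_Clarke} matches $\{\mathfrak{F}_x(x)\} = \partial_C^\pm\mathfrak{F}(x)$.

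For the more delicate equality \eqref{eq:diff_rescaled_equal_2}, I would dichotomize each index $k$ with $d_k(x)\neq 0$: when the sign condition $\pm d_k(x)(\lambda_k(x) - \lambda_{k\pm1}(x))<0$ holds, the equality case of Proposition \ref{prop:diff_single} gives $\partial^\mp\bar\lambda_k(x) = \partial_C^\mp\bar\lambda_k(x) = C_k(x)$; when instead $d_k$ is constant on $\aleph_{i_k(x)}$, the trace-invariance argument from the previous paragraph applies inside that class. Gluing these two mechanisms through the classical chain rule \eqref{eq:chain_rule} (which runs in the direction opposite to \eqref{eq:chain_rule_Clarke}) and Minkowski linearity \eqref{eq:Minkowski} produces the reverse inclusion $C'_N(x) \subset \partial_C^\pm\mathfrak{F}(x)$. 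The hardest part will be the bookkeeping in this last step: one must track how the sign of $d_k(x)$ selects between $\partial^+$ and $\partial^-$, and verify compatibility across indices sharing a multiplicity class that may contain simultaneously "spectral gap" indices and "constant coefficient" indices.
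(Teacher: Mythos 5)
Your proof of the inclusion \eqref{eq:diff} is essentially the paper's, run with the same pivot on Theorem \ref{th:main}(1) and for the same reason you identify: the Clarke chain rule combined with Proposition \ref{prop:diff_single} would let the $u_k$'s vary independently over $SE_k(x)$, losing the simultaneous orthonormality within a multiplicity block, and the direct computation of $\mathfrak{F}^\circ_\pm(x,h)$ along an extremizing sequence preserves it. The paper phrases the final step contrapositively (pick $\zeta\notin C'_N(x)$, separate with Hahn--Banach, contradict $\zeta\in\partial_C^-\mathfrak{F}(x)$), you phrase it via the uniqueness in Lemma \ref{lem:elem}; the two are interchangeable. Your reference to Proposition \ref{prop:scal} is slightly off --- it concerns right directional derivatives, whereas along the Clarke-type sequences the product expansion you need is the one in the proof of Proposition \ref{prop:scal_Clarke} --- but this is cosmetic.

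Your treatment of \eqref{eq:diff_rescaled_equal_1} contains an unnecessary detour that introduces a real gap. You propose to prove that $\mathfrak{F}$ is $\mathcal{C}^1$-Fr\'echet near $x$ by establishing that the block sums $T_i(y)=\sum_{k\in\aleph_i(x)}\bar\lambda_k(y)$ are $\mathcal{C}^1$-Fr\'echet via ``standard holomorphic perturbation of spectral projectors.'' Two problems: the parameter dependence here is only $\mathcal{C}^1$, not analytic, and the abstract setting changes the ambient inner product $B(y,\cdot,\cdot)$ with $y$, so the usual contour-integral argument for smooth dependence of the spectral projector of an isolated cluster would need to be reformulated and proved in the paper's framework --- no such lemma exists there. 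But the detour is also unnecessary. You already observe that the constant value of $d_k$ on each class turns the sum over $\aleph_i(x)$ into the trace of the quadratic map $\mathcal{R}_x(x,\cdot)|_{F_i(x)}$, hence basis-independent, so $C'_N(x)$ is a singleton. Combined with your Part 1 inclusion $\partial_C^\pm\mathfrak{F}(x)\subset C'_N(x)$ and the nonemptiness of the Clarke sub/superdifferentials, this already forces $\partial_C^-\mathfrak{F}(x)=\partial_C^+\mathfrak{F}(x)=C'_N(x)$. This is exactly the paper's mechanism: it writes out the orthogonal change of basis $v_k=\sum_j P_{kj}u_j$ and cancels $\sum_k P_{jk}^2=1$. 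The Fr\'echet-differentiability of $\mathfrak{F}$ then follows as a consequence; you do not need it as a hypothesis.

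Your Part 3 has the more serious gap. You propose to ``glue'' the two cases via the classical chain rule \eqref{eq:chain_rule}. That inclusion reads
\begin{align*}
\partial^\pm\mathfrak{F}(x) \supset \sum_{d_k>0} d_k\,\partial^\pm\bar\lambda_k(x) + \sum_{d_k<0} d_k\,\partial^\mp\bar\lambda_k(x),
\end{align*}
a Minkowski sum over all indices with $d_k\neq0$. When a class $\aleph_i(x)$ with $m_i(x)\ge 2$ has $d_k$ constant and nonzero, the interior indices $j_i(x)<k<J_i(x)$ (if any) contribute a factor $\partial^\pm\bar\lambda_k(x)$ which, by Proposition \ref{propclassicalsubdifferential}, is a nontrivial intersection of convex hulls and is in general \emph{empty} --- Remark \ref{remempty} gives the round sphere with $k=2$ as an explicit example. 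An empty factor annihilates the Minkowski sum and the chain rule yields no information. The trace-invariance observation does rescue this class, but it cannot be fed through \eqref{eq:chain_rule}, which is term-by-term. The paper's proof avoids the chain rule entirely: it represents $\mathfrak{F}^\circ_\pm(x,h)$ via the families $(u_k)$ from Part 1, decomposes any $\xi\in C'_N(x)$ as $\xi_1+\xi_2+\xi_3$ according to which of the two alternatives a class satisfies, handles the constant-$d_k$ classes by the orthogonal-change-of-basis cancellation (so $\langle\xi_3,h\rangle$ coincides with the corresponding block of $\mathfrak{F}^\circ_\pm(x,h)$ exactly), and estimates the remaining single-index blocks using the upper/lower regularity of $\lambda_{J_i(x)}$ and $\lambda_{j_i(x)}$ from Theorem \ref{th:main}(3). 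You would need to replicate that direct estimate; as written, your Part 3 does not go through.
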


\begin{proof}
We prove \eqref{eq:diff} only for $\pm$ equal to $-$, since the other case is obtained in a similar way. 

Let us first prove that the result is true for the classical subdifferential: $\partial^{-} \mathfrak{F}(x)  \subset C'_N(x).$
 Consider $\zeta \notin C_N'(x) $. By the Hahn-Banach theorem, there exists $h\in X$ such that $\langle \xi , h \rangle <  \langle \zeta , h \rangle$ for any $\xi \in C_N'(x)$. 
By Theorem \ref{th:main}, for any sequence $t_n \searrow 0$, up to a subsequence we $t_n$, we have the existence of orthonormal families $( u_k ) \in U_N(x)$ such that 
\begin{equation} \label{eq:cases} \lim_{n\to+\infty} \frac{\lambda_k(x +t_n h) - \lambda_k(x)}{t_n}  = \langle \cR_x(x,u_k),h \rangle
\end{equation} 
for any $1\leq k\leq N$. Since $F$ is a $\mathcal{C}^1$ function, we have that
\begin{align*}
\fk'_r(x,h) &  = \sum_{k=1}^N d_k(x) \lim_{n\to+\infty} \frac{\lambda_k(x +t_n h) - \lambda_k(x)}{t_n}   = \langle \xi , h \rangle
\end{align*}
where
\[
\xi \df \sum_{k=1}^N d_k(x)  \cR_x(x,u_k) 
\]
belongs to $ C_N'(x)$. Then $\fk'_r(x,h) < \langle \zeta , h \rangle$.  This means that $\zeta \notin \partial^{-} \fk(x)$. 

To obtain \eqref{eq:diff} from the inclusion $\partial^{-} \mathfrak{F}(x)  \subset C'_N(x)$, we simply act as in the proof of Proposition \ref{prop:diff_single} using the compactness property of the set-valued function $C_N'(\cdot)$, the approximate sub/superdifferentials, and Proposition \ref{prop:IOFFE}.

Assume that $\lambda_N(x)<\lambda_{N+1}(x)$. Consider $h \in X$ and $(u_k) \in \mathbf{U}_N(x)$ satisfying \eqref{eq:cases}. Consider $\xi \in C'_N(x)$ such that
\[
\xi = \sum_{k=1}^N d_k(x)   \cR_x(x,v_k) 
\]
for some $(v_k) \in \mathbf{U}_N(x)$. For any $i \in \{1,\ldots,i_N(x)\}$ there exist $(P_{jk}^{i})_{j,k \in \aleph_i(x)} \in \cO(m_i(x))$ such that
\[
v_k = \sum_{j \in \aleph_i(x)} P_{kj} u_j
\]
for any $k \in \aleph_i(x)$. Thus
\begin{align*}
 & \phantom{=}  \phantom{=}\sum_{k=1}^N d_k(x) \langle \cR_x(x\, ,v_k) , h \rangle & \\
 &  =  \sum_{i=1}^{i_N(x)} d_{j_i(x)}(x) \sum_{k \in \aleph_i(x)}  \langle \cR_x(x\, ,v_k) , h \rangle & \\
 &  =  \sum_{i=1}^{i_N(x)} d_{j_i(x)}(x) \sum_{k \in \aleph_i(x)}  \sum_{j,j' \in \aleph_i(x)} P_{jk}P_{j'k} \langle \Gamma_x(x,u_j,u_{j'}) - \lambda_k(x) B_x(x,u_j,u_{j'}), h\rangle & \\
 &  =  \sum_{i=1}^{i_N(x)} d_{j_i(x)}(x) \sum_{k \in \aleph_i(x)}  \sum_{j \in \aleph_i(x)} P_{jk}^2 \langle \cR_x(x\, ,u_j) , h \rangle & \\
 &  =  \sum_{i=1}^{i_N(x)} d_{j_i(x)}(x)   \sum_{j \in \aleph_i(x)} \underbrace{\sum_{k \in \aleph_i(x)}P_{jk}^2}_{=1} \langle \cR_x(x\, ,u_j) , h \rangle  = \sum_{k=1}^N d_k(x) \langle \cR_x(x\, ,u_k) , h \rangle = \fk_+^\circ(x,h).
 \end{align*}
Since $h$ is arbitrary, the previous implies that
\[
C'_N(x) = \left\{ \sum_{k=1}^N d_k(x)  \cR_x(x,u_k) \, : \, (u_k) \in \mathbf{U}_N(x)\right\} \subset \partial_C^- \fk(x) \cup  \partial_C^+ \fk(x),
\] 
as claimed, since $\partial_C^\pm \fk(x)$ have to be non-empty.

Let us now prove \eqref{eq:diff_rescaled_equal_2}. Assume that for any $k \in \{1,\ldots,N\}$, one has 
\[ d_k(x) \neq 0  \,\, \Leftrightarrow \,\, \begin{cases} d_k(x)\left(\lambda_k(x)-\lambda_{k+1}(x)\right) < 0 \\ \text{ or $k \mapsto d_k(x)$ is constant on $\aleph_{i_k(x)}(x)$} \end{cases}
 \] 
Consider $h \in X$ and $\xi \in C'_N(x)$ written as
\[ \xi  = \sum_{k=1}^N d_k(x)  \cR_x(x,v_k) \]
for some $(v_k) \in \mathbf{U}_N(x)$. We can write $\xi = \xi_1 + \xi_2 + \xi_3$ with
\begin{align*}
\xi_1 & \coloneqq  \sum_{i \in I_1} d_{J_i(x)}(x) \cR_x(x,v_{J_i(x)})  
\end{align*}
\begin{align*}
\xi_2 & \coloneqq   \sum_{i \in I_2} d_{j_i(x)}(x) \cR_x(x,v_{J_i(x)})
\end{align*}
\begin{align*}
\xi_3 & \coloneqq   \sum_{i \in I_3}  \sum_{k \in \aleph_i(x) }  d_k(x)  \cR_x(x,v_{k})  
\end{align*}
where $I_3$ is the set of indices $i$ such that $k\mapsto d_k(x)$ is constant on $\aleph_i(x)$, $I_1$ is the set of indices that are not in $I_3$ such that $d_{J_i}(x)>0$ and $I_2$ is the set of indices that are not in $I_3$ such that $d_{j_i}(x)>0$.
As for the proof of \eqref{eq:diff_rescaled_equal_1}, we obtain that
\begin{align*}
\xi_3 & =  \sum_{i \in I_3} d_k(x) \sum_{k \in \aleph_i(x) }   \cR_x(x,u_{k})  
\end{align*}
where $(u_k)$ are defined in \eqref{eq:cases}. Moreover, Theorem \ref{th:main} implies that for any $i\in \{1,\ldots,i_N(x)\}$, 
\[
[\lambda_{J_i(x)}]_r(x,h) = [\lambda_{J_i(x)}]^\circ_+(x,h) = \max_{u \in \underline{F}_{i}(x)} \langle \cR_x(x ; u), h  \rangle
\]
Thus, by \eqref{eq:cases}
\begin{align*}
\langle \xi_1 , h \rangle & = \sum_{i \in I_1} d_{J_i(x)}(x) \langle \cR_x(x,v_{J_i(x)}),h\rangle \\
& \le \sum_{i \in I_1} d_{J_i(x)}(x)    [\lambda_{J_i(x)}]^\circ_+(x,h)\\
& = \sum_{i \in I_1} d_{J_i(x)}(x)   \langle \cR_x(x,u_{J_i(x)}),h\rangle 
\end{align*}
The same reasoning easily adapts to $\xi_2$ since Theorem \ref{th:main} implies that for any $i\in \{1,\ldots,i_N(x)\}$, 
\[
[\lambda_{j_i(x)}]^\circ_+(x,h) = \min_{u \in \underline{F}_{i}(x)} \langle \cR_x(x ; u), h  \rangle
\]
so that
\[ \langle \xi_2 , h \rangle \leq \sum_{i \in I_2} d_{j_i(x)}(x) \langle \cR_x(x,u_{J_i(x)}),h\rangle. \]
In the end we get that
\[ \langle \xi,h \rangle = \langle \xi_1,h \rangle + \langle \xi_2,h \rangle +\langle \xi_3,h \rangle   \le \fk^\circ_+(x,h)  \]
from where we conclude, again, by the arbitrariness of $h$ and $\xi$. The other case of\eqref{eq:diff_rescaled_equal_2} is similar.
\end{proof}

\subsection{Euler--Lagrange characterization of critical points}

In this section, we provide an Euler--Lagrange equation that we shall constantly use in the rest of the paper. 

Let $\scaling : \Omega \to (0,+\infty)$ be a $\mathcal{C}^1$-Fréchet differentiable scaling function.  We consider the rescaled Rayleigh quotient defined by
\begin{align*}
\overline{\cR}(x,v)& \coloneqq \scaling(x)\,  \cR(x,v)
\end{align*}
for any $x \in \Omega$ and $v \in H$. We let 
\[-\scaling(x)\Theta(x) \le  \overline{\lambda}_1(x) \le   \overline{\lambda}_2(x) \le \cdots \to +\infty.\] 
be the associated isolated eigenvalues. Note that
\[
\overline{\cR}_x(x,v) = \scaling(x) \cR_x (x,v) + \scaling_x(x) \cR (x,v).
\]
Consider a positive integer $N$ and a function $F \in \mathcal{C}^1(\setR^N,\setR)$ with partial derivatives $\partial_1 F,\ldots, \partial_N F$.  For any $x \in \Omega$,  define
 \[\mathfrak{F}(x) \df F\left(\overline{\lambda}_1(x),\cdots, \overline{\lambda}_N(x)\right)\] 
 and for any $k \in \setN^*$, set
 \[
 d_k(x) \df 
  \begin{cases}
 \partial_k F\left(\overline{\lambda}_1(x),\cdots,\overline{\lambda}_N(x)\right) &  \text{if $1 \le k \le N$}\\
 0 & \text{otherwise}.
 \end{cases}
\]
We use the shorthand $d_A$ to denote a tuple of numbers $(d_i)_{i\in A}$.  For any $x \in \Omega$, we set
\[
S(x)\df  \sum_{k=1}^{N} d_k(x) \lambda_k(x)
\]
and
\[
c(x) \df
\begin{cases}
\qquad 0 & \text{if  $S(x) =0$}\\
S(x)/|S(x)| & \text{otherwise.}
 \end{cases}
\]
Recall that $\Mix(\cdot)$ is defined in \eqref{eq:mix}. We also refer to the end of Section \ref{subsec:abstract_Rayleigh} for the definitions of $\cU_{J_N(x)}(x)$ and $\mathbf{U}_{J_N(x)}(x)$.

\begin{theorem}\label{th:main_cri}
Let $x \in \Omega$ be critical for $\fk$.  Then the following hold.
\begin{enumerate}
\item[(A)]  There exist $(u_k) \in \mathbf{U}_{J_N(x)}(x)$ and $\tilde{d} \in \prod_{i=1}^{i_N(x)} \Mix(d_{\aleph_i(x)})$ such that
\begin{equation}\label{eq:EL1}\tag{EL}
\sum_{k=1}^{J_N(x)} \tilde{d}_k G_x(x,u_k) + \frac{\scaling_x(x)}{\scaling(x)} S(x) = \sum_{i=1}^{i_N(x)}  \mu_i(x) \sum_{k\in \aleph_i(x)}  \tilde{d}_k Q_x(x,u_k).
\end{equation}
\item[(B)] There exist $(\phi_k) \in \cU_{J_N(x)}(x)$ and $(\eps_k) \in \{0,\pm 1\}^{J_N(x)}$ such that
\begin{equation}\label{eq:EL2}\tag{r-EL}
\sum_{k=1}^{J_N(x)} \eps_k G_x(x,\phi_k) +  \frac{\scaling_x(x)}{\scaling(x)} c(x) = \sum_{i=1}^{i_N(x)}  \mu_i(x) \sum_{k\in \aleph_i(x)}  \eps_k Q_x(x,\phi_k).
\end{equation}
\item[(C)] For any $k \in \{1,\ldots,J_N(x)\}$,  the $\phi_k$ from (B) and the $\tilde{d}_k$ from (A) satisfy
\begin{equation}\label{eq:normal}
Q(x,\phi_k) =  \begin{cases} \,\,  \, |\tilde{d}_k| & \text{if S(x)=0},\\
\displaystyle\frac{|\tilde{d}_k|}{\left| S(x) \right|}  & \text{otherwise}.
\end{cases}
\end{equation}
\item[(D)] Assume that there exists $i \in \{1,\ldots,i_N(x)\}$ such that for any $k \in \aleph_i(x)$,
\begin{equation}\label{eq:pos}
d_k(x) > 0.
\end{equation}
Then $\tilde{d}_k(x)>0$ and $\eps_k=1$ for any $k \in  \aleph_i(x)$.
\end{enumerate}
\end{theorem}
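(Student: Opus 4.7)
The starting point is the hypothesis that $x$ is critical, which by Definition \ref{def:crit} means $0 \in \partial_C^-\mathfrak{F}(x)\cup\partial_C^+\mathfrak{F}(x)$. By Proposition \ref{prop:diff}, this yields $0\in C'_N(x)$, i.e.\ there exist coefficients $t_\alpha\ge 0$ with $\sum_\alpha t_\alpha=1$ and families $(u_k^\alpha)\in\mathbf{U}_N(x)$ such that
\[
0 = \sum_{\alpha} t_\alpha \sum_{k=1}^N d_k(x)\bigl(\mathrm{sc}(x)\mathcal{R}_x(x,u_k^\alpha)+\lambda_k(x)\mathrm{sc}_x(x)\bigr).
\]
Substituting $\mathcal{R}_x(x,u)=G_x(x,u)-\lambda_{i_k(x)}(x)Q_x(x,u)$ via \eqref{eq:Rayleigh_der}, and noting that the final summand $\lambda_k(x)\mathrm{sc}_x(x)$ is $\alpha$-independent so contributes $S(x)\mathrm{sc}_x(x)$, this rewrites as
\[
0 = \mathrm{sc}(x)\sum_\alpha t_\alpha \sum_{i=1}^{i_N(x)}\sum_{k\in\aleph_i(x)} d_k(x)\bigl(G_x(x,u_k^\alpha)-\mu_i(x)Q_x(x,u_k^\alpha)\bigr) + S(x)\mathrm{sc}_x(x),
\]
where we have extended $d_k(x):=0$ for $k>N$ and extended each $(u_k^\alpha)$ to a full $Q(x,\cdot)$-orthonormal basis of every $F_i(x)$ with $i\le i_N(x)$.

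The key step is to apply the Mixing Lemma \ref{lem:mixing} eigenspace by eigenspace. For each fixed $i\in\{1,\dots,i_N(x)\}$, the map
\[
A_i:F_i(x)\ni u\longmapsto G_x(x,u)-\mu_i(x)Q_x(x,u)\in X^*
\]
is quadratic, because $G(x,\cdot)$ and $Q(x,\cdot)$ are quadratic in their second slot and Fréchet-differentiable in $x$, so $G_x(x,\cdot)=\Gamma_x(x,\cdot,\cdot)$ and $Q_x(x,\cdot)=B_x(x,\cdot,\cdot)$ are quadratic (this is where assumption (G) is used). Endow $F_i(x)$ with the Euclidean structure induced by $B(x,\cdot,\cdot)$; the Mixing Lemma then produces a single $(u_k)_{k\in\aleph_i(x)}\in\mathbf{O}(F_i(x))$ and $\tilde{d}_{\aleph_i(x)}\in\Mix(d_{\aleph_i(x)})$ such that the $i$-th inner sum equals $\sum_{k\in\aleph_i(x)}\tilde{d}_k A_i(u_k)$. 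Concatenating these bases over $i$ (which is legitimate since distinct $F_i(x)$ are mutually $B(x,\cdot,\cdot)$-orthogonal) yields $(u_k)\in\mathbf{U}_{J_N(x)}(x)$ and $\tilde{d}\in\prod_i\Mix(d_{\aleph_i(x)})$, and dividing by $\mathrm{sc}(x)$ gives \eqref{eq:EL1}. The identity $\sum_i\mu_i(x)\sum_{k\in\aleph_i(x)}\tilde{d}_k=S(x)$ (which holds because mixing preserves the sum within each $\aleph_i(x)$ and $\mu_i(x)$ is constant there) is automatic and ensures consistency.

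For (B) and (C), I rescale: set $\phi_k=\sqrt{|\tilde{d}_k|/|S(x)|}\,u_k$ when $S(x)\ne 0$ (and $\phi_k=\sqrt{|\tilde{d}_k|}\,u_k$ when $S(x)=0$), with $\epsilon_k:=\sign(\tilde{d}_k)\in\{0,\pm 1\}$. Since $G_x(x,\cdot)$ and $Q_x(x,\cdot)$ are quadratic, $\tilde{d}_k G_x(x,u_k)=|S(x)|\epsilon_k G_x(x,\phi_k)$ and similarly for $Q_x$; plugging into \eqref{eq:EL1} and dividing by $|S(x)|$ (or directly when $S(x)=0$) yields \eqref{eq:EL2}, and the normalization identity \eqref{eq:normal} holds by construction. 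Finally (D) is a direct feature of the mixing set: if $d_k(x)>0$ for every $k\in\aleph_i(x)$, then every permutation $d_\sigma$ has strictly positive entries on $\aleph_i(x)$, so every convex combination—hence $\tilde{d}_k$—is strictly positive on $\aleph_i(x)$, giving $\epsilon_k=1$ there.

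I expect the only genuine subtlety to be the bookkeeping around indices $k>N$ that still lie in $\aleph_{i_N(x)}(x)$ and the corresponding extension of the orthonormal families before applying the Mixing Lemma; everything else is a routine combination of Proposition \ref{prop:diff}, Lemma \ref{lem:mixing}, and the quadratic nature of $G_x$ and $Q_x$.
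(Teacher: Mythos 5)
Your proof is correct and follows essentially the same route as the paper: use Proposition \ref{prop:diff} to place $0$ in the convex hull $C'_N(x)$, apply the Mixing Lemma \ref{lem:mixing} eigenspace-by-eigenspace to the quadratic maps $u\mapsto G_x(x,u)-\mu_i(x)Q_x(x,u)$, and then rescale to obtain (B)--(D). One small attribution slip: the quadratic nature of $G_x(x,\cdot)$ and $Q_x(x,\cdot)$ comes from the quadratic structure of $G(x',\cdot)$, $Q(x',\cdot)$ together with Fréchet differentiability in $x'$ (Assumption (E)), not from Assumption (G), which only provides the operator-norm bounds.
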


\begin{proof}
For any $\mu>0$,  let $A_\mu : H \to X$ be the quadratic form defined by
\[
A_\mu(u) \df \scaling(x) \left( G_x(x,u) - \mu \, Q_x(x,u) \right).
\]
With this notation, Proposition \ref{prop:diff} applied with $\overline{\cR}$ and \eqref{eq:Minkowski} imply that
\begin{align*}
\partial_C^\pm \fk(x)  \subset \scaling_x(x) \sum_{k=1}^{N} d_k(x) \lambda_k(x)  + \sum_{i=1}^{i_N(g)} \conv &  \Bigg\{ \sum_{k\in \aleph_i(x)} d_k(x) A_{\mu_i(x)}(u_k)  :   \Bigg. \\
& \qquad  \quad \,\,  \,\quad  \Bigg. (u_k) \in \cO(F_i(x))  \Bigg\}.
\end{align*}
Lemma \ref{lem:mixing} applied to each convex hull yields that
\begin{align*}
\partial_C^\pm \fk(x) & \subset \scaling_x(x) \sum_{k=1}^{N} d_k(x) \lambda_k(x) + \sum_{i=1}^{i_N(g)} \Bigg\{ \sum_{k\in \aleph_i(x)} \tilde{d}_k A_{\mu_i(x)}(u_k)  \Bigg. : (u_k) \in \cO(F_i(x)), \\
& \qquad \qquad  \qquad \qquad  \qquad  \qquad  \qquad \qquad  \quad  \quad  \qquad \Bigg.    \text{and} \, \,  \tilde{d}_{\aleph_i(x)} \in \Mix(d_{\aleph_i(x)})\Bigg\},
\end{align*}
 According to Definition \ref{def:crit}, we have $0 \in \partial_C^- \fk(x) \cup \partial_C^+ \fk(x)$. Then there exists $(u_k) \in \prod_{i=1}^{i_N(x)}  \cO(F_i(x))$ and $\tilde{d} \in \prod_{i=1}^{i_N(x)} \Mix(d_{\aleph_i(x)})$ such that
\begin{align*}
\scaling_x(x) \sum_{k=1}^{N} d_k(x) \lambda_k(x) & = - \sum_{i=1}^{i_N(x)} \sum_{k\in \aleph_i(x)} \tilde{d}_k A_{\mu_i(x)}(u_k).
\end{align*}
Upon using the definition of $A_{\mu_i(x)}(u_k)$  and dividing by $\scaling(x)$, we obtain  \eqref{eq:EL1}.  To get \eqref{eq:EL2},  we first set $\psi_k \df \sqrt{|\tilde{d}_k|} u_k$ for any $k \in \{1,\ldots J_N(x)\}$. If $S(x)=0$ then we set $\phi_k=\psi_k$ for any $k$, otherwise we set
\[
\phi_k \df\frac{\psi_k}{\sqrt{\left| \sum_{k=1}^{N} d_k(x) \lambda_k(x)\right| }}
\]
and we do have \eqref{eq:normal}. The last two points are obvious from the proof.
\end{proof}

Let us state a practical converse of the previous theorem -- note that the assumption below is even weaker than the conclusion of the latter. 
 This result will allow us to interpret some suitable mappings which coordinates are eigenfunctions as critical points of well-chosen eigenvalue functionals.

\begin{theorem}\label{th:converse}
Assume that for some $x \in \Omega$, there is a finite family $(\phi_j)_{j=1,\cdots,l}$ such that $\phi_j \in  E_{k_j}(x)$ for any $j$, where $k_j$ is some positive integer, with associated weights $(\delta_j)_{j=1,\cdots,l} \in \mathbb{R}^l$ such that
\begin{equation}\label{eq:converseELnonortho}
\sum_{j=1}^{l} \delta_j \mathcal{R}_x(x,\phi_j) + \frac{\scaling_x(x)}{\scaling(x)} \sum_{j=1}^l \delta_j \lambda_{k_j}(x) = 0.
\end{equation}
Then there  $N \in \mathbb{N}^*$ and $d_1^{\pm},\cdots,d_N^\pm \in \R$ such that for any $F$ satisfying 
\begin{equation} \label{eqdefdiconverse}\partial_i F^\pm(\bar{\lambda}_1(x),\cdots,\bar{\lambda}_N(x)) = d_i^\pm \end{equation}
for any $i\in \{1,\cdots, N\}$,  setting  $\fk^\pm := F^\pm \circ (\lambda_1,\ldots,\lambda_N)$ we have
\begin{equation}\label{eq:converse}
0 \in \partial^\pm \fk^\pm (x).
\end{equation}

\end{theorem}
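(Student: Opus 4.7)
The plan is to construct $N$ and the $d_k^\pm$ explicitly from a spectral decomposition of the data in each relevant eigenspace, and then verify the conclusion directly through Proposition \ref{prop:C}. By positive homogeneity of $\cR_x(x,\cdot)$, I will first rescale so that $Q(x,\phi_j) = 1$ for every $j$, without affecting \eqref{eq:converseELnonortho}. I then group the data by eigenvalue: with $I := \{i_{k_j}(x) : 1 \le j \le l\}$ and $J_i := \{j : i_{k_j}(x) = i\}$, I fix for each $i \in I$ a $Q(x,\cdot)$-orthonormal basis $(e_k)_{k \in \aleph_i(x)}$ of $F_i(x)$ and write $\phi_j^k := B(x,\phi_j,e_k)$. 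The symmetric matrix $M^i \in \cM_{m_i(x)}(\R)$ with entries $M^i_{k,k'} := \sum_{j \in J_i} \delta_j \phi_j^k \phi_j^{k'}$ diagonalizes as $M^i = O_i\,\Diag(\tilde d_k^i)_{k\in \aleph_i(x)}\,O_i^T$ with $O_i$ orthogonal, and setting $u_k := \sum_{k'} (O_i)_{k',k}\, e_{k'}$ yields a $Q$-orthonormal basis $(u_k)_{k \in \aleph_i(x)}$ of $F_i(x)$. A direct computation using bilinearity of $\Gamma_x(x,\cdot,\cdot)$ and $B_x(x,\cdot,\cdot)$, together with the identity $\cR_x(x,u) = G_x(x,u) - \mu_i(x) Q_x(x,u)$ valid for $u \in F_i(x)$ with $Q(x,u)=1$, will give the key identity
\[
\sum_{j \in J_i} \delta_j\, \cR_x(x, \phi_j) = \sum_{k \in \aleph_i(x)} \tilde d_k^i\, \cR_x(x, u_k) \quad \text{in } X^*.
\]

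I will then set $N := \max_{i \in I} J_i(x)$, and for $k \in \aleph_i(x)$ with $i \in I$ define $d_k^-$ (resp.\ $d_k^+$) to be the $p_k(x)$-th smallest (resp.\ largest) element of $\{\tilde d_{k'}^i : k'\in \aleph_i(x)\}$; otherwise $d_k^\pm := 0$. For any $F^\pm \in \cC^1(\R^N)$ satisfying \eqref{eqdefdiconverse}, Theorem \ref{th:main}(2), the chain rule for right derivatives (Proposition \ref{prop:collection}(4)) and the product rule (Proposition \ref{prop:scal}(1)) will combine to yield
\[
(\fk^\pm)'_r(x; h) = \scaling(x) \sum_{k=1}^N d_k^\pm [\lambda_k]'_r(x; h) + \Bigl(\sum_{k=1}^N d_k^\pm \lambda_k(x)\Bigr) \langle \scaling_x(x), h \rangle.
\]
Since $d_k^\pm$ is a rearrangement of $(\tilde d_{k'}^i)_{k'\in \aleph_i(x)}$ inside each $\aleph_i(x)$, one has $\sum_k d_k^\pm \lambda_k(x) = \sum_j \delta_j \lambda_{k_j}(x)$ (using $\tr(M^i) = \sum_j \delta_j$). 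Substituting from \eqref{eq:converseELnonortho} and using Remark \ref{rk:derivative=eigenvalue} (which gives $[\lambda_k]'_r(x;h) = \nu_{p_k(x)}^i(x,h)$ for $k\in \aleph_i(x)$, with $\nu_1^i \le \cdots \le \nu_{m_i}^i$ the eigenvalues of the symmetric endomorphism $\cL_i(x,h)$ of $F_i(x)$) rewrites
\[
(\fk^\pm)'_r(x; h) = \scaling(x) \sum_{i \in I} \Big[\sum_{p=1}^{m_i(x)} d_{j_i(x)+p-1}^\pm\, \nu_p^i(x,h) - \tr\bigl(D_i\, \cL_i(x,h)\bigr)\Big],
\]
with $D_i := \Diag(\tilde d_k^i)$ in the basis $(u_k)_{k\in \aleph_i(x)}$; the key identity above forces $\tr(D_i \cL_i(x,h)) = \sum_{j\in J_i} \delta_j \langle \cR_x(x,\phi_j), h\rangle$.

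The conclusion then follows from the Von Neumann trace inequality for the symmetric matrices $D_i$ and $\cL_i(x,h)$,
\[
\sum_{p=1}^{m_i(x)} \tilde d_{(m_i+1-p)}^i\, \nu_p^i(x,h) \le \tr\bigl(D_i\, \cL_i(x,h)\bigr) \le \sum_{p=1}^{m_i(x)} \tilde d_{(p)}^i\, \nu_p^i(x,h),
\]
where $\tilde d_{(\cdot)}^i$ denotes the nondecreasing rearrangement. By construction, the first sum in each bracket above equals $\sum_p \tilde d_{(p)}^i \nu_p^i$ for $\pm = -$ and $\sum_p \tilde d_{(m_i+1-p)}^i \nu_p^i$ for $\pm = +$, so the bracket becomes nonnegative for $-$ and nonpositive for $+$. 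Therefore $(\fk^-)'_r(x;h) \ge 0$ and $(\fk^+)'_r(x;h) \le 0$ for every $h \in X$, which by Proposition \ref{prop:C} gives $0 \in \partial^- \fk^-(x)$ and $0 \in \partial^+ \fk^+(x)$, as desired. The hard part is precisely this last step: our ordering of the $\tilde d_k^i$ is fixed independently of $h$, whereas the eigenvectors of $\cL_i(x,h)$ (and hence the indexing that realizes $\nu_p^i$) depend on $h$; what makes a single ordering work uniformly in $h$ is exactly the rearrangement-theoretic content of Von Neumann's inequality.
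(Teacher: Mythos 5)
Your proof is correct, and it splits cleanly into two parts: the first part (normalizing, grouping by eigenvalue, forming the symmetric matrix $M^i$ from the coefficients $\phi_j^k$ and diagonalizing to obtain a $Q$-orthonormal family $(u_k)$ with weights $\tilde d_k^i$ summing to $\sum_{j\in J_i}\delta_j$) is essentially identical to the paper's Step~1. Where you genuinely depart is the second part: the paper defines $F$ by concentrating all positive mass on $d_{j_i(x)}$ and all negative mass on $d_{J_i(x)}$ (i.e.\ $d_{j_i}=\sum_{\tilde d_i^j>0}\tilde d_i^j$, $d_{J_i}=\sum_{\tilde d_i^j<0}\tilde d_i^j$, zero elsewhere), and then establishes $0\in\partial^{\pm}\fk$ by combining the equality chain rule \eqref{eq:chain_rule_equal}, the lower/upper regularity of $\lambda_{j_i}$ and $\lambda_{J_i}$, the equality case \eqref{eq:diff_single_rescaled_equal}, and the Mixing Lemma with the vector $(1,0,\ldots,0)$. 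You instead define $d_k^{\pm}$ as the nondecreasing (resp.\ nonincreasing) rearrangement of $(\tilde d_{k'}^i)_{k'\in\aleph_i(x)}$ within each block, compute $(\fk^{\pm})'_r(x;h)$ directly via Remark~\ref{rk:derivative=eigenvalue} and the chain/product rules, rewrite the ``zeroth-order'' contribution coming from \eqref{eq:converseELnonortho} as $\tr(D_i\,\cL_i(x,h))$, and conclude by Ruhe's (von Neumann's) trace inequality. The trade-off is instructive: the paper stays inside the non-smooth calculus it has built (so the resulting $F$ is sparse, with at most two nonzero derivatives per eigenvalue block), whereas your argument bypasses both the Mixing Lemma and the equality-case calculus entirely at the cost of one classical matrix inequality, and produces a different $F$ (one nonzero derivative per index in each block). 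Both choices of $(d_k^{\pm})$ are valid, and your observation that the rearrangement inequality is exactly what lets a fixed ordering of the $\tilde d_k^i$ dominate the $h$-dependent ordering of the $\nu_p^i(x,h)$ is the right way to see why the argument closes.
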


\begin{proof} Let us prove the case where $\pm$ is $+$. The other case is left to the reader. 

\medskip

\textbf{Step 1:} We prove that up to rearrangements of the family of eigenfunctions $(\phi_j)$, and a multiplication by a constant in \eqref{eq:converseELnonortho} we can find $N \in \mathbb{N}^*$ a family $(u_k) \in \mathbf{U}_{J_N(x)}(x)$ and $\tilde{d}_1,\cdots, \tilde{d}_N \in \R$ and $c \in \{0,\pm 1\}$ such that
\begin{equation}\label{eq:converseEL} \sum_{i=1}^{i_N(x)} \sum_{k \in \aleph_i(x)} \tilde{d}_k \mathcal{R}_x(x,u_k) + \frac{\scaling_x(x)}{\scaling(x)} \sum_{k=1}^N \tilde{d}_k \lambda_k(x)  = 0 \end{equation}

\medskip

We denote by $N$ the highest index between all the eigenvalues associated to the eigenfunctions $(\phi_j)$. We reorder and renumber $(\phi_j)$ as $(\phi_{i}^j)$  for $i = 1,\cdots i_N(x)$ and $ j = 1,\cdots,l_i$ so that $\phi_i^j \in F_i(x)$ and $l_i$ is the number of eigenfunctions in the family associated to $\mu_i(x)$. Let $(\delta_i^j)$ be the relabelled associated weights. Notice that \eqref{eq:converseELnonortho} becomes
\begin{equation} \label{eqrelabelledconverse}
\sum_{i=1}^{i_N(x)} \sum_{j=1}^{l_i} \delta_i^j \frac{ G_x(x,\phi_i^j) - \mu_i(x) Q_x(x,\phi_i^j)}{Q(x,\phi_i^j)} + \frac{\scaling_x(x)}{\scaling(x)} \sum_{i=1}^{i_N(x)} \sum_{j=1}^{l_i} \delta_i^j \mu_i(x) = 0
\end{equation}
For any $\mu>0$,  let $A_\mu : H \to X$ be the quadratic form defined by
\[
A_\mu(u) \df \scaling(x) \left( G_x(x,u) - \mu \, Q_x(x,u) \right).
\]
Then,  setting $\tilde{\phi}_i^j \df \frac{\phi_i^j}{\sqrt{Q(x,\phi_i^j)}}$, \eqref{eqrelabelledconverse} becomes
\begin{equation} \label{eqrelabelledconverse2}
\sum_{i=1}^{i_N(x)} \sum_{j=1}^{l_i} \delta_i^j A_{\mu_i(x)}(\tilde{\phi}_i^j) + \scaling_x(x) \sum_{i=1}^{i_N(x)} \sum_{j=1}^{l_i} \delta_i^j \mu_i(x) = 0
\end{equation}
We let $(v_i^p)$ for $i = 1,\cdots i_N(x)$ and $ p = 1,\cdots, m_i(x)$ be an orthonormal family of eigenfunctions with respect to $Q(x,.)$ such that $v_i^p \in F_i(x)$. We then write $\tilde{\phi}_i^j$ in the basis $(v_i^p)_{p=1,\cdots,m_i(x)}$:
\begin{equation} 
\label{eqbasisconverse} \tilde{\phi}_i^j = \sum_{p=1}^{m_i(x)} \alpha_{i,j,p} v_i^p \end{equation}
Denoting $B_{\mu_i(x)}$ the symmetric bilinear form associated to $A_{\mu_i(x)}$,
\begin{equation} \label{eqdevelopbilinear} \sum_{j=1}^{l_i} \delta_i^j A_{\mu_i(x)}(\tilde{\phi}_i^j) = \sum_{1 \leq p,q \leq m_i(x)} \sum_{j=1}^{l_i} \delta_i^j \alpha_{i,j,p} \alpha_{i,j,q} B_{\mu_i(x)}(v_i^p,v_i^q)  \end{equation}
We set the matrix $A_i \in \mathcal{M}_{m_i(x)}(\R)$ as
$$ (A_i)_{p,q} = \sum_{j=1}^{l_i} \delta_i^j \alpha_{i,j,p} \alpha_{i,j,q} $$
This matrix is symmetric so that there is $P_i \in \mathbf{O}_{m_i(x)}(\R)$ and $D_i \in \mathcal{M}_{m_i(x)}(\R)$ a diagonal matrix such that $A_i = P_i^T D_i P_i$.  \eqref{eqdevelopbilinear} becomes
\begin{eqnarray*}  \sum_{j=1}^{l_i} \delta_i^j A_{\mu_i(x)}(\tilde{\phi}_i^j) = \sum_{1 \leq p,q,r \leq m_i(x)} (P_i)_{r,p} (D_i)_{r,r} (P_i)_{r,q}  B_{\mu_i(x)}(v_i^p,v_i^q) \\ =  \sum_{r=1}^{ m_i(x)}  (D_i)_{r,r} B_{\mu_i(x)}\left(\sum_{p=1}^{m_i(x)}(P_i)_{r,p}v_i^p, \sum_{q=1}^{m_i(x)}(P_i)_{r,q} v_i^q\right) 
\end{eqnarray*}
so that setting $\tilde{d}_{i}^j := (D_i)_{r,r}$, $u_i^r := \sum_{p=1}^{m_i(x)}(P_i)_{r,p}v_i^p$, noticing that 
$$ \sum_{j=1}^{l_i} \delta_i^j = \tr(A_i) = \sum_{j=1}^{m_i(x)} \tilde{d}_i^j$$ 
and up to make a relabelling, we obtain from \eqref{eqrelabelledconverse2} the desired formula \eqref{eq:converseEL}.

\medskip

\textbf{Step 2:} We define $d_1,\cdots,d_N$ in order to get the theorem for $\pm = +$

\medskip

Let $d_1,\cdots,d_N$ we shall define later. Consider $\fk = F \circ (\lambda_1,\ldots,\lambda_N)$ where $F$ satisfies  \eqref{eqdefdiconverse} for $\pm = +$.  Recall that $\bar{\lambda}_{j_1(x)},\cdots,\bar{\lambda}_{j_N(x)}$ are lower regular at $x$ and $\bar{\lambda}_{J_1(x)},\cdots,\bar{\lambda}_{J_N(x)}$ are upper regular at $x$. We assume that for $i\in \{1,\cdots,i_N(x)\}$, $d_{j_i(x)} >0$ and $d_{J_i(x)} <0$ and that $d_k = 0$ if $k \notin \{j_i(x),J_i(x) ; i = 1,\cdots i_N(x) \}$. Then \eqref{eq:chain_rule_Clarke} and \eqref{eq:equal_regular} imply that
\begin{align*}
\partial^+\fk(x)  & = \sum_{i=1}^{i_N(x)}  \bigg( d_{j_i(x)} \partial^+ [\bar{\lambda}_{j_i(x)}] (x) + d_{J_i(x)}\partial^- [\bar{\lambda}_{J_i(x)}](x) \bigg)\\
& = \sum_{i=1}^{i_N(x)}  \bigg( d_{j_i(x)} C_{j_i(x)}(x) + d_{J_i(x)}C_{J_i(x)}(x) \bigg)
\end{align*}
where we have used the equality case in Proposition \ref{prop:diff_single} to get the second line. We have $C_{j_i(x)}(x) = C_{J_i(x)}(x)$, and by lemma \ref{lem:mixing},
\begin{align*}
C_{j_i(x)}(x) = C_{J_i(x)}(x) & = \scaling_x(x)  \mu_i(x) + \scaling(x) \, \mathcal{C}
\end{align*}
where
$$ \mathcal{C} = \Bigg\{ \sum_{k \in \aleph_i(x)} \tilde{\delta}_k \mathcal{R}_x(x,u_k) \, :   \, (u_k) \in \mathbf{O}(F_i(x)),  \tilde{\delta} \in \Mix((1,0,\cdots,0)) \Bigg\}. $$
Then, setting  for $i\in \{1,\cdots, i_N(x)\}$
$$ d_{j_i(x)} := \sum_{j \in \{1,\cdots,m_i(x)\} ; \tilde{d}_i^j > 0 } \tilde{d}_i^j \text{ and } d_{J_i(x)} := \sum_{j \in \{1,\cdots,m_i(x)\} ; \tilde{d}_i^j < 0 } \tilde{d}_i^j   $$
we immediately deduce Step 2 from Step 1.
\end{proof}

\section{Laplace and Steklov functionals on surfaces}\label{sec:surfaces}

In this section, we apply the theory developed in the previous section to functionals depending either on Laplace or Steklov eigenvalues on a compact,  connected, smooth surface $\Sigma$.  We consider both criticality over the whole set of Riemannian metrics of $\Sigma$ and over a fixed conformal class. In this way, we recover results from \cite{Nadi,EI1,FraserSchoen,PetridesEllipsoids}. We let $N$ be a positive integer and $F \in \cC^1(\setR^N)$ be fixed throughout the section.

\subsection{Laplace functionals}

Assume that $\partial \Sigma = \emptyset$.  For any $g \in \mathcal{R}^2(\Sigma)$,  the Laplace--Beltrami operator $\Delta_g$ admits a discrete spectrum whose $k$-th element, $k \in \setN^*$, is given by the min-max formula
\begin{align*}
\lambda_k(g) & = \min_{E \in \cG_k(H^{1}(\Sigma))} \max_{u \in \underline{E}} \frac{\int_\Sigma |du|^2_{g} \di v_{g}}{\int_\Sigma u^2 \di v_{g}} \, \cdot
\end{align*}
We define the scale-invariant quantity
 \[
\overline{\lambda}_k(g) \df \scaling(g)\lambda_k(g)
\]
where
\[
\scaling(g) \df v_g(M)^{2/n}
 \]
and we set
\[
d_k(g) \df
\begin{cases}
 \partial_kF \left(\overline\lambda_1(g),\cdots, \overline\lambda_N(g)\right) & \text{if $k\le N$,}\\
 \qquad \qquad \quad  0 & \text{otherwise.}
\end{cases}
\]
We also define
\[
c(g) \df
\begin{cases}
\qquad 0 & \text{if  $S(g) \df  \sum_{k=1}^{N} d_k(g) \lambda_k(g) =0$,}\\
S(g)/|S(g)| & \text{otherwise.}
 \end{cases}
\]
Lastly, we recall defnition \eqref{eq:def_quadric} of $\mathcal{Q}(\lambda,\eps,c)$ and we recall the multiplicity notation introduced in \ref{subsec:abstract_Rayleigh}: $\{\mu_i(g)\}$, $\aleph_i(g)$, $J_N(g)$ , $i_N(g)$, etc.  We also recall that  $\langle \cdot , \cdot \rangle_{g,h_\eps}$ and $\cM(\eps)$ are defined in \eqref{eq:prod} and \eqref{eq:mappings} respectively. Our first main theorem is the following.

\begin{theorem} \label{theocriticallaplace2} Let $\Sigma$ be a closed, connected, smooth surface.  

\begin{enumerate}
\item For a high enough integer $m$, if $g \in \mathcal{R}^m(\Sigma)$ is critical for the spectral functional
\[ \mathfrak{F} : \mathcal{R}^m(\Sigma) \ni g' \mapsto  F\left(\overline\lambda_1(g'),\cdots, \overline\lambda_N(g')\right),\]
then there exists $\eps \in \{0,\pm 1\}^{J_N(g)}$ such that the following holds.

\begin{enumerate} 
\item There exists a $\Delta_g$-harmonic mapping
\[
\Phi  = (\phi_k) : (\Sigma,g) \to \left(\mathcal{Q}\left(\Lambda(g),\eps, \frac{c(g)}{v_g(\Sigma)}\right),h_\eps \right)
\]
where $\Lambda(g):= (\lambda_1(g),\cdots,\lambda_{J_N(g)}(g))$ such that:
\begin{itemize}
\item[(i)] each $\phi_k$ belongs to $E_k(g)$,
\item[(ii)] the family $(\phi_k)$ is $\|\cdot\|_{L^2(\beta)}$-orthogonal,
\item[(iii)] the following identity holds:
\begin{equation}\label{eq:conformal}
\Phi^* h_{\eps} = \frac{\langle d \Phi, d \Phi \rangle_{g,h_{\eps}}}{2} \, g.
\end{equation}
\end{itemize}
In particular, if $\Phi \in \cM(\pm\eps)$,  then $\Phi$ is a minimal immersion.

\item There exists $\tilde{d} \in \prod_{i=1}^{i_N(g)} \Mix(d_{\aleph_i(g)})$ such that for any $1 \le k \le J_N(g)$, the $k$-th coordinate $\phi_k$ of $\Phi$ satisfies
\begin{equation}\label{eq:normLaplace}
\int_\Sigma \phi_k^2 \di v_g =\begin{cases}
\,\,\, |\tilde{d}_k| & \text{if $S(g)=0$,}\\
\displaystyle \frac{|\tilde{d}_k|}{|S(g)|} & \text{otherwise.}
 \end{cases}
\end{equation}
\end{enumerate}

\item Conversely, assume that there exist  $\eps = (\eps_j) \in \{0,\pm 1\}^N$ and $\Lambda = (\lambda_j) \in [0,+\infty]^N$ and a conformal minimal immersion
\[
\Phi : (M,g_0) \to (\cQ(\Lambda,\eps,1),h_\eps) 
\]
where $g_0$ is a Riemannian metric such that the function 
$$ f \df h_\eps\left( \Delta_{\Phi^*h_\eps} \Phi, \Lambda \Phi\right)  = \frac{\langle d \Phi, \Lambda d \Phi \rangle_{\Phi^*h_\eps, h_\eps} }{h_\eps(\Lambda \Phi, \Lambda \Phi)}$$ 
is well-defined and positive on $M$. Then the Riemannian metric
\[
g \df f \Phi^* h_\eps
\]
is critical for some finite combination of Laplace eigenvalues.
\end{enumerate}
\end{theorem}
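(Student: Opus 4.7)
I would instantiate the abstract machinery of Section \ref{sec:theory} with $X = \mathcal{S}^m(\Sigma)$, $\Omega = \mathcal{R}^m(\Sigma)$, $Y = H^1(\Sigma)$, $G(g,u) = \int_\Sigma |du|_g^2 \, dv_g$, $Q(g,u) = \int_\Sigma u^2 \, dv_g$, and $\scaling(g) = v_g(\Sigma)$. Verifying the standing assumptions (A)--(G) is routine for large enough $m$: (D) is the Rellich--Kondrachov compact embedding $H^1(\Sigma) \hookrightarrow L^2(\Sigma)$, while (B), (C), (G) follow from smooth control of the metric coefficients under $\mathcal{C}^m$-perturbation. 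Standard variational formulas give the Fréchet derivatives
\[G_x(g,\phi)[h] = \int_\Sigma \bigl(-\langle h, d\phi \otimes d\phi\rangle_g + \tfrac{1}{2}|d\phi|_g^2 \tr_g(h)\bigr) dv_g,\]
\[Q_x(g,\phi)[h] = \tfrac{1}{2}\int_\Sigma \phi^2 \tr_g(h) \, dv_g, \qquad \scaling_x(g)[h] = \tfrac{1}{2}\int_\Sigma \tr_g(h) \, dv_g,\]
together with the crucial dimension-two identity $G_x(g,\phi)[fg] = 0$ for every $f \in \mathcal{C}^\infty(\Sigma)$, expressing the conformal invariance of Dirichlet energy on surfaces.

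For Part (1), criticality of $g$ means $0 \in \partial_C^-\fk(g) \cup \partial_C^+\fk(g)$, and Theorem \ref{th:main_cri} produces a $Q$-orthogonal family $(\phi_k) \in \mathcal{U}_{J_N(g)}(g)$, a tuple $\tilde{d} \in \prod_i \Mix(d_{\aleph_i(g)})$, and signs $(\eps_k) \in \{0,\pm 1\}^{J_N(g)}$ satisfying the Euler--Lagrange equation \eqref{eq:EL2}, together with the normalization \eqref{eq:normLaplace} from item (C). Plugging in the derivatives above, I would split every test tensor $h \in \mathcal{S}^m(\Sigma)$ into its trace-free part $h^{tf}$ and its pure-trace part $\tfrac{1}{2}\tr_g(h)\, g$. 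The trace-free component yields $\sum_k \eps_k (d\phi_k \otimes d\phi_k)^{tf} = 0$ pointwise, which in dimension two is equivalent to the conformality identity \eqref{eq:conformal}. The pure-trace component $h = f g$, using $G_x(g,\phi)[fg] = 0$, produces after integrating against arbitrary $f$ the pointwise quadric relation $\sum_k \eps_k \lambda_k(g) \phi_k^2 = c(g)/v_g(\Sigma)$. Since each $\phi_k$ lies in $E_k(g)$ we have $\Delta_g \Phi = \Lambda(g) \Phi$, so Remark \ref{rem:obvious2} identifies $\Phi$ as $\Delta_g$-harmonic into $\mathcal{Q}(\Lambda(g),\eps,c(g)/v_g(\Sigma))$; combined with \eqref{eq:conformal}, this gives a minimal immersion when $\Phi \in \mathcal{M}(\pm\eps)$.

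For Part (2), I would start from the data $\Phi : (M, g_0) \to (\mathcal{Q}(\Lambda,\eps,1),h_\eps)$ and read off $\Phi^* h_\eps = \rho g_0$ (conformality) together with $\Delta_{g_0} \Phi = c_0 \Lambda \Phi$ for some scalar $c_0$ (harmonicity into the quadric via Remark \ref{rem:obvious2}). The two-dimensional covariance $\Delta_{fg'} = f^{-1}\Delta_{g'}$ then shows that the prescribed $f$ is engineered precisely so that $g = f \Phi^* h_\eps$ makes each coordinate $\phi_k$ an eigenfunction of $\Delta_g$ with eigenvalue $\lambda_k$. With the $\phi_k$ now genuine eigenfunctions, I would invoke Theorem \ref{th:converse} applied to the family $(\phi_k)_{k=1,\ldots,N}$ with weights $\delta_k = \eps_k$: verifying \eqref{eq:converseELnonortho} reduces once more to the same trace/trace-free decomposition. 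The trace-free part cancels by conformality of $\Phi$, and the pure-trace part cancels by combining $G_x(g,\phi_k)[fg] = 0$ with the quadric constraint $\sum_k \eps_k \lambda_k \phi_k^2 = 1$ against the $\scaling_x/\scaling$ contribution. Theorem \ref{th:converse} then outputs the combination $F$ for which $g$ is critical.

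The main obstacle lies in the trace/trace-free decomposition of Part (1): one must show that the abstract Euler--Lagrange identity \eqref{eq:EL2}, which a priori entangles eigenvalue groups $\aleph_i(g)$ with Mixing-Lemma weights $\tilde d_k$ and signs $\eps_k$, cleanly separates into the single conformality condition \eqref{eq:conformal} and the single pointwise quadric identity; the fact that distinct $\mu_i(g)$ share the same pure-trace structure is what makes the collapse possible. In Part (2), the delicate step is identifying the metric $g$ that turns the immersion coordinates into eigenfunctions, a phenomenon special to dimension two via the conformal covariance of $\Delta$; without this one could not feed the $\phi_k$ into Theorem \ref{th:converse}.
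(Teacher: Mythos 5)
Your proposal is correct and essentially matches the paper's proof: same choice of $X,\Omega,G,Q,\scaling$, same appeal to Theorem \ref{th:main_cri} for Part (1) and Theorem \ref{th:converse} for Part (2), and the same reading of the Euler--Lagrange identity. Your trace/trace-free decomposition of the test tensor $h$ (with the observation $G_x(g,\phi)[fg]=0$ in dimension two) is exactly dual to the paper's maneuver of writing out the full tensor equation \eqref{eq:identityLaplace}, taking its trace, and substituting back; the only other discrepancy is a notational slip where you wrote $Y=H^1(\Sigma)$ but meant $Y=L^2(\Sigma)$ with $H=H^1(\Sigma)$ (you clearly intend the latter, since you invoke Rellich--Kondrachov $H^1\hookrightarrow L^2$).
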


\begin{rem} In (2), the assumption on $f$ is automatic if $\eps_j = 1 $ for any $j$.
\end{rem}

\begin{proof}
By classical elliptic regularity,  if $m$ is high enough, then any eigenfunction associated with a Laplace eigenvalue $\lambda_k(g)$ is $\mathcal{C}^2$. Choose any such a $m$ and set $X \df \cS^m(M)$, $\Omega \df \cR^m(M)$ , $Y \df L^2(\Sigma)$ and $H\df H^{1}(\Sigma)$.  For any $g' \in \mathcal{R}^m(\Sigma)$, define
\begin{itemize}
 \item $Q(g',u) \df \int_\Sigma u^2 \di v_{g'}$ for any $u \in Y$,
\item $G(g',u) \df \int_\Sigma \left\vert d u \right\vert_{g'}^2 \di v_{g'}$ for any $u \in H$,
\item $\mathcal{R}(g',u)\df G(g',u)/Q(g',u)$ for any $u \in \uY$.
\end{itemize}
Then $N(g',\cdot) \df \left( G(g',\cdot) + Q(g',\cdot)\right)^{1/2}$ coincides with the Hilbert norm $\|\cdot\|_{H^{1}(g')}$. The Rellich--Kondrachov theorem (see \cite[Corollary 3.7]{Hebey}, for instance) ensures that the embedding $(H, N(g',\cdot)) = (H^{1}(\Sigma),\|\cdot\|_{H^{1}(g')})  \hookrightarrow (L^2(\Sigma),\|\cdot\|_{L^2(g')}) = (Y,{Q^{1/2}(g',\cdot)) < +\infty}$ is compact.  Moreover,  by compactness of $M$, there exists a neighborhood $V \subset \mathcal{R}^m(\Sigma)$ of $g$ and $C\ge 1$ such that $C^{-1} g \le g' \le Cg$ for any $g' \in V$.  To sum up,  Assumptions (A), (B), (C'), (D') are satisfied.  (E) is also obviously true, since we can explicitely compute the Fréchet derivatives of $Q$, $G$ and $\scaling$: indeed, a classical calculation in local coordinates (see e.g.~\cite{Via}) yields that for any $h \in \cS^m(\Sigma)$, 
\[
\left. \frac{\di}{\di t} \right|_{t=0^+} \di v_{g+th} =  \frac{1}{2}\langle g,h \rangle_{g} \di v_{g}
\]
so that for any $u \in L^2(M)$ and $v \in H^1(M)$,
\begin{align*}
\langle Q_g(g,u), h \rangle & = \left. \frac{\di}{\di t} \right|_{t=0} Q(g+th,u)  = \left. \frac{\di}{\di t} \right|_{t=0}  \int_\Sigma u^2  \di v_{g+th}   =  \frac{1}{2}\int_\Sigma u^2  \langle g,h \rangle_g \di v_{g}\\
\langle G_g(g,v), h \rangle &  = \left. \frac{\di}{\di t} \right|_{t=0} G(g+th,v)  = \left.   \frac{\di}{\di t}  \right|_{t=0} \int_\Sigma  \left\vert d v \right\vert_{g+th}^2 \di v_{g'} + \left.   \frac{\di}{\di t}  \right|_{t=0} \int_\Sigma  \left\vert d v \right\vert_{g}^2    \di v_{g+th}   \\
&  = - \int_\Sigma \langle dv \otimes dv, h \rangle_g \di v_{g'} + \frac{1}{2}\int_\Sigma |dv|_g^2 \langle g,h \rangle_g \di v_{g}\\
\langle \scaling_g(g),h\rangle & = \left. \frac{\di}{\di t} \right|_{t=0} \scaling(g+th,u)   =  \frac{1}{2}\int_\Sigma \langle g,h \rangle_g \di v_{g}.
\end{align*}
This implies that
\[
Q_g(g,u) = \frac{1}{2} u^2 g \qquad G_g(g,v) = - dv \otimes dv + \frac{1}{2} |dv|_g^2 g \qquad \scaling_g(g,u) = \frac{1}{2}g.
\]
It is easily checked from these formulae that Assumptions (F) and (G) are satisfied.  Hence we can apply Theorem \ref{th:main_cri} which implies that there exist $\Phi \df (\phi_1,\ldots,\phi_{J_N(g)}) \in \cU_{J_N(g)}(g)$, $\tilde{d} \in \prod_{i=1}^{i_N(x)} \Mix(d_{\aleph_i(g)})$,  and $\eps = (\eps_k)\in\{0,\pm 1\}^{J_N(g)}$ such that \eqref{eq:normLaplace} holds and
\begin{equation}\label{eq:identityLaplace}
\sum_{k=1}^{J_N(g)} \eps_k \left(- d\phi_k \otimes d\phi_k + \frac{1}{2} |d \phi_k|_g^2 g \right) + \frac{c(g)}{2v_{g}(\Sigma) } g =  \sum_{i=1}^{i_N(g)} \mu_i(g) \sum_{k \in \aleph_i(g)} \frac{1}{2} \eps_k \phi_k^2 g.
\end{equation}
Since $\Phi \in \cU_{J_N(g)}(g)$ its coordinates are eigenfunctions $\|\cdot\|_{L^2(g)}$-orthogonal one to another. Moreover, taking the trace of \eqref{eq:identityLaplace}, we obtain
\begin{equation}\label{eq:trace}
\sum_{i=1}^{i_N(g)} \mu_i(g) \sum_{k \in \aleph_i(g)} \eps_k \phi_k^2 = \frac{c(g)}{v_{g}(\Sigma) } \, \cdot
\end{equation}
From Remark \ref{rem:obvious},  we obtain that $\Phi$ is a $\Delta_g$-harmonic mapping of $(\Sigma,g)$ into $(\cQ(g,\eps),h_\eps)$.  Inserting \eqref{eq:trace} back into \eqref{eq:identityLaplace},  we easily obtain \eqref{eq:conformal}.  The converse statement  is then obtained from a direct application of Theorem \ref{th:converse}, noticing that harmonicity implies by a direct computation of $0 = \frac{1}{2} \Delta_{\Phi^*h_\eps} h_\eps(\Lambda\Phi,\Phi)$ and the use of $\Delta_{\Phi^*h_\eps} \Phi \perp_{h_\eps} T_\Phi \mathcal{Q}$ that
$$ \Delta_{\Phi^*h_\eps} \Phi = f \Lambda \Phi $$
so that $ \Delta_g \Phi = \Lambda \Phi $. 
\end{proof}

Let us now treat the case of criticality in a conformal class $[g]$ where $g \in \mathcal{R}(\Sigma)$ is fixed. The conformal invariance of the Dirichlet energy on surfaces implies that for any $k \in \setN^*$ and $\tilde{g} = fg \in [g]$, $f \in \cC_{>0}^\infty(\Sigma)$,   the $k$-th eigenvalue of the Laplace--Beltrami operator $\Delta_{\tilde{g}}$ writes as 
\begin{align*}
\lambda_k(\tilde{g}) & = \min_{E \in \cG_k(H^{1}(\Sigma))} \max_{u \in \underline{E}} \frac{\int_\Sigma |du|^2_{g} \di v_{g}}{\int_\Sigma u^2 f \di v_{g}}  \, \cdot
\end{align*}
We extend this formula to conformal factors $f$ that may not be smooth by setting
\[
\lambda_k(\beta) \df \min_{E \in \cG_k(H^{1}(M))} \max_{u \in \underline{E}} \frac{\int_\Sigma |du|^2_{g} \di v_{g}}{\int_\Sigma u^2 \beta \di v_{g}}
\]
for any $\beta \in  \cC_{>0}(\Sigma)$.  Any eigenfunction $\varphi \in H^1(\Sigma)$ associated with $\lambda_k(\beta)$ satisfies
\begin{equation}\label{eq:Laplacebeta}
\Delta_g \varphi = \lambda_k(\beta) \varphi \beta
\end{equation}
weakly on $\Sigma$. Classical elliptic regularity theory ensures that if $\beta \in \cC^m(\Sigma)$ for some $m \in \setN$, then $\varphi \in \cC^{m+2}(\Sigma)$. 
We define the scale-invariant quantities
\[
\overline{\lambda}_k(\beta)  \df \scaling(\beta)\lambda_k(\beta), \qquad k \in \setN^*,
\]
where
\[
\scaling(\beta) \df \|\beta\|_{L^1(g)}.
\]
We adapt the notation in a natural way: $\{\mu_i(\beta)\}$, $\aleph_i(\beta)$, $J_N(\beta)$ , $i_N(\beta)$,  $d_k(\beta)$, $S(\beta)$, $c(\beta)$, etc.  We recall definition \eqref{eq:def_quadric} of $\mathcal{Q}(\lambda,\eps,c)$.  We also set
\[
\|u\|_{L^2(\beta)} \df \left( \int_\Sigma u^2 \beta\di v_g \right)^{1/2} \qquad \forall \, u \in L^2(\Sigma).
\]
Note that this defines a family of locally equivalent norms $\{\|\cdot\|_{L^2(\beta)}\}_{\beta \in  \cC_{>0}(\Sigma)}$ on $L^2(\Sigma)$.

\begin{theorem}\label{theoLaplace2conforme}
Let $\Sigma$ be a closed, connected, smooth surface. 
\begin{enumerate}
\item For a high enough integer $m$, if $\beta \in \cC_{>0}^m(\Sigma)$ is critical for the eigenvalue functional
\[
\fk : \cC_{>0}^m(\Sigma) \ni  \beta' \mapsto F(\overline{\lambda}_1 (\beta'),\ldots,\overline{\lambda}_N (\beta')),
\]
 then there exists $\eps \in \{0,\pm 1\}^{J_N(\beta)}$ such that the following holds.
 \begin{enumerate}
\item There exists a $\Delta_{g}$-harmonic mapping
$$\Phi = (\phi_k) : (\Sigma,g) \to \left(\cQ\left(\Lambda(\beta),\eps,\frac{c(\beta)}{\Vert \beta \Vert_{L^1}}\right),h_\eps\right)$$
where $\Lambda(\beta) = (\lambda_1(\beta),\cdots,\lambda_{J_N(\beta)}(\beta))$ such that
\begin{itemize}
\item[(i)] each $\phi_k$ belongs to $E_k(g)$,
\item[(ii)] the family $(\phi_k)$ is $\|\cdot\|_{L^2(\beta)}$-orthogonal,
\item[(iii)] the following identity holds: \begin{equation}\label{eq:beta*}
\beta = h_\eps(\Phi,\Delta_{g_0}\Phi) = \frac{\langle d\Phi, \Lambda(\beta) d \Phi \rangle_{g,h_\eps}}{h_\eps(\Lambda(\beta)\Phi,\Lambda(\beta)\Phi)} \, \cdot
\end{equation}
\end{itemize}

\item  There exists $\tilde{d} \in \prod_{i=1}^{i_N(\beta)} \Mix(d_{\aleph_i(\beta)})$ such that for any $1 \le k \le J_N(\beta)$, the $k$-th coordinate $\phi_k$ of $\Phi$ satisfies
\begin{equation}\label{eq:normLaplace2}
\int_\Sigma \phi_k^2 \di v_g =\begin{cases}
\,\,\, |\tilde{d}_k| & \text{if $S(\beta)=0$,}\\
\displaystyle \frac{|\tilde{d}_k|}{|S(\beta)|} & \text{otherwise.}
 \end{cases}
 \end{equation}

\end{enumerate}

\item Conversely,  the statement (2) of Theorem \ref{theocriticallaplace2}, holds replacing the assumption "conformal minimal immersion" by "harmonic map" and criticality of $\beta := h_\eps( \Phi,\Delta_{g_0}\Phi)$ holds for a combination of Laplace eigenvalues in a conformal class, as soon as it is positive.

\end{enumerate}

\end{theorem}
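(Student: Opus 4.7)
The plan is to place the problem within the abstract framework of Section \ref{subsec:abstract_Rayleigh} and then apply Theorem \ref{th:main_cri} for the forward direction and Theorem \ref{th:converse} for the converse. Fix $m$ large enough that elliptic regularity for the weighted eigenvalue equation $\Delta_g u = \lambda \beta u$ with $\beta \in \mathcal{C}^m_{>0}(\Sigma)$ yields $\mathcal{C}^2$ eigenfunctions. Set $X := \mathcal{C}^m(\Sigma)$, $\Omega := \mathcal{C}^m_{>0}(\Sigma)$, $Y := L^2(\Sigma)$, $H := H^1(\Sigma)$, and take
\[
G(\beta,u) := \int_\Sigma |du|_g^2 \di v_g, \qquad Q(\beta,u) := \int_\Sigma u^2 \beta \di v_g, \qquad \scaling(\beta) := \|\beta\|_{L^1(g)}.
\]
The standing assumptions (A)--(G) are even easier to check than in the proof of Theorem \ref{theocriticallaplace2}: positive definiteness of $Q$ comes from $\beta > 0$, the Rellich--Kondrachov embedding $H^1 \hookrightarrow L^2$ gives (D), and local equivalence of the norms follows from compactness of $\Sigma$ with continuity of $\beta$. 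The key simplification is that $G$ no longer depends on $\beta$, so the Fréchet derivatives reduce to
\[
G_\beta \equiv 0, \qquad \langle Q_\beta(\beta,u),h\rangle = \int_\Sigma u^2 h \di v_g, \qquad \langle \scaling_\beta(\beta),h\rangle = \int_\Sigma h \di v_g,
\]
from which (E)--(G) are immediate.

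For part (1), criticality allows us to apply Theorem \ref{th:main_cri}(B), producing a family $(\phi_k) \in \cU_{J_N(\beta)}(\beta)$ and signs $(\eps_k) \in \{0,\pm 1\}^{J_N(\beta)}$ for which \eqref{eq:EL2} holds. Because $G_\beta \equiv 0$, this identity -- read in $X^* = \mathcal{C}^m(\Sigma)^*$ and tested against arbitrary $h \in \mathcal{C}^m(\Sigma)$, using density and continuity of all terms -- reduces to the pointwise relation on $\Sigma$,
\[
\sum_{k=1}^{J_N(\beta)} \eps_k \lambda_k(\beta)\, \phi_k^2 \;=\; \frac{c(\beta)}{\|\beta\|_{L^1(g)}},
\]
which is exactly the statement that $\Phi := (\phi_k)$ maps $\Sigma$ into $\cQ(\Lambda(\beta),\eps,c(\beta)/\|\beta\|_{L^1(g)})$, giving (i). Each $\phi_k$ satisfies the weighted eigenvalue equation $\Delta_g \phi_k = \lambda_k(\beta) \beta \phi_k$, so $\Delta_g \Phi = \beta\, \Lambda(\beta)\, \Phi \paral \Lambda(\beta)\,\Phi$; by Remark \ref{rem:obvious2} this means $\Phi$ is $\Delta_g$-harmonic into the quadric. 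The orthogonality in (ii) is built into $(\phi_k) \in \cU_{J_N(\beta)}(\beta)$, the normalization \eqref{eq:normLaplace2} is Theorem \ref{th:main_cri}(C), and the identity \eqref{eq:beta*} follows from the pointwise computation $h_\eps(\Phi,\Delta_g\Phi) = \beta\, h_\eps(\Phi,\Lambda(\beta)\Phi)$ combined with the quadric relation just established, after absorbing the constant $c(\beta)/\|\beta\|_{L^1(g)}$ into a common rescaling of $\Phi$.

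For the converse (2), a harmonic map $\Phi$ into $\cQ(\Lambda,\eps,1)$ satisfies $\Delta_g \Phi = f \Lambda \Phi$ by Remark \ref{rem:obvious2}; setting $\beta := f = h_\eps(\Phi,\Delta_g\Phi)$, which is positive by hypothesis, each coordinate $\phi_k$ becomes an eigenfunction of the $\beta$-weighted Laplacian with eigenvalue $\lambda_k$. This supplies precisely the input \eqref{eq:converseELnonortho} needed by Theorem \ref{th:converse}, with weights $\delta_j$ extracted from the family $(\phi_k)$; the theorem then produces a combination $F$ of rescaled eigenvalues for which $\beta$ is critical. The main technical obstacle throughout is the bookkeeping of normalizations: carefully tracking the rescaling that passes from the $Q$-orthonormal family $(u_k)$ of Theorem \ref{th:main_cri}(A) to the $Q$-orthogonal $(\phi_k)$ of (B), so that the constants appearing in the quadric, in the identity \eqref{eq:beta*}, and in \eqref{eq:normLaplace2} are all mutually consistent -- this is where the convention on $c(\beta)/\|\beta\|_{L^1(g)}$ versus a unit-constant quadric has to be reconciled with care.
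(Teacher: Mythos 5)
Your proposal follows essentially the same route as the paper's proof: cast the conformal problem into the abstract framework with $X=\mathcal{C}^m(\Sigma)$, $Y=L^2(\Sigma)$, $H=H^1(\Sigma)$, $Q(\beta',u)=\int u^2\beta'\,\di v_g$, $G(v)=\int|dv|^2_g\,\di v_g$, compute the Fréchet derivatives (with $G_\beta\equiv 0$), invoke Theorem~\ref{th:main_cri} to get the Euler--Lagrange identity as a pointwise quadric constraint, conclude $\Delta_g$-harmonicity from $\Delta_g\phi_k=\lambda_k(\beta)\beta\phi_k$ and Remark~\ref{rem:obvious2}, and close the converse with Theorem~\ref{th:converse}. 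One small point in your favour: you correctly compute $\langle\scaling_\beta(\beta),h\rangle=\int_\Sigma h\,\di v_g$, i.e.~$\scaling_\beta(\beta)=v_g$, whereas the paper's proof writes $\scaling_\beta(\beta)=\tfrac12 v_g$ (apparently a stray factor carried over from the Riemannian-metric setting where $(\di v_{g+th})'|_{t=0}=\tfrac12\langle g,h\rangle_g\,\di v_g$); the missing factor does not affect the structure of the Euler--Lagrange equation but is worth flagging. You also notice, correctly, that the literal identity $\beta=h_\eps(\Phi,\Delta_g\Phi)$ only holds up to the constant $c(\beta)/\|\beta\|_{L^1(g)}$ coming from the quadric, since the eigenvalue equation gives $h_\eps(\Phi,\Delta_g\Phi)=\beta\sum_k\eps_k\lambda_k(\beta)\phi_k^2=\beta\,c(\beta)/\|\beta\|_{L^1(g)}$; this bookkeeping issue is real and the paper's proof glosses over it, so your caveat is justified rather than a gap.
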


\begin{rem} In (2), if $\eps_j = 1 $ for any $j$, we obtain that $\beta$ is non-negative.
\end{rem}

\begin{proof}
By elliptic regularity theory,  if $m$ is high enough, then any eigenfunction associated with a Laplace eigenvalue $\lambda_k(\beta)$ is $\mathcal{C}^2$.  Choose one such a $m$ and set $X\df \cC^m(\Sigma)$, $\Omega\df\cC_{>0}^m(\Sigma)$, $Y \df L^2(\Sigma)$ and $H \df H^1(\Sigma)$.  For any $\beta' \in \Omega$, $u \in Y$ and $v \in H$, define
\[
Q(\beta',u) \df \int_\Sigma u^2 \beta' \di v_{g} \qquad G(v) \df \int_\Sigma |dv|^2_{g} \di v_{g} \qquad  \cR(\beta',v) \df \frac{G(v)}{Q(\beta',u)} \, \cdot
\]
Assumption (A) trivially holds. Building upon the local equivalence of the norms $\|\cdot\|_{L^2(\beta')}$ on  $Y$, we can adapt the arguments from the proof of the previous theorem to show that Assumptions (B), (C'), (D') are also satisfied. Obviously $Q$, $G$ and $\scaling$ are Fréchet differentiable with respect to the $\beta'$-variable, and a direct computation shows that
\[
Q_{\beta'}(\beta',u) = u^2 v_{g} \qquad G_{\beta'}(v) =  0 \qquad \scaling_{\beta'}(\beta') = \frac{1}{2}v_g.
\]
Then Assumptions (E), (F), (G) are easily checked and Theorem \ref{th:main_cri} applies: this brings the existence of $\Phi \df (\phi_k) \in \cU_{J_N(\beta)}(\beta)$ and $(\eps_k)\in\{0,\pm 1\}^{J_N(\beta)}$ such that
\begin{equation}\label{eq:identityLaplaceconformal}
\sum_{i=1}^{i_N(\beta)} \mu_i(\beta) \sum_{k \in \aleph_i(\beta)} \eps_k \phi_k^2 = \frac{c(\beta)}{\|\beta\|_{L^1(g)}} \, \cdot
\end{equation}
Then \textit{(i)} and \textit{(ii)} are satisfied, and the fact that $\Phi$ is $\Delta$-harmonic follows from Remark \ref{rem:obvious}.  Moreover, \eqref{eq:Laplacebeta}  gives that
\[
\Delta_g \phi_k = \lambda_k(\beta) \phi_k \beta
\]
for any $k \in \{1,\ldots,J_N(\beta)\}$. Multiplying the latter by $\phi_k$ and summing over $k$, we obtain \eqref{eq:Laplacebeta}.  Lastly, the converse statement \textit{(2)} is a direct consequence of Theorem \ref{th:converse}.
\end{proof}

\subsection{Steklov functionals}
Assume now that $\partial \Sigma \neq \emptyset$.  Recall that the trace operator on $\Sigma$ is a compact, surjective, bounded linear map $T : H^1(\Sigma) \to H^{1/2}(\partial \Sigma)$ which 
extends the restriction operator $\left. \cdot \right|_{\partial \Sigma} : \cC^\infty(\Sigma) \to \cC^\infty(\partial \Sigma)$. It defines an isomorphism between the quotient $H^1(\Sigma)/H_0^1(\Sigma)$ and $H^{1/2}(\partial \Sigma)$.  The harmonic extension operator is the bounded linear map $\widehat{\cdot} : H^{1/2}(\partial \Sigma) \to H^1(\Sigma)$ such that for any $u \in H^{1/2}(\partial \Sigma)$,  $\widehat{u}$ is the unique element of $H^1(\Sigma)$ such that
\[
\begin{cases}
\Delta_g \widehat{u} = 0 & \text{weakly on $\Sigma$,}\\
T(\widehat{u}) = u & \text{a.e.~on $\partial \Sigma$,}
\end{cases}
\]
for any $g \in \mathcal{R}(\Sigma)$. Note that $\widehat{u}$ does not depend on $g$ since $\Sigma$ is compact. 

For any $g \in \mathcal{R}^2(\Sigma)$, the associated Dirichlet-to-Neumann operator $\mathcal{D}_g$ maps $u \in \cC^\infty(\partial \Sigma)$ to the normal derivative $\partial_g^\nu \widehat{u} \in \cC^\infty(\partial \Sigma)$. This operator admits a discrete spectrum whose $k$-th element, $k \in \setN^*$, is given by the well-known min-max formula
\begin{align*}
\sigma_k(g) & = \min_{E \in \cG_k(H^{1/2}(\partial \Sigma))} \max_{u \in \underline{E}} \frac{\int_{\Sigma} |d\widehat{u}|^2_{g} \di v_{g}}{\int_{\partial \Sigma} u^2 \di v_{\partial g}}
\end{align*}
where $\partial g$ is the smooth Riemannian metric induced by $g$ on $\partial \Sigma$. The number $\sigma_k(g)$ is called the $k$-th Steklov eigenvalue of $(\Sigma,g)$. 

To let our theory take the boundary into full consideration, we work with the following modified Steklov eigenvalues: for any $(g,\tilde{g}) \in \mathcal{R}^2(\Sigma) \times \mathcal{R}^2(\partial \Sigma)$ and $k \in \setN^*$, we set
\begin{align*}
\sigma_k(g,\tilde{g}) & \df \min_{E \in \mathcal{G}_{k}( H^{1/2}(\partial\Sigma))} \max_{u \in \underline{E} }  \frac{\int_\Sigma \left\vert d \widehat{u} \right\vert_g^2 \di v_g}{\int_{\partial\Sigma} u^2  \di v_{\tilde{g}}} \,\cdot
 \end{align*}
Any associated eigenfunction $\varphi \in H^{1/2}(\partial \Sigma)$ satisfies
\[
\int_{\partial \Sigma}\phi \,\partial_g^\nu \widehat{\varphi}  \di v_{\partial g} = \sigma_k(g,\tilde{g}) \int_{\partial \Sigma} \phi \varphi \di v_{\tilde{g}}
\] 
for any $\phi \in \cC^{\infty}(\partial \Sigma)$. When $\tilde{g} = \beta \partial g$ for some $\beta \in \cC^m(\partial \Sigma)$ with high enough $m \in \setN$, elliptic regularity theory ensures that any associated eigenfunction belongs to $\cC^{2}(\partial \Sigma)$. We define the scale-invariant quantities
 \[
\overline{\sigma}_k(g,\tilde{g}) \df \scaling(g,\tilde{g})\sigma_k(g,\tilde{g}), \qquad k \in \setN^*,
\]
where 
\[
\scaling(g,\tilde{g}) = v_{\tilde{g}}(\partial \Sigma).
\]
We adapt the notation in a natural way: $\{\mu_i(g,\tilde{g})\}$, $\aleph_i(g,\tilde{g})$, $J_N(g,\tilde{g})$ , $i_N(g,\tilde{g})$,  $d_k(g,\tilde{g})$, $c(g,\tilde{g})$, and so on.  We also recall the notation $\mathcal{G}(\lambda,\eps,c)$ of \eqref{eq:def_quadric}

For any $\eps \in \{0,\pm 1\}^{J_N(g,\tilde{g})}$, we let $\mathcal{Q}(g,\tilde{g},\eps)$ be the quadric with parameters $(\sigma_1(g,\tilde{g}),\ldots,\sigma_{J_N(g,\tilde{g})}(g,\tilde{g}))$,  $\eps$ and $v_{\tilde{g}}(\partial \Sigma)$.

\begin{theorem} \label{theocriticalsteklov2} Let $\Sigma$ be a compact, connected, smooth surface with a non-empty boundary. 
\begin{enumerate}
\item For a high enough integer $m $, if $g \in \cR^m(\Sigma)$ is such that $(g,\partial g)$ is critical for
\[
\mathfrak{F}  : \mathcal{R}^m(\Sigma) \times \mathcal{R}^m(\partial \Sigma) \ni (g',\tilde{g}') \mapsto F\left(\overline\sigma_1(g',\tilde{g}'),\cdots, \overline\sigma_N(g',\tilde{g}')\right)\]
then there exists $\eps \in \{0,\pm 1\}^{J_N(g,\partial g)}$ such that the following hold. 
\begin{enumerate}
\item There exists a free boundary $\Delta_g$-harmonic mapping
\[
\Phi : (\Sigma,\partial \Sigma,g) \to \left(\setR^{J_{N}(g,\partial g)},\mathcal{Q}\left(\sigma(g,\partial g),\eps, \frac{c(g,\partial g)}{v_{\partial g}(\partial \Sigma)}\right),h_\eps \right)
\]
whose coordinates $\varphi_k$ restricted to $\partial \Sigma$ are $\|\cdot\|_{L^2(\partial g)}$-orthogonal one to another and such that 
\begin{equation}\label{eq:conformalSteklov}
\Phi^* h_{\eps} = \frac{\langle d \Phi, d \Phi \rangle_{g,h_{\eps}}}{2} \, g.
\end{equation}
In particular, if $\Phi \in \cM(\pm\eps)$,  then $\Phi$ is a free boundary minimal immersion.
\item There exists $\tilde{d} \in \prod_{i=1}^{i_N(g,\partial g)} \Mix(d_{\aleph_i(g,\partial g)})$ such that for any $1 \le k \le J_N(g,\partial g)$, the $k$-th coordinate $\phi_k$ of $\Phi$ satisfies
\begin{equation}\label{eq:normSteklov}
\int_{\partial \Sigma} \phi_k^2 \di v_{\partial g} =\begin{cases}
\,\,\, |\tilde{d}_k| & \text{if $S(g,\partial g)=0$,}\\
\displaystyle \frac{|\tilde{d}_k|}{|S(g,\partial g)|} & \text{otherwise.}
 \end{cases}
\end{equation}
\end{enumerate}
\item Conversely,  assume that there exist $\eps = (\eps_j)\in \{0,\pm 1\}^N$ and $\sigma:= (\sigma_j) \in [0,+\infty]^N$ and a free boundary conformal minimal immersion
\[
\Phi : (\Sigma,\partial \Sigma,g_0) \to (\R^N,\cQ(\sigma,\eps,1),h_\eps)
\]
where $g_0$ is a Riemannian metric such that the function 
$$f \df h_\eps\left( \Phi, \partial_{\nu_{\Phi^*h_\eps}} \Phi \right) $$ 
is positive on $\partial M$. Then for any extension $\tilde{f}$ of $f$ on $M$, the Riemannian metric
\[
g \df \tilde{f} \Phi^* h_\eps \text{ and } \partial g \df f \Phi^* h_\eps 
\]
is critical for some finite combination of Steklov eigenvalues.

\end{enumerate}
\end{theorem}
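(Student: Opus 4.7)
The plan is to mirror the argument for Theorem \ref{theocriticallaplace2}, applying the abstract machinery of Section \ref{sec:theory} to a Rayleigh quotient that encodes the Steklov problem, and then read off the geometric conclusions from the Euler--Lagrange characterization \eqref{eq:EL2}. For a sufficiently large integer $m$ (ensured by elliptic regularity up to the boundary so that Steklov eigenfunctions extend $\cC^2$-smoothly), take
\[
X\df \cS^m(\Sigma)\times \cS^m(\partial\Sigma),\qquad \Omega\df \cR^m(\Sigma)\times \cR^m(\partial\Sigma),\qquad Y\df L^2(\partial\Sigma),\qquad H\df H^{1/2}(\partial\Sigma),
\]
and, for $(g',\tilde g')\in \Omega$, $u\in Y$, $v\in H$, define
\[
Q((g',\tilde g'),u)\df \int_{\partial\Sigma} u^2\,\di v_{\tilde g'},\quad G((g',\tilde g'),v)\df \int_\Sigma |d\widehat v|_{g'}^2 \,\di v_{g'},\quad \cR((g',\tilde g'),v)\df \frac{G((g',\tilde g'),v)}{Q((g',\tilde g'),v)},
\]
where $\widehat v$ denotes the (metric-independent) harmonic extension. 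Assumption (A) is immediate; (B) and (C') follow from the local equivalence of $L^2$- and $H^{1/2}$-norms as $(g',\tilde g')$ varies in a neighborhood of $(g,\partial g)$; (D') reduces to the compact embedding $H^{1/2}(\partial\Sigma)\hookrightarrow L^2(\partial\Sigma)$ together with continuity of the harmonic extension in the metric. Assumption (E) is verified by a direct calculation, which yields
\[
Q_x((g,\partial g),u)\cdot(h,\tilde h)=\tfrac12\!\int_{\partial\Sigma} u^2\langle \partial g,\tilde h\rangle_{\partial g}\,\di v_{\partial g},\quad \scaling_x((g,\partial g))\cdot(h,\tilde h)=\tfrac12\!\int_{\partial\Sigma}\langle \partial g,\tilde h\rangle_{\partial g}\di v_{\partial g},
\]
\[
G_x((g,\partial g),v)\cdot(h,\tilde h)=\int_\Sigma\!\Bigl(-\langle d\widehat v\otimes d\widehat v,h\rangle_g+\tfrac12|d\widehat v|_g^2\langle g,h\rangle_g\Bigr)\di v_g,
\]
where the crucial observation is that the $g'$-variation of $\widehat v$ drops out by orthogonality to $H^1_0(\Sigma)$. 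The required continuity bounds (F) and (G) are then routine consequences of these explicit formulae.

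With the setup in place, Theorem \ref{th:main_cri} produces a family $(\phi_k)\in \cU_{J_N(g,\partial g)}(g,\partial g)$ of boundary traces together with $\eps\in\{0,\pm1\}^{J_N(g,\partial g)}$ and $\tilde d\in\prod_i \Mix(d_{\aleph_i})$ satisfying \eqref{eq:normSteklov} and the Euler--Lagrange equation \eqref{eq:EL2}. Because $Q_x$ and $\scaling_x$ only see the $\tilde h$-component while $G_x$ only sees the $h$-component, this single equation splits into two independent pieces. Testing against $(h,0)$ yields the pointwise identity
\[
\sum_{k=1}^{J_N(g,\partial g)}\eps_k\Bigl(-d\widehat\phi_k\otimes d\widehat\phi_k+\tfrac12|d\widehat\phi_k|_g^2\,g\Bigr)=0\quad\text{on }\Sigma,
\]
which is equivalent to \eqref{eq:conformalSteklov} upon taking traces. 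Testing against $(0,\tilde h)$ gives
\[
\sum_{i=1}^{i_N(g,\partial g)}\mu_i(g,\partial g)\sum_{k\in\aleph_i}\eps_k\phi_k^2=\frac{c(g,\partial g)}{v_{\partial g}(\partial\Sigma)}\quad\text{on }\partial\Sigma,
\]
showing that $\Phi\df (\widehat\phi_k)$ maps $\partial\Sigma$ into the quadric $\cQ(\sigma(g,\partial g),\eps,c(g,\partial g)/v_{\partial g}(\partial\Sigma))$. Since each $\widehat\phi_k$ is $g$-harmonic on $\Sigma$ and the Steklov equation gives $\partial_g^\nu \widehat\phi_k=\sigma_k(g,\partial g)\phi_k$, Remark \ref{rem:obvious2} applied on $\partial\Sigma$ shows $\partial_g^\nu\Phi\paral \Lambda\Phi$, which is exactly the free boundary harmonic map condition \eqref{eq:EL_free_boundary_modif}. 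Orthogonality in $L^2(\partial g)$ follows from $(\phi_k)\in \cU_{J_N}$, and conformality on $\Sigma$ together with harmonicity gives minimality when $\Phi\in\cM(\pm\eps)$.

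For the converse (2), the plan is to apply Theorem \ref{th:converse} to the Rayleigh quotient above at the pair $(g,\partial g)\df(\tilde f\,\Phi^*h_\eps,\,f\,\Phi^*h_\eps)$. By conformal invariance of the Dirichlet energy on surfaces together with minimality of $\Phi$ on the interior, each coordinate $\phi_k$ is $g$-harmonic inside $\Sigma$. The free boundary condition $\partial_{\nu_{\Phi^*h_\eps}}\Phi\paral\Lambda\Phi$ on $\partial M$ together with the definition of $f=h_\eps(\Phi,\partial_{\nu_{\Phi^*h_\eps}}\Phi)$ identifies the Steklov eigenvalues $\sigma_j$ of $(g,\partial g)$ as the prescribed parameters and exhibits each $\phi_k$ as the corresponding Steklov eigenfunction; the defining identity on $\cQ$ then becomes exactly a relation of the form \eqref{eq:converseELnonortho}, so Theorem \ref{th:converse} furnishes a combination $F$ for which $(g,\partial g)$ is critical.

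The main technical obstacle lies not in the abstract application of Theorem \ref{th:main_cri} but in the verification of assumptions (D'), (F), (G) for the Steklov Rayleigh quotient: one must control the harmonic extension $\widehat v$ and its $g'$-derivative uniformly as both $g'$ and $\tilde g'$ vary, which requires trace theory, elliptic regularity up to the boundary, and a Green's-formula rewrite of the Dirichlet energy to move tangential and normal derivatives to the boundary. Once these are in hand, the splitting of \eqref{eq:EL2} into its interior (conformal) and boundary (quadric) pieces is the substantive conceptual step, and the rest of the argument closely parallels the closed Laplace case already treated in Theorem \ref{theocriticallaplace2}.
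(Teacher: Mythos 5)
Your proposal is correct and mirrors the paper's own proof almost exactly: same functional-analytic setup ($X = \cS^m(\Sigma)\times\cS^m(\partial\Sigma)$, $Y = L^2(\partial\Sigma)$, $H = H^{1/2}(\partial\Sigma)$ with harmonic extension), the same Fréchet derivative computations, the same application of Theorem \ref{th:main_cri} producing the two decoupled pieces of the Euler--Lagrange equation (interior tensor identity from the $h$-component, boundary quadric identity from the $\tilde h$-component), and the same reduction of the converse to Theorem \ref{th:converse}. One small imprecision: the interior identity $\sum_k\eps_k\bigl(-d\widehat\phi_k\otimes d\widehat\phi_k+\tfrac12|d\widehat\phi_k|_g^2\,g\bigr)=0$ \emph{is} \eqref{eq:conformalSteklov} outright rather than being ``equivalent to it upon taking traces''; in two dimensions the trace of that identity is the tautology $0=0$, and --- unlike the closed Laplace case --- no trace is needed here because $G_x$ and $(Q_x,\scaling_x)$ already live on disjoint factors of $X$.
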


\begin{rem} \label{rem:stek} In \textit{(2)}, the assumption on $f$ holds if  $\eps_j = 1$ for any $j$ since $\mathcal{Q}$ is convex is that case. Indeed the harmonic function $\psi_y : x \mapsto h_\eps\left(\Phi(x),\Lambda \Phi(y)\right)$ realizes its maximum at $y$ for any $y\in \partial M$, and the Hopf lemma implies that $\partial_\nu \psi(y) >0$.
\end{rem}

The proof is similar to the one of Theorem \ref{theocriticallaplace2}, but we provide details to emphasize the differences between the two contexts.

\begin{proof}
Choose $m$ such that any eigenfunction associated to any $\sigma_k(g,\tilde{g})$ is $\mathcal{C}^2$. Define  $X \df \cS^m(\Sigma) \times \cS^m(\partial \Sigma)$, $\Omega  \df \mathcal{R}^m(\Sigma) \times \mathcal{R}^m(\partial \Sigma)$, $Y \df  L^{2}(\partial \Sigma)$ and $ H \df  H^{1/2}(\partial \Sigma)$. For any $(g',\tilde{g}') \in \Omega$, we consider the Hilbert norm on $H$ given by
\[
\|v\|_{H(g',\tilde{g}')} \df \left( \int_{\partial \Sigma} v^2  \di v_{\tilde{g}'} + \int_\Sigma |d \widehat{v}|^2_{g'} \di v_{g'} \right)^{1/2}
\]
for any $v \in H$, and we set:
\begin{itemize}
\item $Q(g',\tilde{g}',u) \df \int_{\partial\Sigma} u^2  \di v_{\tilde{g}'}$ for any $u \in Y$,
\item $G(g',\tilde{g}',v) \df \int_{\Sigma} |d\widehat{v}|_{g'}^2  \di v_{g'}$ for any $v \in H$,
\item $\cR(g',\tilde{g}',v) \df G(g',\tilde{g}',v)/Q(g',\tilde{g}',v)$ for any $v \in H$.
\end{itemize}
Then $N(g',\tilde{g}',\cdot) \df (G(g',\tilde{g}',\cdot) + Q(g',\tilde{g}',\cdot))^{1/2}$ coincides with $\| \cdot \|_{H(g',\tilde{g}')}$.  The fact that $(H,\|\cdot\|_{H(g',\tilde{g}')}) \hookrightarrow  (Y,Q^{1/2}(g',\tilde{g}', \cdot))$ is a compact embedding follows from the compactness of $T$ and the boundedness of the harmonic extension operator.  The compactness of $M$ also ensures that there exists a neighborhood $V \subset \Omega$ of $(g,\partial g)$ and $C\ge1$ such that $C^{-1} N(g,\partial g,\cdot) \le N(g',\tilde{g}',\cdot) \le CN(g,\partial g,\cdot)$ for any $(g',\tilde{g}') \in V$.  Therefore, Assumptions (A), (B), (C'), (D') are satisfied. Assumption (E) holds too as we can explicitely compute the Fréchet derivatives of $G$, $Q$ and $\scaling$:  
acting as in the proof of Theorem \ref{theocriticallaplace2}, we easily obtain that for any $u \in Y$ and $v \in H$,
 \begin{align*}
 Q_{(g,\partial g)}(g,\partial g,u) & = \left( 0 ,  \frac{1}{2}u^2 \partial g \right), \\
G_{(g,\partial g)}(g,\partial g,v) & = \left( - d \widehat{v} \otimes d \widehat{v} + \frac{1}{2} |d\widehat{v}|^2_g\,  g\, , \,  0 \right)\\
\scaling_{(g,\partial g)}(g,\partial g) & =   \left( 0, \frac{1}{2}\partial g\right).
\end{align*}
Assumptions (G) and (F) may be directly checked from these formulae. Hence we can apply Theorem \ref{th:main_cri} to get existence of $\Psi \df (\psi_k) \in \cU_{J_N(g,\partial g)}(g,\partial g)$,  $\tilde{d} \in \prod_{i=1}^{i_N(g,\partial g)} \Mix(d_{\aleph_i(g,\partial g)})$ and $\eps = (\eps_k) \in \{0,\pm 1\}^{J_N(g,\partial g)}$ such that \eqref{eq:normSteklov} holds and
\begin{equation}\label{eq:preuveSteklov1}
\sum_{k=1}^{J_N(g,\partial g)} \eps_k \, d \widehat{\psi}_k \otimes d \widehat{\psi}_k    = \frac{1}{2}  \left( \sum_{k=1}^{J_N(g,\partial g)} \eps_k \, |d \widehat{\psi}_k|^2 \right) g \qquad \qquad \text{on $\Sigma$,} 
\end{equation}
\begin{equation}\label{eq:preuveSteklov2}
\sum_{i=1}^{i_N(g,\partial g)} \mu_i(g,\partial g) \sum_{k \in \aleph_i(g,\partial g)} \eps_k \psi_k^2  = \frac{c(g,\partial g)}{v_{\partial g}(\partial \Sigma)} \qquad\quad\qquad \qquad  \text{on $\partial \Sigma$.}
\end{equation}
Set $\phi_k \df \widehat{\psi}_k$ for any $k \in \{1,\ldots,J_N(g,\partial g)\}$. Then \eqref{eq:preuveSteklov1} exactly means that the mapping $\Phi \df (\phi_k)_{1 \le k \le J_N(g,\partial g)}$ satisfies \eqref{eq:conformalSteklov}.  Moreover,  we obtain that $\Phi$ is free boundary $\Delta_g$-harmonic by observing that each $\phi_k$ is harmonic on $\Sigma$ and by taking the normal derivative of \eqref{eq:preuveSteklov2}. The converse statement is, again, a direct consequence of Theorem \ref{th:converse}.
\end{proof}

Let us now consider criticality in a conformal class $[g]$ where $g \in \mathcal{R}(\Sigma)$ is some reference Riemannian metric. The conformal invariance of the Dirichlet energy on surfaces implies that for any $\tilde{g} \in [g]$ and $k \in \setN^*$,  the $k$-th Steklov eigenvalue of $\tilde{g}$ writes as 
\begin{align*}
\sigma_k(\tilde{g}) & = \min_{E \in \cG_k(H^{1/2}(\partial M))} \max_{u \in \underline{E}} \frac{\int_{\Sigma} |d\widehat{u}|^2_{g} \di v_{g}}{\int_{\partial \Sigma} u^2 \di v_{\partial \tilde{g}}} \,\cdot
\end{align*}
We consider a variant of this formula by setting
\begin{align*}
\sigma_k(\beta) & \df \min_{E \in \cG_k(H^{1/2}(\partial M))} \max_{u \in \underline{E}} \frac{\int_{\Sigma} |d\widehat{u}|^2_{g} \di v_{g}}{\int_{\partial \Sigma} u^2 \beta \di v_{\partial g}}
\end{align*}
for any $\beta \in \cC_{>0}(\partial \Sigma)$ and we define
\[
\overline{\sigma}_k(\beta) \df \scaling(\beta)\sigma_k(\beta) \qquad \text{with } \scaling(\beta) \df \int_{\partial \Sigma} \beta \di v_{\partial g}.
\]
Consider the associated notations: $\{\mu_i(\beta)\}$, $\aleph_i(\beta)$, $J_N(\beta)$ , $i_N(\beta)$,  $d_k(\beta)$, $c(\beta)$, and so on.  For any $\eps \in \{0,\pm 1\}^{J_N(\beta)}$, let $\mathcal{Q}(\beta,\eps)$ be the quadric with parameters $(\sigma_1(\beta),\ldots,\sigma_{J_N(f)}(\beta))$,  $\eps$ and $c(\beta)/\|\beta\|_{L^1(\partial g)}$. 

The following theorem follows from a suitable adaptation of the previous proof where we freeze the $g$ variable and replaces $\tilde{g}$ by $\beta g$ in the quantities $Q(g,\tilde{g},u)$, $G(g,\tilde{g},v)$ and $\scaling(g,\tilde{g})$.   For the sake of brevity, the statement is shortened ans we do not include the proof.

\begin{theorem}\label{theoSteklov2conforme}
Let $(\Sigma,g)$ be a compact, connected, smooth Riemannian surface with a non-empty boundary. 
\begin{enumerate}
\item For a high enough integer $m$, if $\beta \in \cC_{>0}^m(\partial \Sigma)$ is critical for the spectral functional
\[
\fk : \cC_{>0}^m(\partial \Sigma) \ni  \beta' \mapsto F(\overline{\sigma}_1 (\beta'),\ldots,\overline{\sigma}_N (\beta'))
\]
and such that $c(\beta)\neq 0$, then there exists a free boundary $\Delta_{g}$-harmonic mapping $\Phi : (\Sigma,\partial \Sigma, g) \to (\setR^N,\cQ(\beta,\eps),h_\eps)$ whose coordinates $\varphi_k$ in restriction to $\partial \Sigma$ are $\|\cdot\|_{L^2(\partial g)}$-orthogonal one to another,  for some $\eps \in \{0,\pm 1\}^{J_N(\beta)}$. 

\item Conversely, the statement (2) of theorem \ref{theocriticalsteklov2}, holds replacing the assumption "free boundary conformal minimal immersion" by "free boundary harmonic map" and criticality of $\beta := h_\eps( \Phi,\partial_{\nu_{g_0}}\Phi)$ holds for a combination of Steklov eigenvalues in a conformal class, as soon as it is positive.
\end{enumerate}
\end{theorem}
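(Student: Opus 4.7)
The plan is to adapt the proof of Theorem~\ref{theocriticalsteklov2} by restricting the set of parameters to conformal factors on the boundary and exploiting the conformal invariance of the Dirichlet energy in dimension two. I will pick $m$ large enough that elliptic regularity ensures eigenfunctions of the weighted Steklov problem are $\cC^2$ and work in the abstract setting of Section~\ref{subsec:abstract_Rayleigh} with $X\df \cC^m(\partial\Sigma)$, $\Omega\df\cC^m_{>0}(\partial\Sigma)$, $Y\df L^2(\partial\Sigma)$, $H\df H^{1/2}(\partial\Sigma)$, and
\[
Q(\beta',u)\df \int_{\partial\Sigma} u^2\beta'\di v_{\partial g}, \qquad G(v)\df \int_\Sigma |d\widehat v|_g^2\di v_g, \qquad \cR(\beta',v)\df \frac{G(v)}{Q(\beta',v)},
\]
so that $\lambda_k(\beta')=\sigma_k(\beta')$ and the scaling is $\scaling(\beta')=\int_{\partial\Sigma}\beta'\di v_{\partial g}$. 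Note that, thanks to the conformal invariance of the Dirichlet energy in dimension two, the numerator $G$ does not depend on $\beta'$ at all, which makes this setting a particular case of the general framework of Section~\ref{sec:conf}.

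Next I would verify the standing assumptions (A)--(G). (A) is immediate from the definitions; (B)--(C') and (D') reduce to local equivalence of the weighted norms (uniform positive bounds on $\beta'$ in a neighborhood of $\beta$) together with the compactness of the trace operator $T:H^1(\Sigma)\to H^{1/2}(\partial\Sigma)$ composed with the compact embedding $H^{1/2}(\partial\Sigma)\hookrightarrow L^2(\partial\Sigma)$. For (E)--(G), $G$ has trivial Fréchet derivative in $\beta'$, while
\[
Q_{\beta'}(\beta',u)=u^2\,v_{\partial g}, \qquad \scaling_{\beta'}(\beta')=v_{\partial g},
\]
are linear functionals on $\cC^m(\partial\Sigma)$ whose required bounds follow from the Cauchy--Schwarz inequality.

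I would then apply Theorem~\ref{th:main_cri} to obtain a family $(\psi_k)\in\cU_{J_N(\beta)}(\beta)$, a tuple $\tilde d\in \prod_i\Mix(d_{\aleph_i(\beta)})$, and signs $\eps_k\in\{0,\pm 1\}$ satisfying the Euler--Lagrange equation~\eqref{eq:EL2}. Since $G_{\beta'}\equiv 0$, this equation reduces on $\partial\Sigma$ to
\[
\frac{c(\beta)}{\|\beta\|_{L^1(\partial g)}}=\sum_{i=1}^{i_N(\beta)}\mu_i(\beta)\sum_{k\in\aleph_i(\beta)}\eps_k\psi_k^2.
\]
Set $\phi_k\df\widehat\psi_k$ and $\Phi\df(\phi_k)$. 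Each $\phi_k$ is harmonic on $\Sigma$, so $\Phi$ is $\Delta_g$-harmonic; the boundary identity above means precisely that $\Phi(\partial\Sigma)\subset\cQ(\beta,\eps)$; and the defining Steklov equation $\partial_g^\nu\phi_k=\sigma_k(\beta)\beta\,\phi_k$ gives $\partial_g^\nu\Phi\paral\Lambda(\beta)\Phi$ on $\partial\Sigma$, i.e.\ the free boundary condition~\eqref{eq:EL_free_boundary_modif}. The $Q(\beta,\cdot)$-orthogonality of the $\psi_k$ transfers to $L^2(\beta\,\partial g)$-orthogonality of the boundary traces of the $\phi_k$, as desired.

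For the converse, I would apply Theorem~\ref{th:converse} with the choice $\phi_k$ equal to the coordinates of a given free boundary harmonic map $\Phi:(\Sigma,\partial\Sigma,g_0)\to(\R^N,\cQ(\sigma,\eps,1),h_\eps)$; harmonicity and the free boundary condition together with the choice $\beta\df h_\eps(\Phi,\partial_{\nu_{g_0}}\Phi)$ (assumed positive) yield \eqref{eq:converseELnonortho}, and Theorem~\ref{th:converse} produces a combination $F$ for which $\beta$ is critical. The main technical subtleties to watch are: (i) verifying (F)--(G) in the $H^{1/2}$ setting on the boundary, where the mixed nature of the norm $N$ (involving a bulk Dirichlet term and a boundary $L^2$ term with weight $\beta'$) requires care; and (ii) checking that the Euler--Lagrange identity produced by Theorem~\ref{th:main_cri} can indeed be rewritten as a boundary identity on $\partial\Sigma$, which uses the specific structure $Q_{\beta'}(\beta',u)=u^2 v_{\partial g}$ (a measure supported on $\partial\Sigma$) combined with $G_{\beta'}\equiv 0$.
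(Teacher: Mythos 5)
Your proof is correct and follows essentially the same route the paper intends: freezing the bulk metric $g$, taking $\beta$ on $\partial\Sigma$ as the sole variation, verifying the standing assumptions for the Rayleigh quotient with $\beta$-independent numerator, and reading the rescaled Euler--Lagrange identity \eqref{eq:EL2} as a boundary identity that encodes $\Phi(\partial\Sigma)\subset\cQ$, with the free boundary condition coming from the Steklov eigenequation $\partial_g^\nu\widehat\psi_k=\sigma_k(\beta)\beta\psi_k$, and the converse via Theorem \ref{th:converse}. (Two minor remarks: the analogy with Section \ref{sec:conf} is morally right but that section is stated for closed manifolds, so it is not literally a special case; and you correctly identify the orthogonality as $L^2(\beta\,\partial g)$-orthogonality, which the theorem statement writes somewhat loosely as $\|\cdot\|_{L^2(\partial g)}$.)
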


\begin{rem} As noticed in Remark \eqref{rem:stek}, in \textit{(2)}, the assumption on $f$ holds if $\eps_j = 1 $ for any $j$.
\end{rem}

\section{Laplace and Steklov functionals in higher dimensions}   \label{sec:high}

In this section, we deal with functionals involving either the Laplace or Steklov eigenvalues of a compact, connected, smooth manifold $M$ of dimension $n \ge 3$.  Following \cite{KarpukhinMetras}, we work with suitable weighted versions of these eigenvalues. The main reason for that is that in dimension $n\ge 3$, the Dirichlet energy is no longer invariant under conformal change. Like in the previous section, we let a positive integer $N$ and a function $F \in \cC^1(\setR^N)$ be fixed throughout.

\subsection{Laplace functionals}\label{sec:LaplaceHigher} Assume that $\partial M = \emptyset$.  We begin with considering criticality in a conformal class $[g]$ where  $g \in \mathcal{R}(M)$ is some fixed reference Riemannian metric.  For any $(\alpha,\beta) \in \cC_{>0}(M)^2$ and $k \in \setN^*$,  set
\begin{align*}
\lambda_k(\alpha,\beta) & \df \inf_{E \in \mathcal{G}_{k}\left( H^1(M)\right)} \sup_{u \in \uE }  \frac{\int_M \left\vert d u \right\vert_g^2 \alpha \di v_g}{\int_M u^2 \beta \di v_g}\\
 \scaling(\alpha,\beta) & \df  \frac{\int_M \beta \di v_g }{\left(\int_M \alpha^{\frac{n}{n-2}}\di v_g\right)^{\frac{n-2}{n}}} \\
\overline{\lambda}_k(\alpha,\beta) & \df \lambda_k(\alpha,\beta)\,  \scaling(\alpha,\beta) .
\end{align*}
Any eigenvalue $\phi_k \in H^1(M)$ associated with $\lambda_k(\alpha,\beta)$ satisfies the weak equation
\begin{equation}\label{eq:EL_alpha_beta}
-\delta_g \left( \alpha d \phi_k \right)  = \lambda_k(\alpha,\beta) \phi_k \beta \qquad \text{on $M$.}
\end{equation}
Elliptic regularity theory ensures that if $\beta \in \cC^m(M)$ for some  high enough $m \in \setN$, then $\varphi \in \cC^{2}(M)$. Recall the notations adapted from the previous sections: $\{\mu_i(\alpha,\beta)\}$, $\aleph_i(\alpha,\beta)$, $J_N(\alpha,\beta)$ , $i_N(\alpha,\beta)$,  $d_k(\alpha,\beta)$, $c(\alpha,\beta)$, and so on. We also recall Definition \ref{eq:def_quadric} for $\mathcal{Q}(\lambda,\eps,c)$

\begin{theorem} \label{theocriticallaplacen} Let $(M,g)$ be a closed, connected, smooth Riemannian manifold of dimension $n \ge 3$.  
\begin{enumerate}
\item For a high enough integer $m$, if $(\alpha,\beta)\in \cC_{>0}^m(M)^2$ is critical for the spectral functional
\[ \mathfrak{F} : \cC_{>0}^m(M)^2 \ni (\alpha',\beta') \mapsto  F\left(\overline\lambda_1(\alpha',\beta') ,\cdots, \overline\lambda_N(\alpha',\beta') \right),\]
then there exists $\eps \in \{0,\pm 1\}^{J_N(\alpha,\beta)}$ such that the following holds. 

\begin{enumerate}
\item There exists a $\cC^2$ mapping $\Phi : M \to \R^{J_N(\alpha,\beta)}$ with $\|\cdot\|_{L^2(\beta)}$-orthogonal coordinates such that if $c(\alpha,\beta) \neq 0$, then 
\[
\Phi : (M,g) \to \left(\cQ\left(\Lambda(\alpha,\beta),\eps,\frac{c(\alpha,\beta)}{\| \beta \|_{L^1(g)}}\right),h_{\eps}\right)
\]
is $n$-harmonic 
and such that
\begin{equation}\label{eq:alpha}
\frac{\alpha}{\|\alpha\|^{\frac{2}{n}}_{L^{\frac{n}{n-2}}(g)}} = \langle d\Phi,d\Phi \rangle_{g,h_{c(\alpha,\beta)\eps}}^{\frac{n-2}{2}}
\end{equation}
\begin{equation}\label{eq:beta}
 \frac{\beta}{\|\alpha\|^{\frac{2}{n}}_{L^{\frac{n}{n-2}}(g)}} = \langle d\Phi,d\Phi \rangle_{g,h_{c(\alpha,\beta)\eps}}^{\frac{n-2}{2}} \frac{ \langle d\Phi,d\Lambda(\alpha,\beta)\Phi \rangle_{g,h_{c(\alpha,\beta)\eps}} }{\langle \Lambda(\alpha,\beta)\Phi,\Lambda(\alpha,\beta) \Phi \rangle_{h_{c(\alpha,\beta)\eps}} }
\end{equation}
while if $c(\alpha,\beta)=0$, then
\begin{equation}\label{eq:c=0}
\langle d\Phi,d\Phi \rangle_{g,h_{\eps}} = 0.
\end{equation}

\item There exists $\tilde{d} \in \prod_{i=1}^{i_N(x)} \Mix(d_{\aleph_i(\alpha,\beta)})$ such that for any $1 \le k \le J_N(\alpha,\beta)$, the $k$-th coordinate $\phi_k$ of $\Phi$ satisfies
\begin{equation}\label{eq:normLaplacehigher}
\int_\Sigma \phi_k^2 \di v_g = \begin{cases} \,\,  \, |\tilde{d}_k| & \text{if $S(\alpha,\beta)=0$},\\
\displaystyle\frac{|\tilde{d}_k|}{\left| S(\alpha,\beta) \right|}  & \text{otherwise}. \, 
\end{cases}
\end{equation}
\end{enumerate}

\item Conversely, for $\eps \df (\eps_j) \in \{0,\pm 1\}^N$ and $\Lambda\df (\lambda_j) \in [0,+\infty]^N$ any $n$-harmonic mapping 
$$ \Phi : (M,g) \to \left(\mathcal{Q}(\Lambda,\eps,1),h_\eps\right) $$
such that the functions
$$ \alpha \df \langle d\Phi,d\Phi \rangle_{g,h_{\eps}}^{\frac{n-2}{2}} $$
and 
$$ \beta \df \langle \Phi, -\delta_g\left( \alpha d\Phi \right) \rangle_{h_\eps}  = \alpha  \frac{ \langle d\Phi,d\Lambda(\alpha,\beta)\Phi \rangle_{g,h_{\eps}} }{\langle \Lambda(\alpha,\beta)\Phi,\Lambda(\alpha,\beta) \Phi \rangle_{h_{\eps}} }  $$
are well defined and positive, then $(\alpha,\beta)$ is a critical point of a combination of Laplace eigenvalues.
\end{enumerate}

\end{theorem}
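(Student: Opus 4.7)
The plan is to apply Theorem \ref{th:main_cri} to the abstract Rayleigh quotient built on the Banach space $X = \cC^m(M) \times \cC^m(M)$, the open set $\Omega = \cC_{>0}^m(M)^2$, the vector space $Y = L^2(M)$ and the Hilbert space $H = H^1(M)$, endowed with
\[
Q((\alpha,\beta),u) = \int_M u^2 \beta \di v_g, \qquad G((\alpha,\beta),v) = \int_M |dv|_g^2\, \alpha \di v_g,
\]
together with the scaling $\scaling(\alpha,\beta) = \|\beta\|_{L^1(g)}/\|\alpha\|_{L^{n/(n-2)}(g)}$. First I would verify the standing assumptions (A)--(G): (A), (B), (C'), (D') come from the strict positivity of $\alpha,\beta$ and a weighted Rellich--Kondrachov argument (in the spirit of Theorem \ref{theoLaplace2conforme}), and from fixing $m$ large enough to make all the eigenfunctions of \eqref{eq:EL_alpha_beta} lie in $\cC^2(M)$. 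Assumption (E) is immediate, and the partial Fréchet derivatives read
\[
Q_{(\alpha,\beta)} = (0,\, u^2 \di v_g), \quad G_{(\alpha,\beta)} = (|dv|_g^2 \di v_g,\, 0), \quad \frac{\scaling_{(\alpha,\beta)}}{\scaling} = \left(-\frac{\alpha^{2/(n-2)}}{\int_M \alpha^{n/(n-2)}\di v_g} \di v_g,\; \frac{1}{\|\beta\|_{L^1(g)}}\di v_g\right),
\]
from which (F) and (G) follow from straightforward Hölder estimates.

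Next, Theorem \ref{th:main_cri} produces a $Q$-orthogonal family $\Phi=(\phi_k)\in\cU_{J_N(\alpha,\beta)}$, weights $\tilde d \in \prod_i \Mix(d_{\aleph_i(\alpha,\beta)})$ and signs $\eps\in\{0,\pm 1\}^{J_N(\alpha,\beta)}$ satisfying \eqref{eq:EL2}, with the normalization \eqref{eq:normal} translating directly to \eqref{eq:normLaplacehigher}. Splitting \eqref{eq:EL2} into its $\alpha$- and $\beta$-components, the second component yields
\[
\sum_{i} \mu_i(\alpha,\beta) \sum_{k\in \aleph_i(\alpha,\beta)} \eps_k \phi_k^2 \;=\; \frac{c(\alpha,\beta)}{\|\beta\|_{L^1(g)}}
\]
so that $\Phi(M)$ lies in the announced quadric, while the first component yields
\[
\sum_{k} \eps_k |d\phi_k|_g^2 \;=\; c(\alpha,\beta)\, \frac{\alpha^{2/(n-2)}}{\int_M \alpha^{n/(n-2)}\di v_g}.
\]
Multiplying by $c(\alpha,\beta)$ (using $c^2 \in \{0,1\}$) and raising to the power $(n-2)/2$ rearranges into \eqref{eq:alpha}, or reduces to \eqref{eq:c=0} if $c(\alpha,\beta)=0$.

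The derivation of \eqref{eq:beta} is the subtler step. Observing that the quadric relation makes the scalar function $u := \sum_k \eps_k \lambda_k(\alpha,\beta) \phi_k^2$ constant on $M$, I would multiply the eigenvalue equation $-\delta_g(\alpha\, d\phi_k) = \lambda_k(\alpha,\beta)\phi_k \beta$ by $\eps_k \lambda_k(\alpha,\beta)\phi_k$, sum over $k$, and rewrite the left-hand side pointwise using $\delta_g(\alpha\, \tfrac{1}{2}du)=0$, obtaining the pointwise identity $\alpha\, \langle d\Phi, d\Lambda\Phi\rangle_{g,h_{c\eps}} = \beta\, \langle \Lambda\Phi, \Lambda\Phi\rangle_{h_{c\eps}}$; then \eqref{eq:beta} follows by solving for $\beta$ and substituting \eqref{eq:alpha}. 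The $n$-harmonicity of $\Phi$ finally results from rewriting $-\delta_g(\alpha\, d\Phi)=\beta\Lambda\Phi$ and using that $\alpha$ is proportional to $\langle d\Phi,d\Phi\rangle_{g,h_{c\eps}}^{(n-2)/2}$, so that $\delta_g(|d\Phi|_{g,h_{c\eps}}^{n-2}\, d\Phi)\,\paral\,\Lambda \Phi$.

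For the converse, starting from an $n$-harmonic map $\Phi$ into $\mathcal{Q}(\Lambda,\eps,1)$ with the prescribed $\alpha,\beta>0$, I would directly verify that $-\delta_g(\alpha\, d\phi_k) = \lambda_k \phi_k \beta$ for suitable eigenvalues $\lambda_k$ of the weighted problem, then check that the collection $(\phi_k)$ and weights $\delta_k$ extracted from the parallel condition satisfy \eqref{eq:converseELnonortho}, at which point Theorem \ref{th:converse} delivers a combination $F$ for which $(\alpha,\beta)$ is critical. The main obstacle I anticipate is a clean proof of (D): one must show that for $(\alpha_n,\beta_n)\to(\alpha,\beta)$ in $\cC^m$, any $N$-bounded sequence in $(H^1(M),\|\cdot\|_{H^1(\alpha_n \di v_g)})$ admits a subsequence converging in $(L^2(M),\|\cdot\|_{L^2(\beta_n \di v_g)})$ with the requisite passage to the limit in $\Gamma(\alpha_n,\beta_n,\cdot,h)$. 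This, together with the pointwise manipulation yielding \eqref{eq:beta} from the eigenvalue equation under the constancy of $u$, is where the care is concentrated.
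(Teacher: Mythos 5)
Your proposal is correct and follows essentially the same route as the paper: one sets up $Q((\alpha,\beta),u)=\int u^2\beta$, $G((\alpha,\beta),v)=\int|dv|_g^2\alpha$ with $\scaling(\alpha,\beta)=\|\beta\|_{L^1(g)}/\|\alpha\|_{L^{n/(n-2)}(g)}$, verifies Assumptions (A)--(G) exactly as you indicate, and then splits the rescaled Euler--Lagrange equation \eqref{eq:EL2} of Theorem~\ref{th:main_cri} into its $\alpha$- and $\beta$-components to get the quadric constraint and the $\alpha$-identity, from which $n$-harmonicity follows by substituting back into \eqref{eq:EL_alpha_beta}; the converse is Theorem~\ref{th:converse} as you say. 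One small added value of your write-up is that you actually derive \eqref{eq:beta} (multiplying the eigenvalue equation by $\eps_k\lambda_k\phi_k$, summing, and exploiting that $\sum_k\eps_k\lambda_k\phi_k^2$ is constant), a step the paper's proof states in the theorem but does not detail.
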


\begin{rem}\label{rem:steklovn}
If we set $g_0 \df \langle d \Phi,d\Phi \rangle_{g,h_{c(\alpha,\beta)\eps}} g $, then the mapping $\Phi : (M,g_0) \to (\cQ,h_{\eps})$ is $p$-harmonic for any $p\geq 2$.
\end{rem}

\begin{proof}
Unless specified, all the integrals considered in this proof are implicitely taken over $M$ with respect to the volume measure $v_g$.  We set $X \df \cC^m(M)$ for $m$ high enough. For any $(\alpha',\beta')\in \Omega \df  \cC^m_{>0}(M)$,  $u \in Y\df L^2(M)$ and $v \in H\df H^1(M)$,  set
\[
Q(\alpha',\beta',u) \df  \int u^2 \beta', \qquad G(\alpha',\beta',v) \df \int \left\vert d v \right\vert_g^2 \alpha', 
\]
\[
\left\| v \right\|_{H\left(\alpha',\beta'\right)}^2 \df \int v^2 \beta' + \int \left\vert dv\right\vert_g^2 \alpha'
\]
and note that $N(\alpha',\beta',\cdot) \df \left( G(\alpha',\beta',\cdot) + Q(\alpha',\beta',\cdot)\right)^{1/2}$ coincides with the Hilbert norm $\|\cdot\|_{H(\alpha',\beta')}$.  A direct computation yields that
 \[
 Q_{(\alpha',\beta')}(\alpha',\beta',u) = \big( u^2 v_g, 0 \big), \qquad G_{(\alpha',\beta')}(\alpha',\beta',u) = (0,|du|^2_g v_g)
 \]
 \[
 \scaling_{(\alpha',\beta')}(\alpha',\beta') =  \scaling(\alpha',\beta') \left(  \frac{1}{\int \beta' }  \, v_g \,  , \,   - \frac{(\alpha')^{\frac{2}{n-2}} }{\int (\alpha')^{\frac{n}{n-2}}} \, v_g \right)
 \]
It is easily checked all the assumptions (A)--(G) are satisfied. Then Theorem \ref{th:main_cri} implies that there exist $\Phi \df (\phi_k) \in \cU_{J_N(\alpha,\beta)}(\alpha,\beta)$, $\tilde{d} \in \prod_{i=1}^{i_N(\alpha,\beta)} \Mix(d_{\aleph_i(\alpha,\beta)})$ and $\eps = (\eps_k) \in \{0,\pm 1\}^{J_N(\alpha,\beta)}$ such that \eqref{eq:normLaplacehigher} holds and
\begin{equation}\label{eq:identity1}
\sum_{i=1}^{i_N(\alpha,\beta)} \mu_i(\alpha,\beta) \sum_{k \in \aleph_i(\alpha,\beta)} \eps_k \phi_k^2 = \frac{c(\alpha,\beta)}{\int \beta}
\end{equation}
\begin{equation}\label{eq:identity2}
\sum_{k=1}^{J_N(\alpha,\beta)} \eps_k |d \phi_k|_g^2  =   \frac{\alpha^{\frac{2}{n-2}}}{\int \alpha^{\frac{n}{n-2}}} \, c(\alpha,\beta) \, \cdot 
\end{equation}
If $c(\alpha,\beta)=0$, then  \eqref{eq:identity2} rewrites exactly as \eqref{eq:c=0}.  From now on, assume that $c(\alpha,\beta)\neq 0$. Divide \eqref{eq:identity2} by $c(\alpha,\beta)$ to obtain \eqref{eq:alpha}. After that, replace $\alpha$ in \eqref{eq:EL_alpha_beta} by its value given by \eqref{eq:alpha} to obtain
\[
- \delta_g (\langle d\Phi,d\Phi \rangle_{g,h_{c(\alpha,\beta)\eps}}^{\frac{n-2}{2}}  d \Phi)  \paral \Lambda \Phi
\]
which is the equation for n-harmonic maps. Using \eqref{eq:identity1} ends the proof of (1). The converse is an immediate consequence of Theorem \ref{th:converse}. 
\end{proof}

\subsection{Steklov functionals} Assume now that $\partial M \neq \emptyset$ and consider a reference Riemannian metric $g \in \mathcal{R}(M)$. We begin with studying criticality in the conformal class $[g]$.  For any $(\alpha,\beta) \in  \mathcal{C}(M) \times \mathcal{C}(\partial M)$ and $k \in \setN^*$, set
\begin{align*}
 \sigma_k(\alpha,\beta)  & \df \inf_{E \in \mathcal{G}_k\left(H^{1/2}(\partial M) \right)} \sup_{u \in \uE}  \frac{\int_M \left\vert d \widehat{u} \right\vert_g^2 \alpha \di v_g}{\int_{\partial M} u^2 \beta \di v_{\partial g}}\, ,\\
\scaling(\alpha,\beta) & \df  \frac{\int_{\partial M} \beta \di v_{\partial g} }{\left(\int_{M} \alpha^{\frac{n}{n-2}} \di v_g\right)^{\frac{n-2}{n}}} \\
\overline{\sigma}_k(\alpha,\beta) & \df \sigma_k(\alpha,\beta)\scaling(\alpha,\beta).
\end{align*}
Any eigenfunction $\psi \in H^{1/2}(\partial M)$ associated with $\sigma_k(\alpha,\beta)$ satisfies the weak equation
\begin{equation}\label{eq:EL_alpha_beta_steklov}
\alpha \, \partial_g^\nu \hat{\psi} = \sigma_k(\alpha,\beta) \, \psi \,  \beta \qquad \text{on $\partial M$.}
\end{equation}
Elliptic regularity ensures that for $m$ high enough, if $\beta \in \cC^m(M)$ then $\phi \in \cC^2(M)$.

We consider the associated notations: $\{\mu_i(\alpha,\beta)\}$, $\aleph_i(\alpha,\beta)$, $J_N(\alpha,\beta)$ , $i_N(\alpha,\beta)$,  $d_k(\alpha,\beta)$, $c(\alpha,\beta)$, etc.  We recall definition \eqref{eq:def_quadric} of $\mathcal{Q}(\lambda,\eps,c)$.

\begin{theorem} \label{theocriticalsteklovn} 
Let $(M,g)$ be a compact, connected, smooth Riemannian manifold of dimension $n \ge 3$ with a non-empty boundary $\partial M$.  

\begin{enumerate}
\item For a high enough integer $m$, if $(\alpha,\beta) \in \cC_{>0}^m(M)\times \cC_{>0}^m(\partial  M)$ is critical for the spectral functional
\[ \mathfrak{F} : \cC_{>0}^m(M)\times \cC_{>0}^m(\partial  M) \ni (\alpha',\beta') \mapsto  F\left(\overline\sigma_1(\alpha',\beta') ,\cdots, \overline\sigma_N(\alpha',\beta') \right)\]
then there exists $\eps \in \{0,\pm 1\}^{J_N(\alpha,\beta)}$ such that the following hold.
\begin{enumerate}
\item There exists a $\cC^2$ mapping $\Phi : M \to \setR^{J_N(\alpha,\beta)}$ whose restriction $\Psi \df \left. \Phi \right|_{\partial M}$ has $\|\cdot\|_{L^2(\beta)}$-orthogonal coordinates and such that if $c(\alpha,\beta) \neq 0$, then 
 \[\Phi : (M,\partial M,g) \to \left(\setR^{J_N(\alpha,\beta)},\cQ\left(\sigma\left(\alpha,\beta\right),\eps, \frac{c(\alpha,\beta)}{\int_{\partial M}\beta dv_{\partial g}}\right),h_{c\eps}\right)\]
is free boundary $n$-harmonic and satisfies
$$
\frac{\alpha}{\|\alpha\|_{L^{\frac{n}{n-2}}(g)}^{\frac{2}{n}}}  = \langle d \Phi, d \Phi \rangle_{g,h_{c(\alpha,\beta) \eps}}^{\frac{n-2}{2}}  \qquad \text{on $M$}$$
$$\frac{\beta}{\|\alpha\|_{L^{\frac{n}{n-2}}(g)}^{\frac{2}{n}}} = \langle d \Phi, d \Phi \rangle_{g,h_{c(\alpha,\beta) \eps}}^{\frac{n-2}{2}} h_\eps(\Phi, \partial_g^\nu \Phi) \qquad \text{on $\partial M$}
$$
while if $c(\alpha,\beta) = 0$, then
$$
\langle d \Phi, d \Phi \rangle_{g,h_\eps} = 0  \qquad \text{on $M$}.
$$

\item There exists $\tilde{d} \in \prod_{i=1}^{i_N(\alpha,\beta)} \Mix(d_{\aleph_i(\alpha,\beta)})$ such that for any $1 \le k \le J_N(\alpha,\beta)$, the $k$-th coordinate $\psi_k$ of $\Psi$ satisfies
\begin{equation}\label{eq:normSteklovhigher}
\int_{\partial M} \psi_k^2 \di v_{\partial g} = \begin{cases} \,\,  \, |\tilde{d}_k| & \text{if $S(\alpha,\beta)=0$},\\
\displaystyle\frac{|\tilde{d}_k|}{\left| S(\alpha,\beta) \right|}  & \text{otherwise}. \, 
\end{cases}
\end{equation}

\end{enumerate}

\item Conversely, for $\eps \df (\eps_j) \in \{0,\pm 1\}^N$ and $\sigma\df (\sigma_j) \in [0,+\infty]^N$ any free boundary $n$-harmonic mapping 
$$ \Phi : (M,\partial M ,g) \to \left(\R^N,\mathcal{Q}(\sigma,\eps,1),h_\eps\right) $$
such that the function
$$ \alpha \df \langle d\Phi,d\Phi \rangle_{g,h_{\eps}}^{\frac{n-2}{2}} $$
is positive in $M$ and the function
$$ \beta \df \alpha h_\eps(\Phi,\partial_g^\nu \Phi)$$
is positive on $\partial M$, then $(\alpha,\beta)$ is a critical point of a combination of Steklov eigenvalues.

\end{enumerate}
\end{theorem}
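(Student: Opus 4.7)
The proof of \textit{(1)} combines the framework of Theorem \ref{theocriticallaplacen} with the boundary setting of Theorem \ref{theocriticalsteklov2}. Fix $m$ large enough to ensure $\cC^2$-regularity of Steklov eigenfunctions at $(\alpha,\beta)$, and set $X = \cC^m(M) \times \cC^m(\partial M)$, $\Omega = \cC_{>0}^m(M) \times \cC_{>0}^m(\partial M)$, $Y = L^2(\partial M)$, $H = H^{1/2}(\partial M)$. For $(\alpha', \beta') \in \Omega$ define $Q(\alpha', \beta', u) = \int_{\partial M} u^2 \beta' \, dv_{\partial g}$ and $G(\alpha', \beta', v) = \int_M |d\hat{v}|_g^2 \alpha' \, dv_g$, where $\hat{v}$ is the $\alpha'$-weighted harmonic extension (the unique element of $H^1(M)$ with trace $v$ minimizing $\int_M \alpha' |dw|^2$). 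The norm $N \df (G+Q)^{1/2}$ is uniformly equivalent to $\|\cdot\|_{H^{1/2}(\partial M)}$ near $(\alpha,\beta)$, and compactness of $H^{1/2}(\partial M) \hookrightarrow L^2(\partial M)$ gives Assumption (D'). Assumptions (A)--(G) follow from direct computation, with
$$ Q_{(\alpha,\beta)}(\alpha,\beta,u) = (0, u^2 v_{\partial g}), \qquad G_{(\alpha,\beta)}(\alpha,\beta,v) = (|d\hat{v}|_g^2 v_g, 0), $$
$$ \scaling_{(\alpha,\beta)}(\alpha,\beta) = \scaling \cdot \Bigl( -\tfrac{\alpha^{2/(n-2)}}{\int_M \alpha^{n/(n-2)} dv_g} v_g, \tfrac{v_{\partial g}}{\int_{\partial M} \beta \, dv_{\partial g}} \Bigr). $$
Note that the simple form of $G_\alpha$ (despite $\hat{v}$ depending on $\alpha$) comes from the variational characterization of $\hat{v}$: the derivative of $\hat{v}$ in $\alpha$ has boundary trace zero, so its contribution integrates out.

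Next, apply Theorem \ref{th:main_cri}(B)--(C) to get a family $(\psi_k) \in \mathbf{U}_{J_N(\alpha,\beta)}(\alpha,\beta)$, signs $\eps \in \{0, \pm 1\}^{J_N(\alpha,\beta)}$, and mixing weights $\tilde{d} \in \prod_i \Mix(d_{\aleph_i(\alpha,\beta)})$ satisfying (EL). Setting $\phi_k \df \hat{\psi}_k$ and $\Phi \df (\phi_k)$, decomposing (EL) into its $\alpha$- and $\beta$-components yields
\begin{align*}
\sum_k \eps_k |d\phi_k|_g^2 &= c(\alpha,\beta) \frac{\alpha^{2/(n-2)}}{\int_M \alpha^{n/(n-2)} dv_g} \qquad \text{on } M, \\
\sum_i \mu_i(\alpha,\beta) \sum_{k\in \aleph_i} \eps_k \psi_k^2 &= \frac{c(\alpha,\beta)}{\int_{\partial M} \beta \, dv_{\partial g}} \qquad \text{on } \partial M.
\end{align*}
The first identity gives the formula for $\alpha$ upon raising to the $(n-2)/2$-power (or \eqref{eq:c=0} if $c=0$); the second expresses that $\Psi(\partial M) \subset \cQ$. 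The normalization \eqref{eq:normSteklovhigher} is exactly Theorem \ref{th:main_cri}(C).

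Free boundary $n$-harmonicity then follows by substitution. Since $\hat{\psi}_k$ is $\alpha$-weighted harmonic, $\delta_g(\alpha d\phi_k) = 0$ in $M$; combined with $\alpha \propto |d\Phi|_{g,h_{c\eps}}^{n-2}$ this yields the interior equation $\delta_g(|d\Phi|_{g,h_\eps}^{n-2} d\Phi) = 0$ on $M$. The boundary Steklov equation $\alpha \partial_g^\nu \phi_k = \sigma_k \psi_k \beta$ together with the $\alpha$-formula produces the boundary condition $|d\Phi|^{n-2} \partial_g^\nu \Phi \paral \Lambda \Phi$ on $\partial M$ and the explicit expression for $\beta$. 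The converse statement \textit{(2)} follows from a direct application of Theorem \ref{th:converse} to the coordinates of $\Phi$, endowed with weights determined by the free boundary $n$-harmonic condition, producing criticality of $(\alpha,\beta)$ for a suitable combination of rescaled Steklov eigenvalues.

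The main obstacle is the correct identification of $\hat{v}$ as the $\alpha$-weighted (not Laplace-harmonic) extension, which alone guarantees the pointwise identity $\delta_g(\alpha d\phi_k) = 0$ in $M$ and therefore the clean passage to the interior $n$-harmonic PDE. A closely related technical subtlety is the justification of the simple expression $G_\alpha = |d\hat{v}|_g^2 v_g$ despite the $\alpha$-dependence of $\hat{v}$; this is handled by the variational characterization described above. Once these two points are settled, the remaining calculations track Theorem \ref{theocriticallaplacen} (for the $\alpha$-component and the interior structure) and Theorem \ref{theocriticalsteklov2} (for the $\beta$-component and the free-boundary structure) almost verbatim.
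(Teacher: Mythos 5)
Your proof is correct and follows the same route as the paper: set up the Banach-space framework $X = \cC^m(M)\times\cC^m(\partial M)$, $Y = L^2(\partial M)$, $H = H^{1/2}(\partial M)$, verify Assumptions (A)--(G), apply Theorem~\ref{th:main_cri}, read off the $\alpha$- and $\beta$-components of the resulting Euler--Lagrange identity, and deduce free boundary $n$-harmonicity by substituting $\alpha\propto|d\Phi|^{n-2}$ into the interior and boundary equations. You make one observation that the paper leaves implicit and somewhat imprecise: for the interior identity $\delta_g(\alpha\,d\phi_k)=0$ (and hence the $n$-harmonic PDE) to hold, the extension $\widehat{\cdot}$ must be the $\alpha$-weighted harmonic extension rather than the Laplace one from Section~4, and you correctly note that this does not spoil the Fréchet derivative $G_\alpha(v)=|d\widehat{v}|_g^2\,v_g$ because the variation of $\widehat{v}$ in $\alpha$ has zero trace and the minimality of the extension kills its contribution. (One small slip: Theorem~\ref{th:main_cri}(B) produces a family in $\cU_{J_N}(\alpha,\beta)$, the $Q$-orthogonal family, not in the orthonormal set $\mathbf{U}_{J_N}(\alpha,\beta)$.)
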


\begin{proof} The proof is similar to the one of Theorem \ref{theocriticallaplacen} but we provide some details for convenience of the reader.
For the sake of brevity, we drop the notations $\di v_{\partial g}$ and $\di v_{g}$ in integrals over $\partial M$ and $M$ respectively. For any $(\alpha',\beta') \in \Omega \df \cC_{>0}^m(M)\times \cC_{>0}^m(\partial  M) \subset X \df \cC^m(M)\times \cC^m(\partial  M)$, $u \in Y\df L^2(\partial M)$ and $v \in H \df H^{1/2}(\partial M)$, set
\[
Q(\alpha',\beta',u) = \int_{\partial M} u^2 \beta' , \qquad G(\alpha',\beta',v) = \int_M |d\widehat{u}|_g^2 \alpha' ,
\]
\[
\|v\|^2_{H(\alpha',\beta')} = \int_{\partial M} v^2 \beta' + \int_M |d\widehat{v}|_g^2 \alpha' .
\]
Then
 \[
 Q_{(\alpha',\beta')}(\alpha',\beta',u) = \big( u^2 v_{\partial g}  , 0\big), \qquad G_{(\alpha',\beta')}(\alpha',\beta',u) = (0,  |d\widehat{u}|^2_g v_g)
 \]
 \[
 \scaling_{(\alpha',\beta')}(\alpha',\beta') =  \scaling(\alpha',\beta') \left( \frac{1}{\int_{\partial M} \beta' } \, v_{\partial g} \, , \,  - \frac{(\alpha')^{\frac{2}{n-2}} }{\int_M (\alpha')^{\frac{n}{n-2}}} \, v_g\right).
 \]
Hence Theorem \ref{th:main_cri} implies that there exist $\Psi \df (\psi_k) \in \cU_{J_N(\alpha,\beta)}(\alpha,\beta)$, $\tilde{d} \in \prod_{i=1}^{i_N(\alpha,\beta)} \Mix(d_{\aleph_{i}(\alpha,\beta)})$ and $(\eps_k) \in \{0,\pm 1\}^{J_N(\alpha,\beta)}$ such that
\begin{equation}\label{eq:identity1'}
\sum_{i=1}^{i_N(\alpha,\beta)} \mu_i(\alpha,\beta) \sum_{k \in \aleph_i(\alpha,\beta)} \eps_k \psi_k^2 = \frac{c(\alpha,\beta)}{\int_{\partial M} \beta} \qquad \text{on $\partial M$,}
\end{equation}
\begin{equation}\label{eq:identity2'}
\sum_{k=1}^{J_N(f,h)} \eps_k |d \widehat{\psi}_k|_g^2  =   \frac{\alpha^{\frac{2}{n-2}}}{\int_{M} \alpha^{\frac{n}{n-2}}} \, c(\alpha,\beta)  \qquad \text{on $M$.}
\end{equation}
Set
\[
\Phi \df (\widehat{\psi}_1, \ldots, \widehat{\psi}_{J_N(\alpha,\beta)}).
\]
Then the conclusion is obtained by combining in a natural way the final lines of the proofs of Theorem \ref{theoSteklov2conforme} and \ref{theocriticallaplacen} with the help of \eqref{eq:EL_free_boundary_modif} and \eqref{eq:EL_alpha_beta_steklov}. The converse is left to the reader.
\end{proof}

\begin{rem}
Remark \ref{rem:steklovn} also holds in the context of Theorem \ref{theocriticalsteklovn}.
\end{rem}

\section{Harmonic mappings through particular Rayleigh quotients} 
\label{sec:conf}

In this section, we apply the theory developed in Section 3 to the case where the quadratic form $G$ do not depend on the variation parameter while $Q$ depends on it only linearly.  This yields new results for the conformal criticality of eigenvalue functionals associated with conformally covariant operators. We let $N \in \setN^*$ and $F \in \cC^1(\setR^N)$ be kept fixed throughout. 

\subsection{General statement}

Let $(M,g)$ be a closed, connected, smooth Riemannian manifold.  We let $Y$ be the space of square-integrable (equivalent classes) of functions on $M$.  For any $\beta \in \cC_{>0}(M)$,  we define on $Y$ the norm
\[
\|u\|_{L^2(\beta)} \df \left( \int_M u^2 \beta\di v_g \right)^{1/2}.
\]
For brevity,  let us write $\|\cdot\|_{L^2}$ instead of $\|\cdot\|_{L^2(1)}$.  We point out that the norms $\{\|\cdot\|_{L^2(\beta)}\}_{\beta \in  \cC_{>0}(M)}$ are locally equivalent because $M$ is compact.  

Let $G$ be  a quadratic form on $Y$ with $\|\cdot\|_{L^2}$-dense domain $H$. For any $\beta \in  \cC_{>0}(M)$ and $u \in Y$, we set
\[
Q(\beta,u) = \int_M u^2  \beta \di v_{g}.
\]
Then the Rayleigh quotient
\[
\mathcal{R} :  \cC_{>0}(M) \times Y \ni (\beta,u) \mapsto \frac{G(u)}{Q(\beta,u)}
\]
satisfies Assumption (A).  We assume that  Assumptions (B), (C), (D) and (F) hold too. Assumptions (E) and (G) are trivially satisfied with the Fréchet derivatives of $G$ and $Q$ given by
\begin{equation}\label{eq:Fréch}
G_\beta(u) = 0 \qquad \text{and} \qquad Q_\beta(\beta,u) =  u^2 v_g
\end{equation}
for any $u \in H$ and $\beta \in  \cC_{>0}(M)$.  By Elementary Consequence (2),  for any $\beta \in  \cC_{>0}(M)$ there exists a self-adjoint operator $L(\beta)$ whose eigenvalues are given by
\[
\lambda_k (\beta) =  \min_{E \in \mathcal{G}_{k}(H)} \max_{u \in \uE} \frac{G(u)}{Q(\beta,u)}\,  ,\quad  k \in \setN^*.
\]

For brevity we write $L$ instead of $L(1)$.  We assume that $L$ is elliptic.  In particular, elliptic regularity theory ensures here again that if $\beta$ is $\cC^m$ for a high enough $m$, then the eigenfunctions of $L(\beta)$ are all $\cC^2$.

For any real number $p\ge 1$ and any $k \in \setN^*$ we set
\[
\overline{\lambda}_k (\beta) \df \scaling(\beta) \lambda_k (\beta) \qquad 
\text{with} \qquad
\scaling(\beta) \df \|\beta\|_{L^p} = \left( \int_M \beta^p \di v_g \right)^{1/p}.
\]
We point out that $\scaling$ is $\cC^1$-Fréchet differentiable at any $\beta \in \cC_{>0}(M)$ with Fréchet derivative given by 
\begin{equation}\label{eq:Fréchetsc}
\scaling_\beta(\beta) =  \frac{\beta^{p-1}}{\|\beta\|^{p-1}_{L^p(g)}}  \, v_g.
\end{equation}
We are then in a position to apply Theorem \ref{th:main_cri}.
Like in the previous sections, we use the usual associated notations: for any $\beta \in \cC_{>0}(M)$,  we set $d_k(\beta)\df \partial_k F(\overline{\lambda}_1(\beta),\ldots,(\overline{\lambda}_N(\beta))$, $c(\beta) \df  0$ if $\sum_{k=1}^N d_k(\beta) \lambda_k(\beta)=0$ otherwise $c(\beta)\df \sum_{k=1}^N d_k(\beta) \lambda_k(\beta)/ \left| \sum_{k=1}^N d_k(\beta) \lambda_k(\beta) \right|$,  and we recall the definition \eqref{eq:def_quadric} of $\mathcal{Q}(\lambda,\eps,c)$.

We define the energy
\[
E_{\eps}(\Psi) \df \frac{1}{2} \sum_{k=1}^{N}  \eps_k \int_M \psi_k [L \psi_k] \di v_g
\]
on the set of $\cC^2$ mappings $\Psi = (\psi_1,\ldots,\psi_N) : M \to \setR^N$ such that $\psi_k \in H$ for any $k$.  Since each $L$ is elliptic,  $L$-harmonic mappings from $(M,g)$ to $(\cQ(\Lambda,\eps,c),h_\eps)$ are defined like in \ref{subsubsec:L_harmonic_mappings}.

Our main result is the following.

\begin{theorem}\label{th:abstractconformal}
Let $(M,g)$ be a closed, connected, smooth Riemannian manifold. For a high enough integer $m$,  assume that $\beta \in \cC_{>0}^m(M)$ is critical for the spectral functional
\[
\mathfrak{F} : \cC_{>0}^m(M) \ni \beta \mapsto F(\overline{\lambda}_1 (\beta),\ldots,\overline{\lambda}_N (\beta)).
\]
Then there exist $\eps \in \{0,\pm 1\}^{J_N(\beta)}$ and a mapping
\[
\Phi  = (\phi_k) : M \to \setR^{J_N(\beta)}
\]
such that:
\begin{itemize}
\item[(i)] each $\phi_k$ belongs to $E_k(\beta)$,
\item[(ii)] the family $(\phi_k)$ is $\|\cdot\|_{L^2(\beta)}$-orthogonal,
\item[(iii)] the mapping $\Psi \df (\beta/\|\beta\|_{L^p(g)})^{-\frac{p-1}{2}} \Phi$ maps $M$ to $\cQ(\Lambda(\beta),\eps,\frac{c(\beta)}{\Vert \beta \Vert_{L^p(g)}})$.
\end{itemize}
If $p=1$ 
then $\Phi$ is  a $L$-harmonic mapping  from $(M,g)$ to $\cQ(\Lambda(\beta),\eps,\frac{c(\beta)}{\Vert \beta \Vert_{L^1(g)}})$. 
Conversely, for any $L$-harmonic mapping $\Phi :(M,g)\to \cQ(\Lambda,\eps,1)$ for $\Lambda \in \R^N$ and $\eps \in \{0,\pm 1 \}^N$ such that $\beta \df h_\eps\left(\Phi,L\Phi\right) $ is positive, $\beta$ is a critical point of a combination of eigenvalues with respect to $L$. 

 \end{theorem}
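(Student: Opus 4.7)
The plan is to read off the conclusion directly from Theorem \ref{th:main_cri}, feeding in the very explicit Fréchet derivatives of this particular setting. Assumptions (A)--(G) are satisfied by hypothesis, with (E) and (G) trivial thanks to the explicit formulas \eqref{eq:Fréch} and \eqref{eq:Fréchetsc}. Theorem \ref{th:main_cri} therefore yields $\eps = (\eps_k) \in \{0,\pm1\}^{J_N(\beta)}$ and a $Q(\beta,\cdot)$-orthonormal family $\Phi = (\phi_k) \in \mathbf{U}_{J_N(\beta)}(\beta)$ satisfying the rescaled Euler--Lagrange equation \eqref{eq:EL2}. Points (i) and (ii) are then immediate from $\phi_k \in SE_k(\beta)$ and the $Q(\beta,\cdot)$-orthonormality.

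Substituting $G_\beta(\beta,\phi_k) = 0$, $Q_\beta(\beta,\phi_k) = \phi_k^2\, v_g$ and
\[
\frac{\scaling_\beta(\beta)}{\scaling(\beta)} \;=\; \frac{\beta^{p-1}}{\|\beta\|_{L^p}^{p}}\, v_g
\]
into \eqref{eq:EL2}, I would obtain the pointwise scalar identity
\[
\sum_{i=1}^{i_N(\beta)} \mu_i(\beta) \sum_{k \in \aleph_i(\beta)} \eps_k\, \phi_k^2 \;=\; \frac{c(\beta)\, \beta^{p-1}}{\|\beta\|_{L^p}^{p}}\,\cdot
\]
Multiplying through by $(\beta/\|\beta\|_{L^p})^{-(p-1)}$ precisely shows that the rescaled components $\psi_k \df (\beta/\|\beta\|_{L^p})^{-(p-1)/2}\phi_k$ satisfy
\[
\sum_k \eps_k \lambda_k(\beta)\, \psi_k^2 \;=\; \frac{c(\beta)}{\|\beta\|_{L^p}},
\]
which is exactly (iii). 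When $p=1$, the rescaling factor is $1$ so $\Psi = \Phi$. Moreover, since each $\phi_k \in E_k(\beta)$ is a weak eigenfunction, the definition of $L(\beta)$ gives $L\phi_k = \lambda_k(\beta)\, \beta\, \phi_k$; hence $L\Phi = \beta\, \Lambda(\beta) \Phi \parallel \Lambda(\beta)\Phi$, and Remark \ref{rem:obvious2} identifies $\Phi$ as an $L$-harmonic map into $\mathcal{Q}(\Lambda(\beta), \eps, c(\beta)/\|\beta\|_{L^1})$.

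For the converse, I would apply Theorem \ref{th:converse}. Given an $L$-harmonic map $\Phi : M \to \mathcal{Q}(\Lambda, \eps, 1)$, the characterization \eqref{eq:crit} gives $L\Phi = c(x)\Lambda\Phi$ for some scalar function $c$, and contracting against $\Phi$ yields
\[
\beta \;=\; h_\eps(\Phi, L\Phi) \;=\; c(x) \sum_k \eps_k \lambda_k \phi_k^2 \;=\; c(x),
\]
using that $\Phi$ lies in the quadric. Hence $L\phi_k = \lambda_k \beta \phi_k$, so each $\phi_k$ is a weak eigenfunction of $L(\beta)$ with eigenvalue $\lambda_k$, which is therefore some eigenvalue $\lambda_{k_j}(\beta)$ in the spectrum of $L(\beta)$. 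A direct computation using $\mathcal{R}_\beta(\beta, \phi_k) = -\lambda_k \phi_k^2 v_g / Q(\beta, \phi_k)$ and the identity $\beta^p = c(\Phi) \beta^{p-1}$ (valid when $p=1$, trivial otherwise by the choice of $\beta$) allows one to exhibit coefficients $\delta_j$ such that the hypothesis \eqref{eq:converseELnonortho} of Theorem \ref{th:converse} holds, producing the desired $F$. The main technical point here, and the one I expect to require the most care, is verifying the algebraic identity \eqref{eq:converseELnonortho} in the regime $p>1$, where the scaling derivative $\beta^{p-1}/\|\beta\|_{L^p}^p$ interacts nontrivially with the $\phi_k^2$ terms and one must exploit the specific form of $\beta$ (or restrict the argument to $p=1$, as the theorem statement does for the $L$-harmonicity conclusion).
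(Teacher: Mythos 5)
Your forward direction is essentially identical to the paper's: apply Theorem~\ref{th:main_cri} with the explicit Fréchet derivatives $G_\beta = 0$, $Q_\beta(\beta,u) = u^2 v_g$, and $\scaling_\beta(\beta)/\scaling(\beta) = (\beta^{p-1}/\|\beta\|_{L^p}^p)\,v_g$ to obtain precisely equations~\eqref{eq:EL3}--\eqref{eq:EL5}, then specialize to $p=1$ for $L$-harmonicity via Remark~\ref{rem:obvious2}. The paper leaves the converse ``to the reader''; your sketch via Theorem~\ref{th:converse} (using $L\Phi = c\Lambda\Phi$, contracting against $\Phi$ to identify $\beta = c$, and verifying~\eqref{eq:converseELnonortho} with $\delta_j = \eps_j Q(\beta,\phi_j)$ in the $p=1$ regime) is correct and fills that gap, and your hesitation about $p>1$ is warranted since the identity $\sum_j \delta_j \lambda_j \phi_j^2/Q(\beta,\phi_j) \propto \beta^{p-1}$ has no reason to hold in that generality; the only slip is labeling the family $(\phi_k)$ from part (B) of Theorem~\ref{th:main_cri} as $Q(\beta,\cdot)$-\emph{orthonormal} in $\mathbf{U}_{J_N(\beta)}(\beta)$, when it is merely orthogonal, lying in $\cU_{J_N(\beta)}(\beta)$ with the specific normalization~\eqref{eq:normal} --- inconsequential since (ii) only claims orthogonality.
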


 \begin{proof}
Choose $m$ high enough to ensure that elliptic regularity applies.  Apply Theorem \ref{th:main_cri} knowing that  the Fréchet derivatives of $G$, $Q$ and $\scaling$ are given by \eqref{eq:Fréch} and \eqref{eq:Fréchetsc}. Then the rescaled Euler--Lagrange equation \eqref{eq:EL2} writes as
\begin{equation}\label{eq:EL3}
\sum_{i=1}^{i_N(\beta)} \mu_i(\beta) \sum_{k \in \aleph_i(\beta)} \eps_k \phi_k^2 = c(\beta)\frac{\beta^{p-1}}{\|\beta\|^p_{L^p(g)}}   \qquad \text{on $M$}
\end{equation}
where each $\phi_k$ belongs to $E_k(\beta)$ and $(\phi_k)$ is $\|\cdot\|_{L^2(\beta)}$-orthogonal.  Set $\Phi \df (\phi_k)$. Then \textit{(i)} and \textit{(ii)} holds. Set $\Psi \df (\beta/\|\beta\|_{L^p(g)})^{-\frac{p-1}{2}} \Phi$.
Then Equation \eqref{eq:EL3} yields
\begin{equation}\label{eq:EL5}
\sum_{i=1}^{i_N(\beta)} \mu_i(\beta) \sum_{k \in \aleph_i(\beta)} \eps_k \psi_k^2 = \frac{c(\beta)}{\|\beta\|_{L^p(g)}} \qquad \text{on $M$}
\end{equation}
which means that \textit{(iii)} is satisfied.  If $p=1$ 
equation \eqref{eq:EL3} reduces to 
\begin{equation}\label{eq:EL4}
\sum_{i=1}^{i_N(\beta)} \mu_i(\beta) \sum_{k \in \aleph_i(\beta)} \eps_k \phi_k^2 = \frac{c(\beta)}{\|\beta\|_{L^1(g)}} \qquad \text{on $M$}
\end{equation}
which implies that $\Phi$ is an $L$-harmonic from $(M,g)$ to $\cQ(\Lambda(\beta),\eps,\frac{c(\beta)}{\Vert \beta \Vert_{L^1(g)}})$, see \ref{subsubsec:L_harmonic_mappings}. The proof of the converse is left to the reader.  
\end{proof}

\subsection{Application to GJMS operators}

Let $(M,g)$ be a closed, connected, smooth Riemannian manifold of dimension $n$. For any integer $m$ belonging to
\[
I(n) \df \begin{cases}
\setN^* & \text{if $n$ is odd,}\\
\{1,\ldots, n/2\} & \text{if $n$ is even,}
\end{cases}
\]
the associated GJMS operator $J_{g,m}$ of order $m$ is a self-adjoint, elliptic, conformally covariant operator acting on $\cC^\infty(M)$ with leading order term $\Delta_{g}^m$ \cite{GJMS}.  This operator admits a discrete spectrum $\{\lambda_k(g)\}_{k \in \setN^*}$ given by the usual min-max formula: for any $k \in \setN^*$,
\begin{align*}
\lambda_k(g) & = \min_{E \in \cG_k(H^{m}(M))} \max_{u \in \underline{E}} \frac{\int_M u J_{g,m} u \di v_{g}}{\int_M u^2 \di v_{g}}  \, \cdot
\end{align*}
If $\tilde{g}=f g$ for some $f \in \cC_{>0}^\infty(M)$, then the conformal covariance rule of $J_{g,m}$ states that for any  $u \in \cC^\infty(M)$,
\[
J_{\tilde{g},m} (u) = f^{-\frac{n+2m}{4}} J_{g,m} (f^{\frac{n-2m}{4}}u).
\] This implies that for any $u \in H^m(M)$,
\begin{align*}
 \frac{\int_\Sigma u J_{\tilde{g},m} u \di v_{\tilde{g}}}{\int_\Sigma u^2  \di v_{\tilde{g}}}  =  \frac{\int_\Sigma v J_{g,m} v \di v_{g}}{\int_\Sigma v^2 f^{m} \di v_{g}} \, \cdot
\end{align*}
where $v=f^{\frac{n-2m}{4}} u$. In view of this, we define
\begin{align*}
\lambda_k(\beta) & \df \min_{E \subset \cG_k(H^{1}(M))} \max_{u \in \underline{E}} \frac{\int_M u J_{g,m}u \di v_{g}}{\int_M u^2 \beta \di v_{g}}\\
 \scaling(\beta) & \df \|\beta\|_{L^{n/(2m)}(g)}\\
\overline{\lambda}_k(\beta) &  \df \scaling(\beta)\lambda_k(\beta)
\end{align*}
for any $\beta \in C_{>0}(M)$.   Then Theorem \ref{th:abstractconformal} directly implies the following result, where  we use the usual notation.

\begin{theorem}\label{th:general}
Let $(M,g)$ be a closed, connected, smooth Riemannian manifold of dimension $n$, and $m$ an integer belonging to $I(n)$. For a high enough $\ell$,  assume that $\beta \in \cC_{>0}^\ell(M)$ is critical for the spectral functional
\[
\fk : \cC_{>0}^\ell(M) \ni  \beta' \mapsto F(\overline{\lambda}_1 (\beta'),\ldots,\overline{\lambda}_N (\beta')),
\]
then there exist $\eps \in \{0,\pm 1\}^{J_N(\beta)}$ and a mapping
\[
\Phi : M \to \setR^{J_N(\beta)}
\]
such that:
\begin{itemize}
\item[(i)] each $\phi_k$ belongs to $E_k(\beta)$,
\item[(ii)] the family $(\phi_k)$ is $\|\cdot\|_{L^2(\beta)}$-orthogonal,
\item[(iii)] the rescaled mapping $\Psi\df (\beta/ \|\beta\|_{L^{\frac{n}{2m}}(g)})^{-\frac{n-2m}{4m}} \Phi$ is a $J_{\tilde{g},m}$-harmonic map from $M$ to $\cQ\left(\Lambda(\beta),\eps,\frac{c(\beta)}{\Vert \beta \Vert_{L^{\frac{n}{2m}}(g)}}\right)$, where $\tilde{g} = \beta^{\frac{1}{m}} g$
\end{itemize}
Conversely, any  $J_{g,m}$-harmonic map  $ \Psi : M \to \cQ\left(\Lambda,\eps,1 \right)$ for $\Lambda \in \R^N$ and $\eps \in \{0,\pm 1 \}^N$ such that $\beta \df h_\eps\left(\Psi,J_{g,m}\Psi\right)$ is positive, $\beta$ is a critical point of a combination of eigenvalues with respect to $J_{g,m}$. In other words, the metric $\tilde{g} = \beta^{\frac{1}{m}}g$ is a critical metric in the conformal class of $g$.
\end{theorem}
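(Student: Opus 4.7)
The plan is to apply Theorem \ref{th:abstractconformal} in the concrete setting $L = J_{g,m}$, $G(u) = \int_M u \, J_{g,m} u \di v_g$ on $H = H^m(M)$, and $p = n/(2m)$, and then upgrade the resulting "maps into a quadric" conclusion to genuine $J_{\tilde{g},m}$-harmonicity using the conformal covariance of $J_{g,m}$.

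First I would verify the standing assumptions (A)--(G) for this Rayleigh quotient. Self-adjointness and ellipticity of $J_{g,m}$, together with the fact that its leading term is $\Delta_g^m$, imply that for $\theta$ large enough, $(G(\cdot) + \theta Q(\beta,\cdot))^{1/2}$ is equivalent to the standard $H^m$-norm, giving (B). Compactness of $M$ and local equivalence of the weights $\|\cdot\|_{L^2(\beta)}$ yield (C'), while the Rellich--Kondrachov embedding $H^m(M) \hookrightarrow L^2(M)$ yields (D'); by Remark \ref{eq:alternative_assumptions} this covers (C) and (D). Assumptions (E)--(G) follow from the explicit Fréchet derivatives $G_\beta = 0$, $Q_\beta(\beta,u) = u^2 v_g$ and the smoothness of $\beta \mapsto \|\beta\|_{L^p(g)}$ on $\cC_{>0}(M)$. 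For $\ell$ large enough, elliptic regularity for $J_{g,m}$ with $\cC^\ell$ coefficients guarantees $\cC^2$ smoothness of the eigenfunctions of $L(\beta)$, so the application of Theorem \ref{th:abstractconformal} is valid.

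Applying that theorem then produces $\eps \in \{0,\pm 1\}^{J_N(\beta)}$ and a mapping $\Phi = (\phi_k)$ satisfying (i) and (ii), such that $\Psi \df (\beta/\|\beta\|_{L^p(g)})^{-(p-1)/2}\Phi$ maps into $\cQ(\Lambda(\beta),\eps, c(\beta)/\|\beta\|_{L^p(g)})$. Observing that $(p-1)/2 = (n-2m)/(4m)$, the rescaling matches the $\Psi$ in the statement.

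The crucial step is to upgrade this to $J_{\tilde{g},m}$-harmonicity of $\Psi$. By Remark \ref{rem:obvious2} it is enough to show that each $\Psi_k$ is a $J_{\tilde{g},m}$-eigenfunction with eigenvalue $\lambda_k(\beta)$. Setting $f = \beta^{1/m}$ so that $\tilde{g} = f g$, the conformal covariance rule yields
\[
J_{\tilde{g},m}\Psi_k = f^{-(n+2m)/4}\, J_{g,m}\bigl(f^{(n-2m)/4}\Psi_k\bigr).
\]
A direct calculation gives $f^{(n-2m)/4}\Psi_k = \|\beta\|_{L^p(g)}^{(n-2m)/(4m)}\phi_k$, a constant multiple of $\phi_k \in E_k(\beta)$. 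Since the abstract eigenvalue equation reads $J_{g,m}\phi_k = \lambda_k(\beta)\,\beta\,\phi_k$ in this Rayleigh setting, collecting powers of $\beta = f^m$ produces $J_{\tilde{g},m}\Psi_k = \lambda_k(\beta) \Psi_k$, as announced.

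For the converse, given an $L$-harmonic mapping $\Psi$ with $\beta \df h_\eps(\Psi, J_{g,m}\Psi) > 0$, I would run this computation in the reverse direction, using conformal covariance to identify (a constant multiple of) each $\Psi_k$ with an eigenfunction of $L(\beta)$ in the abstract sense, and then invoke Theorem \ref{th:converse} to produce a combination $F$ for which $\beta$ is critical. The main obstacle is purely bookkeeping: threading the exponent $(n-2m)/(4m)$ consistently between the abstract rescaling of Theorem \ref{th:abstractconformal} and the conformal covariance rule; the analytical content of each step is routine once the assumptions are verified.
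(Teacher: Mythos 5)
Your proposal is correct and follows the same route the paper intends: verify assumptions (A)--(G) for the Rayleigh quotient built from $J_{g,m}$, invoke Theorem~\ref{th:abstractconformal} with $p=n/(2m)$, and then use the conformal covariance rule of the GJMS operators (stated in the paper just before the theorem) to recognize each $\Psi_k$ as a $J_{\tilde g,m}$-eigenfunction, hence $\Psi$ as $J_{\tilde g,m}$-harmonic. The paper elides this last step by calling the theorem a ``direct'' consequence; you have carried out the exponent bookkeeping explicitly, and it checks out.
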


In the sequel, we deduce two corollaries from Theorem \ref{th:general}.

\subsubsection*{Paneitz operator}

Let $(M,g)$ be a closed, connected, smooth Riemannian $4$-manifold.  For any $\tilde{g} \in [g]$, the associated Paneitz operator
\[
P_{\tilde{g}} \df \Delta_{\tilde{g}}^2 - \delta_{\tilde{g}} \left( \frac{2}{3} R_{\tilde{g}} g - 2 \Ric_{\tilde{g}} \right) \nabla^{\tilde{g}} 
\]
is a fourth-order, self-adjoint, elliptic operator acting on $H^2(M)$.    As well-known, 
\[
P_{\tilde{g}} = J_{\tilde{g},2}.
\]
For any $\beta \in \cC_{>0}(M)$, we set
\[
\lambda_k(\beta) \df \min_{E \in \cG_k(H^{2}(M))} \max_{u \in \underline{E}} \frac{\int_M u P_{g} u \di v_{g}}{\int_M u^2 \beta \di v_{g}}
\]
\[
\overline{\lambda}_k(\beta)  \df \scaling(\beta)\lambda_k(\beta) \qquad \text{with} \qquad \scaling(\beta) = \|\beta\|_{L^1(g)}.
\]
Then Theorem \ref{th:general} yields the following.

\begin{cor}
Let $(M,g)$ be a closed, connected, smooth, four-dimensional Riemannian manifold. For a high enough integer $\ell$, if $\beta \in \cC_{>0}^\ell(M)$ is critical for $\fk$,  then there exists a $P_{g}$-harmonic mapping $\Phi$ from $(M,g)$ to some pseudo-Euclidean quadric $(\cQ(\Lambda(\beta),\eps,\frac{c(\beta)}{\int_M \beta}),h_\eps)$, where $\eps \in \{0,\pm 1\}^{J_N(\beta)}$,  and the coordinates of $\Phi$ are $\|\cdot\|_{L^2(\beta)}$-orthogonal eigenfunctions of $P_g$.

Conversely, if $\Phi : M \to (\cQ(\Lambda,\eps,1),h_\eps) $ is a $P_g$-harmonic mapping $\Lambda \in \R^N$ and $\eps \in \{0,\pm 1 \}^N$ such that 
$$ \beta  \df h_\eps\left(\Phi,P_g \Phi\right)  $$
also expressed as
\begin{equation*}
\begin{split} h_\eps(\Lambda \Phi,\Lambda\Phi) \beta = -h_\eps(\Lambda \Delta\Phi,\Delta\Phi) + 2 \langle \nabla \Phi \nabla \Delta \Phi \rangle_{g,h_\eps} + \Delta_g\left( \langle \nabla \Phi, \nabla \Lambda\Phi \rangle_{g,h_\eps} \right) \\
 +  \frac{2}{3}R_g \langle \nabla \Phi,\nabla \Lambda \Phi \rangle_{g,h_\eps} + 2 \langle \nabla \Phi,\nabla \Lambda \Phi \rangle_{\Ric_g,h_\eps} 
\end{split} \end{equation*}
is positive, $\beta$ is a critical point of a combination of eigenvalues $\overline{\lambda}_k$ and the metric $\beta^{\frac{1}{2}} g$ is critical in the conformal class of $g$.
\end{cor}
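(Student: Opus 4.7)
My strategy is to obtain the corollary as a direct specialization of Theorem \ref{th:general} to the parameters $n=4$ and $m=2$, exploiting that the Paneitz operator in dimension four coincides with the second-order GJMS operator $P_g = J_{g,2}$. With these parameters the critical exponent $n/(2m)$ equals $1$, so that the scaling function $\scaling(\beta)=\|\beta\|_{L^{n/(2m)}(g)}$ of Theorem \ref{th:general} reduces to $\|\beta\|_{L^1(g)}$, consistently with the hypothesis. Moreover the conformal rescaling exponent $(n-2m)/(4m)$ vanishes, so the auxiliary mapping $\Psi$ appearing in Theorem \ref{th:general} coincides with $\Phi$ itself. Hence the direct implication of the corollary --- existence of $\eps$, of the mapping $\Phi$ into $\cQ(\Lambda(\beta),\eps,c(\beta)/\|\beta\|_{L^1(g)})$, membership $\phi_k\in E_k(\beta)$, and $\|\cdot\|_{L^2(\beta)}$-orthogonality of the coordinates --- is a verbatim translation of the abstract statement, with $P_g$-harmonicity inherited from $J_{g,2}$-harmonicity.

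For the converse, Theorem \ref{th:general} already grants criticality of the function $h_\eps(\Phi,P_g\Phi)$ whenever it is positive, for any $P_g$-harmonic map $\Phi$ into $\cQ(\Lambda,\eps,1)$. The only remaining content of the corollary is to rewrite this scalar in the explicit pointwise form displayed. To this end I would exploit the harmonicity constraint $P_g\Phi = c\,\Lambda\Phi$, in which the scalar $c$ can be extracted by pairing: since $h_\eps(\Phi,\Lambda\Phi)\equiv 1$ on the quadric, pairing with $\Phi$ gives $c=\beta$, whereas pairing with $\Lambda\Phi$ gives $h_\eps(\Lambda\Phi,P_g\Phi) = h_\eps(\Lambda\Phi,\Lambda\Phi)\,\beta$. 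It then suffices to expand $h_\eps(\Lambda\Phi,P_g\Phi)$ term by term using $P_g = \Delta_g^2 - \delta_g\bigl(\tfrac{2}{3}R_g\, g - 2\Ric_g\bigr)\nabla^g$ together with the pointwise product rule $\Delta_g(fh) = (\Delta_g f)h + f(\Delta_g h) - 2\langle df,dh\rangle_g$ applied to the scalar pairing $\langle\nabla\Phi,\nabla\Lambda\Phi\rangle_{g,h_\eps}$, plus an analogous pointwise divergence identity for the lower-order term.

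The hard part will be the bookkeeping of cross-terms in this expansion: the displayed identity is pointwise rather than integral, so every product rule must be tracked with its full correction terms, and the $\Ric_g$-weighted pairing on the right-hand side must be matched against the $\delta_g(\Ric_g\,\nabla\Phi)$ part of $P_g$ via a product rule applied to the pairing of $\Lambda\Phi$ against $\Ric_g(\nabla\Phi,\cdot)$. None of this is conceptually difficult, but it requires maintaining a consistent convention for covariant derivatives and for raising and lowering indices throughout.
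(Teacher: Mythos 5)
Your proposal is correct and takes essentially the same route as the paper: the paper itself derives the corollary by directly specializing Theorem \ref{th:general} to $n=4$, $m=2$, where $P_g=J_{g,2}$, $n/(2m)=1$ forces $\scaling(\beta)=\|\beta\|_{L^1(g)}$, and $(n-2m)/(4m)=0$ collapses $\Psi$ to $\Phi$, while the converse reduces to the pointwise expansion of $h_\eps(\Lambda\Phi,P_g\Phi)$ via the constraint $h_\eps(\Lambda\Phi,\Phi)\equiv 1$ on the quadric and the Leibniz rule for $\Delta_g$, exactly as you sketch.
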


\subsubsection*{Conformal Laplacian} Let $(M,g)$ be a closed, connected, smooth Riemannian manifold of dimension $n\ge 3$.  For any $\tilde{g} \in [g]$, the associated conformal Laplacian
\begin{equation}\label{eq:conformal_Laplacian}
L_{\tilde{g}} \df \Delta_{\tilde{g}} + \frac{n-2}{4(n-1)} R_{\tilde{g}}.
\end{equation}
is a second-order, self-adjoint, elliptic operator acting on $H^1(M)$.   As well-known, 
\[
L_{\tilde{g}} = J_{\tilde{g},1}.
\]
For any $\beta \in \cC_{>0}(M)$, we set
\[
\lambda_k(\beta) \df \min_{E \in \cG_k(H^{1}(M))} \max_{u \in \underline{E}} \frac{\int_M u L_{g} u \di v_{g}}{\int_M u^2 \beta \di v_{g}}
\]
\[
\overline{\lambda}_k(\beta)  \df \scaling(\beta)\lambda_k(\beta) \qquad \text{with} \qquad \scaling(\beta) = \|\beta\|_{L^1(g)}.
\]
Then Theorem \ref{th:general} yields the following.

\begin{cor}
Let $(M,g)$ be a closed, connected, smooth Riemannian manifold of dimension $n \ge 3$. For a high enough integer $\ell$,  if $\beta \in \cC^\ell_{>0}(M)$ is critical for $\fk$, then there exists a mapping $\Phi : M \to \setR^{J_N(g)}
$
whose coordinates are $\|\cdot\|_{L^2(\beta)}$-orthogonal eigenfunctions of $L_g$ and such that the rescaled mapping
\[
\Psi \df (\beta/\|\beta\|_{L^{\frac{n}{2}}(g)})^{-\frac{n-2}{4}} \Phi
\]
is a $L_{\tilde{g}}$-harmonic map from $M$ to $\cQ(\Lambda(\beta),\eps,\frac{c(g)}{\|\beta\|_{L^1(g)}})$, where $\tilde{g}\df \beta g$, $\eps \in \{0,\pm 1\}^{J_N(\beta)}$.
Conversely, if $\Phi : M \to (\cQ(\Lambda,\eps,1),h_\eps) $ is a $L_g$-harmonic mapping $\Lambda \in \R^N$ and $\eps \in \{0,\pm 1 \}^N$ such that 
$$\beta \df h_\eps\left(\Phi,L_g \Phi\right) = \frac{\langle\nabla \Lambda \Phi,\nabla \Phi\rangle_{g,h_\eps}+\frac{n-2}{4(n-1)}R_g}{h_\eps\left(\Lambda\Phi,\Lambda\Phi\right)}$$ 
is positive, $\beta$ is a critical point of a combination of eigenvalues $\overline{\lambda}_k$ and the metric $\beta g$ is critical in the conformal class of $g$.
\end{cor}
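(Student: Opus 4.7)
The proof is a direct specialization of Theorem \ref{th:general} to the case $m=1$, with the identification $L_g = J_{g,1}$. First I would check that with $m=1$ the Sobolev exponent $n/(2m)$ reduces to $n/2$, the conformal weight $(n-2m)/(4m)$ to $(n-2)/4$, and $\beta^{1/m}$ to $\beta$, so that the metric $\tilde g = \beta g$ in the statement agrees with the $\tilde g = \beta^{1/m} g$ produced by Theorem \ref{th:general}. Since $L_g$ is a second-order self-adjoint elliptic operator acting on $H^1(M)$, the abstract hypotheses (compact Rellich embedding, continuity of the Dirichlet-type form) are automatic. The direct implication then follows verbatim: a critical $\beta$ yields an $\|\cdot\|_{L^2(\beta)}$-orthogonal family of eigenfunctions $(\phi_k)$ for the weighted eigenvalue problem together with the $L_{\tilde g}$-harmonicity of $\Psi \df (\beta/\|\beta\|_{L^{n/2}(g)})^{-(n-2)/4}\,\Phi$ into the associated pseudo-Euclidean quadric.

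For the converse, Theorem \ref{th:general} (via its abstract counterpart Theorem \ref{th:abstractconformal}) already supplies the critical point $\beta \df h_\eps(\Phi, L_g \Phi)$ once the latter is positive. The only additional task is to derive the explicit expression claimed for $\beta$. By the Euler--Lagrange characterization of $L_g$-harmonic maps into a quadric (Remark \ref{rem:obvious2}), there is $c : M \to \setR$ such that $L_g \Phi = c\,\Lambda \Phi$. Pairing with $\Phi$ via $h_\eps$ and using $h_\eps(\Phi, \Lambda\Phi) \equiv 1$ on $\cQ(\Lambda,\eps,1)$ gives $\beta = c$, while pairing with $\Lambda \Phi$ gives the equivalent expression $c = h_\eps(\Lambda \Phi, L_g \Phi)/h_\eps(\Lambda \Phi, \Lambda \Phi)$. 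Expanding the numerator using definition \eqref{eq:conformal_Laplacian} of $L_g$ together with the pointwise identity
\[
h_\eps(\Lambda \Phi, \Delta_g \Phi) \;=\; \tfrac{1}{2} \Delta_g\, h_\eps(\Lambda \Phi, \Phi) + \langle \nabla \Lambda \Phi, \nabla \Phi\rangle_{g,h_\eps}
\]
and the fact that $h_\eps(\Lambda \Phi, \Phi) \equiv 1$ on the quadric (so its Laplacian vanishes) yields precisely the formula in the statement.

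The main obstacle is essentially bookkeeping rather than conceptual: one must reconcile the two $L^p$ norms appearing in the statement (the $L^1$-norm in the definition of $\scaling(\beta)$ versus the $L^{n/2}$-norm in the conformal rescaling of $\Phi$ into $\Psi$) and carefully propagate the scaling constant $c(\beta)/\|\beta\|_{L^1(g)}$ that parametrizes the target quadric, checking that it coincides with the constant obtained through the normalization $h_\eps(\Psi, \Lambda\Psi) \equiv c(\beta)/\|\beta\|_{L^{n/2}(g)}$ after the change of variable $\Phi \mapsto \Psi$ and the substitution $\tilde g = \beta g$.
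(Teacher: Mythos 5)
Your proof is essentially the paper's: the corollary is indeed just Theorem~\ref{th:general} specialized to $m=1$ with $L_g=J_{g,1}$, and the paper's ``proof'' is a single sentence to that effect. Your verification of the exponents $n/(2m)\mapsto n/2$, $(n-2m)/(4m)\mapsto(n-2)/4$, $\beta^{1/m}\mapsto\beta$ is exactly the right bookkeeping, and your derivation of the explicit formula for $\beta$ in the converse --- pairing $L_g\Phi=c\,\Lambda\Phi$ against $\Phi$ and $\Lambda\Phi$, then using $h_\eps(\Lambda\Phi,\Phi)\equiv 1$ on $\cQ(\Lambda,\eps,1)$ together with the identity $h_\eps(\Lambda\Phi,\Delta_g\Phi)=\tfrac12\Delta_g h_\eps(\Lambda\Phi,\Phi)+\langle\nabla\Lambda\Phi,\nabla\Phi\rangle_{g,h_\eps}$ --- is correct (with the paper's sign convention $\Delta_g=-\div\nabla$, which makes $\Delta_g(\phi^2)=2\phi\Delta_g\phi-2|\nabla\phi|^2$) and is a useful elaboration that the paper leaves implicit.

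One point where you should be more decisive. You flag the clash between the $L^1$-norm in the subsection's definition $\scaling(\beta)=\|\beta\|_{L^1(g)}$ and the quadric parameter $c(g)/\|\beta\|_{L^1(g)}$, versus the $L^{n/2}$-norm in the rescaling $\Psi=(\beta/\|\beta\|_{L^{n/2}(g)})^{-(n-2)/4}\Phi$, and you describe this as a ``bookkeeping'' task of checking that the two constants ``coincide.'' They do not coincide in general, and there is no such check to carry out: the $L^1$ norm in both places is a typographical slip, almost certainly a copy-paste from the Paneitz case immediately above, where $n=4$ and $m=2$ give $n/(2m)=1$ so that $\|\beta\|_{L^1}$ \emph{is} correct there. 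For the conformal Laplacian ($m=1$, $n\ge 3$) the exponent produced by Theorem~\ref{th:general} is $p=n/2$, so the subsection's $\scaling(\beta)$ should read $\|\beta\|_{L^{n/2}(g)}$ and the target quadric should be $\cQ(\Lambda(\beta),\eps,c(\beta)/\|\beta\|_{L^{n/2}(g)})$; likewise $J_N(g)$ should be $J_N(\beta)$ and $c(g)$ should be $c(\beta)$. Rather than treating the discrepancy as something to reconcile, the correct move is to note that it is an error in the statement and apply Theorem~\ref{th:general} as stated, which fixes the norm to be $\|\beta\|_{L^{n/2}(g)}$ throughout.
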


\begin{rem}
Notice that if criticality holds for a single eigenvalue, then the target manifold is a sphere and $L_g$-harmonic mappings are $\Delta_g$-harmonic mappings.
\end{rem}

\begin{rem}
We believe that Theorem \ref{th:abstractconformal} will also find natural applications to the fractional GJMS operators considered in \cite{CaseChang} and the boundary operators introduced in \cite{Case,CaseLuo}.
\end{rem}

\section{Conformal Laplacian}\label{sec:conformal}

In this short section, we focus on eigenvalue functionals associated with the conformal Laplacian on a closed, connected, smooth manifold $M$ of dimension $n\ge 3$.  The study of critical points in a given conformal class is covered by the previous section, but the concrete formula \eqref{eq:conformal_Laplacian} allows us to treat the case of critical points in the whole set $\cR(M)$. This is what we do in this section.

We fix a Riemannian metric $g \in \cR(M)$. For any $(g',\beta) \in \cR^2(M) \times \cC_{>0}(M)$, we set
\begin{align*}
\lambda_k^C(g',\beta) & \df \min_{E \in \cG_k(H^1(M))} \max_{u \in \uE} \frac{\int_M |du|_{g'}^2 + c_n R_{g'} u^2 \di v_{g'}}{\int_M u^2 \beta \di v_g}\\
\scaling(g',\beta) & \df \|\beta\|_{L^{\frac{n}{2}}(g)} v_{g'}(M)^{2/n-1} \\
\overline{\lambda}_k^C(g',\beta) & \df  \lambda_k^C(g',\beta)\scaling(g',\beta),
\end{align*}
where $k \in \setN^*$.   The conformal covariance rule of the conformal Laplacian writes as
\[
L_{\beta g} u = \beta^{-\frac{n+2}{4}} L_g (\beta^{\frac{n-2}{4}} u)
\]
for any $\beta \in \cC^\infty(M)$ and $u \in H^1(M)$ and implies that $\lambda_k^C(g,\beta)$ coincides with the $k$-th lowest eigenvalue of $L_{\beta g}$. As a consequence, we adapt the notation and writes $\beta g$ instead of $(g,\beta)$ whenever possible. We let $N \in \setN^*$ and $F \in \cC^1(\setR^N)$ be fixed, as usual, and we use the notations introduced in the previous sections. Then the following holds.

\begin{theorem}
Let $(M,g)$ be a closed, connected, smooth Riemannian manifold of dimension $n\ge 3$.  For a high enough integer $m$, if $\beta \in \cC_{>0}^m(M)$ is such that the couple $(g,\beta)$ is critical for the spectral functional
\[
\fk : \cR^m(M) \times \cC_{>0}^m(M) \ni (g',\beta') \mapsto F(\lambda_1^C(g',\beta'), \ldots,\lambda_N^C(g',\beta')),
\]
then there exists $\eps \in \{0,\pm 1\}^{J_{N}(\beta g)}$ such that the following hold.  
\begin{enumerate}
\item[(a)] There exists a $\cC^2$ mapping
\[
\Phi = (\phi_k) : M \to \setR^{J_N(\beta g)}
\]
such that:
\begin{itemize}
\item[(i)] each $\phi_k$ belongs to $E_k(\beta g)$,
\item[(ii)] the family $(\phi_k)$ is $\|\cdot\|_{L^2(\beta)}$-orthogonal,
\item[(iii)] $\Psi \df (\beta/\|\beta\|_{L^{\frac{n}{2}}(g)})^{-\frac{n-2}{4}} \, \Phi : M\to \cQ(\Lambda(\beta g), \eps, \frac{c(\beta g)}{\Vert \beta \Vert_{L^{\frac{n}{2}}}^{\frac{n}{2}}})$ is $L_{\beta g}$-harmonic
\item[(iv)] the following identities hold:
\begin{equation}\label{eq:Einstein1}
\Phi^*h_\eps = \frac{\langle d \Phi, d \Phi\rangle_{g,h_\eps}}{n}  g + \frac{c_n}{n} h_\eps(\Phi,\Phi) \mathring{\Ric}_g
\end{equation}
\begin{equation}\label{eq:Einstein2}
\langle d \Phi, d \Phi\rangle_{g,h_\eps} = \frac{n}{n-2} \frac{c(g,\beta)}{v_g(M)} - c_n R_g h_\eps(\Phi,\Phi).
\end{equation}
In particular, if $g$ is Einstein and $\langle d \Phi, d \Phi\rangle_{g,h_\eps}>0$ on $M$, then $\Phi$ is a conformal immersion.
\end{itemize}

\item[(b)] There exists $\tilde{d} \in \prod_{i=1}^{i_N(\beta g)} \Mix(d_{\aleph_i(\beta g)})$ such that for any $1 \le k \le J_N(\beta g)$, the $k$-th coordinate $\phi_k$ of $\Phi$ satisfies
\begin{equation}\label{eq:normconformalLap}
\int_M \phi_k^2 \di v_g =\begin{cases}
\,\,\, |\tilde{d}_k| & \text{if $S(\beta g)=0$,}\\
\displaystyle \frac{|\tilde{d}_k|}{|S(\beta g)|} & \text{otherwise.}
 \end{cases}
\end{equation}
\end{enumerate}
\end{theorem}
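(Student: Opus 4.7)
The plan is to cast this as another application of Theorem \ref{th:main_cri} with $X \df \cS^m(M) \times \cC^m(M)$, $\Omega \df \cR^m(M) \times \cC_{>0}^m(M)$, $Y \df L^2(M)$, $H \df H^1(M)$, and
\[
Q((g',\beta'),u) \df \int_M u^2 \beta' \di v_g, \qquad G((g',\beta'),u) \df \int_M \bigl(|du|_{g'}^2 + c_n R_{g'} u^2\bigr) \di v_{g'},
\]
where $c_n = (n-2)/(4(n-1))$. Note that $Q$ depends only on $\beta'$ and $G$ only on $g'$. The verification of the standing assumptions (A)--(G) follows exactly as in the proof of Theorem \ref{theocriticallaplace2}, the only new ingredient being the smooth dependence of $R_{g'}$ on $g'$ in the $\cC^m$-topology, which is standard for large enough $m$.

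The key computation is the Fréchet derivative $G_{g'}(g,\beta,u)$. Combining the standard variation of $|du|^2\,\di v_g$ with the classical formula $\dot R_g(h) = -\Delta_g\tr_g h + \delta_g\delta_g h - \langle \Ric_g, h\rangle_g$, integrating the first two terms by parts against $u^2$, and adding the volume-form variation yields
\[
G_{g'}(g,\beta,u) = -du\otimes du + \tfrac{1}{2}|du|_g^2\,g + c_n\bigl(\nabla^2(u^2) - \Delta_g(u^2)\,g - u^2\Ric_g + \tfrac{1}{2}R_g u^2\, g\bigr).
\]
The remaining Fréchet derivatives are routine: $G_{\beta'}=0$, $Q_{g'}=0$, $Q_{\beta'}(g,\beta,u)=u^2 v_g$, and the two components of $\scaling_{(g',\beta')}(g,\beta)$ are given by direct differentiation of $\|\beta\|_{L^{n/2}(g)}$ and $v_{g'}(M)^{2/n-1}$ respectively. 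Feeding these into the rescaled Euler--Lagrange equation \eqref{eq:EL2} produces, together with a family $(\phi_k)\in\cU_{J_N(\beta g)}(\beta g)$, signs $\eps\in\{0,\pm 1\}^{J_N(\beta g)}$ and weights $\tilde d$ satisfying the normalization \eqref{eq:normconformalLap}, a pair of identities on $M$ corresponding to the two components of $X$.

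The $\beta'$-component of this Euler--Lagrange equation is precisely the condition extracted from Theorem \ref{th:general} for criticality in the fixed conformal class $[g]$, and it immediately yields items (i)--(iii) of (a) together with the $L_{\beta g}$-harmonicity of the rescaled mapping $\Psi$. The novel content (iv) comes from the $g'$-component, which after substitution reads
\[
\sum_k \eps_k\, G_{g'}(g,\beta,\phi_k) \;=\; \tfrac{n-2}{2n}\tfrac{c(\beta g)}{v_g(M)}\, g.
\]
Taking the trace of this tensor identity and using $\Delta_g\phi_k = \lambda_k\beta\phi_k - c_n R_g\phi_k$ together with the $\beta'$-equation to eliminate $\sum_k \eps_k \lambda_k\beta\phi_k^2$ gives \eqref{eq:Einstein2}. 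To obtain \eqref{eq:Einstein1}, I would expand $\nabla^2(\phi_k^2) = 2\phi_k\nabla^2\phi_k + 2\,d\phi_k\otimes d\phi_k$, apply the eigenvalue equation again to trade the $\phi_k\nabla^2\phi_k$-type terms for curvature contributions, and subtract off the trace part via \eqref{eq:Einstein2}. The main obstacle will be this algebraic reorganization: verifying that all the Hessian-of-eigenfunction terms combine with the explicit $-c_n\sum_k\eps_k\phi_k^2\Ric_g$ to yield precisely the traceless Ricci tensor $\mathring{\Ric}_g$ as in \eqref{eq:Einstein1} — rather than a more complicated curvature combination — will require careful bookkeeping with the sign convention for $\Delta_g$ and with the contracted Bianchi identity.
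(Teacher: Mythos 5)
Your framing mirrors the paper's exactly: same choice of $X$, $\Omega$, $Q$, $G$, $\scaling$, same invocation of Theorem \ref{th:main_cri}, and the verification of (A)--(G) is indeed routine. The divergence happens at the key Fréchet derivative $G_{g'}$. You work from the pointwise linearization of $R_{g'}$ and integrate the divergence terms by parts against $u^2$, correctly picking up the extra contributions $c_n\bigl(\nabla^2(u^2) - \Delta_g(u^2)\,g\bigr)$. The paper instead quotes the Einstein--Hilbert variation formula $\left.\tfrac{\di}{\di t}\right|_{t=0}\int_M R_{g+th}\,\di v_{g+th}=\int_M\langle -\Ric_g+\tfrac{R_g}{2}g,h\rangle_g\,\di v_g$, in which those divergence terms have already been integrated away, and its stated $G_{(g,\beta)}(g,\beta,u)$ therefore carries no Hessian contributions. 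Since what must actually be differentiated is $\int_M R_{g'}u^2\,\di v_{g'}$ with $u^2$ non-constant, the divergence terms do \emph{not} disappear, and your formula for $G_{g'}$ is the correct one; the paper's is missing them. (The displayed $g'$-component of $\scaling_{g'}/\scaling$ also deserves a recheck: differentiating $v_{g'}(M)^{2/n-1}$ yields $\tfrac{2-n}{2n\,v_g(M)}\,g$ rather than $\tfrac{n}{2v_g(M)}\,g$.)

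The obstacle you flag at the end is therefore genuine and, with your corrected $G_{g'}$, does not close. Passing to the traceless part of the $g'$-Euler--Lagrange equation leaves a term $c_n\,\mathring{\nabla^2 f}$ with $f\df h_\eps(\Phi,\Phi)$, and neither the $\beta'$-equation nor the eigenvalue equation $\Delta_g\phi_k + c_nR_g\phi_k=\lambda_k\beta\phi_k$ can remove it: those constrain only scalar quantities such as $\Delta_g(\phi_k^2)$ and $\sum_k\eps_k\lambda_k\beta\phi_k^2$, not the full Hessian, and there is no contracted-Bianchi or Bochner identity that converts $\mathring{\nabla^2 f}$ into a multiple of $f\,\mathring{\Ric}_g$ in general. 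So your plan as written cannot reach \eqref{eq:Einstein1} in the stated form --- but that is not a defect in your reasoning. The paper's own derivation reaches \eqref{eq:Einstein1} precisely because the divergence terms were dropped; with the corrected Euler--Lagrange system one obtains an identity of the schematic form $\mathring{\Phi^*h_\eps}=c_n\bigl(\mathring{\nabla^2 f}-f\,\mathring{\Ric}_g\bigr)$, carrying a Hessian of $h_\eps(\Phi,\Phi)$, and the statement of (iv) would need to be revisited accordingly.
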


\begin{proof}
Set $X \df \cS^m(M) \times \cC^m(M)$ and $\Omega\df \cR^m(M) \times \cC_{>0}^m(M)$ for a high enough integer $m$. For any $(g',\beta') \in \Omega$ and $u \in L^2(M)$, $v \in H^1(M)$, define
\[
Q(g',\beta',u) \df \int_M u^2 \beta' \di v_{g} \quad \text{and} \quad G(g',\beta',u) \df \int_M |du|_{g'}^2 + c_n R_{g'} u^2 \di v_{g'}
\]
From previous arguments, we know that $Q$ and $\scaling$ are Fréchet differentiable at $(g,\beta)$, with
\[
Q_{(g,\beta)}(g,\beta,u) = \left( 0, u^2 v_g \right),
\]
\[
\frac{\scaling_{(g,\beta)}(g,\beta,u)}{\scaling(g,\beta,u)} = \left( \frac{n}{2 v_g(M)}\, g, \frac{\beta^{\frac{n}{2}-1}}{\|\beta\|_{L^{\frac{n}{2}}}^{\frac{n}{2}}(g)} v_g \right).
\]
In addition, since for any $h \in \cS^2(M)$,
\[
\left.  \frac{\di}{\di t}\right|_{t=0} \int_M R_{g+th} \di v_{g+th} = \int_M \left\langle -\Ric_g + \frac{R_g}{2} \, g, h \right\rangle_g \di v_g,
\]
(see e.g.~\cite[proof of Proposition 1.1]{Via}),  also $G$ is Fréchet differentiable at $(g,\beta)$, with
\[
G_{(g,\beta)}(g,\beta,u) = \left( - d u \otimes d u + \frac{1}{2} |du|^2_g g  - c_n \Ric_g u^2 + \frac{c_n}{2}R_gu^2 g,  \,\, 0 \right).
\]
One can easily show that Assumptions (A)--(F) holds. Then Theorem \ref{th:main_cri} applies and gives the existence of $\Phi \in \cU_{J_{N}(\beta g)}(\beta g)$, $\tilde{d} \in \prod_{i=1}^{i_N(\beta g)} \Mix(d_{\aleph_i(\beta g)})$ and $\eps \in \{0,\pm 1\}^{J_{N}(\beta g)}$ such that \eqref{eq:normconformalLap} holds and
\begin{align}\label{eq:premièreequation}
& \sum_{k=1}^{J_{N}(\beta g)} \eps_k \left(- d \phi_k \otimes d \phi_k + \frac{1}{2} |d\phi_k|^2_g g  - c_n \Ric_g \phi_k^2 + \frac{c_n}{2}R_g\phi_k^2 g \right)  =  \frac{n c(\beta g)}{2 v_g(M)}\,    g 
\end{align}
\begin{align}\label{eq:secondequation}
& \frac{n}{2} \sum_{i=1}^{i_N(\beta g)} \mu_i(\beta g) \sum_{k \in \aleph_i(\beta g)} \eps_k \phi_k^2  = \frac{c(\beta g)}{\|\beta\|^{\frac{n}{2}}_{L^{\frac{n}{2}}(g)}} \beta^{\frac{n}{2}-1}.
\end{align}
Note that \textit{(i)} and \textit{(ii)}  follow from the fact that $\Phi \in \cU_{J_{N}(\beta g)}(\beta g)$.  Moreover, \textit{(iii)} is obtained from mutiplying \eqref{eq:secondequation} by $(\beta/\|\beta\|_{L^{\frac{n}{2}(g)}})^{-\frac{n}{2}-1}$ and setting $\Psi$ as indicated. Lastly,  in \textit{(iv)},  \eqref{eq:Einstein1} and \eqref{eq:Einstein2}  are obtained by taking the traceless part and the trace of \eqref{eq:premièreequation}, respectively. If $g$ is Einstein then $\mathring{Ric}_g=0$ so that \eqref{eq:Einstein1} becomes $\Phi^*h_\eps = (1/n)\langle d \Phi, d \Phi\rangle_{g,h_\eps} \, g$.
\end{proof}

\section{Laplace spectral functionals with mixed boundary conditions}\label{sec:mixed}

In this section, we consider spectral functionals involving Laplace eigenvalues with different boundary conditions on a smooth, connected, compact surface $\Sigma$ with a non-empty boundary $\partial\Sigma$.  For a finite partition of $\partial \Sigma$ by the images $\{A_i\}$ of smooth connected curves:
\[
\partial \Sigma = \sqcup_{i=1}^L A_i
\]
we let $H_{i}^1(\Sigma)$ be the completion of $\cC_c^\infty(\mathring{\Sigma} \cup A_i)$ with respect to any $H^1$ norm on $\Sigma$ and we define, for any $k \in \setN^*$ and $g \in \cR^2(\Sigma)$,
\[
\lambda_k^{A_i}(g) \df \min_{E \in \cG_k(H_{i}^1(\Sigma))} \max_{u \in \uE} \frac{\int_M |du|^2_g \di v_g}{\int_M u^2 \di v_g} \cdot
\]
We also define the normalized version of these numbers
\[
\overline{\lambda}_k^{A_i}(g) \df \lambda_k^{A_i}(g) v_g(\Sigma)
\]
and the classical Dirichlet and Neumann quantities
\[
\overline{\lambda}_k^{\cD}(g)  \df  \overline{\lambda}_k^{\partial \Sigma}(g) \qquad \text{and} \qquad \overline{\lambda}_k^{\mathcal{N}}(g) \df \overline{\lambda}_k^\emptyset(g).
\]
We let $N \ge L +1$ be an integer and consider $F \in \cC^1(\setR^N)$.  For any $g \in \cR^2(\Sigma)$, we set
\[\fk(g) \df F(\lambda_1^{\cD}(g), \lambda_1^{A_1}(g),\ldots,\lambda_1^{A_L}(g),\lambda_1^{\mathcal{N}}(g),\ldots,\lambda_{N-L-1}^{\mathcal{N}}(g))
\]
and we define the numbers $d_k(g)$ in a similar way as in the previous sections.   We set
\begin{align*}
S(g)&  \df \left(d_1(g)\lambda_1^{\cD}(g) + d_2(g)\lambda_1^{A_1}(g)+\ldots+d_{L+1}(g)\lambda_1^{A_L}(g) \right)\\
& + \sum_{k=1}^{J_{N-L-1}(g)} d_{L+1+k}(g) \lambda_k^\mathcal{N}(g)
\end{align*}
and we define $c(g)$ as the sign of $S(g)$ (equal to $0$ when $S(g)=0$). We also introduce the set of indices
\[
I(g) \df \bigg\{ k \in \{1,\ldots,L+1\} \, : \,  d_k(g) \, \neq \, 0\bigg\},
\]
and for any $\eps \in \{0,\pm 1\}^{L+1+J_N^\mathcal{N}(g)}$,  we let $\cD\cQ(\lambda,\eps,c,I)$ be the positive piece of the dyadic subdivision of $\cQ(\lambda,\eps,c)$ (see definition \ref{eq:def_quadric}) associated with a set of indices $I$, that is to say,
\[
\mathcal{D}\cQ(\lambda,\eps,c,I) \df \left\{ x \in \cQ(\lambda,\eps,c) \, : \,  x_i > 0 \, \,   \text{if $i \in I$} \right\}
\]
The boundary of $\mathcal{D}\cQ(\lambda,\eps,c,I)$ in $\cQ(\lambda,\eps,c)$ is defined as
\[
\partial \mathcal{D}\cQ(\lambda,\eps,c,I) \df \left\{ x \in \cQ(\lambda,\eps,c) \, : \,  x_i = 0 \, \,   \text{if $i \in I$} \right\}.
\]

\begin{theorem}\label{th:main8}
Let $\Sigma$ be a smooth, connected, compact surface with a non-empty boundary $\partial\Sigma= \sqcup_{i=1}^L A_i$ where each $A_i$ is the image of a smooth connected curve.

\begin{enumerate}
\item For a high enough integer $m$, if $g \in \cR^m(\Sigma)$ is critical for the restriction of $\fk$ to $\cR^m(\Sigma)$ and such that $c \df c(g)\neq 0$, then then there exists a free boundary minimal immersion
\[
\Phi : (\Sigma,\partial \Sigma, g) \to \left(\mathcal{D}\cQ\left(\Lambda(g),\eps,\frac{c(g)}{v_g(\Sigma)}\right),\partial \mathcal{D}\cQ\left(\Lambda(g),\eps,\frac{c(g)}{v_g(\Sigma)}, I(g)\right),h_{c\eps}\right)
\]
where 
\begin{equation}\label{eq:eigenvalues}
\Lambda(g) \df \left(\lambda_1^{\cD}(g), \lambda_1^{A_1}(g),\ldots,\lambda_1^{A_L}(g),\lambda_1^{\mathcal{N}}(g),\ldots,\lambda_{N-L-1}^{\mathcal{N}}(g)\right)
\end{equation}
with $\|\cdot\|_{L^2(g)}$-orthogonal last $J_{N-L-1}(g)$ coordinates, for $\eps \in \{0,\pm 1\}^{L+1+J_N^\mathcal{N}(g)}$.

\item Conversely, for $\lambda \in \R^N$, $\eps \in \{0,\pm 1\}$, $I \subset \{1,\cdots,N\}$ and any conformal free boundary minimal immersion 
$$\Phi : (\Sigma,\partial \Sigma, g) \to \left(\mathcal{D}\cQ(\lambda,\eps,1),\partial \mathcal{D}\cQ(\lambda,\eps,1,I),h_{\eps}\right)$$
then the metric 
$$ h_\eps(\Phi,\Delta_{\Phi^* h_\eps} \Phi) \Phi^* h_\eps   $$
is critical for a combination of Laplace eigenvalues with mixed Dirichlet and Neumann boundary conditions.
\end{enumerate}
\end{theorem}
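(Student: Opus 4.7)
The plan is to mirror the proof of Theorem \ref{theocriticallaplace2}, now handling a functional whose constituents involve eigenvalues associated with distinct Sobolev spaces according to the prescribed boundary conditions. I would take $X\df\cS^m(\Sigma)$, $\Omega\df\cR^m(\Sigma)$ for a sufficiently large $m$ and, for each of the $L+2$ relevant eigenvalue problems (Dirichlet on $\partial\Sigma$, mixed with Dirichlet on $A_i$ for $i=1,\ldots,L$, and pure Neumann), set up a Rayleigh quotient on $Y=L^2(\Sigma)$ with $Q(g,u)=\int_\Sigma u^2\,\di v_g$, $G(g,u)=\int_\Sigma |du|_g^2\,\di v_g$, and $\scaling(g)=v_g(\Sigma)$, taking the admissible Hilbert space $H$ to be $H_0^1(\Sigma)$, $H_i^1(\Sigma)$ or $H^1(\Sigma)$ respectively. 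Rellich--Kondrachov and the Fréchet derivative computations from the proof of Theorem \ref{theocriticallaplace2} verify Assumptions (A)--(G) in each case, and Proposition \ref{prop:diff_single} controls the Clarke sub/superdifferential of each rescaled eigenvalue by the appropriate Rayleigh convex hull.

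Next I would apply the chain rule from Proposition \ref{prop:collectionClarke}(4) to split $\partial_C^\pm\fk(g)$ into a sum over the eigenvalues appearing in $\fk$, then invoke the Mixing Lemma \ref{lem:mixing} and Theorem \ref{th:main_cri} to obtain signs $\eps_k\in\{0,\pm1\}$, a Dirichlet eigenfunction $\phi_1$, mixed eigenfunctions $\phi_{i+1}\in E_1^{A_i}(g)$ for $i=1,\ldots,L$, and $\|\cdot\|_{L^2(g)}$-orthogonal Neumann eigenfunctions $\phi_{L+2},\ldots$, satisfying the rescaled Euler--Lagrange identity
\[
\sum_{k}\eps_k\bigl(-d\phi_k\otimes d\phi_k+\tfrac12|d\phi_k|_g^2\,g\bigr)+\frac{c(g)}{2\,v_g(\Sigma)}\,g=\tfrac12\sum_{i}\mu_i(g)\sum_{k\in\aleph_i}\eps_k\phi_k^2\,g\qquad\text{on }\Sigma.
\]
Since $\dim\Sigma=2$, the first sum on the left is already pointwise traceless with respect to $g$, so the traceless part of this identity yields weak conformality $\Phi^*h_\eps=\tfrac12\langle d\Phi,d\Phi\rangle_{g,h_\eps}\,g$, while its trace yields the quadric identity $\sum_i\mu_i(g)\sum_{k\in\aleph_i}\eps_k\phi_k^2=c(g)/v_g(\Sigma)$, placing $\Phi(\Sigma)$ inside $\cQ(\Lambda(g),\eps,c(g)/v_g(\Sigma))$. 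Combined with the harmonicity of each eigenfunction on the interior, this realises $\Phi$ as a (branched) minimal immersion.

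The core geometric step is then the boundary analysis: on each $A_i$ the Dirichlet component $\phi_1$ and the mixed component $\phi_{i+1}$ vanish, while the remaining components (Neumann, and mixed on $A_j$ for $j\neq i$) satisfy a Neumann condition. Consequently $\Phi(A_i)\subset\{x_1=0,\,x_{i+1}=0\}\cap\cQ$, which is contained in $\partial\mathcal{D}\cQ(\Lambda(g),\eps,c(g)/v_g(\Sigma),I(g))$, and the pattern of normal derivatives matches exactly \eqref{eq:EL_free_boundary_modif}. Properness of the immersion into the dyadic piece follows from the Hopf-type maximum principle argument sketched in Section \ref{subsec:mappings}, applied arc by arc. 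The converse is then an immediate application of Theorem \ref{th:converse}, since each coordinate of a prescribed conformal free-boundary minimal immersion is an eigenfunction of $\Delta_g$ with the correct boundary condition, so that the induced conformal factor times $\Phi^*h_\eps$ realises the required critical metric.

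The main obstacle will be the boundary analysis at the junction points of consecutive arcs $A_i$, $A_{i+1}$: verifying that the simultaneous vanishing/Neumann conditions glue coherently into a bona fide free boundary mapping valued in the stratum $\mathcal{D}\cQ$ rather than in a lower-dimensional face. A secondary subtlety is the definition of $I(g)$ when some $d_k(g)$ vanish, as only the ``active'' Dirichlet-type components should constrain the target stratum; this is precisely what the index set $I(g)$ is designed to track.
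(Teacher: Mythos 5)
Your proposal follows essentially the same route as the paper: set up $Q,G,\scaling$ on $\cS^m(\Sigma)$ with the $L+2$ admissible spaces $H_0^1$, $H_i^1$, $H^1$ to handle the distinct boundary conditions, decompose $\partial_C^\pm\fk$ through the chain rule into contributions from each eigenvalue, apply the Mixing Lemma to the Neumann block, and extract the Euler--Lagrange identity whose trace gives the quadric relation and whose traceless part gives conformality. The boundary analysis by normal differentiation of the trace identity and the converse via Theorem \ref{th:converse} also match the paper's argument (which in fact leaves the converse implicit).

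One detail you leave implicit but should make explicit, since it is the hinge on which the dyadic target $\mathcal{D}\cQ$ rests: the eigenvalues $\lambda_1^{\cD}(g)$ and $\lambda_1^{A_i}(g)$, being \emph{first} eigenvalues of Dirichlet-type problems, are \emph{simple}, and their eigenfunctions have constant sign on $\mathring{\Sigma}$. The paper opens its proof with precisely this observation. Simplicity is what lets the corresponding $\partial_C^\pm$ reduce to a single Fréchet derivative (no convex hull, no mixing), and sign-definiteness is what justifies choosing $\overline{\phi}_\ell>0$ on $\mathring{\Sigma}$ so that $\Phi$ lands in the positive stratum $\mathcal{D}\cQ$ rather than merely in $\cQ$; without it the conclusion $\Phi(\Sigma)\subset\mathcal{D}\cQ$ does not follow from the trace identity alone. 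You tacitly assume a single $\phi_1$ and $\phi_{i+1}$, but you never invoke the positivity, which is the step that actually pins the target to the right piece of the quadric. Your worry about junction points of the arcs is not an obstacle the paper addresses either; once the identity holds on $\Sigma$ and each coordinate satisfies its boundary condition, the normal derivative computation works arc by arc and regularity at corners is handled by the same local elliptic theory that makes the eigenfunctions $\cC^2$.
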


\begin{proof}
First of all, we point out that the eigenvalues $\lambda_1^{\cD}(g), \lambda_1^{A_1}(g),\ldots,\lambda_1^{A_L}(g)$ are all simple, and that the associated eigenvalues have a constant sign on $\mathring{\Sigma}$. Secondly, we choose $m$ high enough to ensure that the eigenfunctions corresponding to the eigenvalues \eqref{eq:eigenvalues} are all $\cC^2$. Set $X \df \cS^m(\Sigma)$,  $\Omega \df \cR^m(\Sigma)$,  $Y \df L^2(\Sigma)$ and,  for any $g \in \cR(M)$, $u \in L^2(M)$ and $v \in H^1(M)$, 
\[
Q(g,u) \df \int_M u^2 \di v_g\qquad
G(g,v)  \df \int_M |dv|_g^2 \di v_g \qquad\cR(g,v)  \df \frac{G(g,v)}{Q(g,v)}\, \cdot
\]
Adapting the proof of Proposition \ref{prop:diff} in a natural way, we obtain that
\begin{align*}
\partial_C^\pm \fk(g) & \subset \scaling(g) \sum_{\ell=1}^{L+1} d_\ell(g) \cR_g(g,u_\ell)\\
&  + \scaling_g(g) \left(d_1(g)\lambda_1^{\cD}(g) + d_2(g)\lambda_1^{B_1}(g)+\ldots+d_{L+1}(g)\lambda_1^{B_L}(g) \right)\\
& + \scaling(g) \conv \left\{  \sum_{k=1}^{J_{N-L-1}(g)} d_{L+1+k}(g) \mathcal{R}_g(g,u^\mathcal{N}_k) : (u^\mathcal{N}_k) \in \mathbf{U}_{N-L-1}^{\mathcal{N}}(g) \right\} \\
& + \scaling_g(g) \sum_{k=1}^{J_{N-L-1}(g)} d_{L+1+k}(g) \lambda_k^\mathcal{N}(g)
\end{align*}
where $u_1$ is an $\|\cdot\|_{L^2(g)}$-normalized eigenfunction associated with $\lambda_1^{\cD}(g)$,  $u_{1+\ell}$ is an $\|\cdot\|_{L^2(g)}$-normalized eigenfunction associated with $\lambda_1^{B_\ell}(g)$ for any $\ell \in \{1,\ldots,L\}$,  and $\mathbf{U}_{N-L-1}^{\mathcal{N}}(g)$ is the set of $\|\cdot\|_{L^2(g)}$-orthonormal $(N-L-1)$-uples $(u_k^\mathcal{N}) \subset H^1(\Sigma)$ such that $\Delta_g u_k^\mathcal{N} = \lambda_k^\mathcal{N}(g) u_k^\mathcal{N}$ for any $k\in \{1,\ldots,J_{N-L-1}(g)\}$. Then we apply the mixing Lemma \ref{lem:mixing} to rewrite the previous convex hull as 
\[
\left\{  \sum_{k=1}^{J_{N-L-1}(g)} \tilde{d}_{L+1+k} \mathcal{R}_g(g,u^\mathcal{N}_k) : \tilde{d} \in \prod_{i=1}^{i_{N-L-1}(g)} \Mix(d_{\aleph_i(g)}), (u^\mathcal{N}_k) \in \cU_{N-L-1}^{\mathcal{N}}(g) \right\} \, \cdot
\]
By criticality of $g$, we obtain that there exists
\[
\overline{\Phi} = (\overline{\phi}_1,\ldots,\overline{\phi}_{L+1},\overline{\phi}^\mathcal{N}_1,\ldots,\overline{\phi}^\mathcal{N}_{N-L-1})
\]
such that
\begin{align*}
0 & =  \sum_{\ell=1}^{L+1} d_\ell(g) \cR_g(g,\overline{\phi}_\ell)+ \sum_{k=1}^{J_{N-L-1}(g)} \tilde{d}_{L+1+k} \mathcal{R}_g(g,\overline{\phi}^\mathcal{N}_k) + \frac{\scaling_g(g)}{\scaling(g)} S(g) .
\end{align*}
We rescale each coordinate of $\overline{\Phi}$ as in the proof of Theorem \ref{th:main_cri} to obtain a new map $\Phi$ whose coordinates $\phi_\ell$, $\phi_k^\mathcal{N}$ satisfy
\begin{align}\label{eq:au-dessus}
& \phantom{=} \sum_{\ell=1}^{L+1} \eps_\ell G_g(g,\phi_\ell) + \sum_{k=1}^{J_{N-L-1}(g)} \eps_{k+L+1} G_g(g,\phi_k) +  \frac{\scaling_g(g)}{\scaling(g)} c(g) \nonumber \\
& = \sum_{\ell=1}^{L+1} \eps_\ell Q_g(g,\phi_\ell^\mathcal{N}) + \sum_{k=1}^{J_{N-L-1}(g)} \eps_{k+L+1} Q_g(g,\phi_k^\mathcal{N})
\end{align}
for some $\eps \in \{0,\pm 1\}^{L+J_{N-L-1}(g)}$ such that $\eps_\ell$ is the sign of $d_\ell(g)$ for any $\ell \in \{1,\ldots,L+1\}$. As already computed in the previous section, one has
\[
Q_g(g,\phi) =  \frac{1}{2} \phi^2 g \qquad G_g(g,\phi)  = - d\phi \otimes d\phi + \frac{1}{2}|d\phi|_g^2 g \qquad \scaling_g(g) = \frac{1}{2} \, g,
\]
for any $\phi \in H_1(\Sigma)$. By taking the trace and the traceless part of \eqref{eq:au-dessus} respectively, we obtain
\begin{equation}\label{eq:tracemixed}
\sum_{\ell=1}^{L+1} \eps_\ell \phi_\ell^2 + \sum_{k=1}^{J_{N-L-1}(g)} \eps_{k+L+1} (\phi_k^\mathcal{N})^2 = \frac{c}{v_g(\Sigma)}
\end{equation}
\[
\sum_{\ell=1}^{L+1} c\eps_\ell d \phi_\ell \otimes d \phi_\ell + \sum_{k=1}^{J_{N-L-1}(g)} c\eps_{k+L+1} d \phi_k^\mathcal{N} \otimes d \phi_k^\mathcal{N}  = \frac{1}{2}|d \Phi|_{g,h_c\eps}^2g
\]
where we have set
\[
|d \Phi|_{g,h_{c\eps}}^2 \df \left( \sum_{\ell=1}^{L+1} c\eps_\ell | d \phi_\ell|^2_g + \sum_{k=1}^{J_{N-L-1}(g)} c\eps_{k+L+1} |d \phi_k^\mathcal{N} |^2_g \right).
\]
With no loss of generality, we can assume that the functions $\overline{\phi}_\ell$ are all positive on $\mathring{\Sigma}$.  Since these functions vanish on $\partial \Sigma$,  we obtain from \eqref{eq:tracemixed} that $\Phi$ maps $M$ to $\cD\cQ(g,\eps)$ and that $\Phi(\partial M) \subset \partial\cD\cQ(g,\eps)$. Moreover, taking the normal derivative of \eqref{eq:tracemixed} on $\partial M$, we obtain that $\Phi(M)$ meets $\partial \cD\cQ(g,\eps)$ orthogonally. By \ref{subsec:mappings}, we conclude that $\Phi$ is free boundary and minimal, as desired.
\end{proof}

\bibliographystyle{alpha} 
\bibliography{Biblio.bib}

\end{document}